\pgfplotsset{
 compat=1.11,
 My Line Style/.style={
  smooth,
  thick,
  samples=400,
 },
 }
\newcommand\redsout{\bgroup\markoverwith{\textcolor{red}{\rule[1.5ex]{2pt}{0.4pt}}}\ULon}
\setlist[1]{wide}
\setlist[2]{leftmargin=15mm}
\setlist[enumerate]{label=\rm{(\arabic*)}}
\setlist[enumerate,2]{label=\rm({\it\roman*}), }
\setlist[itemize]{label=\raisebox{0.25ex}{\tiny$\bullet$}}
\newcommand\iso{\stackrel{\simeq}{\longrightarrow}}
\newcommand\Jac{\mathrm{Jac}}
\newcommand\car{\mathrm{char}}
\newcommand\NN{{\mathbb N}}
\newcommand\RR{{\mathbb R}}
\renewcommand\AA{{\mathbb A}}
\newcommand\CC{{\mathbb C}}
\newcommand\PP{{\mathbb P}}
\newcommand\Pic{\mathrm{ Pic}}
\newcommand\tr{\hbox to 1mm {${}^t \! $} }
\renewcommand\k{\mathbf{ \textbf k}}
\newcommand\kany{\mathrm{k}}
\DeclareMathOperator{\Aut}{Aut}
\DeclareMathOperator{\Bir}{Bir}
\DeclareMathOperator{\Aff}{Aff}
\DeclareMathOperator{\GL}{GL}
\DeclareMathOperator{\PGL}{PGL}
\DeclareMathOperator{\End}{End}
\DeclareMathOperator{\Spec}{Spec}
\DeclareMathOperator{\Span}{span}
\DeclareMathOperator{\Triang}{Triang}
\DeclareMathOperator{\Tame}{Tame}
\newtheorem{theorem}{Theorem}
\newtheorem*{maintheorem*}{Main Theorem}
\newtheorem{lemma}{Lemma}[subsection]
\newtheorem{corollary}[lemma]{Corollary}
\newtheorem{theoremsimple}[lemma]{Theorem}
\newtheorem{proposition}[lemma]{Proposition}
\newtheorem*{corollary*}{Corollary}
\newtheorem*{corollary**}{Corollary}
\theoremstyle{definition}
\newtheorem{definition}[lemma]{Definition}
\theoremstyle{remark}
\newtheorem{remark}[lemma]{Remark}
\newtheorem{example}[lemma]{Example}
\newcommand{\Z}{\mathbb{Z}}
\newcommand{\p}{\mathbb{P}}
\newcommand{\A}{\mathbb{A}}
\newcommand{\F}{\mathbb{F}}
\newcommand{\bir}{\dashrightarrow}
\newcommand*{\qrr@gobblenexttocentry}[5]{}
\newcommand*{\qrr@gobblenexttocentry}[4]{}
\newcommand*{\addsubsection}{
	\addtocontents{toc}{\protect\qrr@gobblenexttocentry}
	\subsection}
\newcommand{\set}[2]{\left\{\,#1 \ | \ #2\,\right\}}
\newcommand{\Bigset}[2]{\left\{\,#1 \ \Big| \ #2\,\right\}}
\title[Automorphisms of the affine $3$-space of degree $3$]{Automorphisms of the affine $3$-space of degree $3$} 
\subjclass[2010]{14R10, 37F10}
\author{J\'er\'emy Blanc}
\address{J\'er\'emy Blanc, Universit\"{a}t Basel, Departement Mathematik und Informatik, Spiegelgasse $1$, CH-$4051$ Basel, Switzerland}
\email{jeremy.blanc@unibas.ch}
\author{Immanuel van Santen}
\address{Immanuel van Santen, Universit\"{a}t Basel, Departement Mathematik und Informatik, Spiegelgasse $1$, CH-$4051$ Basel, Switzerland}
\email{immanuel.van.santen@math.ch}
\begin{document}
\begin{abstract}
		In this article we give two explicit families of automorphisms of degree $\leq 3$ 
		of the affine $3$-space $\mathbb{A}^3$
		such that each automorphism of degree $\leq 3$ of $\mathbb{A}^3$
		is a member of one of these families up to composition of affine automorphisms at the source and target; this shows in particular that all of them are tame.
		As an application, we give the list of all 
		dynamical degrees of automorphisms of degree $\leq 3$
		of $\mathbb{A}^3$; this is a set of $3$ integers and $9$ quadratic integers. 
		Moreover, we also describe up to compositions with affine automorphisms for $n\geq 1$ 
		all morphisms $\mathbb{A}^3 \to \mathbb{A}^n$ of degree $\leq 3$ with the property that 
		the preimage of every affine hyperplane in $\mathbb{A}^n$ is isomorphic to~$\mathbb{A}^2$.
\end{abstract}

\maketitle

\tableofcontents

	\section{Introduction}
	\subsection{The results}
	In this text, we fix an algebraically closed field $\k$ of any characteristic. 
	We denote by $\AA^n$ or sometimes $\AA^n_{\k}$ the \emph{affine $n$-space}
	$\Spec(\k[x_1, \ldots, x_n])$ over $\k$ for a specified choice of coordinates $x_1,  \ldots , x_n$.
	Every morphism $f \colon \AA^n \to \AA^m$ is given by
	\[	
		\begin{array}{rcl}
			\AA^n &\stackrel{f}{\longrightarrow}& \AA^m \\
			(x_1, \ldots, x_m) &\longmapsto& 
			(f_1(x_1, \ldots, x_n), \ldots, f_m(x_1, \ldots, x_n))
		\end{array}
	\]
	for polynomials $f_1, \ldots, f_m \in \k[x_1, \ldots, x_n]$. 
	If $n=3$, we often use $x,y,z$ instead of $x_1,x_2,x_3$ as coordinates.
	For simplicity we denote the above morphism sometimes by $f = (f_1, \ldots, f_m)$.
	For a morphism $f = (f_1, \ldots, f_n) \colon \AA^n \to \AA^m$ we denote by $\deg(f)$
	its degree which is by definition equal to the maximum of the degrees $\deg(f_1), \ldots, \deg(f_n)$.
	
	Let $\Aut_\k(\AA^n)$ be the group of all automorphisms of $\AA^n$ over $\k$. 
	In the last decades, there has been done
	a lot of research on this group $\Aut_\k(\AA^n)$, see e.g. the survey \cite{Es2000Polynomial-automor}.
	There are two prominent subgroups of $\Aut_{\k}(\AA^n)$, namely the group of \emph{affine automorphisms}
	\[
			\Aff_{\k}(\AA^n) = \Bigset{(f_1, \ldots, f_n) \in \Aut_{\k}(\AA^n)}{ 
					\begin{array}{l}
						\textrm{$f_i \in \k[x_1, \ldots, x_n]$ and $\deg(f_i) = 1$} \\ 
						\textrm{for all $i = 1, \ldots, n$}
					\end{array}
				} \\
	\]
	and the group of \emph{triangular automorphisms}
	\[
			\Triang_{\k}(\AA^n) = \Bigset{(f_1, \ldots, f_n) \in \Aut_{\k}(\AA^n)}{ 
				\begin{array}{l}
					\textrm{$f_i \in \k[x_i, \ldots, x_n]$} \\
					\textrm{for all $i = 1, \ldots, n$}
				\end{array}
				} \, .
	\]
	The subgroup generated by $\Aff_{\k}(\AA^n)$ and $\Triang_{\k}(\AA^n)$ inside $\Aut_{\k}(\AA^n)$
	is called the group of \emph{tame automorphisms} and we denote it by $\Tame_{\k}(\AA^n)$.
	In case $n = 1$, all automorphisms of $\AA^1$ are tame (in fact they are affine) and
	for $n = 2$ it is proven by Jung and van der Kulk \cite{Ju1942Uber-ganze-biratio, Ku1953On-polynomial-ring} 
	that all automorphisms of $\AA^2$ are tame.
	Since a long time it was conjectured that the famous Nagata-automorphism
	\[
		(x-2 y(zx+y^2)-z(zx+y^2)^2, y+z(zx+y^2), z) \in \Aut_{\k}(\AA^3)
	\]
	is non-tame, until Shestakov and Umirbaev gave fifteen years ago 
	an affirmative answer if $\car(\k) = 0$, see \cite{ShUm2004The-tame-and-the-w}. 
	It is still an open problem whether 
	$\Tame_{\k}(\AA^n) \neq \Aut_{\k}(\AA^n)$ for $n \geq 4$ and when $\car(\k) \neq 0$ also for $n = 3$.	
	
	It is conjectured by Rusek \cite{Ru1988Two-dimensional-ja} 
	that all automorphisms of $\AA^n$ of degree $2$ are tame. 
	If $n = 3$ and $\k = \CC$, Fornaes and Wu \cite{FoWu1998Classification-of-} classified all
	automorphisms of $\AA_{\CC}^3$ of degree $2$ up to conjugation by 
	affine automorphisms and it turned out that all of them
	are triangular up to composition of affine automorphisms at the source and target. For $n = 4$
	and $\k = \RR$, Meisters and Olech \cite{MeOl1991Strong-nilpotence-} and for $n = 5$ and $\k = \CC$, 
	Sun \cite{Su2014Classification-of-} gave affirmative answers to Rusek's conjecture.
	
	Motivated by these investigations of the tame automorphisms in $\Aut_{\k}(\AA^n)$, 
	we study in this paper
	automorphisms of $\AA^3$ of degree $3$. For this let us introduce
	the following equivalence relation: $f, g \in \Aut_{\k}(\AA^n)$ are \emph{equivalent}
	if there exist $\alpha, \beta \in \Aff_{\k}(\AA^n)$ such that $f = \alpha \circ g \circ \beta$. 
	The main theorem of this article is the following description of degree $3$ automorphisms of $\AA^3$:
	
	\begin{theorem}[{see Theorem~\ref{thm:Classification}}]\label{Thm:Main}
			Each automorphism of $\AA^3$ 
			of degree $\leq 3$ is either equivalent to a triangular automorphism 
			or to an automorphism of the form
			\[
				\label{eq.non_triangular}
				\tag{$\ast$}
				(x + yz + za(x, z), y + a(x, z) + r(z), z) \in \Aut_{\k}(\AA^3)
			\]
			where $a \in \k[x, z] \setminus \k[z]$ is homogeneous of degree $2$
			and $r \in \k[z]$ is of degree $\leq 3$.
	\end{theorem}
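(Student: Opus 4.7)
The plan is to classify degree-$\leq 3$ automorphisms of $\AA^3$ via a direct case analysis on the structure of their homogeneous components after normalization. As a first step I would use the equivalence relation to assume $f(0)=0$ and that the linear part of $f$ is the identity, so that $f = \id + f^{(2)} + f^{(3)}$ with $f^{(i)}$ the homogeneous part of degree $i$. The condition that $f$ be an automorphism becomes $\det(Jf)\in \k^*$, and expanding this determinant shows that its homogeneous components in degrees $1$ through $6$ must all vanish; these polynomial identities are the backbone of the whole analysis.

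Next I would study the cubic part $f^{(3)}=(c_1,c_2,c_3)$. Vanishing of the top-degree component $\det(Jf^{(3)})=0$ forces a linear dependence of the gradients $\nabla c_1,\nabla c_2,\nabla c_3$, and more generally the identities arising from $\det(Jf)\in \k^*$ strongly restrict how the $c_i$ and the quadratic parts can interact. I would split cases according to the dimension of $\Span(c_1,c_2,c_3)$ inside the space of cubic polynomials and according to whether the $c_i$ share a common linear factor. After an affine change at the target one may assume that a maximal subset of the $c_i$ vanishes; combined with a linear change at the source this typically forces $f_3$ to become affine, producing the third coordinate $f_3=z$ visible in the normal form $(\ast)$.

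With the cubic part normalized I would then exploit the remaining degree identities in $\det(Jf)=\mathrm{const}$ to constrain the quadratic part $f^{(2)}$. In each branch these become linear (or quasi-linear) equations in the coefficients of $f^{(2)}$ and can be solved explicitly. Depending on the branch, a further affine change at the source either puts the second component into $\k[y,z]$, bringing $f$ into triangular form, or the cross-term $yz$ in $f_1$ cannot be killed, in which case the same identities make the quadratic $a(x,z)$ appear simultaneously in $f_1$ (with coefficient $z$) and in $f_2$, exactly as in $(\ast)$; the residual freedom in $f_2$ then manifests as the univariate polynomial $r(z)$.

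The principal obstacle is the combinatorial cost of the case analysis: the cubic classification already branches several times (by rank, by divisibility of the $c_i$ by a linear form, and by the relation of the gradient dependence to the standard coordinates), and each branch triggers its own sub-analysis of the quadratic part, so the main work is to verify that no branch yields a family beyond the two listed in the statement. A secondary difficulty is that the paper works in arbitrary characteristic, so any use of the characteristic-zero degree-estimate methods of Shestakov and Umirbaev has to be replaced by explicit polynomial manipulation, which is what makes the proof long.
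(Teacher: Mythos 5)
There is a genuine gap at the very first step: you replace the hypothesis ``$f$ is an automorphism'' by ``$\det(\Jac(f))\in\k^*$'', and all the identities you propose to exploit (vanishing of the homogeneous components of the Jacobian determinant in degrees $1$ through $6$) are consequences only of the latter, strictly weaker, condition. Since the paper works over a field of arbitrary characteristic, this is fatal: in $\car(\k)=2$ the map $f=(x+z^2+y^3,\;y+x^2,\;z)$ has linear part the identity, satisfies $\det(\Jac(f))=1$, hence verifies every one of your polynomial identities, yet it is not an automorphism --- by Lemma~\ref{lem.Lin_system_char_2} the pair $(x+z^2+y^3,\,y+x^2)$ is a non-trivial $\AA^1$-fibration, so no third coordinate can complete it to an automorphism --- and in particular it is not equivalent to a triangular automorphism nor to the form~\eqref{eq.non_triangular} (both of which are automorphisms). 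So your case analysis cannot close: at least one branch contains maps outside the two asserted families, and nothing in the Jacobian identities lets you discard them. The analogous degree-$3$ phenomenon occurs in $\car(\k)=3$ (Lemma~\ref{lem.Lin_system_char_3}). Even in characteristic zero, where Vistoli's theorem guarantees that constant-Jacobian cubics are automorphisms, your plan would be re-proving that theorem as a by-product, and the assertion that ``no branch yields a family beyond the two listed'' is exactly the content of the theorem, left entirely unverified.

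The paper avoids this by working with a condition strictly stronger than the Jacobian one (cf.\ Lemma~\ref{lem.Derksen}): it requires that the preimage of \emph{every} affine hyperplane be isomorphic to $\AA^2$ (an ``affine linear system of affine spaces''). This geometric input --- classification of cubic hypersurfaces isomorphic to $\AA^2$ via their singularities at infinity (Proposition~\ref{Proposition:XxpqisA2smalldegreelistpqDeg3}), then an analysis of such hypersurfaces in pencils --- is what rules out the characteristic-$p$ pathologies: the two exceptional families surface explicitly in Proposition~\ref{prop:globally_xp_i+q_i} and are then excluded for automorphisms because automorphisms are trivial bundles (Corollary~\ref{cor:Trivial_bundles}). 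If you want to salvage your approach, you must inject the automorphism hypothesis (or at least the fibre condition) somewhere beyond the Jacobian identities; as written, the proposal proves a statement about a larger class of endomorphisms for which the conclusion is false.
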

	
	In fact we prove that none of the automorphisms of~\eqref{eq.non_triangular} is
	equivalent to a triangular automorphism, see Proposition~\ref{prop.Cases_distinct}.
	
	Theorem~\ref{Thm:Main} implies in particular that all automorphisms of degree $\leq 3$ of $\AA^3$ are tame,
	see Corollary~\ref{cor.deg3_autos_tame}.
	
	As an other application of Theorem~\ref{Thm:Main} we 
	compute all dynamical degrees
	of automorphisms of degree $\leq 3$.
	Recall, that the dynamical degree of an automorphism $f \in \Aut(\AA^n)$ is defined by
	\[
		\lambda(f) = \lim_{r \to \infty} \deg(f^r)^{\frac{1}{r}} \in \RR_{\geq 1},
	\]
	satisfies $1\le \lambda(f)\le \deg(f)$ and is invariant under conjugation (in $\Aut(\AA^n)$ but also in the bigger group $\Bir(\AA^n)$ of birational transformations of $\AA^n$). It gives information about the iteration of the automorphism $f$. 
	The dynamical degree of an automorphism of $\A^2$ is always an integer, and all possible integers are possible, by simply taking $(x,y)\mapsto (y,x+y^d)$, for each $d\ge 1$. 
	The set of dynamical degrees of automorphisms of $\A^3$ is still quite mysterious. In 2001, K. Maegawa proved that the set of dynamical degrees of all automorphisms of $\A^3_\CC$ of degree $2$ is equal to $\{1,\sqrt{2},(1+\sqrt{5})/2,2\}$ \cite[Theorem~3.1]{Maequadratic}. This also holds for each field (Theorem~\ref{thm.Dynamical_degrees} below). Recently, we proved that for each $d\ge 1$ and each ground field $\kany$, the set of all dynamical degrees of automorphisms of $\A^3_\kany$ of degree $\le d$ that are equivalent to a triangular automorphism is 
	\[
		\Bigset{ \frac{a+\sqrt{a^2+4bc}}{2}}{(a,b,c)\in \NN^3, a+b\le d, c\le d} \setminus \{0\} \, ,
	\] 
	see \cite[Theorem 1]{BlSa2022Dynamical-degrees-}, reproduced below as Theorem~\ref{TheoremDynAffTri}. 
	Using Theorem~\ref{Thm:Main}, we prove the following result:
	\begin{theorem}
	\label{thm.Dynamical_degrees}
	For each $d\ge 1$ and each field $\kany$, let us denote by $\Lambda_{d,\kany}\subset\mathbb{R}$ the set of dynamical degrees of all automorphisms of $\A^3_{\kany}$ of degree $d$. We then have
	\[\begin{array}{rcl}
	\Lambda_{1,\kany}&=&\{1\}\\
	\Lambda_{2,\kany}&=&\{1,\sqrt{2},(1+\sqrt{5})/2,2\}\\
	\Lambda_{3,\kany}&=&\{1 , \sqrt{2} , \ \frac{1+\sqrt{5}}{2} , \ \sqrt{3} , 2 , \ \frac{1 + \sqrt{13}}{2} , \
		1 + \sqrt{2} , \ \sqrt{6} , \ \frac{1 + \sqrt{17}}{2} , \ \frac{3+\sqrt{5}}{2} , \ 1+ \sqrt{3} , \ 3 \}
		\, .
	\end{array}\]
	\end{theorem}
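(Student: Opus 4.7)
The plan is to combine the classification in Theorem~\ref{Thm:Main} with the triangular formula of Theorem~\ref{TheoremDynAffTri} and then to compute directly the dynamical degrees arising from the non-triangular family $(\ast)$. The key general fact underpinning the reduction is that if $f=\alpha g\beta$ with $\alpha,\beta$ affine, then $f=\alpha(g\circ\beta\alpha)\alpha^{-1}$, so $f$ is conjugate to $g\circ(\beta\alpha)$ in $\Aut(\A^3)$; since the dynamical degree is invariant under conjugation in $\Bir(\A^3)$, the set of dynamical degrees over the equivalence class of $g$ coincides with $\{\lambda(g\circ\gamma):\gamma\in\Aff_\k(\A^3)\}$.

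For $d=1$ every automorphism is affine and therefore has dynamical degree $1$. For $d=2$ I would first observe that any element $f=(x+yz+za(x,z),\,y+a(x,z)+r(z),\,z)$ of family $(\ast)$ has degree exactly $3$: the monomial $z\cdot a(x,z)$ has degree $3$ and cannot cancel against the remaining terms of the first component, because $a$ is homogeneous of degree $2$ and does not lie in $\k[z]$. Hence Theorem~\ref{Thm:Main} reduces to the triangular case, and Theorem~\ref{TheoremDynAffTri} with $d=2$---enumerating triples $(a,b,c)\in\NN^3$ with $a+b\le 2$ and $c\le 2$---yields exactly the four values $\{1,\sqrt 2,(1+\sqrt 5)/2,2\}$, each realised by an explicit degree-$2$ automorphism.

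For $d=3$ the same dichotomy leaves two cases. The triangular case is handled by Theorem~\ref{TheoremDynAffTri}: enumerating $(a,b,c)\in\NN^3$ with $a+b\le 3$ and $c\le 3$ produces the eleven distinct nonzero values $\{1,\sqrt 2,\sqrt 3,2,\sqrt 6,3,(1+\sqrt 5)/2,(1+\sqrt{13})/2,(1+\sqrt{17})/2,1+\sqrt 2,1+\sqrt 3\}$. It remains to compute the set $\{\lambda(f\circ\gamma):f\in(\ast),\,\gamma\in\Aff_\k(\A^3)\}$. For this I would exploit that every $f\in(\ast)$ preserves the coordinate $z$: the iterates $f^n=(X_n,Y_n,z)$ satisfy the recursion $X_{n+1}=X_n+zY_n+z\,a(X_n,z)$ and $Y_{n+1}=Y_n+a(X_n,z)+r(z)$, and composition with an affine $\gamma$ perturbs this recursion in a controlled way depending on whether the new third coordinate still depends only on $z$ or mixes $x$ and $y$ nontrivially, and on the shape of the quadratic form $a$. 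A case analysis along these lines should reduce the computation to a finite list of explicit normal forms.

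The main obstacle will be to show that the non-triangular analysis produces exactly one new dynamical degree, namely $(3+\sqrt 5)/2$. In the generic subcase the recursion $X_{n+1}\approx z\cdot a(X_n,z)$ yields degree growth $\deg X_n\sim 2^n$ and hence $\lambda=2$, a value already lying in the triangular list; other subcases should similarly land on values such as $1$, $\sqrt 2$ or $1+\sqrt 3$ already produced by triangular automorphisms. The genuinely new value $(3+\sqrt 5)/2$---the larger root of $t^2-3t+1$, equivalently the Perron eigenvalue of $\bigl(\begin{smallmatrix}2&1\\1&1\end{smallmatrix}\bigr)$---should emerge from a specific balance in which the iterates reduce to a linear system in $(X_n,Y_n)$ whose coefficient matrix has trace $3$ and determinant $1$. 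Once every possible growth rate coming from $(\ast)\circ\Aff_\k(\A^3)$ has been shown to lie in the asserted list and $(3+\sqrt 5)/2$ is confirmed to be realised by at least one such composition, taking the union with the eleven triangular values yields exactly the twelve-element set $\Lambda_{3,\kany}$.
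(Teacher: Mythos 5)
Your overall strategy is the same as the paper's: reduce via Theorem~\ref{Thm:Main} to the affine-triangular case (handled by Theorem~\ref{TheoremDynAffTri}, giving the eleven values of $L_3$) plus the non-triangular family $(\ast)$ composed with affine maps, and then show the latter contributes only $(3+\sqrt5)/2$ beyond $L_3$. The reductions for $d=1,2$ and the enumeration of $L_2$, $L_3$ are correct (one small omission: to get $\sqrt2$ and $(1+\sqrt5)/2$ inside $\Lambda_{3,\kany}$, i.e.\ realised by automorphisms of degree \emph{exactly} $3$, the triples $(a,b,c)$ producing them only give degree-$2$ maps $f_{a,b,c}$, so one must conjugate $f_{0,1,2}$ and $f_{1,1,1}$ by non-affine automorphisms such as $(x,y+z^3,z)$, as the paper does in Corollary~\ref{Cor:DynDegSmallAffTriang}).

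The genuine gap is the treatment of $\{\lambda(\alpha\circ g):g\in(\ast),\ \alpha\in\Aff_\k(\A^3)\}$, which is the actual content of the theorem and occupies Proposition~\ref{Prop:DynDeg3NotTriang}. Your proposal only sets up a recursion for the iterates in the special case where the third coordinate of the composition still lies in $\k[z]$, and then asserts that a case analysis ``should'' reduce everything to normal forms landing in the claimed list. Two problems with this. First, computing $\lim_r\deg(f^r)^{1/r}$ by tracking the recursion $X_{r+1}=X_r+zY_r+za(X_r,z)$ directly does not work once $\alpha$ mixes the coordinates: cancellations in the iterates are exactly what make dynamical degrees hard, and quadratic irrational values such as $1+\sqrt3$ or $(1+\sqrt{17})/2$ cannot be read off a naive degree count. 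The paper instead uses weighted degrees and Proposition~\ref{prop.Comp_DDeg} (choose $\mu\in(\RR_{>0})^3$ with $\deg_\mu(f)=\theta$ and a $\mu$-leading part with non-vanishing iterates, whence $\lambda(f)=\theta$), together with an algebraic-stability analysis of the action on the hyperplane at infinity; some tool of this kind is indispensable and is absent from your plan. Second, your heuristics about which values appear are partly wrong: besides the ``generic'' value, the non-triangular compositions produce $3$ (algebraically stable case), $(1+\sqrt{13})/2$, $1+\sqrt2$, $1+\sqrt3$, $(1+\sqrt{17})/2$, $2$ and $(3+\sqrt5)/2$, distributed over the cases \ref{DynamDegCaser3}--\ref{DynamDegCase2c} of Proposition~\ref{Prop:DynDeg3NotTriang} according to whether $a_1$, $a_0$ and a certain coefficient $\varepsilon$ vanish; establishing that \emph{no other} value occurs is precisely the delicate part (``we cannot get more dynamical degrees'') and is not addressed by the proposal beyond the word ``should''. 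Your identification of $(3+\sqrt5)/2$ as the root of $t^2-3t+1$ arising from $(y+xz,z,x+z(y+xz))$ is the right intuition, but as written the proof of the inclusion $\Lambda_{3,\kany}\subseteq L_3\cup\{(3+\sqrt5)/2\}$ is missing.
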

Note that the automorphisms in~\eqref{eq.non_triangular} in Theorem~\ref{Thm:Main}
	all fix a linear projection $\AA^3 \to \AA^1$ and thus the dynamical degree of these automorphisms are
	integers, see e.g.~\cite[Corollary 2.4.3]{BlSa2022Dynamical-degrees-}. Thus one has to permute
	the coordinate functions of these automorphisms in order to 
	produce interesting dynamical degrees.
The most interesting number in Theorem~\ref{thm.Dynamical_degrees}
is $(3+\sqrt{5})/2$. It is the dynamical degree of 
$f=(y + xz, z,x+z(y + xz)) \in \Aut(\A^3_{\kany})$, for each field $\kany$.
 It follows from \cite[Theorem 1]{BlSa2022Dynamical-degrees-} that $\lambda(f)=(3+\sqrt{5})/2$ is not the dynamical degree of any automorphism of $\A^3$ that is equivalent (over $\kany$ or over its algebraic closure $\k=\overline{\kany}$) to a triangular automorphism, of any degree, see \cite[Example 4.4.6]{BlSa2022Dynamical-degrees-}. The fact that all  dynamical degrees above arise 
essentially follows from \cite{BlSa2022Dynamical-degrees-}, the main 
contribution of this text to Theorem~\ref{thm.Dynamical_degrees} 
is to show that we cannot get more dynamical degrees.
Theorem~\ref{thm.Dynamical_degrees} implies that every dynamical degree of an element of $\Aut(\A^3)$ of degree $2$ is also the dynamical degree of an element of $\Aut(\A^3)$ of degree $3$, contrary to the case of dimension $2$ (an element of $\Aut(\A^2)$ of degree $3$ has dynamical degree equal to either $1$ or $3$).
	
	\subsection{Outline of the article}
	In order to classify all automorphisms of degree $\leq 3$ up to equivalence we study first
	degree $3$ polynomials in $\k[x, y, z]$ that define the affine plane $\AA^2$ in $\AA^3$
	in Section~\ref{sec.cubic_hypersurfaces}.
	The closure in $\PP^3$ of such a hypersurface in $\AA^3$ is singular, 
	so the polynomial has the form $xp + q$ for some $p, q \in \k[y, z]$ up to an affine automorphism, see
	Corollary~\ref{cor.cubic_surf_iso_to_A2_singular}. These polynomials were studied by
	Sathaye~\cite{Sathaye1976} for fields with $\car(\k) = 0$ and by Russell \cite{Russell1976} for all fields
	and it turns out that all of them are variables of $\k[x, y, z]$, i.e.~there are polynomials
	$g, h \in \k[x, y, z]$ with $\k[xp+q, g, h] = \k[x, y, z]$, see also Propositions~\ref{Prop:XxpqisA2thendisjoint},~\ref{Prop:XfibratReminder} 
	and Corollary~\ref{coro:TrivialisationVariables} for more
	detailed informations. We then give a description
	of all such hypersurfaces up to affine automorphisms
	(Proposition~\ref{Proposition:XxpqisA2smalldegreelistpqDeg3}).
	As the polynomials of degree $3$ of the form $xp+q$ correspond to cubic hypersurfaces of $\A^3$ whose closures in $\p^3$ are
	singular at $[0:1:0:0]$ (Lemma~\ref{lem.hypersxpq}), it is also useful to classify them up to affine automorphisms that fix this point; this is done in Proposition~\ref{Proposition:XxpqisA2smalldegreelistpq}, where a bigger list is given. Corollary~\ref{cor:Degree3onepointoneline} then corresponds to the case where we focus on a line at infinity instead of a point.
	
	Then we investigate these hypersurfaces in families in 
	Section~\ref{sec.families_cubic_surf}. 
	The best suited notion for us is the following:
	a morphism $f \colon \AA^d \to \AA^n$ is called an \emph{affine linear system of affine spaces} if the preimage of each affine hyperplane of $\AA^n$ is isomorphic to $\AA^{d-1}$,
	see Definition~\ref{def.aff_linear_system_of_aff_spaces}.
	In case $d = 3$, we say that $f$ is \emph{in standard form} if 
	$f = (xp_1 + q_1, \ldots, x p_n + q_n)$
	for some polynomials $p_i, q_i \in \k[y, z]$. 
	An affine linear system of affine spaces $g \colon \AA^3 \to \AA^n$
	of degree $3$ is equivalent to one in standard form if and only if for general 
	affine hyperplanes $H \subset \AA^3$ the closures of
	$g^{-1}(H)$ in $\PP^3$ have a common singularity, see Lemma~\ref{lem.hypersxpq}. 
	Two affine linear systems of affine spaces $f, g \colon \AA^d \to \AA^n$ are called 
	\emph{equivalent} if there are $\alpha \in \Aff_{\k}(\AA^n)$ and $\beta \in \Aff_\k(\AA^3)$
	such that $f = \alpha \circ g \circ \beta$.
	The key point
	in the proof of Theorems~\ref{Thm:Main} and~\ref{thm:Classification} 
	is to show that each affine linear system of affine spaces
	$\AA^3 \to \AA^3$ of degree $\leq 3$ is equivalent to one in standard form, 
	see Proposition~\ref{prop:globally_xp_i+q_i}.

	In Section~\ref{sec.classific}, we give a description of all affine linear systems
	of affine spaces $\AA^3\to \AA^n$ of degree $\leq 3$
	which implies Theorem~\ref{Thm:Main}. We call a morphism $f \colon Y \to X$ an \emph{$\AA^1$-fibration} if each closed fiber is (schematically) isomorphic to $\AA^1$
	and we call $f$ a \emph{trivial $\AA^1$-fibration} if there exists an isomorphism
	$\varphi \colon X \times \AA^1 \to Y$ such that the composition 
	$f \circ \varphi \colon X \times \AA^1 \to X$ is the projection onto the first factor.
	Note that the above definition of an $\AA^1$-fibration differs
	from the notions of $\AA^1$-fibrations in 
	\cite{GuMaMi2012Bbb-A1-fibrations-}  and \cite{KaMi1978On-flat-fibrations}.
	In fact we show:
	
	\begin{theorem}
	\label{thm:Classification}
			Every affine linear system of affine spaces $\AA^3 \to \AA^n$ of degree $\leq 3$ 
			is equivalent to an element of the following eleven families. Case $\mathrm{I)}$ corresponds
			to $n=1$,  Cases $\mathrm{IIa)}$ and $\mathrm{IIb)}$ correspond to $n=2$ and Case $\mathrm{III)}$ corresponds to $n=3$. 
			\itemindent = 6pt
			\item[$\mathrm{I)}$] variables of $\k[x, y, z]$:\label{thm:Class1}
				\begin{enumerate}[leftmargin=*, label=$(\arabic*)$]
					\item \label{thm:Classification_1}
					$x+r_2(y, z)+r_3(y, z)$ where $r_i \in \k[y,z]$ is homogeneous of degree $i$;
					\item \label{thm:Classification_2}
					$xy+yr_2(y,z)+z$ where $r_2\in \k[y,z]\setminus \k[y]$ is homogeneous of degree $2$;
					\item \label{thm:Classification_3}
					$xy^2+y (z^2+az+b)+z$ where $a,b\in \k$.
				\end{enumerate}
			\item[$\mathrm{IIa)}$] trivial $\AA^1$-fibrations: 
				\begin{enumerate}[leftmargin=*, label=$(\arabic*)$]
					\setcounter{enumi}{3}
					\item \label{thm:Classification_4}
					$(x+p_2(y, z) + p_3(y, z),y+q_2 z^2 + q_3 z^3)$ where $p _i\in \k[y, z]$ 
					is homogeneous of degree $i$ and $q_2, q_3 \in \k$;
					\item \label{thm:Classification_5}
					$(yz + za_2(x, z) + x, y + a_2(x, z) + r_1 z + r_2 z^2 + r_3 z^3)$ 
					where $a_2 \in \k[x, z] \setminus \k[z]$ is homogeneous of degree $2$ and 
					$r_i \in \k$;
					\item \label{thm:Classification_6}
					$(yz + za_2(x, z) + x, z)$ where $a_2 \in \k[x, z] \setminus \k[z]$ is homogeneous of 
					degree $2$;
					\item \label{thm:Classification_7}
					$(xy^2 + y (z^2 + az + b) + z, y)$ where $a, b \in \k$.
				\end{enumerate}
				\item[$\mathrm{IIb)}$] non-trivial $\AA^1$-fibrations: 
				\begin{enumerate}[leftmargin=*, label=$(\arabic*)$]
					\setcounter{enumi}{7}
					\item \label{thm:Classification_8}
					$(x+ z^2 +y^3, y + x^2)$ where $\car(\k) = 2$;
					\item \label{thm:Classification_9}
					$(x + z^2 + y^3, z + x^3)$ where $\car(\k) = 3$.		
				\end{enumerate}
				\item[$\mathrm{III)}$] automorphisms of $\AA^3$:
				\begin{enumerate}[leftmargin=*, label=$(\arabic*)$]
					\setcounter{enumi}{9}
					\item \label{thm:Classification_10} 
					$(x+p_2(y, z) + p_3(y, z),y+q_2 z^2 + q_3 z^3,z)$ 
					where $p _i\in \k[y, z]$ is homogeneous of degree $i$ and $q_2, q_3 \in \k$;
					\item \label{thm:Classification_11} 
					$(yz + za_2(x, z) + x, y + a_2(x, z) + r_2 z^2 + r_3 z^3, z)$ where
					$a_2 \in \k[x, z] \setminus \k[z]$ is homogeneous of degree $2$ and $r_2, r_3 \in \k$.
				\end{enumerate} 
	\end{theorem}
	
	The proof of Theorem~\ref{thm:Classification} 
	is given towards the end of Section~\ref{sec.classific}. 
	All the eleven cases in our list are in fact pairwise non-equivalent, see
	Proposition~\ref{prop.Cases_distinct}. For $n = 1$ and $\k = \CC$, 
	Ohta gave in \cite[Theorem 1]{Oh1999The-structure-of-a}
	a list of all possibilities for affine linear systems of affine spaces $\AA^3 \to \AA^1$
	of degree $\leq 3$, together with a description of the curve at infinity. This corresponds then to a refined list of the items \ref{thm:Classification_1}-\ref{thm:Classification_2}-\ref{thm:Classification_3} of 
	Theorem~\ref{thm:Classification}. 
	Note that the fact that each affine linear system
	$\AA^3 \to \AA^1$ of affine spaces of degree $\leq 3$ is equivalent to one of the items~\ref{thm:Classification_1}-\ref{thm:Classification_2}-\ref{thm:Classification_3} is proven in Proposition~\ref{Proposition:XxpqisA2smalldegreelistpqDeg3} below, and is thus the very first part of our study.
	Moreover, Ohta gave in \cite[Theorem 2]{Oh2001The-structure-of-a} and 
	\cite[Theorem 2]{Oh2009The-structure-of-a}
	lists of all possible
	affine linear systems $\AA^3 \to \AA^1$ of affine spaces of degree $4$ in case
	the closure of the corresponding hypersurface in $\PP^3$ is normal. 
	In particular, he proves that all of them are variables of $\AA^3$.
	
	Let us give the connection of our results to the Jacobian conjecture.
	Recall that an endomorphism $f \in \End_{\k}(\AA^n)$ has a constant non-zero Jacobian determinant
	$\det(\Jac(f)) \in \k^\ast$ if and only if for all affine hyperplanes $H \subset \AA^n$
	the preimage $f^{-1}(H)$ is a smooth hypersurface
	of $\AA^n$, see Lemma~\ref{lem.Derksen}. 
	Thus for all $f \in \End_{\k}(\AA^n)$ we have the following implications
	\[
		f \in \Aut_{\k}(\AA^n) \ \implies \
		\begin{array}{l}
			\textrm{$f$ is an affine linear system} \\
			\textrm{of affine spaces}
		\end{array}
		\ \implies \
		\det(\Jac(f)) \in \k^\ast \, .
	\]
	For fields with $\car(\k) = 0$, the Jacobian conjecture asserts that the
	implications are equivalences. For $n = 3$, Vistoli proved the Jacobian conjecture in case $f \in \End_{\k}(\AA^3)$
	has degree $3$, see \cite{Vi1999The-Jacobian-conje}. For fields with $\car(\k) = p > 0$, the 
	last implication is certainly not an equivalence, 
	take e.g. $(x_1 + x_1^p, x_2, \ldots, x_n) \in \End_{\k}(\AA^n)$.
	However, Theorem~\ref{thm:Classification} shows that in case 
	$n = 3$ and $f \in \End_{\k}(\AA^3)$ is of degree $\leq 3$, the
	first implication is an equivalence. 
	
	It is also worth to mention that in case $n = 2$, there are
	affine linear systems of affine spaces $\AA^3 \to \AA^n$ of degree $\leq 3$
	that are $\AA^{3-n}$-fibrations which are not trivial $\AA^{3-n}$-fibrations, 
	contrary to the cases $n = 1$ and $n = 3$.
	In fact, an affine linear system of affine spaces $\AA^3 \to \AA^n$ of degree $\leq 3$
	is a trivial $\AA^{3-n}$-fibration if and only if it is equivalent to a linear system in standard form,
	see Corollary~\ref{cor:Trivial_bundles}. Note that there are even non-trivial $\AA^1$-fibrations
	$\AA^2 \to \AA^1$ in positive characteristic, see~\cite[Example on p.670]{KaMi1978On-flat-fibrations}.
	
	In the last Section, we then compute the dynamical degree of all automorphisms of $\AA^3$ of degree
	$\leq 3$ by using the technique introduced in~\cite{BlSa2022Dynamical-degrees-} and we prove Theorem~\ref{thm.Dynamical_degrees} at the end of this section.

	\addsubsection*{Acknowledgements}
	The authors would like to thank Pierre-Marie Poloni for many fruitful discussions
	and the indication of the references~\cite{Oh1999The-structure-of-a, Oh2001The-structure-of-a, Oh2009The-structure-of-a}. Moreover, we would like to thank the anonymous referee for the very helpful comments, especially 
	for pointing us an elementary argument in Proposition~\ref{Prop:XfibratReminder} 
	that gives \ref{XfibratReminderTechnical}$\Rightarrow$\ref{XfibratReminderXisoA2}.
	
	\subsection{Conventions}
	All schemes, varieties, rational maps and morphisms between them are defined over $\k$. Points of varieties refer to closed points of the associated scheme.
	If $f \colon X \to Y$ is a morphism of varieties, then the fibre over a point $y \in Y$ 
	refers to the \emph{schematic} fibre of $f$ over $y$, i.e.~$f^{-1}(y) = \Spec(\kappa(y)) \times_Y X$
	where $\Spec(\kappa(y)) \to Y$ corresponds to the embedding of the point $y$ in $Y$. More generally,
	the preimage of a closed subvariety $Y'$ of $Y$ corresponds to the \emph{schematic} preimage of $Y'$ under $f$, i.e.~$f^{-1}(Y') = Y' \times_Y X$. If we speak of an $n$-dimensional scheme $X$, then
	we mean that every irreducible component of $X$ has dimension $n$.	
	
	We denote for each $d \geq 0$ by $\k[x_1, \ldots, x_n]_d$ the vector space
	of homogeneous polynomials of degree $d$ in the variables $x_1,\ldots, x_n$.
	By convention, the zero polynomial will be assumed to be homogeneous of any degree $d\ge 0$ (even if it has degree $-\infty$).
	
	\section{Hypersurfaces of $\AA^3$ that are isomorphic to $\AA^2$}
	\label{sec.cubic_hypersurfaces}
	\subsection{Existence of singularities at infinity}
	In the sequel, we always see $\A^3$ as an open subvariety of $\p^3$ 
	via the open embedding $\AA^3 \hookrightarrow \PP^3$, $(x, y, z) \mapsto [1:x:y:z]$ and denote by
	$[w:x:y:z]$ the homogeneous coordiantes of $\PP^3$.

Recall that the multiplicity $m$ of a hypersurface $Y \subseteq \PP^n$ at a given point $p \in Y$ is the multiplicity of the equation at this point, that can be computed locally, or is equivalently the multiplicity at $p$ of the polynomial obtained by restriction of $Y$ to a general line through $p$.

\begin{lemma}
	\label{lem.hypersxpq}
	Let $F \in \k[w, x,y,z]$ be a homogeneous polynomial of degree $d$, let $f = F(1, x, y, z) \in \k[x, y, z]$ 
	and let $X=\Spec(\k[x,y,z]/(f)) \subset \AA^3$ be the corresponding hypersurface. 
	The following conditions are equivalent:
	\begin{enumerate}[leftmargin=*]
		\item \label{degree1x} $f=xp+q$ for some polynomials $p,q\in \k[y,z]$.
		\item \label{degree1xmult} 
		The closure $\overline{X}$ in $\PP^3$ has multiplicity $\ge d-1$ at the point $[0:1:0:0]$.
	\end{enumerate}
\end{lemma}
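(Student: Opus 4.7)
The plan is to compute the multiplicity of $\overline{X}$ at the point $P = [0:1:0:0]$ directly in an affine chart, and then observe that the condition ``multiplicity $\ge d-1$'' translates into a restriction on which monomials of $F$ may appear nonzero, which is exactly the form described in~\ref{degree1x}.

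First I would pass to the affine chart $\{x\neq 0\}$ of $\PP^3$, with coordinates $u=w/x$, $v=y/x$, $t=z/x$, so that $P$ corresponds to the origin. Since $F$ is homogeneous of degree $d$, in this chart $\overline{X}$ is cut out by
\[
G(u,v,t)\;=\;F(u,1,v,t).
\]
The multiplicity of $\overline{X}$ at $P$ is by definition the minimal total degree of a monomial occurring in $G$. Writing $F=\sum c_{ijkl}\,w^ix^jy^kz^l$ with $i+j+k+l=d$, each monomial $c_{ijkl}\,w^ix^jy^kz^l$ specialises to $c_{ijkl}\,u^iv^kt^l$, of total degree $i+k+l=d-j$. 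Hence the multiplicity of $\overline{X}$ at $P$ equals $d-j_{\max}$, where $j_{\max}$ is the largest exponent of $x$ appearing in a nonzero monomial of $F$.

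Consequently, \ref{degree1xmult} holds exactly when $j_{\max}\le 1$, i.e.~when every monomial of $F$ has $x$-degree at most $1$. This is equivalent to $F=xA(w,y,z)+B(w,y,z)$ for some homogeneous $A,B\in\k[w,y,z]$. Dehomogenising by setting $w=1$ yields $f=xp(y,z)+q(y,z)$ with $p(y,z)=A(1,y,z)$ and $q(y,z)=B(1,y,z)$, which is~\ref{degree1x}. Conversely, if $f=xp+q$ with $p,q\in\k[y,z]$, then no monomial of $f$ has $x$-degree exceeding $1$; since the homogenisation $F$ has the same set of $x$-exponents as $f$, the same bound $j_{\max}\le 1$ holds for $F$, giving \ref{degree1xmult}. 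There is no real obstacle here: the only point requiring a bit of care is the translation between the homogeneous data $F$ and the affine data $f$, and in particular keeping track of which variable is dehomogenised to compute the multiplicity at $P$.
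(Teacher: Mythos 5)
Your proof is correct and follows essentially the same route as the paper's: both compute the multiplicity of $\overline{X}$ at $[0:1:0:0]$ by dehomogenising at that point and observing that it equals $d$ minus the largest $x$-exponent occurring in $F$ (the paper phrases this via the decomposition $F=\sum_i x^{d-i}F_i(w,y,z)$, you via individual monomials, which is the same bookkeeping since $(i,j,k,l)\mapsto(i,k,l)$ is injective on exponent tuples of degree $d$, so no cancellation occurs). The only shared imprecision is that both arguments identify the multiplicity of $\overline{X}$ with that of the divisor defined by $F$, which matters only in the degenerate case $w\mid F$, i.e.\ $\deg(f)<d$.
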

\begin{proof}
	We write $f=\sum_{i=0}^d x^{d-i} f_i(y,z)$ where $f_i\in \k[y,z]$ is of degree $\le i$ for $i=0,\ldots,d$. For each $i$, we denote by $F_i\in \k[w,y,z]$ the homogeneous polynomial of degree $i$ such that $F_i(1,y,z)=f_i$. This implies that
	$F=\sum_{i=0}x^{d-i} F_i$. Note that $\deg(F) = d$ and that $\overline{X}$ is given by $F$ in $\PP^3$.
	Note that the multiplicity of $\overline{X}$, 
	or equivalently of $F$,
	at the point $[0:1:0:0]$ is the smallest integer $m\ge 0$ such that $F_m$ is not zero. Hence, this multiplicity $m$ satisfies $m\ge d-1$ if and only $F=xF_{d-1}+F_d$, which corresponds to ask that $f=xf_{d-1}+f_d$.
\end{proof}

\begin{corollary}
	\label{cor.cubic_surf_iso_to_A2_singular}
	Let $X \subset \AA^3$ be a hypersurface of degree $d\leq 3$ with $X \simeq \AA^2$.
	\begin{enumerate}[leftmargin=*]
		\item \label{eq1} If $d= 3$, then the closure $\overline{X}$ in $\PP^3$ is singular.
		\item \label{eq2} 
		Up to an affine coordinate change, $X$ is given by $xp + q = 0$
		for polynomials $p, q \in \k[y, z]$ with $\max(\deg(p)+1,\deg(q))= d$.
	\end{enumerate}
\end{corollary}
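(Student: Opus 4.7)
The plan is to prove (1) by a Picard-rank argument on the smooth cubic surface and then to deduce (2) by invoking Lemma~\ref{lem.hypersxpq} after moving a suitable (singular, if $d=3$) point of $\overline{X}\cap\{w=0\}$ to $[0:1:0:0]$ via an affine change of coordinates.

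For (1), I would argue by contradiction, assuming $\overline{X}\subset\PP^3$ is smooth. Letting $D_1,\ldots,D_k$ denote the irreducible components of $\overline{X}\cap\{w=0\}$, the localization sequence
\[
\bigoplus_{i=1}^k \ZZ\,D_i \longrightarrow \Pic(\overline{X}) \longrightarrow \Pic(X) \longrightarrow 0,
\]
together with $\Pic(X)=\Pic(\AA^2)=0$, shows that $\Pic(\overline{X})$ is generated by the classes of the $D_i$. But $\overline{X}\cap\{w=0\}$ is a plane curve of degree $3$ inside $\{w=0\}\simeq\PP^2$, so $k\leq 3$. This contradicts the classical fact that a smooth cubic surface in $\PP^3$ over an algebraically closed field satisfies $\Pic(\overline{X})\simeq\ZZ^7$ (it is a del Pezzo surface, isomorphic to the blow-up of $\PP^2$ at six points in general position).

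For (2), I would first locate a point $q_0\in\overline{X}\cap\{w=0\}$ whose multiplicity on $\overline{X}$ is at least $d-1$. For $d\leq 2$ any point of the non-empty intersection $\overline{X}\cap\{w=0\}$ works, since incidence already gives multiplicity $\geq 1\geq d-1$. For $d=3$, part (1) provides a singular point of $\overline{X}$; smoothness of $X\simeq\AA^2$ forces this singularity to lie in $\overline{X}\cap\{w=0\}$, where its multiplicity is $\geq 2 = d-1$. Since an affine automorphism of $\AA^3$ restricts on $\{w=0\}$ to an element of $\PGL_3(\k)$, and this action is transitive on $\{w=0\}\simeq\PP^2$, I can send $q_0$ to $[0:1:0:0]$. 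Lemma~\ref{lem.hypersxpq} then rewrites the equation of $X$ as $xp+q$ with $p,q\in\k[y,z]$, and the degree condition $\max(\deg(p)+1,\deg(q))=d$ is automatic, as it simply restates $\deg(f)=d$, which is preserved by affine changes of coordinates.

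The main obstacle is part (1): it rests on the classical computation $\Pic(\overline{X})\simeq\ZZ^7$ for a smooth cubic surface over an algebraically closed field of arbitrary characteristic. Once this standard fact is cited, the rank inequality against a degree-$3$ plane curve settles (1), and part (2) reduces to the transitivity of the affine group on the hyperplane at infinity combined with Lemma~\ref{lem.hypersxpq}.
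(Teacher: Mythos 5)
Your proposal is correct and follows essentially the same route as the paper: part (1) via $\Pic(\overline{X})\simeq\ZZ^7$ for a smooth cubic surface against the at most three boundary components (the paper states the same localization argument more tersely), and part (2) by moving a point of multiplicity $\geq d-1$ at infinity to $[0:1:0:0]$ and invoking Lemma~\ref{lem.hypersxpq}.
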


\begin{proof}
	\ref{eq1}:
	If $\overline{X}$ is a smooth cubic hypersurface of $\PP^3$, then 
	$\Pic(\overline{X}) \simeq \Z^7$, see \cite[Chp.~V, Proposition~4.8(a)]{Ha1977Algebraic-geometry}. However,
	since $\overline{X} \setminus X$ has at most $3$ irreducible components $\Pic(X)$ is not trivial,
	so $X$ cannot be isomorphic to $\A^2$.
	
	\ref{eq2}: There exists a point in $\overline{X}\subset \p^3$ having multiplicity $\ge d-1$: this is clear if $d\le 2$ and follows from \ref{eq1} if $d=3$. Applying an affine automorphism of $\A^3$, we can assume that this point is $[0:1:0:0]$, and the result then follows from Lemma~\ref{lem.hypersxpq}.
\end{proof}
\begin{remark}
Corollary~\ref{cor.cubic_surf_iso_to_A2_singular}\ref{eq1} is also true for $d \geq 4$: 
If $\bar{X}$ is smooth,
then it is a K3-surface in case $d = 4$ and of general type in case $d > 4$. In both situations 
$\overline{X}$ is not rational. 

Corollary~\ref{cor.cubic_surf_iso_to_A2_singular}\ref{eq2} is false for $d \geq 4$: Consider
the hypersurface $X$ in $\AA^3$ which is given by $f := z + (x+ yz)^2\cdot y^{d-4} = 0$.
Note that $X$ is isomorphic to $\AA^2$, since $f$ is the first component of 
the composition $\varphi_2 \circ \varphi_1$ of the automorphisms
\[
	\begin{array}{rcl}
		\AA^3 &\stackrel{\varphi_1}{\longrightarrow}& \AA^3 \\
		(x, y, z) &\longmapsto& (x + yz, y, z)
	\end{array}
	\quad
	\textrm{and}
	\quad
	\begin{array}{rcl}
		\AA^3 &\stackrel{\varphi_2}{\longrightarrow}& \AA^3 \\
		(x, y, z) &\longmapsto& (x, y, z + x^2y^{d-4}) \, .
	\end{array}
\]
Note that the closure $\overline{X}$ in $\PP^3$ is singular only along the lines $w=y=0$ and $w=z=0$
and that the multiplicity at each of these points is $\leq d-2$. In particular, by Lemma~\ref{lem.hypersxpq}
there is no affine coordinate change of $\AA^3$ such that $X$ is given by $xp+ q = 0$ for 
$p, q \in \k[y, z]$.
\end{remark}

\subsection{Hypersurfaces of $\AA^3$ of degree $1$ in one variable}
Motivated by Corollary~\ref{cor.cubic_surf_iso_to_A2_singular}, this section is devoted to the study of hypersurfaces $X\subset \A^3$ given by 
\[
	xp(y,z)+q(y,z) = 0
\] 
for some polynomials $p,q\in \k[y,z]$ where $p \neq 0$. We start with the following result which is due to Russell~\cite[Theorem~2.3]{Russell1976}
\begin{proposition}\label{Prop:XxpqisA2thendisjoint}
	Let $p,q\in \k[y,z]$ be such that 
	\[X=\Spec(\k[x,y,z]/(xp+q))\]
	is isomorphic to $\A^2$ and such that $p\not\in \k$. 
	Then there is an automorphism of $\k[y,z]$ that sends $p$ onto an element of $\k[y]$.
	In particular,  the irreducible components of the scheme
	$\Spec(\k[y,z]/(p))$ are disjoint and isomorphic to $\A^1$.
\end{proposition}

By Proposition~\ref{Prop:XxpqisA2thendisjoint} 
we are led to study the case of hypersurfaces in $\AA^3$ of the form
$xp(y) + q(y, z)$. This is done in the next result.

\begin{proposition}\label{Prop:XfibratReminder}
	Let $p\in \k[y]\setminus \k$, $q\in \k[y,z]$ and consider the polynomial 
	\[
		f=xp(y)+q(y,z)\in \k[x,y,z] \, .
	\]
	Write $\tilde{p}=\prod_{i=1}^r (y-a_i)$ where $a_1,\ldots,a_r\in \k$ are the $r$ distinct roots of $p$. Then
	the following statements are equivalent:
	\begin{enumerate}[leftmargin=*]
	\item\label{XfibratReminderXisoA2}
	$X=\Spec(\k[x,y,z]/(f))$ is isomorphic to $\A^2$;
	\item\label{XfibratReminderPvariable}
	There exists $\varphi\in \Aut_\k(\k[x,y,z])$ such that $\varphi(x)=f$ and $\varphi(y)=y$; 
	\item\label{XfibratReminderTechnical}
	There exist $a\in \k[y,z], r_0,r_1\in \k[y]$ with $\deg(r_i)<r$ for $i=0,1$ such that
	$r_1(a_i)\not=0$ for each $i\in \{1,\ldots,r\}$ and
	\[
		q(y,z)=a\tilde{p}+ zr_1+r_0 \, .
	\]
	\end{enumerate}
\end{proposition}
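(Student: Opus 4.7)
The plan is to prove the cycle of implications $(2)\Rightarrow(1)\Rightarrow(3)\Rightarrow(2)$. The implication $(2)\Rightarrow(1)$ is immediate: any automorphism $\varphi$ with $\varphi(x)=f$ and $\varphi(y)=y$ descends to an isomorphism $\AA^2=V(x)\stackrel{\sim}{\longrightarrow} V(f)=X$.

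For $(1)\Rightarrow(3)$, I would study the projection $\pi\colon X\to \AA^1$, $(x,y,z)\mapsto y$. Over $a\in\k$ with $p(a)\neq 0$, the fiber is the graph of $x=-q(a,z)/p(a)$, hence isomorphic to $\AA^1$; over $a=a_i$, the scheme-theoretic fiber equals $\AA^1_x\times\Spec(\k[z]/(q(a_i,z)))$. The key claim is that, since $X\simeq\AA^2$, every scheme-theoretic fiber of $\pi$ is reduced and isomorphic to $\AA^1$. I would establish this by combining the smoothness of $X$, the triviality of $\k[X]^{\ast}$ and of $\Pic(X)$, a Suzuki-type Euler characteristic identity $1=e(X)=e(\AA^1)+\sum_i(e(\pi^{-1}(a_i)_{\mathrm{red}})-1)$ (forcing each reduced special fiber to be irreducible, i.e.~$q(a_i,z)$ has exactly one distinct root in $\k$), and a unique-factorisation argument in $\k[X]\simeq\k[s,t]$, writing $y-a_i=c_i g_i^{m_i}$, to rule out multiplicities $m_i>1$. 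It follows that $q(a_i,z)$ is linear in $z$ with non-zero leading coefficient for each $i$. Dividing $q$ by $\tilde p$ in the variable $y$ to get $q=a(y,z)\tilde p(y)+R(y,z)$ with $\deg_y R<r$, and expanding $R=\sum_j z^j R_j(y)$, the conditions $R_j(a_i)=0$ for $j\ge 2$ combined with $\deg_y R_j<r$ force $R_j\equiv 0$ for $j\ge 2$. Setting $r_1:=R_1$, $r_0:=R_0$ then yields $(3)$.

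For $(3)\Rightarrow(2)$, the decisive observation is that condition $(3)$ is exactly the statement that $f$ is a variable modulo every maximal ideal $(y-a)$ of $\k[y]$: if $p(a)\neq 0$ then $f\bmod(y-a)=p(a)x+q(a,z)$ is degree one in $x$ with unit coefficient, hence a variable in $\k[x,z]$; if $a=a_i$ then $f\bmod(y-a_i)=r_1(a_i)z+r_0(a_i)$ is degree one in $z$ with unit coefficient $r_1(a_i)\neq 0$, again a variable. Invoking the residual variable results of Sathaye and Russell (cf.~\cite{Sathaye1976,Russell1976}), applied to the $\k[y]$-algebra $\k[y][x,z]$, $f$ is then a $\k[y]$-variable: there exists $G\in\k[y][x,z]$ with $\k[y][f,G]=\k[y][x,z]$. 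The endomorphism of $\k[x,y,z]$ defined by $\varphi(x)=f$, $\varphi(y)=y$, $\varphi(z)=G$ is the desired automorphism, proving $(2)$.

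The main obstacle is the fiber analysis in $(1)\Rightarrow(3)$, specifically ruling out higher multiplicities in the scheme-theoretic fibers $\pi^{-1}(a_i)$. This step requires the full global structure of $X\simeq\AA^2$ (smoothness, UFD property, triviality of units and Picard group) and is the most delicate part of the argument; the remaining steps are either immediate or reduce cleanly to the cited residual variable results.
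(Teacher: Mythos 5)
Your architecture (the cycle $(2)\Rightarrow(1)\Rightarrow(3)\Rightarrow(2)$) is sound, and your treatment of $(1)\Rightarrow(3)$ has exactly the same skeleton as the paper's: reduce everything to the claim that every \emph{schematic} fibre of $\pi\colon X\to\AA^1$, $(x,y,z)\mapsto y$, is isomorphic to $\AA^1$, then read off that $q(a_i,z)$ must have degree $1$ in $z$ and kill the coefficients $r_j$ for $j\ge 2$ using $\deg r_j<r$. The gap is in how you justify that key claim. The paper quotes Ganong's theorem on $\AA^1$-fibrations of $\AA^2$, which holds in every characteristic. Your substitute --- a Suzuki-type Euler characteristic identity plus a factorisation $y-a_i=c_ig_i^{m_i}$ in $\k[s,t]$ --- is a characteristic-zero argument: Suzuki's formula acquires wild-ramification correction terms in characteristic $p$, and the UFD factorisation alone does not exclude $m_i>1$ (think of $y=t^p$: smoothness of $X$ and factoriality of $\k[s,t]$ are perfectly compatible with a non-reduced fibre; what rules this out is the reducedness of the \emph{general} fibre combined with the structure theory of $\AA^1$-fibrations, i.e.\ precisely the cited result). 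Since the statement is over an algebraically closed field of arbitrary characteristic, this step needs the reference, not the sketch.

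The second issue is the attribution in $(3)\Rightarrow(2)$. Sathaye and Russell prove that a \emph{linear plane} ($V(f)\simeq\AA^2$ with $f$ of degree $1$ in $x$) is a variable, with the automorphism preserving $y$ --- that is, they give $(1)\Rightarrow(2)$ directly, and this is exactly how the paper uses Russell's Theorem~2.3. What you invoke instead is the statement ``residual variable over $\k[y]$ implies variable over $\k[y]$'', which is a Bhatwadekar--Dutta-type theorem; its standard form assumes $\QQ\subseteq R$, and its validity over $\k[y]$ in positive characteristic cannot simply be attributed to those two papers. Your observation that condition $(3)$ says precisely that $f$ is a variable modulo every maximal ideal $(y-a)$ is correct and attractive, but to close the cycle you either need a genuine reference for the residual-variable theorem over a PID in characteristic $p$, or you should follow the paper's route: deduce $(1)\Rightarrow(2)$ from Russell, and $(3)\Rightarrow(1)$ from the Asanuma-type local-to-global statement for $\AA^1$-fibrations used in the paper, which together with the trivial $(2)\Rightarrow(1)$ and the fibre computation $(1)\Rightarrow(3)$ yields all the equivalences.
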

\begin{proof}
$\ref{XfibratReminderXisoA2}\Rightarrow \ref{XfibratReminderPvariable}$:
	This is done in~\cite[Theorem~2.3]{Russell1976}, see also~\cite{Sathaye1976} for the case $\car(\k) = 0$.
	
$\ref{XfibratReminderPvariable}\Rightarrow \ref{XfibratReminderXisoA2}$: The automorphism $\varphi$ corresponds to an automorphism of $\A^3$ that sends $X$ onto $\Spec(\k[x,y,z]/(x))\simeq \A^2$.	
	
$\ref{XfibratReminderXisoA2}\Rightarrow \ref{XfibratReminderTechnical}$:
	We consider the morphism $\pi\colon X\to \A^1$ given by $(x,y,z)\mapsto y$. Then, outside of $\{a_1,\ldots,a_r\}$, $\pi$
	is a trivial $\A^1$-bundle. If $X$ is isomorphic to $\AA^2$, then
	each fibre of $\pi$ needs to be isomorphic to $\A^1$ (this follows for instance from \cite[Theorem~4.12]{Ganong2011}). We write $q(y,z)=\sum_{j=0}^d z^j (q_j\tilde{p}+r_j),$ with $q_j,r_j\in \k[y]$ and $\deg(r_j)<r=\deg(\tilde{p})$ for each $j$.
	
	For each $i\in \{1,\ldots,r\}$, the fibre of $\pi$ over $a_i$ is $\Spec(\k[x,z]/(q(a_i,z)))$, so $q(a_i,z)$ is a polynomial of degree $1$ in $z$ (as each fibre of $\pi$ is isomorphic to $\AA^1$). 
	This implies that $r_j(a_i)=0$ for each $j\ge 2$ and that $r_1(a_i)\not=0$. As $\deg(r_j)<r$, we obtain that $r_j=0$ for $j\ge 2$.
	This gives \ref{XfibratReminderTechnical} with $a=\sum_{j=0}^d z^j q_j$.
	
	$\ref{XfibratReminderTechnical}\Rightarrow \ref{XfibratReminderXisoA2}$:
Let $R=\k[x,y,z]/(f)$ be the ring of regular functions on $X$. For each $i\in \{1,\ldots,r\}$, Assertion~\ref{XfibratReminderTechnical} gives $f(x,a_i,z)=q(a_i,z)=zr_1(a_i)+r_0(a_i)$, so $R/(y-a_i)\simeq \k[\A^1]$, which implies that $(y-a_i)$ is a prime ideal of $R$ and that $\pi^{-1}(a_i)=X\cap \{y=a_i\}$ is isomorphic to $\A^1$. Hence, every (closed) fibre of $\pi$ is isomorphic to $\A^1$.
	
	We consider $h_0=z$ and construct inductively a finite sequence $h_0,h_1,\ldots,h_{N_1}$ of regular functions on $X$ such that $(\pi, h_i)\colon X\to \A^2$ restricts to an isomorphism $\pi^{-1}(U)\iso U\times \A^1$, where $U = \AA^1 \setminus \{ a_1, \ldots, a_r \}$.
	
	If $h_i$ is constant on $\pi^{-1}(a_1)$, 
	then there is a $c_i\in \k$ such that $h_i-c_i$ is a multiple of $y-a_1$. We then choose $h_{i+1} \in R$ such that $h_i-c_i = (y-a_1)\cdot h_{i+1}$. 
	This sequence ends up at some point, i.e.~that there exists $N_1\ge 0$ such that $h_{N_1}$ is not constant on $\pi^{-1}(a_1)$. Indeed, this is a direct application of \cite[Lemma~1.1]{KaWr1985Flat-families-of-a} 
	where we use that $R$ is a Noetherian integral domain.
	
	Now, we start with $h_{N_1} \in R$. With the same argument as above, there exists now
	$h_{N_2} \in R$ that is not constant on $\pi^{-1}(a_1)$, not constant on $\pi^{-1}(a_2)$
	and $(\pi, h_{N_2})$ restricts to an isomorphism $\pi^{-1}(U)\iso U\times \A^1$. Proceeding
	the same way with $i =3, \ldots, r$ we find $h \in R$ that is not constant on each
	$\pi^{-1}(a_j)$ for $j =1, \ldots, r$ and such that $(\pi, h)$ restricts to an isomorphism
	$\pi^{-1}(U)\iso U\times \A^1$. 
	 
	We observe that $(\pi,h)\colon X\to \AA^2$ is birational, quasi-finite and surjective.
	By Zariski's Main Theorem~\cite[Corollaire~(4.4.9)]{Gr1961Elements-de-geomet} 
	it is thus an isomorphism.
\end{proof}

Remark that the implication $\ref{XfibratReminderTechnical}\Rightarrow \ref{XfibratReminderXisoA2}$ of Proposition~\ref{Prop:XfibratReminder} also follows from~\cite[Lemma~3.10]{BlSa2019Embeddings-of-affi} (the argument is essentially due to Asanuma~\cite[Corollary 3.2]{As1987Polynomial-fibre-r}), but the argument given above is much simpler and goes back to \cite{KaWr1985Flat-families-of-a}.

\begin{corollary}\label{coro:TrivialisationVariables}
Let $f\in \k[x,y,z]$ be a polynomial of degree $\le 3$. Then $f$ is a variable of $\k[x,y,z]$ if and only if $\Spec(\k[x,y,z]/(f))\simeq \A^2$. In particular, if this holds, then $\Spec(\k[x,y,z]/(f-\lambda))\simeq \A^2$ for each $\lambda\in \k$.
\end{corollary}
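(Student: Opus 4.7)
The plan is to establish the nontrivial direction ($X := \Spec(\k[x,y,z]/(f)) \simeq \A^2 \Rightarrow f$ is a variable) by first reducing to a canonical form and then exploiting Propositions~\ref{Prop:XxpqisA2thendisjoint} and~\ref{Prop:XfibratReminder}. The reverse implication is essentially immediate: given $\varphi \in \Aut_\k(\k[x,y,z])$ with $\varphi(x) = f$, the automorphism $\varphi$ descends to an isomorphism $\Spec(\k[x,y,z]/(f)) \iso \Spec(\k[x,y,z]/(x)) \simeq \A^2$.

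For the forward direction, I would first invoke Corollary~\ref{cor.cubic_surf_iso_to_A2_singular}\ref{eq2} to assume, after an affine automorphism of $\A^3$ (which preserves the property of being a variable), that $f = xp + q$ with $p, q \in \k[y, z]$. The argument then splits according to the nature of~$p$. If $p \in \k^*$, the morphism $(x, y, z) \mapsto (xp + q,\, y,\, z)$ is an automorphism of $\A^3$ with first component $f$, exhibiting $f$ as a variable. If $p \in \k[y, z] \setminus \k$, Proposition~\ref{Prop:XxpqisA2thendisjoint} supplies $\alpha \in \Aut_\k(\k[y, z])$ carrying $p$ into $\k[y]$; extending $\alpha$ by the identity on~$x$ and applying it to $f$, I may further assume $p \in \k[y] \setminus \k$, at which point the implication $\ref{XfibratReminderXisoA2} \Rightarrow \ref{XfibratReminderPvariable}$ of Proposition~\ref{Prop:XfibratReminder} produces the desired $\varphi \in \Aut_\k(\k[x, y, z])$ with $\varphi(x) = f$.

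The main obstacle will be the degenerate case $p = 0$, which is not addressed by Propositions~\ref{Prop:XxpqisA2thendisjoint}--\ref{Prop:XfibratReminder}. In this situation $f = q(y, z) \in \k[y, z]$ and $X = \A^1 \times \Spec(\k[y, z]/(q))$; irreducibility and reducedness of $X$ force $q$ to be (a scalar multiple of) a prime, and cancellation of the $\A^1$-factor yields $\Spec(\k[y, z]/(q)) \simeq \A^1$. Since $\deg q \le 3$, I would invoke the Abhyankar--Moh--Suzuki classification of embeddings $\A^1 \hookrightarrow \A^2$ in degree $\le 3$, which is valid over any algebraically closed field, to conclude that $q$ is a variable of $\k[y, z]$, hence of $\k[x, y, z]$ by extending the rectifying automorphism trivially in~$x$. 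An alternative route, avoiding this external citation, would be to strengthen the choice made in Corollary~\ref{cor.cubic_surf_iso_to_A2_singular}\ref{eq2} by selecting a singular point of $\overline{X}$ of multiplicity exactly $d-1 = 2$ (rather than $d = 3$), which forces $p \neq 0$; such a point always exists since otherwise $\overline{X}$ would be a cone over a smooth plane cubic, and the resulting Picard-group obstruction is incompatible with $X \simeq \A^2$.

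Finally, the ``in particular'' clause follows by fixing $\varphi \in \Aut_\k(\k[x, y, z])$ with $\varphi(x) = f$: for every $\lambda \in \k$ one has $\varphi(x - \lambda) = f - \lambda$, so $f - \lambda$ is itself a variable and $\Spec(\k[x, y, z]/(f - \lambda)) \simeq \Spec(\k[x, y, z]/(x - \lambda)) \simeq \A^2$.
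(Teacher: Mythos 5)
Your proof is correct, and its overall strategy (reduce to the form $xp(y)+q(y,z)$ and conclude with Proposition~\ref{Prop:XfibratReminder}\ref{XfibratReminderPvariable}) matches the paper's, but the reduction step is organised differently. The paper gets the normal form in one stroke by citing the full classification of Proposition~\ref{Proposition:XxpqisA2smalldegreelistpqDeg3}, whose three cases all have $p\in\k[y]\setminus\{0\}$ built in — at the cost of a forward reference, since that proposition is proved only in the next subsection. You instead use only the earlier, weaker Corollary~\ref{cor.cubic_surf_iso_to_A2_singular}\ref{eq2} together with Proposition~\ref{Prop:XxpqisA2thendisjoint}, which respects the logical order of the paper but forces you to treat the degenerate case $p=0$ separately; your handling of it (integrality of $X=\A^1\times\Spec(\k[y,z]/(q))$, cancellation, then rectification of the curve $q=0$) is sound. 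Two small points. First, rather than appealing to an external ``Abhyankar--Moh--Suzuki in degree $\le 3$'' — the full AMS theorem fails in positive characteristic, and only the low-degree case survives — you should cite the paper's own Corollary~\ref{Coro:A1inA2smalldegree}, which is exactly the statement you need and is proved over any algebraically closed field. Second, your proposed alternative for avoiding the $p=0$ case (choosing a point of multiplicity exactly $d-1$) is not fully justified as written: you would also have to rule out cones over singular rational cubics, not just over smooth ones, so the Picard-group argument needs more care; but since this is only an alternative to a case you already handle correctly, it does not affect the validity of the proof.
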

\begin{proof}
If $f$ is a variable of $\k[x,y,z]$, then $\Spec(\k[x,y,z]/(f-\lambda))\simeq \A^2$ for each $\lambda\in \k$, and thus in particular for $\lambda=0$. Conversely, we suppose that $\Spec(\k[x,y,z]/(f))$ is isomorphic to $\A^2$, and prove that $f$ is a variable.

After an affine coordinate change we may assume $f = x p(y) + q(y, z)$ with
$p \in \k[y] \setminus \{0\}$ and $q \in \k[y, z]$ (Proposition~\ref{Proposition:XxpqisA2smalldegreelistpqDeg3}). If $p\in \k^*$, then $f$ is a variable as $(f,y,z)\in \Aut(\A^3)$. If $p\in \k[y]\setminus \k$, then Proposition~\ref{Prop:XfibratReminder}\ref{XfibratReminderPvariable} implies that
$f$ is a variable.
\end{proof}
\subsection{Hypersurfaces of $\AA^3$ of small degree that are isomorphic to $\AA^2$}

\begin{lemma}\label{Lemm:EmbeddingA2degree}
Let $p,q\in \k[t]$ be two polynomials such that 
\[
	\k[t]=\k[p,q] \text{ and }\deg(p)<\deg(q).
\] 
Then, either $1\in \{\deg(p),\deg(q)\}$ or $2\le \deg(p)\le \deg(q)-2$.
\end{lemma}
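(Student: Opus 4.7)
The plan is to reduce the statement to a single non-trivial case and then attack it via a degree argument modulo $\deg(p)$. Set $m=\deg(p)$ and $n=\deg(q)$. First I would dispose of the cases $m\leq 1$: if $m=1$ then trivially $1\in\{m,n\}$, and if $m\leq 0$ (i.e.\ $p\in\k$) then $\k[q]=\k[p,q]=\k[t]$ forces $n=1$. Since $m<n$, the remaining task is to show that $m\geq 2$ forces $n\geq m+2$; equivalently, to rule out $n=m+1$ with $m\geq 2$, which I would do by contradiction.

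Assume $m\geq 2$ and $n=m+1$. The key preliminary step is to endow $\k[t]=\k[p,q]$ with the structure of a free $\k[p]$-module. Since $t$ is a root of the monic polynomial $P(T)-p\in\k[p][T]$, where $P\in\k[T]$ is the polynomial with $P(t)=p$ (so $\deg P=m$), the ring $\k[t]$ is integral of degree $m=[\k(t):\k(p)]$ over $\k[p]$. Because $\k[p]\cong\k[X]$ is a PID, $\k[t]$ is free of rank $m$ over $\k[p]$. Moreover, $\k[p,q]=\k[t]$ gives $\k(p,q)=\k(t)$, so $q$ has degree exactly $m$ over $\k(p)$; its minimal polynomial, being monic over the integrally closed ring $\k[p]$, has coefficients in $\k[p]$. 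Hence $\{1,q,q^2,\ldots,q^{m-1}\}$ is a $\k[p]$-basis of $\k[p,q]$, and every $f\in\k[p,q]$ admits a unique expansion $f=\sum_{i=0}^{m-1}A_i(p)q^i$ with $A_i\in\k[X]$.

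The central observation is that, for $A_i\neq 0$, the summand $A_i(p)q^i$ has $t$-degree $m\deg_X(A_i)+i(m+1)$, which is congruent to $i\pmod m$ since $m+1\equiv 1\pmod m$. As $i$ runs over $\{0,1,\ldots,m-1\}$, these residues are pairwise distinct, so the non-zero summands in the expansion of $f$ have pairwise distinct $t$-degrees and no cancellation of leading terms can occur. This yields
\[
\deg_t(f)\in\Gamma:=\{\,ma+i(m+1)\ :\ a\geq 0,\ 0\leq i\leq m-1\,\}.
\]
I would then finish by noting that $1\notin\Gamma$: a representation $1=ma+i(m+1)$ with $a,i\geq 0$ would force $a=i=0$ (since $m,m+1\geq 2$), yielding the absurdity $0=1$. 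Since $t\in\k[p,q]$, this is the desired contradiction.

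The main obstacle is justifying cleanly the free $\k[p]$-module structure with basis $1,q,\ldots,q^{m-1}$, which rests on the integral closedness of $\k[p]$. Once this is in hand, the residue computation modulo $m$ is elementary and crucially exploits $\gcd(m,m+1)=1$; the same mechanism would in fact rule out any $(m,n)$ with $\gcd(m,n)=1$ and $\min(m,n)\geq 2$, though only the case $n=m+1$ is needed here.
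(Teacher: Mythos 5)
Your proof is correct, and it takes a genuinely different route from the paper. The paper argues geometrically: it extends $t\mapsto(p(t),q(t))$ to a morphism $\p^1\to\p^2$, observes that the image is a rational curve of degree $\deg(q)\ge 3$ that is smooth away from $[0:0:1]$, deduces from the genus formula for smooth plane curves that the multiplicity $m$ at that point satisfies $m>1$, and identifies $m=\deg(q)-\deg(p)$. You instead argue algebraically: after reducing to excluding $\deg(q)=\deg(p)+1$ with $\deg(p)\ge 2$, you use that $\k[t]$ is a free $\k[p]$-module with basis $1,q,\ldots,q^{\deg(p)-1}$ (which you justify correctly via integrality, the integral closedness of $\k[p]$, and division by the monic minimal polynomial of $q$), and then the congruence $\deg_t(A_i(p)q^i)\equiv i \pmod{\deg(p)}$ shows the nonzero summands have pairwise distinct degrees, so every element of $\k[p,q]$ has degree in the numerical set $\{a\deg(p)+i\deg(q)\}$, which excludes $1$ and hence $t$. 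Both arguments are complete and characteristic-free. Your approach is more elementary (no projective curves or genus formula needed) and, as you note, the same congruence mechanism excludes any coprime pair $(\deg p,\deg q)$ with both degrees at least $2$ — a small instance of the semigroup condition underlying the Abhyankar--Moh--Suzuki circle of ideas; the paper's approach has the advantage of a geometric interpretation of $\deg(q)-\deg(p)$ as the multiplicity of the curve at its point at infinity, which is reused elsewhere in the paper.
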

\begin{proof}
Suppose first that $\deg(p)\le 0$, which is equivalent to $p\in \k$. We obtain $\k[t]=\k[q]$, which implies that $\deg(q)=1$. Indeed, $\deg(q)\ge 1$ since $q\not\in \k$ and $\deg(q)>1$ is impossible, 
as the degree of any element of $\k[q]$ is a multiple of $\deg(q)$.

If $\deg(p)=1$, the result holds, so we may assume that $\deg(p)\ge 2$. It remains to see that $\deg(p)<\deg(q)-1$.
We then consider the closed embedding 
$f\colon \A^1 \hookrightarrow \A^2$ given by $t\mapsto (p(t),q(t))$, which extends to a morphism 
$\hat{f}\colon \p^1\to \p^2$ given by $[t:u]\mapsto [u^d:P(t,u):Q(t,u)]$, where $d=\deg(q)$ and where 
$P(t,u)=u^d\cdot p(\frac{t}{u})$, $Q(t,u)=u^d \cdot q(\frac{t}{u})$ are homogeneous polynomials of degree $d$. The image $\Gamma=\hat{f}(\p^1)$ is a closed curve of $\p^2$ that is rational and smooth outside of $[0:0:1]=\hat{f}([1:0])$. The degree of $\Gamma$ is the intersection of $\Gamma$ with a general line, which is then equal to $d=\deg(q)\ge 3$. The multiplicity $m$ of $\Gamma$ at the point $[0:0:1]$ satisfies then $m>1$, as a smooth curve of degree $d\ge 3$ has genus $\frac{(d-1)(d-2)}{2} \geq 1$. It remains to observe that $m=\deg(q)-\deg(p)$. This can be checked in coordinates, or simply seen geometrically: a general line of $\p^2$ passing through $[0:0:1]$ intersects the curve $\Gamma \setminus \{ [0:0:1] \}$ 
in $\deg(q)-m$ points and these points correspond to the roots of $p-\lambda$ for some general $\lambda$.
\end{proof}

\begin{corollary}\label{Coro:A1inA2smalldegree}
Let $C\subset \A^2=\Spec(\k[x,y])$ be a closed curve isomorphic to $\A^1$, of degree $\le 3$. Then, 
up to applying an element of $\Aff(\A^2)$, 
the curve $C$ is given by
$x+p(y) = 0$ for some $p\in \k[y]$ of degree $\le 3$ with no constant or linear term.
\end{corollary}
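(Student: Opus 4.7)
The plan is to parametrize $C$ via an isomorphism $\A^1 \iso C$, given by $t \mapsto (f(t), g(t))$ with $\k[f,g] = \k[t]$, and then to exploit Lemma~\ref{Lemm:EmbeddingA2degree}. A standard computation with the homogenization $\p^1 \to \overline{C} \subset \p^2$ shows that this map is birational onto $\overline{C}$ with no base points (the three components $u^d, u^d f(t/u), u^d g(t/u)$ with $d = \max(\deg f, \deg g)$ have no common factor since at least one of $f, g$ achieves degree $d$), so $\deg C = \max(\deg f, \deg g) \leq 3$.

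Next I would reduce to the situation $\deg f < \deg g$ by means of affine automorphisms: swapping $x$ and $y$ if necessary, I may assume $\deg f \leq \deg g$; then, if $\deg f = \deg g$, subtracting a suitable scalar multiple of $g$ from $f$ strictly decreases $\deg f$ while preserving $\k[f, g] = \k[t]$, and this corresponds to the affine automorphism $(x, y) \mapsto (x - cy, y)$ of $\A^2$. Since $\deg g \leq 3$, Lemma~\ref{Lemm:EmbeddingA2degree} then rules out the alternative $2 \leq \deg f \leq \deg g - 2$ and forces $\deg f \in \{0, 1\}$.

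If $\deg f = 0$, then $f$ is a constant, so $\k[f, g] = \k[g] = \k[t]$ implies $\deg g = 1$; thus $C$ is a line $x = c$, which the translation $(x, y) \mapsto (x - c, y)$ brings to $x + 0 = 0$. If $\deg f = 1$, I reparametrize the source so that $f(t) = t$, realizing $C$ as the graph $\{y = g(x)\}$; the swap $(x, y) \mapsto (y, x)$ then presents $C$ as $x - g(y) = 0$, and writing $g(y) = a + by + p(y)$ with $p$ of degree at most $3$ having no constant or linear term, the final affine automorphism $(x, y) \mapsto (x + a + by, y)$ yields the desired form $x + p(y) = 0$. The substantive content is entirely in Lemma~\ref{Lemm:EmbeddingA2degree}, which constrains the admissible degrees; the rest is just bookkeeping of affine transformations, so no serious obstacle remains beyond carefully tracking how each operation acts on the parametrization.
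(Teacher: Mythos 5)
Your proof is correct and follows essentially the same route as the paper's: parametrize $C$, normalize to $\deg f<\deg g$ by affine automorphisms, invoke Lemma~\ref{Lemm:EmbeddingA2degree} to force $\deg f\in\{0,1\}$, and finish by the same case analysis. The only (welcome) difference is that you make explicit the identification $\deg C=\max(\deg f,\deg g)$ needed to rule out the alternative $2\le\deg f\le\deg g-2$, a point the paper leaves implicit.
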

\begin{proof}
Let $p,q\in \k[t]$ be such that $t\mapsto (p(t),q(t))$ is an isomorphism $\A^1\to C$ defined over $\k$. The polynomials $p,q$ satisfy then $\k[p,q]=\k[t]$. After applying an affine automorphism of $\A^2$, we may assume that $\deg(p)<\deg(q)$. By Lemma~\ref{Lemm:EmbeddingA2degree}, we obtain $1\in \{\deg(p),\deg(q)\}$. 

We first assume that $\deg(q)=1$, which implies that $\deg(p)<1$, so $p\in \k$. After applying an affine automorphism of $\A^2$, we get $p=0$ and $q=t$, so the curve $C$ is given by $x=0$.

We then assume that $\deg(p)=1$. After applying an automorphism of $\A^1$, we may assume that $p=t$. Hence, $C$ is
given by $y-q(x)=0$. After applying the automorphism $(x,y)\mapsto (y,x)$, the equation is $x-q(y) = 0$. By using
an automorphism of the form 
$(x,y)\mapsto (x+ay+b,y)$ for some $a,b\in \k$, we may assume that $q$ has no constant or linear term.
\end{proof}

\begin{lemma}\label{Lemm:XxpqisA2smalldegreelistp}
Let $f \in \k[x,y,z]$ be a polynomial of the form
\[
	f=xp(y, z) + q(y, z) \, ,
\]
for some $p,q\in \k[y,z]$ with $p\not=0$ and $\deg(p)\le 3$. If the surface $\Spec(\k[x,y,z]/(f))$ is isomorphic to $\A^2$, then after applying an affine automorphism on $y$ and $z$, one of the following cases hold:
\begin{enumerate}[leftmargin=*]
\item\label{FirstCaseky_}
$p\in \k[y]$ has degree $\le 3$;
\item
$p=y+r(z)$ for some $r\in \k[z]$ of degree $2$ or $3$.
\end{enumerate}
\end{lemma}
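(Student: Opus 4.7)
The plan is to apply Proposition~\ref{Prop:XxpqisA2thendisjoint} to obtain geometric information on $F_p = \Spec(\k[y,z]/(p))$ and then perform a short case analysis on the factorisation of $p$, using Corollary~\ref{Coro:A1inA2smalldegree} to normalise irreducible components by an affine change of $(y,z)$.

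I would begin by writing $p = u \prod_{i=1}^r g_i^{k_i}$ in $\k[y,z]$ with $u \in \k^*$ and distinct irreducibles $g_i$, and set $d_i = \deg g_i$, so $\sum k_i d_i = \deg p \leq 3$. By Proposition~\ref{Prop:XxpqisA2thendisjoint} the curves $V(g_i) \subset \A^2$ are pairwise disjoint and each isomorphic to $\A^1$. The proof then splits according to whether some $d_j$ exceeds $1$.

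If all $d_i = 1$, then each $V(g_i)$ is an affine line, and pairwise disjointness over the algebraically closed field $\k$ forces them to be mutually parallel. A single affine change of $(y,z)$ therefore brings them simultaneously to the form $y = c_i$ for distinct $c_i \in \k$, so $p = u \prod (y - c_i)^{k_i} \in \k[y]$, yielding case~\ref{FirstCaseky_}. If some $d_j \geq 2$, the inequality $\sum k_i d_i \leq 3$ leaves only three subcases for the multiset $\{(d_i, k_i)\}$: a single component with $(d_1,k_1) \in \{(2,1),(3,1)\}$, or the mixed configuration $\{(2,1),(1,1)\}$.

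The main obstacle, and the only nontrivial point, is ruling out this mixed "parabola $+$ line" configuration. To do so I would apply Corollary~\ref{Coro:A1inA2smalldegree} to the quadratic component $V(g_1)$ in order to place it, after an affine change of $(y,z)$, in the standard form $y + c z^2 = 0$ with $c \in \k^*$; then a one-line substitution argument shows that over an algebraically closed field every affine line of $\A^2$ meets this parabola, contradicting the disjointness of $V(g_1)$ and $V(g_2)$. With this case eliminated, only the single-component subcases remain: $d_1 \in \{2,3\}$ and $k_1 = 1$. Applying Corollary~\ref{Coro:A1inA2smalldegree} to $V(g_1)$ then yields, after a further affine change of $(y,z)$, a normalisation $g_1 = \mu(y + r(z))$ with $\mu\in \k^*$ and $r \in \k[z]$ of degree $d_1$ without constant or linear term. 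Absorbing the overall scalar $u\mu$ by the rescaling $y \mapsto y/(u\mu)$ finally puts $p$ in the form $y + r'(z)$ with $\deg r' = d_1 \in \{2,3\}$, which is case~(2).
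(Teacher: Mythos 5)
Your proof is correct and follows the same overall strategy as the paper's: Proposition~\ref{Prop:XxpqisA2thendisjoint} to see that the components of $F_p$ are pairwise disjoint copies of $\A^1$, a case split on whether all components are lines (in which case disjointness forces a common direction and hence $p\in\k[y]$), and Corollary~\ref{Coro:A1inA2smalldegree} to normalise a component of degree $\ge 2$. The only point where you diverge is in excluding the ``conic plus line'' configuration: the paper works at infinity, noting that by B\'ezout an affine line disjoint from the conic $C$ would have to be tangent to $\overline{C}$ at its unique point at infinity, which is impossible since $L_\infty$ is already tangent there; you instead normalise the conic to $y+cz^2=0$ with $c\in\k^*$ and verify by direct substitution that every affine line of $\A^2$ meets it over the algebraically closed field $\k$ (a degree-$2$ equation in $z$ when the line is not of the form $z=\mathrm{const}$, and an explicit point otherwise). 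Both arguments are sound; yours is more computational but entirely affine, while the paper's is coordinate-free once the behaviour of $\overline{C}$ at infinity is known. The remaining details (enumeration of the multisets $\{(d_i,k_i)\}$ under $\sum k_id_i\le 3$, and absorbing the scalar $u\mu$ by rescaling $y$ to reach the exact normal form $y+r(z)$ of case~(2)) are handled correctly.
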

\begin{proof}
If $p\in \k$, then we are in case \ref{FirstCaseky_}. We may thus assume that $p\not\in \k$.
By Proposition~\ref{Prop:XxpqisA2thendisjoint}, the irreducible components of
$F_p=\Spec(\k[y,z]/(p))$ are disjoint and isomorphic to $\A^1$.

We use the embedding $\A^2\hookrightarrow \p^2$, $(y,z)\mapsto [1:y:z]$ and denote by $L_\infty = \p^2\setminus \A^2$
the line at infinity.

If the irreducible components of $F_p$ are lines, then their closures in $\PP^2$ have
to pass through the same point in $L_\infty$. After applying an affine automorphism, we may assume that the point is $[0:0:1]$, which implies that $p\in \k[y]$.

It remains to study the case where at least one irreducible component has degree $\ge 2$. This component
corresponds to an irreducible curve $C\subset \A^2$ of degree $d\in \{2,3\}$ whose closure $\overline{C}$ in $\p^2$ is again an irreducible curve of degree $d$.

By Corollary~\ref{Coro:A1inA2smalldegree}, we may apply an affine automorphism and assume that $C$ is the zero locus of $y+r(z)$ for some polynomial $r$ of degree $d$. If $F_p$ is equal to $C$, then $p=y+r(z)$ (up to some constant which can be removed by an affine automorphism). Otherwise, as $F_p$ has degree $\le 3$, we get that $F_p$ is reduced,
and it is the disjoint union of the degree $2$ curve $C$ with some line. But there is no such line in $\A^2$: by B\'ezout's theorem, the closure of the line in $\p^2$ would be tangent to the conic $\overline{C}$
at the point at infinity of $\overline{C}$, impossible as already $L_\infty$ is tangent to $\overline{C}$ at that point.
\end{proof}

\begin{proposition} 
\label{Proposition:XxpqisA2smalldegreelistpq}
Let $f\in \k[x,y,z]$ be a polynomial of degree $\le 3$ of the form
\[
	f=xp(y, z) + q(y, z) \, ,
\]
for some $p,q\in \k[y,z]$. If the surface $\Spec(\k[x,y,z]/(f))$ is isomorphic to $\A^2$, then after 
applying an affine automorphism that fixes the point $[0:1:0:0]$, one of the following cases occurs:

	\begin{enumerate}[leftmargin=*]
	\item \label{pqCaseOnlyyz}
	 $f=y+s(z)$ for some polynomial $s \in \k[z]$ of degree $\le 3$;
		\item \label{pqPofdegree2}
		$f=x(y+z^2)+z$;
		\item \label{pqFirstCasekyDeg3}
		$f=x+r_2(y, z)+r_3(y, z)$ for some homogeneous $r_i \in \k[y,z]$ of degree $i$;
		\item \label{pqSecondCasexyDeg3}
		$f=xy+yr_2(y,z)+z$ for a homogeneous polynomial $r_2\in \k[y,z]$ of degree $2$;
		\item \label{pqSecondCasexy2Deg3}
		$f=xy^2+y s(z)+z$ for a polynomial $s\in \k[z]$ of degree $\le 2$;
		\item \label{pqTwoyDeg3}
		$f=xy(y+1)+s(y)z+t(y)$ for some polynomials $s,t\in \k[y]$ of degree $\le 1$ with $s(0)s(-1)\not=0$.
	\end{enumerate}
\end{proposition}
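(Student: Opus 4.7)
The plan is to classify $p$ first up to affine automorphisms of $\AA^3$ fixing $[0{:}1{:}0{:}0]$ (which act arbitrarily on $(y,z)$ and by $x\mapsto \lambda x+h(y,z)$ with $\lambda\in\k^*$ and $\deg h\le 1$), and then to pin down $q$ in each case via Proposition~\ref{Prop:XfibratReminder}.

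If $p=0$, then $f=q(y,z)$ and $X\simeq V(q)\times \AA^1$, so $V(q)\subset \AA^2_{y,z}$ is a curve of degree $\le 3$ isomorphic to $\AA^1$; Corollary~\ref{Coro:A1inA2smalldegree} together with a rescaling of $y$ yields case~\ref{pqCaseOnlyyz}. Assume henceforth $p\neq 0$. The hypothesis $\deg f\le 3$ forces $\deg p\le 2$, and Lemma~\ref{Lemm:XxpqisA2smalldegreelistp} (whose proof uses only affine transformations of $(y,z)$) reduces us either to $p\in \k[y]$ of degree $\le 2$, or, after affine changes of $(y,z)$ and a rescaling, to $p=y+z^2$; the possibility $p=y+r(z)$ with $\deg r=3$ is excluded by $\deg(xp)\le 3$.

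When $p\in \k[y]$, translating and rescaling $y$ brings $p$ to one of $1$, $y$, $y^2$ or $y(y+1)$. The case $p=1$ is immediate: shifting $x$ by the degree-$\le 1$ part of $q$ gives case~\ref{pqFirstCasekyDeg3}. In the other three cases, Proposition~\ref{Prop:XfibratReminder}\ref{XfibratReminderTechnical} writes $q=a(y,z)\tilde{p}+zr_1(y)+r_0(y)$ with $\deg r_i<r$ and $r_1$ non-vanishing at each root. A translation of $z$ kills $r_0$ when $\deg\tilde p=1$, and a rescaling normalises $r_1$. The bound $\deg q\le 3$ limits the degree of $a$, and affine shifts $x\mapsto x-h(y,z)$ with $\deg h\le 1$ remove the unwanted monomials of $a$: one finds $a$ homogeneous of degree $2$ for $p=y$ (case~\ref{pqSecondCasexyDeg3}), $a\in \k[z]$ of degree $\le 2$ for $p=y^2$ (case~\ref{pqSecondCasexy2Deg3}), and $a=0$ for $p=y(y+1)$, with the non-vanishing of $r_1$ at the two roots giving the condition $s(0)s(-1)\ne 0$ (case~\ref{pqTwoyDeg3}).

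The case $p=y+z^2$ is the main obstacle. I would apply the non-affine change of variables $y\mapsto y-z^2$ inside $\k[y,z]$ to rewrite $f$ as $xy+Q(y,z)$; Proposition~\ref{Prop:XfibratReminder}\ref{XfibratReminderTechnical} then gives $Q=yA(y,z)+r_1 z+r_0$ with $r_1\in\k^*$. Substituting back, $q=(y+z^2)\,A(y+z^2,z)+r_1 z+r_0$, and the bound $\deg q\le 3$ forces $A(y+z^2,z)$ to be affine in $(y,z)$, so the shift $x\mapsto x-A(y+z^2,z)$ is a legitimate affine move that kills the $A$-contribution. A rescaling of $(x,y,z)$ preserving $p$ normalises $r_1=1$. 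The residual constant $r_0$ is the last subtlety: naive translations either destroy $p=y+z^2$ or introduce extra terms. The key observation is that the affine automorphism $(x,y,z)\mapsto (x,\,y+2r_0 z-r_0^2,\,z-r_0)$ fixes $[0{:}1{:}0{:}0]$, preserves $p=y+z^2$, and sends $z+r_0$ to $z$, producing the normal form $f=x(y+z^2)+z$ of case~\ref{pqPofdegree2}.
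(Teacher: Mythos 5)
Your proof is correct and follows essentially the same route as the paper: reduce $p$ via Lemma~\ref{Lemm:XxpqisA2smalldegreelistp}, then pin down $q$ in each case with Proposition~\ref{Prop:XfibratReminder}, handling $p=y+r(z)$ by the non-affine shear $y\mapsto y-z^2$. The only difference is cosmetic — in case~\ref{pqPofdegree2} you normalise $p$ to $y+z^2$ first and then remove the residual constant with the map $(x,y+2r_0z-r_0^2,z-r_0)$, whereas the paper normalises $\lambda z+\mu$ first and absorbs the leftover terms of $r$ into $y$ at the end — and your automorphism does work in every characteristic.
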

\begin{proof}
If $p=0$, then $f=q\in \k[y,z]$, so $\Spec(\k[x,y,z]/(f))=\A^1 \times \Spec(\k[y,z]/(f))$, which implies that $\Spec(\k[y,z]/(f))\simeq \A^1$. By Corollary~\ref{Coro:A1inA2smalldegree}, we may apply an affine automorphism on $y$ and $z$ in order to be in case~\ref{pqCaseOnlyyz}. We may thus assume in the sequel that $p\not=0$.

According to Lemma~\ref{Lemm:XxpqisA2smalldegreelistp}, we only need to consider the following two cases: either $p\in \k[y]$ or $p=y+r(z)$ for some $r\in \k[z]$ of degree $2$.

Suppose first that $p=y+r(z)$ for some $r\in \k[z]$ of degree $2$. By using the (non-affine) automorphism $(x,y,z)\mapsto (x,y-r(z),z)$ of $\AA^3$, we get
\[
	\Spec(\k[x,y,z]/(f))\simeq \Spec(\k[x,y,z]/(xy+q(y-r(z),z)).
\]
Then, Proposition~\ref{Prop:XfibratReminder} shows that $q(y-r(z),z)=ay+\lambda z+\mu$ for some $a\in \k[y,z]$, $\lambda \in \k^*$ and $\mu\in \k$. This gives 
\[
	f=xp + q=(x+s)(y+r)+\lambda z +\mu,
\]
where $s=a(y+r,z)\in \k[y,z]$. As $\deg(r)=2$, we obtain that $\deg(s)\le 1$. 
Hence, after applying the affine automorphism $(x, y, z) \mapsto (x-s(y, z), y, z)$,
we may assume that $f$ is equal to $x(y+r(z)) + \lambda z+\mu$. Using the affine automorphism 
$(x,y,z)\mapsto (x,y,\lambda^{-1}(z-\mu))$, we obtain $x(y+r'(z)) + z$ for some $r'=\sum_{i=0}^2 \mu_i z^2\in \k[z]$ of degree $2$.
After replacing $y$ with $y-\mu_0-\mu_1 z$ we get $x(y+\mu_2 z^2)+z$. We then apply $(x,y,z)\mapsto (\mu_2^{-1} x,\mu_2 y,z)$ 
in order to be in case~\ref{pqPofdegree2}.

It remains to consider the case where $p\in \k[y]$. We distinguish the different cases:

If $p\in \k^*$, we may assume that $p=1$ and after applying $(x, y, z) \mapsto (x - q_0 - q_1(y, z))$ we are in case~\ref{pqFirstCasekyDeg3}, where $q_0, q_1 \in \k[y, z]$ are the constant and linear part of $q$, respectively.

If $p$ has one single root, we may assume that $p=y^i$ for some $i\in \{1,2\}$. Then, Proposition~\ref{Prop:XfibratReminder} shows that $q(y,z)=ay+\lambda z+\mu$ for some $a\in \k[y,z]$, $\lambda\in \k^*$ and $\mu\in \k$. 
After applying the affine automorphism $(x,y,z)\mapsto (x,y,\lambda^{-1}(z-\mu))$ we may assume that $\lambda=1$ and $\mu=0$. 

If $i = 1$, then $f=xy+yr(y,z)+z$ for some polynomial $r$ of degree $\leq 2$.
		Let $r_1, r_0 \in \k[y, z]$ be the homogeneous parts of degree $1$ and degree $0$ of $r$, respectively.
		We may apply the affine automorphism $(x, y, z) \mapsto (x-r_1(y, z) -r_0, y, z)$
		and thus we may assume that $r$ is homogeneous of degree $2$. Hence, we are in case~\ref{pqSecondCasexyDeg3}.
		
		 If $i = 2$, then $f=xy^2+yr(y,z)+z$ for some polynomial $r$ of degree $\leq 2$. Now, after
		applying a suitable affine automorphism of the form $(x, y, z) \mapsto (x-b(y, z), y, z)$ we may assume
		that $r \in \k[z]$ and thus we are in case~\ref{pqSecondCasexy2Deg3}. 
	
We then assume that $p$ has two distinct roots. We may assume that $p=y(y+1)$. Proposition~\ref{Prop:XfibratReminder} shows that $q(y,z)=ay(y+1)+s z+ t$ for some $a\in \k[y,z]$ of degree $\leq 1$, and some $s, t \in \k[y]$ of degree $\leq 1$ with $s(0) \neq 0$, $s(-1) \neq 0$. After applying $(x, y, z) \mapsto (x-a(y, z), y, z)$ we are in case~\ref{pqTwoyDeg3}.
\end{proof}

\begin{proposition}[Hypersurfaces isomorphic to $\AA^2$ of degree $\leq 3$]
	\label{Proposition:XxpqisA2smalldegreelistpqDeg3}
	Let $f \in \k[x, y, z]$ be an irreducible polynomial of degree $\leq 3$.
	If the surface $\Spec(\k[x, y, z] / (f))$ is isomorphic to $\A^2$, then 
	there is $\alpha \in \Aff(\AA^3)$, such that one of the following cases occur:
	\begin{enumerate}[leftmargin=*, label=\Alph*$)$]
		\item \label{FirstCasekyDeg3}
		$\alpha^\ast(f)=x+r_2(y, z)+r_3(y, z)$ for some homogeneous $r_i \in \k[y,z]$ of degree $i$;
		\item \label{SecondCasexyDeg3}
		$\alpha^\ast(f)=xy+yr_2(y,z)+z$ for a homogeneous $r_2\in \k[y,z]\setminus \k[y]$ of degree $2$;
		\item \label{SecondCasexy2Deg3}
		$\alpha^\ast(f)=xy^2+y (z^2+az+b)+z$ for some $a,b\in \k$.
	\end{enumerate}
	Moreover, if $f \in \k[x, y, z]$ is one of the polynomials
	from cases~\ref{pqFirstCasekyDeg3}-\ref{pqTwoyDeg3} 
	of Proposition~$\ref{Proposition:XxpqisA2smalldegreelistpq}$, then
	we may in addition assume that $\alpha^\ast(y) \in \k[y]$.
\end{proposition}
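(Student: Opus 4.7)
The plan is to combine Corollary~\ref{cor.cubic_surf_iso_to_A2_singular} with Proposition~\ref{Proposition:XxpqisA2smalldegreelistpq}. By Corollary~\ref{cor.cubic_surf_iso_to_A2_singular}\ref{eq2}, an affine coordinate change puts $f$ into the form $f=xp(y,z)+q(y,z)$; Proposition~\ref{Proposition:XxpqisA2smalldegreelistpq} then provides a further affine automorphism putting $f$ into one of the six cases \ref{pqCaseOnlyyz}--\ref{pqTwoyDeg3}. The task is thereby reduced to showing that each of these six cases is equivalent to one of \ref{FirstCasekyDeg3}, \ref{SecondCasexyDeg3}, \ref{SecondCasexy2Deg3}, and, for cases~\ref{pqFirstCasekyDeg3}--\ref{pqTwoyDeg3}, that the final affine automorphism $\alpha$ can be chosen so that $\alpha^*(y)\in\k[y]$.

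Three of the cases are essentially immediate. Case~\ref{pqFirstCasekyDeg3} is already of the form~\ref{FirstCasekyDeg3} ($\alpha=\id$); case~\ref{pqSecondCasexyDeg3} with $r_2\notin\k[y]$ is already of the form~\ref{SecondCasexyDeg3}; and in case~\ref{pqSecondCasexy2Deg3} with $s(z)=cz^2+az+b$, $c\neq 0$, the diagonal rescaling $\alpha=(c^2x,\,y/c,\,z)$ normalises the $yz^2$-coefficient and yields form~\ref{SecondCasexy2Deg3}. For the remaining subcases I would exhibit explicit affine maps: for case~\ref{pqCaseOnlyyz} ($f=y+s(z)$), the map $\alpha=(y-s_0-s_1z,\,x,\,z)$ yields $x+s_2z^2+s_3z^3$ (form~\ref{FirstCasekyDeg3}); for case~\ref{pqPofdegree2} ($f=x(y+z^2)+z$), the swap $\alpha=(y,x,z)$ gives $xy+yz^2+z$ (form~\ref{SecondCasexyDeg3}); for case~\ref{pqSecondCasexyDeg3} with $r_2=cy^2\in\k[y]$, the swap $\alpha=(z,y,x)$ produces $x+yz+cy^3$ (form~\ref{FirstCasekyDeg3}); and for case~\ref{pqSecondCasexy2Deg3} with $s(z)=az+b$, the map $\alpha=(z,\,y,\,x-by)$ gives $x+y^2z$ (form~\ref{FirstCasekyDeg3}) when $a=0$, while for $a\neq 0$ a direct computation shows that $\alpha=(a^2z+ab,\,y-1/a,\,x/a-by+2z+b/a)$ yields $xy+a^2y^2z+z$ (form~\ref{SecondCasexyDeg3}).

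The hardest case is~\ref{pqTwoyDeg3}, i.e. $f=xy(y+1)+s(y)z+t(y)$, because the $x$-leading coefficient $p=y(y+1)$ has two distinct roots whereas the analogous $x$-leading coefficients of \ref{FirstCasekyDeg3}, \ref{SecondCasexyDeg3}, \ref{SecondCasexy2Deg3} are all monomials in~$y$. The key idea is the translation $\alpha_2=y-s_0/s_1$ when $s_1\neq 0$, under which $s(\alpha_2)$ collapses to the monomial $s_1y$, effectively merging the two roots of $p$ into a single one for $s$. Choosing $\alpha_3=x/s_1+L_3'(y,z)$ and $\alpha_1\in\k[y,z]$ by solving a small linear system that cancels the lower-degree terms then produces form~\ref{SecondCasexyDeg3}; the hypothesis $s(-1)\neq 0$ is exactly what is needed to ensure that the constant $c=-s_0(s_1-s_0)/s_1^2$ appearing as the $z$-coefficient is nonzero, so that the linear system can be solved. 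When $s_1=0$ the simpler map $\alpha=(z,\,y,\,x/s_0-(t_1/s_0)y-t_0/s_0)$ directly produces $x+yz(y+1)$, in form~\ref{FirstCasekyDeg3}.

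For the moreover clause, one observes that in each of the constructions above for cases~\ref{pqFirstCasekyDeg3}--\ref{pqTwoyDeg3} the second component $\alpha_2$ is by inspection an element of $\k[y]$ (it is always one of $y$, $y/c$, $y-1/a$, or $y-s_0/s_1$). By contrast, the swap-style maps used for cases~\ref{pqCaseOnlyyz} and~\ref{pqPofdegree2} have $\alpha_2=x$, which explains why those two cases are genuinely excluded from the moreover statement.
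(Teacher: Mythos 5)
Your proposal is correct and follows essentially the same route as the paper: reduce to $f=xp+q$ via Corollary~\ref{cor.cubic_surf_iso_to_A2_singular}, invoke Proposition~\ref{Proposition:XxpqisA2smalldegreelistpq}, and then normalise each of the six cases by explicit affine coordinate changes whose $y$-component lies in $\k[y]$ in cases~\ref{pqFirstCasekyDeg3}--\ref{pqTwoyDeg3}. Only cosmetic details differ, e.g.\ your normalisation $xy+a^2y^2z+z$ versus the paper's $xy+y^2z+z$ in the $\deg(s)=1$ subcase, and in case~\ref{pqCaseOnlyyz} the correction $-s_0-s_1z$ should be attached to the component playing the role of $y$ (i.e.\ $\alpha=(y,\,x-s_0-s_1z,\,z)$) for the displayed output to come out as claimed.
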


\begin{proof}By Corollary~\ref{cor.cubic_surf_iso_to_A2_singular} we may assume that
	\[
		f=xp +q
	\]
	for some $p,q\in \k[y,z]$ with $\deg(p) \leq 2$ and $\deg(q) \leq 3$.
	We go through the different cases of Proposition~\ref{Proposition:XxpqisA2smalldegreelistpq}.
	
	\ref{pqCaseOnlyyz}: We exchange $x$, $y$ and get $f=x+s(z)$ and then we replace $x$ with 
	$x+a+bz$ for some $a, b \in \k$ in order to be in case~\ref{FirstCasekyDeg3}.
	
 	\ref{pqPofdegree2}: We exchange $x$, $y$ and get $f=y(x+z^2)+z=xy+yz^2+z$ which is a subcase of~\ref{SecondCasexyDeg3}.
	
	\ref{pqFirstCasekyDeg3} and \ref{pqSecondCasexyDeg3} directly give \ref{FirstCasekyDeg3} and \ref{SecondCasexyDeg3}, except if we are in case~\ref{pqSecondCasexyDeg3} with $r_2 \in \k[y]$, 
	in which case we exchange $x$, $z$ in order to be in case~\ref{FirstCasekyDeg3}.
	
	\ref{pqSecondCasexy2Deg3}: 
	We have $f=xy^2+y s(z)+z$ for some polynomial $s$ of degree $\le 2$. We distinguish three cases:
	
	If $\deg(s)\le 0$, we have $s\in \k$. After the coordinate change $(x, y, z) \mapsto (x, y, z- sy)$ and the exchange of $x$, $z$ we are in case~\ref{FirstCasekyDeg3}. 
	
	If $\deg(s)=1$, we have $f=xy^2+y(az+b)+z$ for some $a\in \k^*$ and $b\in \k$. We replace $x,y,z$ with $a(az+b),(y-1)/a,x$ and obtain $xy+yr_2(y, z)+z$ where 
	$r_2 = yz + uy + vz + w$ for some $u,v,w \in \k$. 
	After replacing $x$ with $x-uy- vz-w$, 
	we may assume that $r_2$ is homogeneous and still not in $\k[y]$; this gives \ref{SecondCasexyDeg3}.
		
	If $\deg(s)=2$ we apply a homothety in $x$ and $y$, and obtain \ref{SecondCasexy2Deg3}.
	
	\ref{pqTwoyDeg3}: We exchange $x$ and $z$ and get $f=xs(y)+y(y+1)z+t(y)$ for some polynomials $s,t\in \k[y]$ of degree $\le 1$ with $s(0)s(-1)\not=0$. If $s\in \k$, then $s \neq 0$ and after
	applying $(x, y, z) \mapsto (s^{-1}(x-t(y)), y, z)$ we are in case~\ref{FirstCasekyDeg3}. 
	Otherwise, we replace $s(y)$ with $y$ and get $xy+u(y)z+v(y)$ where $u, v \in \k[y]$, $\deg(u) = 2$, $\deg(v) \leq 1$
	and $u(0)\not=0$. Hence, we get $xy+ya(y,z)+\lambda z+\mu$ with $a\in \k[y,z]$, $\lambda\in \k^*$ and $\mu\in \k$.
	After replacing $\lambda z+\mu$ with $z$ we get $f = xy + y b(y, z) + z$ for some $b \in \k[y, z]$.
	When we write $b$ as $b_0 + b_1 + b_2$, where each $b_i \in \k[y,z]$ is homogeneous of degree $i$, we may replace $x$ with $x-b_0-b_1$ and obtain \ref{SecondCasexyDeg3}, except when $b_2 \in \k[y]$:
	then we exchange $x$ and $z$ in order to be in case~\ref{FirstCasekyDeg3}.
	
	Moreover, in cases~\ref{pqFirstCasekyDeg3}-\ref{pqTwoyDeg3} we see that the constructed affine 
	coordinate change maps $\k[y]$ onto itself. This shows the last statement.
	\end{proof}

	In the next corollary, we list several properties of the closure in $\PP^3$ of a 
	hypersurface in $\AA^3$ of degree $3$ which is isomorphic to $\AA^2$.
		
	\begin{corollary}
		\label{cor.Properties_of_hypersurfaces_at_infinity}
		Let $f \in \k[x, y, z]$ be a polynomial of degree $3$ such that 
		$X = \Spec(\k[x, y, z] / (f)) \simeq \AA^2$ and write $f = f_0 + f_1 + f_2 + f_3$
		where $f_i \in \k[x, y, z]$ is homogeneous of degree $i$.
		\begin{enumerate}[leftmargin=*]
			\item \label{cor.PHI_Conic_tangent} 
					If $f_3$ defines a conic $\Gamma$ and a tangent line $L$ in $\PP^2$,
					then the singular locus of $\overline{X} \subset \PP^3$ equals the point
					$(\Gamma \cap L)_{\textrm{red}}$. 
			\item \label{cor.PHI_line} 
					If $f_3$ defines one line $($with multiplicity $3)$ in $\PP^2$, then $f_2$ is either zero or defines 
					some lines in $\PP^2$ and all the lines given by $f_3$ and $f_2$ have a point
					in $\PP^2$ in common. 
					Moreover, the singular locus of $\overline{X} \subset \PP^3$ is given by $w = f_2 = f_3 = 0$.
			\item \label{cor.PHI_distinct_lines} If $f_3$ neither defines
					a conic and a tangent line in $\PP^2$, nor one line in $\PP^2$, 
					then $f_3$ defines several lines
					in $\PP^2$ and all these lines pass through
					the same point $q \in \PP^2$. Moreover, $q$ lies in the singular
					locus of $\overline{X} \subset \PP^3$.
		\end{enumerate}
	\end{corollary}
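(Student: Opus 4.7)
The plan is to invoke Proposition~\ref{Proposition:XxpqisA2smalldegreelistpqDeg3} to reduce $f$ to one of the three normal forms
\[
	\textrm{(A)}\ f = x + r_2(y,z) + r_3(y,z), \quad \textrm{(B)}\ f = xy + y r_2(y,z) + z, \quad \textrm{(C)}\ f = xy^2 + y(z^2 + az + b) + z,
\]
with the prescribed homogeneity conditions, in particular $r_2\not\in \k[y]$ in (B). Since the statements~\ref{cor.PHI_Conic_tangent}--\ref{cor.PHI_distinct_lines} are invariant under affine changes of $\AA^3$ (the cubic form $f_3$ in $\PP^2$, the singular locus of $\overline{X}$, and the relevant incidence data all transform equivariantly), this reduction is harmless. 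Reading off $f_3$ in each form: in (A), $f_3 = r_3 \in \k[y,z]_3$ is a binary cubic, hence a product of at most three linear forms through $[1:0:0]$; in (B), $f_3 = y \cdot r_2(y,z)$ is also such a product, and $r_2 \not\in \k[y]$ prevents $f_3$ from being the cube of a single linear form; in (C), $f_3 = y(xy + z^2)$ consists of the smooth conic $xy + z^2 = 0$ together with its tangent $y = 0$ at $[1:0:0]$. Thus~\ref{cor.PHI_Conic_tangent} occurs exactly in case (C), \ref{cor.PHI_line} exactly in (A) with $r_3$ a cube of a linear form, and~\ref{cor.PHI_distinct_lines} in every remaining subcase; in the latter, the common point $q$ of all lines of $f_3 = 0$ is always $[0:1:0:0] \in \PP^3$.

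Next I would verify the claims on the singular locus by Jacobian computations on the homogenization $F$ of $f$. In case (C), the system $F_x = \cdots = F_w = F = 0$ reduces to $y = w = z = 0$, giving the single reduced point $[0:1:0:0]$, matching the tangency point $(\Gamma \cap L)_{\mathrm{red}}$. In case (A) with $r_3 = cL^3$, after a further linear change on $y,z$ I may assume $L = z$, so $F = w^2 x + w r_2(y,z) + z^3$; the partials give $w = 0$, $z = 0$, $r_2 = 0$, so the singular locus is $\{w = z = 0, r_2(y,z) = 0\} = \{w = f_2 = f_3 = 0\}$, and since $r_2 \in \k[y,z]_2$ either vanishes or factors as at most two linear forms in $y, z$ through $[1:0:0]$, the claim on $f_2$ follows. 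For every remaining subcase of~\ref{cor.PHI_distinct_lines} I would merely evaluate $F$ and its four partials at $[0:1:0:0]$: inspection of the monomials of (A) and (B) shows that each of these five expressions consists only of monomials with a positive power of $w$, $y$, or $z$, so they all vanish and $[0:1:0:0]$ lies in the singular locus.

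For the Moreover of~\ref{cor.PHI_Conic_tangent}, the hypothesis places the tangency point at $[0:1:0:0]$ in the original coordinates, so the reduction to normal form is not directly available. Lemma~\ref{lem.hypersxpq} still gives $f = xp + q$ with $p, q \in \k[y,z]$, and comparing cubic parts via $f_3 = x p_2 + q_3$ with a conic-plus-tangent factorisation at $[1:0:0]$ forces $p_2 = \lambda L^2$ and $q_3 = L Q$ for some linear $L \in \k[y,z]_1$ and quadric $Q \in \k[y,z]_2$, with $\Gamma = \lambda x L + Q$ a smooth conic. To upgrade $p_2 = \lambda L^2$ to $p = s^2$ I would invoke Lemma~\ref{Lemm:XxpqisA2smalldegreelistp}, which, after a linear change on $y, z$, brings $p$ either into $\k[y]$ or into the form $y + r(z)$ with $\deg r = 2$. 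A direct expansion in the second case yields $f_3 = L^2 \cdot M$ for some linear $M$ (a double line plus a transversal), contradicting smoothness of $\Gamma$, so only the first option survives. In that first option, Proposition~\ref{Prop:XfibratReminder} combined with the smoothness requirement that $q_3$ has a nonzero $yz^2$-coefficient rules out the subcase where $p$ has two distinct roots, leaving $p = \alpha (y - \mu)^2$, a square of a linear polynomial. Since a linear change on $y, z$ preserves square structure, $p = s^2$ in the original coordinates and $f = x s^2 + q$ as claimed. The main obstacle will be this last step: combining Lemma~\ref{Lemm:XxpqisA2smalldegreelistp} with Proposition~\ref{Prop:XfibratReminder} to eliminate simultaneously the non-$\k[y]$ form of $p$ and the distinct-roots subcase, while tracking the linear change on $y, z$ back to the original coordinates.
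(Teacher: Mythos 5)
Your overall strategy --- reduce to the three normal forms of Proposition~\ref{Proposition:XxpqisA2smalldegreelistpqDeg3}, read off $f_3$ in each, and compute gradients of the homogenisation --- is exactly the paper's, and your case matching (conic plus tangent $\leftrightarrow$ form (C), triple line $\leftrightarrow$ form (A) with $r_3$ a cube, all remaining subcases $\leftrightarrow$ several concurrent lines) is correct. The one place you take a genuinely different route is the ``Moreover'' of~\ref{cor.PHI_Conic_tangent}: the paper simply observes that since the normal form $xy^2+y(z^2+az+b)+z$ has its unique singularity at $[0:1:0:0]$ and the hypothesis places the singularity of $\overline{X}$ there as well, the normalising affine automorphism $\psi$ must fix $[0:1:0:0]$, hence $\psi^*$ preserves $\k[y,z]$ and $f=\psi^*(xy^2+\cdots)$ is visibly of the form $xs^2+q$. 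Your route through Lemma~\ref{Lemm:XxpqisA2smalldegreelistp} and Proposition~\ref{Prop:XfibratReminder} does reach the same conclusion, but it is much longer, and the step ``a direct expansion in the second case yields $f_3=L^2\cdot M$'' is not a direct expansion of $f=x(y+r(z))+q$ alone: you need the $\AA^2$ hypothesis (via Proposition~\ref{Prop:XfibratReminder}, which forces $f=(x+s)(y+r)+\lambda z+\mu$) to see that $q_3$ is divisible by $z^2$; without that input $q_3$ could contain the monomial $y^2z$ and $f_3$ could perfectly well be a smooth conic plus its tangent.

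Two further points need care. First, your blanket claim that the statements ``transform equivariantly'' under affine changes is false for $f_2$ taken in isolation: if $f=\psi^*(g)$ with $\psi$ having linear part $\varphi$ and a nontrivial translation, then $f_3=\varphi^*(g_3)$ but $f_2=\varphi^*(g_2)+D$, where $D$ is the degree-$2$ correction produced by the translation acting on $g_3$ (in case~\ref{cor.PHI_line} the paper computes $D=3\delta\,\varphi^*(y)^2$). The conclusions of~\ref{cor.PHI_line} survive because $D$ happens to lie in $\k[s,t]$ and to vanish on $\{f_3=0\}$, but this has to be checked; it is not formal. Second, in characteristic $3$ the partial derivatives of $F=w^2x+wr_2+z^3$ do not give $z=0$ (the term $3z^2$ vanishes identically), so you must also impose $F=0$ --- that is, use that the singular locus is contained in $\overline{X}$ --- to recover $z=0$, as the paper does explicitly. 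Both are short fixes, but as written these two steps are gaps.
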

	\begin{proof}Applying an affine automorphism, we are in one of the three cases \ref{FirstCasekyDeg3}-\ref{SecondCasexyDeg3}-\ref{SecondCasexy2Deg3} of Proposition~\ref{Proposition:XxpqisA2smalldegreelistpqDeg3}. 
	The affine automorphism induces an automorphisms 
	of the plane at infinity and thus an isomorphism between the curve in $\p^2$ 
	given by $f_3 = 0$ and respectively $r_3(y,z)=0$, $yr_2(y,z)=0$ and $y(xy+z^2)=0$
	where $r_i \in \k[y, z]$ is homogeneous of degree $i$ for $i = 1, 2$.
	We thus obtain two cases for $f_3=0$, namely a conic and a tangent line \ref{cor.PHI_Conic_tangent}, or a set of lines through the same point: \ref{cor.PHI_line}-\ref{cor.PHI_distinct_lines}. The distinction between \ref{cor.PHI_line} and \ref{cor.PHI_distinct_lines} corresponds to ask whether the lines are all the same or not. We study the three cases separately.
	
		\ref{cor.PHI_Conic_tangent}: Here we are in Case \ref{SecondCasexy2Deg3} of Proposition~\ref{Proposition:XxpqisA2smalldegreelistpqDeg3}. There exist thus $\psi \in \Aff(\AA^3)$
		and $a, b \in \k$ with $f = \psi^\ast(g)$ where $g = xy^2 + y(z^2 + az + b) + z$. 
		Let $G \in \k[w, x, y, z]$ be the homogeneous 
		polynomial of degree $3$ such that $G(1, x, y, z) = g$. The gradient of $G$ 
		\begin{align*}
			&\left(\frac{\partial G}{\partial w}, \frac{\partial G}{\partial x}, \frac{\partial G}{\partial y}, \frac{\partial G}{\partial z}\right) \\
			&= \left(y(az + 2bw) + 2zw, y^2, 2xy + z^2 + azw + bw^2, y(2z + aw) + w^2\right)
		\end{align*}
		is equal to zero if and only if $w=y=z=0$ and thus $[0:1:0:0]$ is the only singularity
		of the hypersurface $G=0$ in $\PP^3$. 
		

		\ref{cor.PHI_line}: Here we are in Case \ref{FirstCasekyDeg3} of Proposition~\ref{Proposition:XxpqisA2smalldegreelistpqDeg3}. There exist thus $\psi \in \Aff(\AA^3)$ and a homogeneous $r_2 \in \k[y, z]$ of degree $2$ such that
		$f = \psi^\ast(h)$ where $h = x + r_2(y, z) + y^3$. 
		Let $\varphi \in \GL_3(\k)$ be the linear part of $\psi$.
		Then $f_3 = \varphi^\ast(y)^3$ and 
		$f_2 = r_2(\varphi^\ast(y), \varphi^\ast(z)) + 3 \delta \varphi^\ast(y)^2$ where 
		$\psi^\ast(y) = \varphi^\ast(y) + \delta$. Thus $f_2, f_3 \in \k[s, t]$ for 
		$s = \varphi^\ast(y)$, $t = \varphi^\ast(z)$ and the first claim follows.
		Let $H \in \k[w, x, y, z]$ such that $H(1, x, y, z) = h$. The gradient
		of $H$
		\[
			\left(\frac{\partial H}{\partial w}, \frac{\partial H}{\partial x}, \frac{\partial H}{\partial y}, \frac{\partial H}{\partial z}\right) = \left(2xw + r_2(y, z), w^2, w \frac{\partial r_2}{\partial y}(y, z) + 3y^2,
											 w \frac{\partial r_2}{\partial z}(y, z)\right)
		\]
		is equal to zero if and only if
		\[
			\left\{
				\begin{array}{rl}
					w = y = r_2(y,z) = 0 & \textrm{if $\car(\k) \neq 3$} \\
					w = r_2(y, z) = 0 & \textrm{if $\car(\k) = 3$}
				\end{array}
			\right. \, .
		\]
		Since the intersection of $H=0$ with the plane $w=0$ at infinity only consists of the line $w=y=0$, the singular locus of $H=0$ is equal to $w = y = r_2(y, z) = 0$
		(where $\k$ has any characteristic). 
		Note that this singular locus is mapped via $\psi^{-1}$
		onto $w = s = r_2(s, t) + 3 \delta s^2 = 0$ and thus the second claim follows.
		
		\ref{cor.PHI_distinct_lines}: The first claim
		directly follows from Proposition~\ref{Proposition:XxpqisA2smalldegreelistpqDeg3} and
		we may assume (after an affine automorphism) that $f$ is as in case~\ref{FirstCasekyDeg3} or
		in case~\ref{SecondCasexyDeg3}. In both cases the common intersection
		point of the lines defined by $f_3$ is $[0:1:0:0]$ which is a singularity of 
		$\overline{X} \subset \PP^3$ by Lemma~\ref{lem.hypersxpq}.
	\end{proof}
	
	\begin{corollary}\label{cor:Degree3onepointoneline}
	Let $f \in \k[x, y, z]$ be an irreducible polynomial of degree $3$ such that the hypersurface $X=V_{\AA^3}(f)$ is isomorphic to $\A^2$ and such that the closure of $X$ in $\p^3$ contains the line $w=y=0$. After applying an affine automorphism of $\AA^3$ that preserves the line $w=y=0$, we obtain one of the following cases:

	\begin{enumerate}[leftmargin=*, label=\alph*{$)$}]
		\item \label{LFirstCasekyDeg3}
		$f=x+r_2(y, z)+ys_2(y, z)$ for some homogeneous $r_2,s_2 \in \k[y,z]$ of degree $2$, with $s_2\not=0$;
		\item \label{LSecondCasexyDeg3y}
		$f=xy+yr_2(y,z)+z$ for a homogeneous $r_2\in \k[y,z]\setminus \k[y]$ of degree $2$;
		\item\label{LSecondCasexyDeg3z}
		$f=xz+yzr_1(y,z)+y+\delta z$ for some homogeneous $r_1\in \k[y,z]\setminus \{0\}$ of degree $1$ and $\delta\in \k$;
		\item \label{LSecondCasexy2Deg3}
		$f=xy^2+y (z^2 + az + b)+z$ for some $a, b \in \k$;
	\end{enumerate}
	\end{corollary}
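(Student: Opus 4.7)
The plan is to apply Proposition~\ref{Proposition:XxpqisA2smalldegreelistpqDeg3} to obtain an affine automorphism $\alpha \in \Aff(\AA^3)$ such that $g := \alpha^*(f)$ lies in one of the three canonical forms~\ref{FirstCasekyDeg3}, \ref{SecondCasexyDeg3}, \ref{SecondCasexy2Deg3} of that proposition, and then to track the image $L' := \alpha^{-1}(L)$ of the line $L = \{w = y = 0\}$ in the new coordinates. Since $L \subset \overline{X}$, the line $L'$ is contained in $\overline{V(g)}$ and appears as a linear factor of the cubic at infinity $g_3 = 0$. For each canonical form I will construct an affine automorphism $\beta \in \Aff(\AA^3)$ with $\beta(L) = L'$ such that $\beta^*(g)$ has one of the four desired shapes; the composition $\tau := \alpha \circ \beta$ then satisfies $\tau(L) = \alpha(\beta(L)) = \alpha(L') = L$ and $\tau^*(f) = \beta^*(g)$ is as required.

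Form~\ref{SecondCasexy2Deg3} is immediate: the cubic at infinity $y(xy + z^2)$ has only the line $\{w = y = 0\}$ as a linear component, forcing $L' = L$ and $\beta = \id$, which gives case~\ref{LSecondCasexy2Deg3}. Similarly, form~\ref{SecondCasexyDeg3} with $L' = L$ gives case~\ref{LSecondCasexyDeg3y} with $\beta = \id$. For form~\ref{FirstCasekyDeg3}, i.e.\ $g = x + r_2 + r_3$ with $r_3 \in \k[y,z] \setminus \{0\}$, every linear factor of $r_3$ passes through $[0:1:0:0]$, and so does $L$; I will pick $\beta$ in the stabiliser of form~\ref{FirstCasekyDeg3} (affine maps with $\beta^*(y), \beta^*(z) \in \k[y,z]$ and $\beta^*(x) = \lambda x + h(y,z)$ for some $\lambda \in \k^*$ and $h \in \k[y,z]$) that satisfies $\beta(L) = L'$. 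The resulting polynomial has the shape $x + r_2' + r_3'$ with the additional constraint $y \mid r_3'$ coming from $L \subset \overline{V(\beta^*g)}$, so $r_3' = y s_2$ with $s_2 \neq 0$ since $\deg f = 3$; this gives case~\ref{LFirstCasekyDeg3}.

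The main obstacle is form~\ref{SecondCasexyDeg3} when $L' \neq L$; here $L'$ corresponds to a linear factor $\ell \in \k[y,z]$ of $r_2$ not proportional to $y$. When $\ell$ is proportional to $z$ (equivalently, $r_2$ has $z$ as a factor), the swap $\beta : (x,y,z) \mapsto (x,z,y)$ satisfies $\beta(L) = \{w = z = 0\} = L'$ and a direct computation gives $\beta^*(g) = xz + yz r_1(y,z) + y$ for some nonzero linear $r_1$, matching case~\ref{LSecondCasexyDeg3z} with $\delta = 0$. Otherwise, after rescaling, $\ell = y - \mu z$ with $\mu \neq 0$; I first apply $\beta_1 : (x,y,z) \mapsto (x, y + \mu z, z)$, which realises $\beta_1(L) = L'$, and then post-compose with $\gamma : (x,y,z) \mapsto (x, y, z - y/\mu)$, which preserves $L$ and is designed so that $\gamma^*(y + \mu z) = \mu z$. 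The key point is that this $\gamma$ cancels the unwanted $xy$-contribution of $\beta_1^*(g)$ against its newly generated $\mu xz$-term, producing a polynomial of the shape $c \cdot xz + yz \cdot (\text{linear in } y, z) + y + \delta z$ after a final rescaling of $x$ and $y$ to normalise coefficients. This is case~\ref{LSecondCasexyDeg3z}, with $r_1 \neq 0$ inherited from the assumption $r_2 \notin \k[y]$.
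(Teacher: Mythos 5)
Your proposal is correct and follows essentially the same route as the paper: reduce to the three normal forms of Proposition~\ref{Proposition:XxpqisA2smalldegreelistpqDeg3}, read off the line $w=y=0$ as a linear factor of the leading cubic, and move it back into position by a further affine change, the only nontrivial case being form~\ref{SecondCasexyDeg3} with a line other than $y=0$, which yields case~\ref{LSecondCasexyDeg3z}. Your treatment of that case via a shear $\beta_1$ followed by the correcting map $\gamma$ is just a factored version of the paper's single linear substitution sending $\alpha y+\beta z\mapsto y$ and $y\mapsto z$, and all the key verifications (divisibility of $r_2$ by the linear form, cancellation of the $xy$-term, nonvanishing of $r_1$ and $s_2$) are in place.
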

	
\begin{proof}
There exists an affine automorphism that sends $f$ onto a $g\in \k[x,y,z]$ wich is one of the polynomials
from Proposition~\ref{Proposition:XxpqisA2smalldegreelistpqDeg3}. We then look at the image $\ell$ of the line $w = y = 0$ in the plane at infinity $H_\infty=\{[w:x:y:z]\in \p^3 \mid w=0\}$ and apply an affine automorphism to send it back to $w = y = 0$.

In case~\ref{FirstCasekyDeg3}, $g=x+r_2(y, z)+r_3(y, z)$ for some homogeneous $r_i \in \k[y,z]$ of degree~$i$. As $\deg(g)=3$, we get $r_3\not=0$, and the line $\ell$ is given by $p_1(y,z)=0$ for some homogeneous polynomial $p_1\in\k[y,z]$ of degree $1$ that divides $r_3$. We apply an element of $\GL_2(\k)$ acting on $y,z$ and obtain \ref{LFirstCasekyDeg3}.

In case~\ref{SecondCasexyDeg3}, $g=xy+yr_2(y,z)+z$ for a homogeneous polynomial $r_2\in \k[y,z]\setminus \k[y]$ of degree $2$. The line $\ell$ is given by $p_1(y,z)=0$ for some homogeneous polynomial $p_1\in\k[y,z]$ of degree $1$ that divides $yr_2(y,z)$. If $\ell$ is the line $y=0$ we get \ref{LSecondCasexyDeg3y}. 
Otherwise, the line is $\alpha y +\beta z$ with $\beta\not=0$ and $g=xy+y(\alpha y +\beta z)s_1(y,z)+z$ for some homogeneous degree $1$ polynomial $s_1\in \k[y,z] \setminus \{0\}$. We apply a linear coordinate change and send $\alpha y +\beta z$ and $y$ respectively to $y$ and $z$; this sends $z$ onto $\gamma y+\delta z$ with $\gamma\in \k^*$, $\delta\in \k$, and sends $g$ onto $xz+yzs_1'(y,z)+\gamma y+\delta z$ for some homogeneous degree $1$ polynomial 
$s_1'\in \k[y,z]\setminus \{0\}$. 
We replace $y$ with $\gamma^{-1}y$ and get~\ref{LSecondCasexyDeg3z}.

In case~\ref{SecondCasexy2Deg3}, $g=xy^2+y (z^2 + az + b)+z$ for some
$a, b, \in \k$ and thus the line $\ell$ is $y = 0$. Hence we obtain~\ref{LSecondCasexy2Deg3}.
\end{proof}

\begin{corollary}[Hypersurfaces isomorphic to $\AA^2$ of degree $2$]
	\label{Cor:Hypersurf_degree_2}
	Let $f \in \k[x, y, z]$ be an irreducible polynomial of degree $2$ 
	and assume that $X = \Spec(\k[x, y, z]/(f))$ is isomorphic to $\AA^2$. Then, after applying an affine automorphism, one
	of the following cases occur:
	\begin{enumerate}[$(1)$]
		\item \label{Cor:Hypersurf_degree_2_1} $f = x + y^2$;
		\item \label{Cor:Hypersurf_degree_2_2} $f = x + yz$.
	\end{enumerate}
\end{corollary}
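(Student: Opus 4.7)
My plan is to reduce to the classification already established in Proposition~\ref{Proposition:XxpqisA2smalldegreelistpqDeg3} and then observe that the degree constraint eliminates all but one case, after which a standard normalisation of a binary quadratic form finishes the argument.

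First, I would apply Proposition~\ref{Proposition:XxpqisA2smalldegreelistpqDeg3} to the irreducible polynomial $f$ of degree $2$. It gives an affine automorphism $\alpha \in \Aff(\AA^3)$ such that $\alpha^\ast(f)$ is one of the three forms \ref{FirstCasekyDeg3}, \ref{SecondCasexyDeg3}, \ref{SecondCasexy2Deg3}. Case~\ref{SecondCasexyDeg3} gives $xy + yr_2(y,z)+z$ with $r_2$ homogeneous of degree $2$, and since $r_2 \notin \k[y]$ the term $yr_2(y,z)$ is nonzero of degree $3$; case~\ref{SecondCasexy2Deg3} contains the term $xy^2$ of degree $3$. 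Since applying $\alpha$ preserves the degree, both cases force $\deg(f) \geq 3$ and can be discarded.

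Thus we are in case~\ref{FirstCasekyDeg3}: $\alpha^\ast(f) = x + r_2(y,z) + r_3(y,z)$ with $r_i \in \k[y,z]$ homogeneous of degree $i$. The degree condition $\deg(f)=2$ forces $r_3 = 0$, and irreducibility (or $\deg(f) \neq 1$) forces $r_2 \neq 0$. So after this affine change of variables, $f = x + r_2(y,z)$ for a nonzero homogeneous quadratic $r_2 \in \k[y,z]$.

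It remains to normalise $r_2$ via an affine automorphism of $\AA^3$ of the form $(x,y,z) \mapsto (x, \phi(y,z))$ with $\phi \in \GL_2(\k)$, which preserves the shape $x + r_2(y,z)$. Since $\k$ is algebraically closed, any nonzero binary form factors as a product of two linear forms, so $r_2 = c\,\ell_1 \ell_2$ with $c \in \k^\ast$ and $\ell_1,\ell_2$ linear. If $\ell_1,\ell_2$ are proportional, we may choose $\phi$ to send $\ell_1$ to $y$, obtaining $r_2 = c' y^2$; rescaling $y$ (and composing with $x \mapsto c' x$, which is affine) brings $f$ to $x + y^2$, giving~\eqref{Cor:Hypersurf_degree_2_1}. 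If $\ell_1,\ell_2$ are not proportional, a $\GL_2(\k)$-change sends them to $y$ and $z$, yielding $r_2 = c'' yz$; a similar rescaling brings $f$ to $x+yz$, giving~\eqref{Cor:Hypersurf_degree_2_2}. I do not expect any genuine obstacle: the only point worth noting is that factorisation of binary forms into linear factors is valid in every characteristic since $\k$ is algebraically closed.
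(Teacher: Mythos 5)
Your proposal is correct and follows essentially the same route as the paper: both reduce to Proposition~\ref{Proposition:XxpqisA2smalldegreelistpqDeg3}, observe that the degree-$2$ hypothesis leaves only the form $x+r_2(y,z)$ with $r_2\neq 0$ homogeneous of degree $2$, and then split according to whether $r_2$ has one or two roots in $\PP^1$ (your explicit factorisation into linear forms over the algebraically closed field $\k$ is just a spelled-out version of that dichotomy).
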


\begin{proof}
	Since $f$ is of degree $2$, it follows from Proposition~\ref{Proposition:XxpqisA2smalldegreelistpqDeg3} that 
	$f$ is equal to $x + r_2(y, z)$ for a non-zero homogeneous polynomial of degree $2$ up to an affine automorphism.
	Depending whether $r_2(y, z) = 0$ has one ore two zeros in $\PP^1$ we are in 
	case~\ref{Cor:Hypersurf_degree_2_1} and case~\ref{Cor:Hypersurf_degree_2_2}, respectively.
\end{proof}

\section{Families of cubic hypersurfaces of $\AA^3$, all isomorphic to $\A^2$}
\label{sec.families_cubic_surf}
In this section, we study families of cubic hypersurfaces of $\AA^3$ that are isomorphic to $\AA^2$.
In order to to this we begin with linear systems on $\PP^2$.

\subsection{Linear systems on $\p^2$}
To study families of hypersurfaces of $\AA^3$, it is natural too look at the behaviour at infinity. 
In the following, for $d \geq 0$, we denote by $\k[x, y, z]_d$ 
the vector space of homogeneous polynomials of degree $d$ in $\k[x, y, z]$ 
and we consider it as an affine space (of dimension $d +2 \choose 2$).
In particular, $\k[x, y, z]_d$ carries the Zariski topology. Moreover, for any vector space $V$,
we let $\PP(V) = \textrm{Proj}_{\k}(\textrm{Sym}V^\ast)$ be the projectivisation of the symmetric algebra
$\textrm{Sym}V^\ast$ of the dual vector space $V^\ast$.

\begin{lemma}\label{Lemm:PencilLinearFactor}
Let $f,g\in \k[x,y,z]$ be two homogeneous polynomials of degree $d\ge 1$ without common factor. The following are equivalent:
\begin{enumerate}[leftmargin=*]
\item\label{AlwaysLinFact1}
The polynomial $\lambda f+\mu g$ is divisible by a linear factor, for all $\lambda,\mu\in \k$. 
\item\label{AlwaysLinFact2}
The polynomial $\lambda f+g$ is divisible by a linear factor, for infinitely many $\lambda\in \k$. 
\item\label{AlwaysLinFact3}
There are two linear polynomials $s,t\in \k[x,y, z]_1$ such that $f,g\in \k[s,t]$.
\end{enumerate}
\end{lemma}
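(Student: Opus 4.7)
The plan is to prove the cycle of implications $(3)\Rightarrow(1)\Rightarrow(2)\Rightarrow(3)$. The implication $(3)\Rightarrow(1)$ is clear: any homogeneous binary form $\lambda f+\mu g\in \k[s,t]_d$ splits over the algebraically closed field $\k$ into $d$ linear factors in $s,t$, which lie in $\k[x,y,z]_1$. The implication $(1)\Rightarrow(2)$ is immediate.

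For $(2)\Rightarrow(1)$, I would appeal to a closedness argument: the subset $R_d\subset \mathbb{P}(\k[x,y,z]_d)$ consisting of forms divisible by a linear factor is the image of the multiplication morphism $\mathbb{P}(\k[x,y,z]_1)\times \mathbb{P}(\k[x,y,z]_{d-1})\to \mathbb{P}(\k[x,y,z]_d)$, $([\ell],[h])\mapsto [\ell h]$, between projective varieties, hence Zariski closed. Since $f,g$ are nonzero and coprime, they are linearly independent, so the pencil $[\lambda:\mu]\mapsto [\lambda f+\mu g]$ gives a closed embedding $\mathbb{P}^1\hookrightarrow \mathbb{P}(\k[x,y,z]_d)$. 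The preimage of $R_d$ is a closed subset of $\mathbb{P}^1$, infinite by assumption $(2)$, hence all of $\mathbb{P}^1$; this yields $(1)$.

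The main content is $(1)\Rightarrow(3)$, which I would establish by induction on $d$. The base case $d=1$ is trivial with $s=f$, $t=g$. For $d\ge 2$, the closedness argument applied over the function field $\k(\tau)$ shows that the generic member $\tau f+g\in \k(\tau)[x,y,z]_d$ has a linear factor over $\overline{\k(\tau)}$, hence is reducible there. By the Bertini--Krull theorem on composite pencils there exist coprime homogeneous $p,q\in \k[x,y,z]$ of degree $d/m$ and coprime homogeneous $F,G\in \k[u,v]$ of degree $m\ge 2$ with $f=F(p,q)$ and $g=G(p,q)$. If $d/m=1$, then $p,q$ are already linear forms and we finish with $s=p$, $t=q$.

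If $d/m\ge 2$, the plan is to show that the derived pencil $\{\alpha p+\beta q\}$ also satisfies $(1)$; the inductive hypothesis then produces linear forms $s,t\in \k[x,y,z]_1$ with $p,q\in \k[s,t]$, whence $f=F(p,q)$ and $g=G(p,q)$ lie in $\k[s,t]$. To verify $(1)$ for the derived pencil, fix $[\alpha:\beta]\in \mathbb{P}^1$; by coprimality of $F,G$ there is a unique $[\lambda:\mu]\in \mathbb{P}^1$ with $(\alpha u+\beta v)\mid (\lambda F+\mu G)$ in $\k[u,v]$. Substituting $u=p$, $v=q$ yields
\[
\lambda f+\mu g=(\lambda F+\mu G)(p,q)=\prod_{i=1}^{m}(\alpha_i p+\beta_i q),
\]
with one factor equal to $\alpha p+\beta q$. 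By $(1)$, $\lambda f+\mu g$ admits a linear factor, which by unique factorization in $\k[x,y,z]$ must divide one of the $\alpha_i p+\beta_i q$. Hence the closed set $T=\{[\alpha:\beta]\in\mathbb{P}^1: \alpha p+\beta q\text{ admits a linear factor}\}$ surjects onto $\mathbb{P}^1$ via the degree-$m$ morphism $[\alpha:\beta]\mapsto [\lambda:\mu]$ just constructed; being closed and infinite, $T=\mathbb{P}^1$, completing the induction. The main obstacle is the appeal to the Bertini--Krull factorization $f=F(p,q)$, $g=G(p,q)$; the rest is bookkeeping via closedness in projective space and the combinatorics of the degree-$m$ morphism.
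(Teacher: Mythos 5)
Your proof is correct, and while its skeleton matches the paper's (closedness of the locus $R_d$ of forms divisible by a linear factor, hence the equivalence of \ref{AlwaysLinFact1} and \ref{AlwaysLinFact2}; induction on $d$ with a descent to a pencil of lower degree), the key input for $\ref{AlwaysLinFact1}\Rightarrow\ref{AlwaysLinFact3}$ is genuinely different. The paper studies the rational map $\eta=[f:g]\colon\p^2\dashrightarrow\p^1$ and splits according to whether $\k(\frac{f}{g})$ is separably closed in $\k(\frac{x}{z},\frac{y}{z})$: if so, a general fibre is irreducible by \cite[Theorem~3.3.17]{FOV} and condition \ref{AlwaysLinFact1} then forces $f=s^d$, $g=t^d$; if not, it factors $\eta=\nu\circ\eta'$ through a pencil $[a:b]$ of smaller degree and inducts. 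Your Bertini--Krull decomposition $f=F(p,q)$, $g=G(p,q)$ plays exactly the role of the factorization $\eta=\nu\circ\eta'$, your case $d/m=1$ plays the role of the separably-closed case, and your verification that the derived pencil $\{\alpha p+\beta q\}$ again satisfies \ref{AlwaysLinFact1} --- via the degree-$m$ covering $[\alpha:\beta]\mapsto[\lambda:\mu]=[G(\beta,-\alpha):-F(\beta,-\alpha)]$ together with closedness of $T$ --- is sound and parallels the paper's remark that a line contained in a fibre of $\eta$ lies in a fibre of $\eta'$. Two points need to be supplied for your version to be complete. First, the standard statements of Bertini--Krull in characteristic $p>0$ carry an extra, non-composite case, namely $f,g\in\k[x^p,y^p,z^p]$; you must note that over the algebraically closed (hence perfect) field $\k$ this is absorbed into compositeness, since then $f$ and $g$ are $p$-th powers and one may take $F=u^p$, $G=v^p$. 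This is not cosmetic: the positive-characteristic behaviour is precisely what the paper's separability case analysis is designed to handle. Second, extracting \emph{homogeneous} coprime $p,q$ of degree $d/m$ from the raw Bertini--Krull output is genuine (if routine) work: one uses that divisors of homogeneous polynomials are homogeneous and that the coprimality of $F$ and $G$ supplies at least two linearly independent homogeneous combinations of $p$ and $q$ of equal degree. With these two points made explicit, your argument is a valid alternative: it is shorter if Bertini--Krull is taken as a black box, whereas the paper's route avoids that theorem at the cost of invoking the irreducibility of general fibres for maps whose function field is separably closed.
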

\begin{proof}
Observe that the subset $R_d\subset\k[x,y,z]_d$ of elements that are divisible by a linear factor is closed. Indeed, $\p(R_d)$ is the image of the morphism $\p(\k[x,y,z]_1)\times \p(\k[x,y,z]_{d-1})$, $(p,q) \mapsto pq$. Hence, the set 
\[
	\set{ [\lambda:\mu]\in \p^1}{\lambda f+\mu g\text{ is divisible by a linear factor}}
\] 
is a closed subset of $\p^1$. Thus it is infinite if and only if it is the whole $\p^1$. This gives the equivalence $\ref{AlwaysLinFact1}\Leftrightarrow\ref{AlwaysLinFact2}$.

Let us prove $\ref{AlwaysLinFact3}\Rightarrow\ref{AlwaysLinFact1}$. As $f$ and $g$ have no common factor, $s, t$ are linearly independent. We apply a linear coordinate change 
and may assume that $s=x$ and $t=y$. Now, it is enough to remark that every 
homogeneous polynomial of $\k[x,y]$ is a product of linear factors.

It remains to prove $\ref{AlwaysLinFact1}\Rightarrow\ref{AlwaysLinFact3}$. We prove this by induction on $d = \deg(f) = \deg(g)$. The case where $d = 1$ 
holds by choosing $s=f$ and $t=g$.
We consider the dominant rational map $\eta\colon \p^2\dasharrow \p^1$, $[x:y:z]\mapsto [f(x,y,z):g(x,y,z)]$. If $\k(\frac{f}{g})$ is separably closed in $\k(\frac{x}{z},\frac{y}{z})$, then a general fibre of $\eta$ is irreducible \cite[Theorem 3.3.17, page~105]{FOV} (but not necessarily reduced). After
replacing $f,g$ with another basis of $\k f\oplus \k g$, we may thus assume that the zero locus of 
$f$ and $g$ are irreducible curves in $\PP^2$. 
The assumption \ref{AlwaysLinFact1} implies that two linear factors $s,t\in \k[x,y,z]$ exist such that $f=s^d$ and $g=t^d$. This gives $\ref{AlwaysLinFact3}$. If $\k(\frac{f}{g})$ is not separably closed in $\k(\frac{x}{z},\frac{y}{z})$, then there is a rational map 
$\frac{a}{b}$ (where $a,b\in \k[x,y,z]$ are homogeneous of the same degree without common factor) such that $\k(\frac{f}{g}) \subsetneq \k(\frac{a}{b})$ is a proper algebraic field 
extension, by the Primitive Element Theorem. 
Hence, we may decompose $\eta$ as $\eta=\nu\circ \eta'$, where $\nu\colon \p^1\to \p^1$ is a finite morphism 
which is not an isomorphism and $\eta'\colon \p^2\dasharrow \p^1$ is given by $[x:y:z]\mapsto [a(x,y,z):b(x,y,z)]$. Note that $\deg(a) = \deg(b) < d$, since $\nu$ is not an isomorphism.
As infinitely many fibres of $\eta$ contain lines, the same holds for $\eta'$, so \ref{AlwaysLinFact2} holds for $a$ and $b$. By induction, we find two homogeneous 
linear polynomials $s,t\in \k[x,y, z]$ such that $a,b\in \k[s,t]$ and hence $f,g\in \k[s,t]$ too.
\end{proof}

\begin{lemma}\label{LinearSystemLinearFactors}
Let $d\ge 2$ and let $V\subseteq \k[x,y,z]_d$ be a vector subspace such that the gcd of all elements of $V$ is $1$, and such that each element of $V$ is divisible by a linear factor. 
Then, one of the following holds:
\begin{enumerate}[leftmargin=*]
\item\label{Vlinearpencil}
There are two linear polynomials $s,t\in \k[x,y,z]_1$ such that $V\subseteq \k[s,t]$.
\item\label{Vlinearsquare_or_power3}
The degree $d$ is a power of $\car(\k)=p>0$, and $V=\k x^d\oplus \k y^d\oplus \k z^d$.
\end{enumerate}
\end{lemma}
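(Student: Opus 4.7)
The plan is to argue by case analysis on $\dim V$. First note that $\dim V \geq 2$: a $1$-dimensional $V = \k f$ with $\deg f = d \geq 2$ has $\gcd V = f \neq 1$. If $\dim V = 2$, I pick a basis $\{f, g\}$ of $V$: since $\gcd V = 1$ we have $\gcd(f, g) = 1$, and since every element of the pencil $\k f + \k g$ is divisible by a linear factor, Lemma~\ref{Lemm:PencilLinearFactor} yields $f, g \in \k[s, t]$ for some linear forms $s, t$, so $V \subseteq \k[s, t]$: conclusion~\ref{Vlinearpencil}.

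Now suppose $\dim V \geq 3$. Let $M \subseteq \k[x, y, z]_d$ denote the set of $d$-th powers of linear forms, i.e., the image of $\mu \colon \k[x, y, z]_1 \to \k[x, y, z]_d$, $\ell \mapsto \ell^d$. The map $\mu$ has finite fibres (as $\ell_1^d = \ell_2^d$ forces $\ell_1 = \zeta \ell_2$ for some $d$-th root of unity $\zeta$), and projectively it extends to the $d$-th Veronese embedding $\p^2 \hookrightarrow \p^N$; hence $M$ is a closed irreducible affine cone of dimension $3$. I will split on whether $V \cap M$ is dense in $V$. The key auxiliary computation I plan to use is: for two \emph{distinct} $2$-dimensional subspaces $W_1, W_2 \subseteq \k[x, y, z]_1$, the intersection $W_1 \cap W_2$ has dimension exactly $1$ (by dimension count in the $3$-dimensional space $\k[x,y,z]_1$), and if $\ell$ spans it, then the degree-$d$ parts of the subalgebras $\k[W_1], \k[W_2] \subset \k[x,y,z]$ meet in exactly $\k \ell^d$. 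This reduces, after a linear change of variables sending $W_1, W_2$ to $\k x + \k y, \k x + \k z$ respectively, to the elementary identity $\k[x, y] \cap \k[x, z] = \k[x]$.

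In the first subcase, $V \cap M$ is a proper closed subset of $V$, so I fix $f \in V$ that is not a $d$-th power of a linear form. The hypothesis $\gcd V = 1$ implies that the set $\{g \in V : \gcd(f, g) \neq 1\}$ is a finite union of proper linear subspaces of $V$ (one per irreducible factor of $f$), hence a proper subset of $V$ over the infinite field $\k$; so generic $g \in V$ satisfies $\gcd(f, g) = 1$. Applying Lemma~\ref{Lemm:PencilLinearFactor} to the pencil $\k f + \k g$ gives $f, g \in \k[s_g, t_g]$ for some linear $s_g, t_g$. By the intersection computation above, the subspace $\k s_g + \k t_g$ is uniquely determined by $f$: two distinct candidates would force $f \in \k \ell^d$ for some linear $\ell$, contradicting that $f$ is not a $d$-th power. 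Hence all such $g$ lie in one and the same $\k[s, t]_d$, a closed linear subspace of $V$ that contains a dense open subset of $V$ and therefore equals $V$: conclusion~\ref{Vlinearpencil}.

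In the second subcase, $V \cap M$ is dense in $V$, and since $M$ is closed this gives $V \subseteq M$. Combined with $\dim V \geq 3 = \dim M$ and the irreducibility of $V$ and $M$, this forces $\dim V = 3$ and $V = M$; in particular $M$ is a linear subspace. I plan to show this happens exactly when $\mathrm{char}(\k) = p > 0$ and $d = p^e$: writing $x^d + y^d = (ax + by + cz)^d$, the coefficient of $z^d$ forces $c = 0$, and in characteristic $0$ the coefficient of $x^{d-1} y$ yields $d a^{d-1} b = 0$, contradicting $a^d = b^d = 1$. In characteristic $p > 0$ with $d = p^e m$, $\gcd(m, p) = 1$, Frobenius rewrites the equation as $u^m + v^m = (a^{p^e} u + b^{p^e} v)^m$ in $u = x^{p^e}, v = y^{p^e}$, and the coefficient of $u v^{m-1}$ gives $m \cdot a^{p^e}(b^{p^e})^{m-1} = 0$; since $m$ is invertible in $\k$, this yields the same contradiction unless $m = 1$, i.e., $d = p^e$. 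In this surviving case, Frobenius identifies $M$ with $\k x^d + \k y^d + \k z^d$ (using that $\k$ is algebraically closed to extract $d$-th roots), so $V = \k x^d \oplus \k y^d \oplus \k z^d$: conclusion~\ref{Vlinearsquare_or_power3}. The main obstacle will be establishing the intersection formula cleanly and executing the positive-characteristic coefficient comparison.
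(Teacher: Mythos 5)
Your proof is correct and follows essentially the same route as the paper: the same dichotomy (every element of $V$ a $d$-th power of a linear form, or not), the same key input (Lemma~\ref{Lemm:PencilLinearFactor} applied to pencils $\k f+\k g$ with $\gcd(f,g)=1$), and the same density argument to pass from generic $g$ to all of $V$. The only differences are cosmetic: you establish the uniqueness of $\k s+\k t$ via the intersection $\k[x,y]_d\cap\k[x,z]_d=\k x^d$ where the paper uses factorial closedness of $\k[s,t]$ together with two independent linear factors of $f$, and you derive $\car(\k)=p>0$ and $d=p^e$ from $x^d+y^d=\ell^d$ where the paper uses $x^d-y^d=(x-y)^d$.
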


\begin{proof}
Since the gcd of all elements in $V$ is $1$, we get $\dim V \geq 2$.
Suppose first that every element of $V$ is a $d$-th power in $\k[x, y, z]$. 
Then up to a linear coordinate change we may assume that $x^d, y^d \in V$. Since $x^d-y^d$ is a $d$-th power and is divisible by $x-y$, we get $x^d-y^d=(x-y)^d$. As $d\ge 2$, this implies that $\car(\k)=p>0$ and that $d$ is a power of $p$.
We get \ref{Vlinearpencil} if $V$ is generated by $x^d$ and $y^d$ and \ref{Vlinearsquare_or_power3} otherwise.

Suppose now that some element $f\in V$ is not a $d$-th power. By 
Lemma~\ref{Lemm:PencilLinearFactor}, we may apply a linear coordinate change and may assume that $f\in \k[x,y]$. For each element $g\in V$ that has no common factor with $f$, there exist two linear polynomials $s,t\in \k[x,y,z]_1$ such that $f,g\in \k[s,t]$ by Lemma~\ref{Lemm:PencilLinearFactor}. 
As $f\in \k[x,y]$ is not a power of an element of $\k[x, y, z]_1$ and as $f \in \k[x, y]$ is homogeneous,
there are linearly independent $p_1, q_1 \in \k[x, y]_1$ such that $f$ is divisible by the product $p_1 q_1$.
Since $\k[s, t]$ is factorially closed in $\k[x, y, z]$ and as $f \in \k[s, t]$, we get $p_1, q_1 \in \k[s, t]$
and thus $x, y \in \k[s, t]$, i.e. $\k[x, y] = \k[s, t]$. In particular, $g \in \k[x, y]$.
Since the set of elements $g \in V$ that have no common factor with $f$
is Zariski open in $V$, this set spans $V$ as a $\k$ vector space and so $V\subset \k[x,y]$.
\end{proof}

\begin{lemma}
	\label{lem.2nd_power}
	Assume that $\car(\k) = 2$ and let $g_1, \ldots, g_n \in \k[x, y]_2$,
	such that $\k g_1+\cdots +\k g_n = \k x^2 \oplus \k y^2$. If $s \geq 0$
	and $h_1, \ldots, h_n \in \k[x, y]_s$ are such that
	$\sum_i \lambda_i g_i$ and $\sum_i \lambda_i h_i$ have a common non-zero linear
	factor for all $(\lambda_1, \ldots, \lambda_n) \in \k^n$, then either $h_i = 0$ for all $i$
	or $s \geq 2$ and there exists $h \in \k[x, y]_{s-2} \setminus \{ 0 \}$ with
	$h_i = h g_i$ for all $i$.
\end{lemma}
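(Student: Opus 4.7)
The plan is to reduce to a normal form via a change of basis on $\k^n$ and then extract the conclusion from the divisibility condition by substitution. First, any invertible matrix $P\in\GL_n(\k)$ replacing $(g_1,\ldots,g_n)$ by $(\sum_j P_{ij}g_j)_i$ also replaces $(h_1,\ldots,h_n)$ by $(\sum_j P_{ij}h_j)_i$, and preserves both the hypothesis (after the substitution $\lambda\mapsto P^{T}\lambda$) and the conclusion (if $h_i=h\cdot g_i$ in one basis, then the same $h$ witnesses the relation in any other). Since $V=\k g_1+\cdots+\k g_n$ has dimension $2$, I may thus assume $g_1=x^2$, $g_2=y^2$ and $g_3=\cdots=g_n=0$. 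Because $\car(\k)=2$ and $\k$ is algebraically closed, every nonzero element $\lambda_1 x^2+\lambda_2 y^2$ of $V$ is a square $(\alpha x+\beta y)^2$ with $\alpha^2=\lambda_1$, $\beta^2=\lambda_2$, whose only linear factor up to scalar is $\alpha x+\beta y$.

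The first step is to show $h_j=0$ for all $j\geq 3$. For any $(\alpha,\beta)\in\k^2\setminus\{(0,0)\}$ and any $j\geq 3$, applying the hypothesis to the tuples $(\alpha^2,\beta^2,0,\ldots,0)$ and to the same tuple with $\lambda_j$ switched from $0$ to $1$ forces $\alpha x+\beta y$ to divide both $\alpha^2 h_1+\beta^2 h_2$ and $\alpha^2 h_1+\beta^2 h_2+h_j$, hence $h_j$ itself. Since this holds for every $[\alpha:\beta]\in\PP^1(\k)$ and $\k$ is infinite, while a nonzero element of $\k[x,y]_s$ has only finitely many linear factors up to scalar, $h_j$ must vanish.

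It remains to determine $h_1$ and $h_2$. The hypothesis now says that $\alpha x+\beta y$ divides $\alpha^2 h_1+\beta^2 h_2$ for every $(\alpha,\beta)\neq(0,0)$. Using that divisibility of a homogeneous $f\in\k[x,y]$ by $\alpha x+\beta y$ is equivalent to $f(\beta,\alpha)=0$, and invoking Zariski density, this becomes the polynomial identity
\[
	\alpha^2 h_1(\beta,\alpha)+\beta^2 h_2(\beta,\alpha)=0 \quad\text{in}\quad \k[\alpha,\beta].
\]
Writing $h_1=\sum_{k=0}^{s} c_k x^{s-k}y^k$ and $h_2=\sum_{k=0}^{s} d_k x^{s-k}y^k$ and equating the coefficients of $\alpha^q\beta^{s+2-q}$ for $0\leq q\leq s+2$ yields $d_0=d_1=0$, $c_{s-1}=c_s=0$, and $d_k=c_{k-2}$ for $2\leq k\leq s$. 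If $s\leq 1$ these relations force $h_1=h_2=0$, so we land in the first alternative. If $s\geq 2$, the polynomial $h:=\sum_{k=0}^{s-2}c_k x^{s-2-k}y^k\in\k[x,y]_{s-2}$ satisfies $h_1=x^2 h=h\cdot g_1$ and $h_2=y^2 h=h\cdot g_2$, so the second alternative holds when $h\neq 0$ and the first when $h=0$. The main obstacle is this last coefficient bookkeeping, but it is driven by the key feature that in characteristic $2$ the pencil $V$ consists entirely of squares, giving each nonzero element a uniquely determined linear factor.
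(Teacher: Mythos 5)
Your proof is correct and follows essentially the same route as the paper: both reduce by a base change on the $g_i$ to a normal form with $g_1=x^2$, $g_2=y^2$, translate the common-factor condition into the identity $\alpha^2h_1(\beta,\alpha)+\beta^2h_2(\beta,\alpha)=0$ using that the square $(\alpha x+\beta y)^2$ has a unique linear factor in characteristic $2$, and extract $h$ from it. The only cosmetic differences are that the paper sets $g_i=y^2$ for all $i\ge 2$ (so the $i\ge 3$ case is absorbed into the same divisibility argument) and concludes via unique factorization ($x^2\mid h_1$, $y^2\mid h_i$) rather than your explicit coefficient comparison.
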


\begin{proof}
	Note that $n\geq 2$. 
	After a linear coordinate change
	in $x, y$ and after replacing $h_1, \ldots, h_n$ and $g_1, \ldots, g_n$ with certain linear combinations
	we may assume that $g_1 = x^2$ and $g_i = y^2$ for all $i = 2, \ldots, n$. 
	For each $i\in \{2,\ldots,n\}$ and each
	$\alpha,\beta\in \k$, $(\alpha x + \beta y)^2 = \alpha^2 g_1 + \beta^2 g_i$ and 
	$\alpha^2 h_1 + \beta^2 h_i$ have a common non-zero linear factor, so $\alpha x + \beta y$ divides $\alpha^2 h_1 + \beta^2 h_i$, which means that $\alpha^2 h_1(\beta, \alpha)+\beta^2 h_i(\beta,\alpha)=0$. As this last equation is true for all $\alpha,\beta\in \k$, the polynomial $y^2h_1+x^2h_i$ is zero. We get a polynomial $\tilde{h}_i$ such that $h_1=\tilde{h}_ix^2$ and $h_i=\tilde{h}_iy^2$. The equality $h_1=\tilde{h}_ix^2$ yields that $\tilde{h}_i$ is independent of $i$, so writing $h=h_i$ gives the result.
\end{proof}

	\begin{lemma}
		\label{lem.almost_eigenvectors}
		Assume that $\car(\k) > 0$ and denote by $\phi \colon \PP^2 \to \PP^2$ the Frobenius endomorphism.
		\begin{enumerate}
		\item\label{lem.almost_eigenvectors1}
		For each $A\in \PGL_3$, there exists $v \in \PP^2$ such that $A \phi(v) = v$.
		\item\label{lem.almost_eigenvectors2}
		For each $B\in \PGL_3$, there exists $v \in \PP^2$ such that $\phi (B v) = v$.
		\end{enumerate}
	\end{lemma}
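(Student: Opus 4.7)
My plan is to reduce both parts of the lemma to the following general fact:

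\emph{Every morphism $f\colon \PP^n \to \PP^n$ satisfying $f^*\OOO(1) = \OOO(d)$ with $d \ge 1$ admits a fixed point in $\PP^n$.}

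The reduction is immediate. Since $A, B \in \PGL_3$ act as linear automorphisms of $\PP^2$ (and hence satisfy $A^*\OOO(1) = \OOO(1) = B^*\OOO(1)$), while $\phi$ is given in homogeneous coordinates by $[x_0:x_1:x_2] \mapsto [x_0^p : x_1^p : x_2^p]$ (so that $\phi^*\OOO(1) = \OOO(p)$), both $A \circ \phi$ and $\phi \circ B$ are morphisms $\PP^2 \to \PP^2$ of polynomial degree $p \ge 1$. A fixed point of the former is a $v$ with $A\phi(v) = v$, giving~\ref{lem.almost_eigenvectors1}, and a fixed point of the latter is a $v$ with $\phi(Bv) = v$, giving~\ref{lem.almost_eigenvectors2}.

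To establish the general fact, I would use intersection theory on $\PP^n \times \PP^n$. Let $h_1, h_2 \in \CH^1(\PP^n \times \PP^n)$ denote the pull-backs of the hyperplane class along the two projections; then $\CH^*(\PP^n \times \PP^n) = \Z[h_1, h_2]/(h_1^{n+1}, h_2^{n+1})$. The class of the diagonal is the standard expression $[\Delta] = \sum_{i=0}^n h_1^i h_2^{n-i}$, and applying the projection formula to the graph embedding $(\id, f) \colon \PP^n \hookrightarrow \PP^n \times \PP^n$ yields $[\Gamma_f] = \sum_{j=0}^n d^j h_1^j h_2^{n-j}$ (one checks this by pairing with the complementary monomials $h_1^{n-j} h_2^j$ and using that $f^*h = dh$). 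Multiplying these two classes and keeping only the top-dimensional contribution gives
\[
	[\Delta] \cdot [\Gamma_f] = (1 + d + d^2 + \cdots + d^n)\, h_1^n h_2^n \in \CH^{2n}(\PP^n \times \PP^n),
\]
which is strictly positive. Hence $\Gamma_f \cap \Delta \neq \varnothing$, which is exactly the statement that $f$ has a fixed point.

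The proof is short and essentially bookkeeping in the Chow ring of $\PP^n \times \PP^n$, so I do not expect any serious obstacle. The only mild subtlety is to ensure that the compositions $A \circ \phi$ and $\phi \circ B$ really are morphisms of polynomial degree $p$ (and not some higher power of $p$, which would happen if one thought of $\phi$ as the absolute Frobenius with its field-theoretic degree $p^n$); but this is transparent from the explicit formulas since $A$ and $B$ act linearly.
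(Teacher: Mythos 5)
Your argument is correct, and it takes a genuinely different route from the paper. You reduce both parts to the classical fixed-point theorem for endomorphisms of projective space: a morphism $f \colon \PP^n \to \PP^n$ with $f^*\OOO(1) = \OOO(d)$, $d \geq 1$, has $[\Delta]\cdot[\Gamma_f] = (1+d+\cdots+d^n)\,h_1^nh_2^n \neq 0$ in $\CH^{2n}(\PP^n\times\PP^n)$, so $\Gamma_f$ meets $\Delta$; applying this to $A\circ\phi$ and $\phi\circ B$, both of which are given by base-point-free degree-$p$ forms, settles the two parts symmetrically and at once. The paper instead proves only part~\ref{lem.almost_eigenvectors1} directly, via the incidence variety $M = \set{(A,v)\in\PGL_3\times\PP^2}{A\phi(v)=v}$: the second projection is a bundle with fibre a $6$-dimensional parabolic, so $M$ is irreducible of dimension $8$, the image $\Gamma=\pi_1(M)$ is closed and irreducible, and a tangent-space computation at a point of $\Gamma$ with two preimages forces $\dim\Gamma=8=\dim\PGL_3$; part~\ref{lem.almost_eigenvectors2} is then deduced from part~\ref{lem.almost_eigenvectors1} by replacing $B$ with the matrix of $p$-th powers of its entries. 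Your approach buys brevity, generality (any endomorphism of $\PP^n$, not just Frobenius twisted by a linear map), and a uniform treatment of both statements; the paper's approach is more hands-on, staying within dimension counts for group actions and avoiding the diagonal class and refined intersection products (one does need to know, in your argument, that a nonzero intersection class forces a nonempty set-theoretic intersection, which is standard but worth citing). Either proof is complete.
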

\begin{proof}
		 We denote by $\theta \colon \PGL_3	\to \PGL_3$ the endomorphism that sends
		 a matrix $C$ to the matrix obtained from $C$ by taking the $p$-th power of each entry.
			
		We will only prove \ref{lem.almost_eigenvectors1}, as \ref{lem.almost_eigenvectors2} follows from it by choosing $A = \theta(B)$. We then have to show that
		\[
		\Gamma = \set{A \in \PGL_3}{A\phi(v) = v \ \textrm{for some $v \in \PP^2$}}
		\] 
		is equal to $\PGL_3$. We consider 
		\[
		M = \set{(A,v) \in \PGL_3\times \p^2 }{A\phi(v) = v}
		\] 
		and obtain $\Gamma=\pi_1(M)$, where $\pi_1 \colon M \to \PGL_3$ is the first projection. As $\pi_1$ is proper,
		we get that $\Gamma$ is closed in $\PGL_3$ and thus we only have to show that $\dim \Gamma = 8$. We observe that the identity matrix $I\in \PGL_3$ belongs to $\Gamma$ and that $\pi_1^{-1}(I)=\p^2(\F_p)$ is finite. By Chevalley's Upper Semi-continuity Theorem for the dimension of fibres \cite[Corollaire 13.1.5]{Gr1966Elements-de-geomet}, the set 
		$\set{A \in \Gamma}{\dim \pi_1^{-1}(\{A\})\ge 1}$ is closed in $\Gamma$. It then suffices to show that $M$ is irreducible and of dimension $8$.

		To show this, we will prove that the second projection $\pi_2\colon M\to \p^2$ is a locally trivial $P$-bundle, where  $P$ is the parabolic subgroup of $\PGL_3$ that fixes $[1:0:0]$. Note that $\pi_2 \colon M \to \PP^2$ is $\PGL_3$-equivariant with 
		respect to the natural action on $\PP^2$ and the $\PGL_3$-action on $M$ given by
		$B \cdot (A, v) \coloneqq (B A \theta(B)^{-1}, Bv)$. We then only need to show that $\pi_2$ is a trivial $P$-bundle over the open subset $U=\set{[p_0:p_1:p_2]\in \p^2}{p_0\not=0}$. We
		consider the morphism $h \colon U \to \PGL_3$ given by
		\[
				[p_0:p_1:p_2]  \mapsto  
				\begin{pmatrix}
					p_0 & 0 & 0 \\
					p_1 & p_0 & 0 \\
					p_2 & 0 & p_0
				\end{pmatrix} \, ,
		\]
		which satisfies $h(p)([1:0:0])=p$ for each $p\in U$. We get a $V$-isomorphism
		\[
				\begin{array}{ccc}
				P \times V & \stackrel{\simeq}{\longrightarrow} & \pi_2^{-1}(V) \\
				 (A, p) & \longmapsto & (h(p) A \theta(h(p)^{-1}), p),
				\end{array}
		\]
		whose inverse sends $(A, p)$ onto $ (h(p)^{-1} A \theta(h(p)), p)$.
	\end{proof}

\subsection{Affine linear systems of affine spaces} It turns out that the following definition is very useful for us: 
\begin{definition}
	\label{def.aff_linear_system_of_aff_spaces}
	Let $f_1, \ldots, f_n \in \k[x_1, \ldots, x_d]$. We say that a morphism 
	\[
		\begin{array}{rcl}
			\AA^d &\longrightarrow& \AA^n \\
			(x_1, \ldots, x_d) &\longmapsto& (f_1(x_1, \ldots, x_d), \ldots, f_n(x_1, \ldots, x_d))
		\end{array}
	\]
	is an \emph{affine linear system of affine spaces} if for each $\lambda_0 \in \k$ and each
	$(\lambda_1, \ldots, \lambda_n) \in \k^n \setminus \{0\}$ the
	polynomial $\lambda_0 + \lambda_1 f_1 + \ldots + \lambda_n f_n$ is not constant
	and the corresponding hypersurface in $\AA^d$ is isomorphic to $\AA^{d-1}$. 
	This is equivalent
	to say that the preimage of every 
	affine linear hypersurface in $\AA^n$ 
	under the morphism $(f_1, \ldots, f_n) \colon \AA^d \to \AA^n$ is isomorphic to $\AA^{d-1}$.
	
	We call two affine linear systems of affine spaces 
	$(f_1, \ldots, f_n), (g_1, \ldots, g_n) \colon \AA^d \to \AA^n$ \emph{equivalent} if there exist
	affine automorphisms $\alpha \in \Aff(\AA^d)$, $\beta \in \Aff(\AA^n)$ such that
	\[
		(g_1, \ldots, g_n) = \beta \circ (f_1, \ldots, f_n) \circ \alpha \, .
	\]
	
	If the preimage of every linear hypersurface in $\AA^n$ under the morphism 
	$f = (f_1, \ldots, f_n) \colon \AA^d \to \AA^n$ is isomorphic to $\AA^{d-1}$, then we say that
	$f$ is a \emph{linear system of affine spaces}. Hence, every affine linear system of affine spaces
	is a linear system of affine spaces.
\end{definition}

\begin{remark}
	Every automorphism $f \colon \AA^n \to \AA^n$ is an affine linear system of affine spaces
	and two automorphisms $f, g \colon \AA^n \to \AA^n$ are equivalent, if they are the same
	up to affine automorphisms at the source and target.
\end{remark}

\begin{remark}
	Note that the notions ``affine linear hypersurface'' and ``affine linear system of affine spaces'' 
	are not intrinsic notions of the affine space and of morphisms between them.
	They depend on the choice of coordinate systems of the affine spaces 
	(up to affine automorphisms). Therefore, as mentioned in the introduction, we always make 
	a particular choice of the coordinates of the affine spaces involved.
\end{remark}

\begin{example}
	Let $f_1, \ldots, f_n \in \k[x_1 \ldots, x_d]$.
	If $\deg(f_i) \leq 1$ for each $i$, then
	$f \coloneqq (f_1, \ldots, f_n) \colon \AA^d \to \AA^n$ is called an \emph{affine linear morphism}.
	In case $f$ is surjective, it is an affine linear system of affine spaces.
\end{example}

Next, we list some basic properties of affine linear systems of affine spaces.

\begin{lemma}
	\label{lem.properties_lin_systems_of_aff_spaces}
	Let $f_1, \ldots, f_n \in \k[x_1, \ldots, x_d]$ be polynomials and let
	$f = (f_1, \ldots, f_n)$ be the corresponding morphism $\AA^d \to \AA^n$.
	\begin{enumerate}[leftmargin=*]
		\item \label{lem.properties_lin_systems_of_aff_spaces-1}
		If $f = (f_1, \ldots, f_n)$ is an affine linear system of affine spaces and if 
		$f_{i, 1}$ denotes the homogeneous part of $f_i$ of degree $1$ for $i=1, \ldots, n$, then 
		$f_{1, 1}, \ldots, f_{n, 1}$ are linearly independent over $\k$ in $\k[x_1, \ldots, x_d]_1$. In particular,
		$n \leq d$.
		\item \label{lem.properties_lin_systems_of_aff_spaces0}
		Assume that $f$ is an affine linear system of affine spaces. Then for all automorphisms
		$\varphi \in \Aut(\AA^d)$ and all $\alpha \in \Aff(\AA^n)$, the composition
		$\alpha \circ f \circ \varphi \colon \AA^d \to \AA^n$ is an affine linear system of affine spaces.
		\item \label{lem.properties_lin_systems_of_aff_spaces1} 
		Assume that $\deg(f) = \max_{1 \leq i \leq n} \deg(f_i) = 1$. Then $f$
		is an affine linear system of affine spaces if and only if $f \colon \AA^d \to \AA^n$ is surjective.
		In particular, if $d \geq n$, then up to equivalence there is exactly
		one affine linear system of affine spaces $\AA^d\to \AA^n$ of degree $1$.	
		\item \label{lem.properties_lin_systems_of_aff_spaces1.5}
		If $f_1, \ldots, f_n \in \k[x, y, z]$ are of degree $\le3$, then 
		$(f_1, \ldots, f_n)\colon \AA^3 \to \AA^n$ defines a linear system of affine spaces
		if and only if it defines an \emph{affine} linear system of affine spaces.
		\item 
		\label{lem.properties_lin_systems_of_aff_spaces2}
		Let $\pi \colon \AA^n \to \AA^l$ be a surjective affine linear morphism. If $f$ is an
		affine linear system
		of affine spaces, then the composition $\pi \circ f \colon \AA^d \to \AA^l$ as well.
		\item 
		\label{lem.properties_lin_systems_of_aff_spaces3}
		Let $\rho \colon \AA^r \to \AA^d$ be a surjective affine linear morphism. If 
		$f$ is an affine linear system of affine spaces, then $f \circ \rho$ as well. If $d \leq 3$ and if
		$f \circ \rho$ is an affine linear system of affine spaces, then $f$ as well.
		\item
		\label{lem.properties_lin_systems_of_aff_spaces4}
		Assume that $d = n$. If $f = (f_1, \ldots, f_n) \colon \AA^n \to \AA^n$ 
		is an affine linear system of affine spaces, then the determinant of the Jacobian
		of $f$ lies in $\k^\ast$.
	\end{enumerate}
\end{lemma}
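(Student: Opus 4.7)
The seven items split into three groups: formal consequences of the definition ((2) and (5)), items reducing to linear algebra once (1) is known ((3)), and items importing external results (Corollary~\ref{coro:TrivialisationVariables} for (4), cancellation theorems for (6), and the Jacobian criterion Lemma~\ref{lem.Derksen} for (7)). I would prove them in the order (2), (1), (3), (5), (4), (6), (7), so that each item may freely invoke its predecessors.

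For (2), I unpack the definitions: an element of $\Aff(\AA^n)$ permutes affine hyperplanes of $\AA^n$ and sends non-constant affine combinations of the $y_i$ to non-constant ones, while $\varphi \in \Aut(\AA^d)$ preserves the $\AA^{d-1}$-isomorphism class of any hypersurface of $\AA^d$. For (1), I argue by contradiction: if the $f_{i,1}$ are linearly dependent, then after a linear change of coordinates on the target (valid by (2)) I may assume $f_{n,1} = 0$. If $f_n$ itself is constant, then $\lambda_0 + f_n$ is constant for every $\lambda_0$, violating the non-constancy condition with $(\lambda_1, \ldots, \lambda_n) = (0, \ldots, 0, 1)$; otherwise $V(f_n - f_n(0))$ is a proper hypersurface whose defining polynomial has neither constant nor linear term, so the origin lies on it as a point where the gradient vanishes, hence as a singular point of the scheme, contradicting smoothness of $\AA^{d-1}$. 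The bound $n \leq d$ then follows because the $f_{i,1}$ live in the $d$-dimensional space $\k[x_1, \ldots, x_d]_1$. Item (3) is now immediate: linear independence of the $f_{i,1}$ (by (1)) is equivalent to surjectivity of the affine map $f$, and conversely any affine surjection pulls affine hyperplanes back to affine hyperplanes of codimension~$1$; the uniqueness statement when $d \geq n$ reduces any such $f$ to the standard projection by elementary linear algebra. Item (5) is the same mechanism as (2): an affine surjection $\pi \colon \AA^n \to \AA^l$ pulls affine hyperplanes of $\AA^l$ back to affine hyperplanes of $\AA^n$.

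For (4), given a non-zero $(\lambda_1, \ldots, \lambda_n) \in \k^n$, the polynomial $g = \sum \lambda_i f_i$ has degree at most $3$ and $V(g) \simeq \AA^2$ by the linear system hypothesis, so Corollary~\ref{coro:TrivialisationVariables} gives $V(g - \lambda_0) \simeq \AA^2$ for every $\lambda_0 \in \k$, which is exactly the affine condition. For the forward direction of (6), extending $\rho$ to an automorphism of $\AA^r$ by adjoining additional affine forms identifies $\rho^{-1}(f^{-1}(H))$ with $f^{-1}(H) \times \AA^{r-d} \simeq \AA^{d-1} \times \AA^{r-d} = \AA^{r-1}$. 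The converse under $d \leq 3$ is the nontrivial step: setting $V = f^{-1}(H)$, one has $V \times \AA^{r-d} \simeq \AA^{r-1}$ and must deduce $V \simeq \AA^{d-1}$, which amounts to cancellation of $\AA^{r-d}$ from the product and holds for $d - 1 \leq 2$ by classical cancellation results for curves and for the affine plane (Fujita-Miyanishi-Sugie). Finally, (7) combines the definition with Lemma~\ref{lem.Derksen}: every preimage of an affine hyperplane is isomorphic to $\AA^{n-1}$, hence smooth, forcing $\det(\Jac(f)) \in \k^\ast$.

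The main obstacle is the converse in (6): it accounts for the restriction $d \leq 3$ and requires appealing to the nontrivial cancellation theorem for the affine plane. Every other item is either a definitional manipulation or a direct application of a previously established result in the paper.
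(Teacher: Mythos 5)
Your proposal is correct and follows essentially the same route as the paper: item (1) via the observation that a linear combination with vanishing linear part is either constant or defines a hypersurface singular at a point, (4) via Corollary~\ref{coro:TrivialisationVariables}, (6) via the product decomposition $(f\circ\rho)^{-1}(H)\simeq f^{-1}(H)\times\AA^{r-d}$ together with cancellation for the affine line and plane, and (7) via Lemma~\ref{lem.Derksen}. The only cosmetic differences are that you normalize to $f_{n,1}=0$ in (1) where the paper works directly with the linear combination, and that you deduce surjectivity in (3) from (1) where the paper argues that a non-surjective affine map has image inside an affine hyperplane.
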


\begin{proof}
	\ref{lem.properties_lin_systems_of_aff_spaces-1}: If
		there exists 
		$(\lambda_1, \ldots, \lambda_n) \in \k^n\setminus \{0\}$ such that 
		$\sum_{i=1}^{n} \lambda_i f_{i, 1} = 0$, we write 
		$\lambda_0=\sum_{i=1}^{n} \lambda_i f_{i}(0)\in \k$ and obtain that the polynomial 
		$\sum_{i=1}^{n}\lambda_i f_i-\lambda_0$ is either $0$ or defines a singular hypersurface of $\A^n$.
		In both cases $\sum_{i=1}^{n} \lambda_i f_i - \lambda_0$ does not define an $\AA^{d-1}$ in $\AA^d$.

	\ref{lem.properties_lin_systems_of_aff_spaces0}: This follows directly from the definition.
	
	\ref{lem.properties_lin_systems_of_aff_spaces1}: If $f$ is surjective, then the statement is clear.
	If $f$ is not surjective, then the image of $f$ is contained in an affine 
	linear hypersurface in $\AA^n$ and thus $f$ is not an affine linear system of affine spaces.
	
	\ref{lem.properties_lin_systems_of_aff_spaces1.5}: This
	follows from Corollary~\ref{coro:TrivialisationVariables}.
	
	\ref{lem.properties_lin_systems_of_aff_spaces2}: This follows, since the preimage of an affine linear
	hypersurface under $\pi$ is again an affine linear hypersurface.
	
	\ref{lem.properties_lin_systems_of_aff_spaces3}: Let $H \subset \AA^n$ be an affine 
	linear hypersurface.
	Then the preimage $(f \circ \rho)^{-1}(H)$ is isomorphic to $f^{-1}(H) \times \AA^{r-d}$. Hence, 
	the first claim follows. On the other hand, 
	as $f^{-1}(H)$ has dimension $d-1$ and 
	since Zariski's Cancellation Problem has an affirmative answer for the affine line
	(see \cite[Corollary~2.8]{AbHeEa1972On-the-uniqueness-}) and the affine plane
	(see \cite{Fu1979On-Zariski-problem, MiSu1980Affine-surfaces-co} and
	 \cite[Theorem~4]{Ru1981On-affine-ruled-ra}), 
	the second claim follows.
	
	\ref{lem.properties_lin_systems_of_aff_spaces4}: This follows from Lemma~\ref{lem.Derksen} below.
\end{proof}

The next Lemma is essentially due to Derksen, see~\cite[Lemma~2.3]{EsSh1997Some-combinatorial}:

\begin{lemma}
	\label{lem.Derksen}
	Let $f_1, \ldots, f_n \in \k[x_1, \ldots, x_n]$ and let $f = (f_1, \ldots, f_n) \colon \AA^n \to \AA^n$.
	Then the determinant of the Jacobian of $f$ lies in $\k^\ast$ if and only if 
	the preimage of each affine linear hypersurface under $f$ is a smooth hypersurface in $\AA^n$.
\end{lemma}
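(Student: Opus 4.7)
The plan is to apply the Jacobian criterion directly, observing that if $g = a_0 + a_1 f_1 + \cdots + a_n f_n$ is the pullback of a defining equation of an affine linear hyperplane $H \subset \AA^n$, then its gradient is the row vector $\nabla g = (a_1, \ldots, a_n) \cdot J(f)$. The whole lemma then reduces to translating linear independence of the rows of $J(f)$ into smoothness of such preimages.

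For the ``if'' direction, suppose $\det J(f) \in \k^\ast$. Given any $(a_0, a_1, \ldots, a_n) \in \k^{n+1}$ with $(a_1, \ldots, a_n) \neq 0$, the vector $(a_1, \ldots, a_n) J(f)(p)$ is nonzero at every $p \in \AA^n$, since $J(f)(p)$ is invertible. In particular, $g$ is nonconstant (its gradient is nowhere zero), so the weak Nullstellensatz gives $V(g) \neq \varnothing$, and the Jacobian criterion shows $V(g) = f^{-1}(H)$ is a smooth hypersurface of $\AA^n$.

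For the converse, fix a point $p \in \AA^n$ and a nonzero tuple $(a_1, \ldots, a_n) \in \k^n$. Choosing $a_0 = -\sum_i a_i f_i(p)$ places $p$ inside the preimage of the corresponding hyperplane, which by hypothesis is a smooth hypersurface, so $\nabla g(p) = (a_1, \ldots, a_n) J(f)(p) \neq 0$. Varying $(a_1, \ldots, a_n)$ over $\k^n \setminus \{0\}$, the rows of $J(f)(p)$ must be linearly independent, so $\det J(f)(p) \neq 0$. As $p$ was arbitrary, $\det J(f)$ is a polynomial in $\k[x_1, \ldots, x_n]$ without zeros on $\AA^n_\k$; since $\k$ is algebraically closed, the weak Nullstellensatz forces $\det J(f) \in \k^\ast$.

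The argument is essentially bookkeeping with the Jacobian criterion together with the weak Nullstellensatz; the only small care needed is to ensure that $g$ is genuinely nonconstant when $(a_1, \ldots, a_n) \neq 0$ (so that $V(g)$ is a hypersurface rather than empty or all of $\AA^n$), but this is automatic from the fact that a nowhere-vanishing gradient forces the polynomial to be nonconstant.
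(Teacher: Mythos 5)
Your proof is correct and follows essentially the same route as the paper's: both arguments identify the gradient of $\lambda_0+\sum\lambda_i f_i$ with the row combination $(\lambda_1,\ldots,\lambda_n)J(f)$, apply the Jacobian criterion for smoothness of the hypersurface $f^{-1}(H)$, and use the Nullstellensatz to pass between ``$\det J(f)$ vanishes somewhere'' and ``$\det J(f)\notin\k^\ast$''. The paper merely compresses this into a single chain of equivalences of the negated statements, whereas you separate the two implications and spell out the non-constancy of $g$; the content is the same.
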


\begin{proof}
	The determinant of the Jacobian of $f$ does not lie in $\k^\ast$ if and only if there exist
	$\lambda_1, \ldots, \lambda_n \in \k$, not all equal to zero, and there is a point $a \in \AA^n$ such that
	\[
		\sum_{i=0}^n \lambda_i \frac{\partial f_i}{\partial x_j}(a) = 0 \quad \textrm{for each $j = 1, \ldots, n$} \, .
	\]
	However, this last condition is equivalent to the existence of some
	$\lambda_0 \in \k$ and some $(\lambda_1, \ldots, \lambda_n) \in \k^n \setminus \{ 0 \}$ such that
	either $\lambda_0 + \lambda_1 f_1 + \ldots + \lambda_n f_n$ is zero
	or defines a singular hypersurface in $\AA^n$.
\end{proof} 

In the next Proposition, we study affine linear systems of affine spaces $\AA^2\to \AA^2$ of
degree $\leq 3$ up to affine automorphisms at the source and target.

\begin{proposition}
	\label{cor.Autos_of_A2_deg3}
	Let $f_1, f_2 \in \k[x, y]$ of degree $\leq 3$ 
	such that $f = (f_1, f_2) \colon \AA^2 \to \AA^2$ is a linear system of affine spaces.
	Then, up to affine coordinate changes at the source and target, we get
	$f = (x + q(y), y)$ where $q \in \k[y]$.
\end{proposition}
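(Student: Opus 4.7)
The proof proceeds by first establishing a Jacobian constraint, then normalizing $f_1$, and finally using the Jacobian equation to pin down $f_2$.

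First, by Corollary~\ref{Coro:A1inA2smalldegree}, every polynomial $g \in \k[x,y]$ of degree $\le 3$ whose zero locus is isomorphic to $\AA^1$ is, up to an affine change of coordinates and a non-zero scalar, of the form $x+p(y)$, and in particular is a variable of $\k[x,y]$; therefore $g-\mu$ is also such a variable for every $\mu \in \k$. Applied to the linear combinations $\lambda_1 f_1 + \lambda_2 f_2$ for $(\lambda_1,\lambda_2)\in\k^2\setminus\{0\}$, this upgrades the hypothesis from ``linear system of affine spaces'' to ``affine linear system of affine spaces''. Lemma~\ref{lem.Derksen}, which is valid in any dimension, then yields $\det \Jac(f) \in \k^*$.

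Next, applying Corollary~\ref{Coro:A1inA2smalldegree} to the curve $\{f_1=0\}\simeq\AA^1$, I obtain an affine automorphism of $\AA^2$ at source and a rescaling at target after which $f_1 = x + p(y)$, where $p\in\k[y]$ has degree $\le 3$ with no constant or linear term, so $p=\alpha_2 y^2+\alpha_3 y^3$ with $\alpha_2,\alpha_3\in\k$.

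The main step is to exploit the identity $\det \Jac(f) = \partial_y f_2 - p'(y)\,\partial_x f_2 = c \in \k^*$. Writing $f_2 = g_0(y) + x g_1(y) + x^2 g_2(y) + x^3 g_3$ with $g_i\in\k[y]$, $\deg g_i \le 3-i$, and $g_3\in\k$, and comparing coefficients of powers of $x$ gives the cascade
\[
  g_2' \;=\; 3\,p'(y)\,g_3, \qquad g_1' \;=\; 2\,p'(y)\,g_2, \qquad g_0' - p'(y)\,g_1 \;=\; c.
\]
Integrating and using the degree bounds forces, in characteristic different from $2$ and $3$, the sequence $g_3=0$, then $g_2\in\k$, then $g_1\in\k$, and ultimately $f_2 = \gamma_1 f_1 + c y + \gamma_0$ for some $\gamma_0,\gamma_1\in\k$. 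A final affine change at target produces $f\sim(x+p(y),\,y)$, which is of the claimed form. (The case $p=0$ is similar: the Jacobian identity gives $f_2 = cy + h(x)$ for some $h\in\k[x]$ of degree $\le 3$, and the desired form is obtained by swapping the source and target coordinates.)

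The hard part is in characteristics $2$ and $3$, where $p'$ degenerates ($p' = \alpha_3 y^2$ in characteristic $2$, $p' = 2\alpha_2 y$ in characteristic $3$), so that some of the above equations become vacuous and permit extra terms in $f_2$ \emph{a priori}. To rule them out I would invoke the full hypothesis that each $\{\lambda f_1 + f_2 = 0\}$ is isomorphic to $\AA^1$ rather than only the Jacobian: in characteristic $2$, the resulting quadratic equation in one of the variables turns, after an Artin--Schreier substitution, into the condition that a one-parameter family of polynomials in a single variable lies in the image of $u\mapsto u^2+u$, and a coefficient comparison in $\lambda$ forces the additional terms to vanish. A parallel argument using the inseparability of $y\mapsto y^3$ handles characteristic $3$.
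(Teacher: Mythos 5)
Your overall strategy coincides with the paper's: normalize $f_1$ to $x+p(y)$ via Corollary~\ref{Coro:A1inA2smalldegree}, upgrade the hypothesis so that Lemma~\ref{lem.Derksen} applies, and then squeeze $f_2$ out of $\det\Jac(f)\in\k^*$. The execution differs, though. The paper composes $f$ at the source with the non-affine shear $\psi=(x-q,y)$, so that $f\circ\psi=(x,\,f_2(x-q,y))$ and the Jacobian determinant collapses to the single partial derivative $\partial_y\bigl(f_2(x-q,y)\bigr)$; the shape of $f_2$ is then read off in one line and only a little degree bookkeeping is needed to undo $\psi$ by affine maps. You instead expand $f_2=g_0+xg_1+x^2g_2+x^3g_3$ and solve the cascade $g_2'=3p'g_3$, $g_1'=2p'g_2$, $g_0'-p'g_1=c$. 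In characteristic $0$ (or $\geq 5$) this works, but your chain ``$g_3=0$, then $g_2\in\k$, then $g_1\in\k$'' is too quick: when $\deg p=2$ the equation $g_1'=2p'g_2$ with $g_2\in\k^*$ is perfectly consistent with $\deg g_1=2$, and $g_2=0$ only follows after feeding $g_1=g_2p+\beta$ into $g_0'=c+p'g_1$ and using $\deg g_0\le 3$. That is a repairable slip.

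The genuine gap is characteristics $2$ and $3$, which you leave as a plan (``I would invoke\dots''). The paper works over an algebraically closed field of arbitrary characteristic, and these are precisely the characteristics where the subtle phenomena of the whole article occur, so this cannot be treated as a routine verification. Concretely, in characteristic $3$ the pair $(x+y^2,\,y+x^3)$ has Jacobian determinant $1-6x^2y=1$ and passes your cascade untouched ($g_3=1$, $g_2=g_1=0$, $g_0=y$), yet $f_1+f_2=x+y+y^2+x^3$ cuts out a smooth affine plane cubic of genus $1$, so this is not a linear system of affine spaces; similarly, in characteristic $2$ the pair $(x+q(y),\,ay+y^2)$ passes the Jacobian test for every $q$ while $f_2=c$ is a pair of disjoint lines. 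Eliminating the surviving monomials really does require returning to the hypothesis that every member of the pencil $\lambda_1f_1+\lambda_2f_2+\lambda_0$ is an $\AA^1$, and your one-sentence Artin--Schreier sketch does not carry this out. (Be aware that the paper's shear trick does not make the issue vanish either: in characteristic $p$ the identity $\partial_yg=a\in\k^*$ only forces $g=ay+p_0(x)+\sum_{k\ge 1}y^{kp}h_k(x)$, and killing the $y^{kp}$-terms again needs the fibre condition $\{g=c\}\simeq\AA^1$; so a complete argument must do exactly the work your sketch defers.)
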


\begin{proof}
	By Corollary~\ref{Coro:A1inA2smalldegree}, we may assume after an affine coordinate change
	in $(x, y)$ that $f_1 = x + q(y)$ for some $q \in \k[y]$ of degree $\leq 3$. 
	Set $\psi = (x - q, y) \in \Aut(\AA^2)$. The determinant of the Jacobian
	of $(x, f_2(x-q, y)) = f \circ \psi$ is a non-zero constant (due to Lemma~\ref{lem.properties_lin_systems_of_aff_spaces}\ref{lem.properties_lin_systems_of_aff_spaces4})
	and it is equal to the $y$-derivative of $f_2(x-q, y)$. Hence, 
	$f_2(x - q, y) = a y + p(x)$ for some $a \in \k^\ast$ and 
	$p \in \k[x]$, i.e.~$f_2 = a y + p(x + q)$. After scaling
	$f_2$ we may assume $a = 1$. If $\deg(q) \leq 1$, then $\psi \in \Aff(\AA^2)$
	and since $f \circ \psi = (x, y + p(x))$, the result follows after conjugation with $(x, y) \mapsto (y, x)$.
	If $\deg(q) \geq 2$, then $\deg(p) \leq 1$, since otherwise $\deg(f_2) = \deg(p) \deg(q) \geq 4$.
	Thus $\varphi = (x, y - p(x)) \in \Aff(\AA^2)$ and since 
	$\varphi \circ f = (x + q(y), y)$, the result holds.
\end{proof}

\subsection{Linear systems of affine spaces of degree $3$ 
with a conic in the base locus}

In this subsection we study linear systems $f \colon \AA^3 \to \AA^n$ of degree $3$ such that the
rational map $\PP^3 \bir \PP^n$ which extends $f$ contains a conic in the base locus. In fact,
this study will be important in order to prove that every automorphism of degree $3$ of $\AA^3$
can be brought into standard form (Proposition~\ref{prop:globally_xp_i+q_i} below). 
As explained in the introduction, we say that an affine linear system of affine spaces $f\colon \AA^3\to \AA^n$ is in \emph{standard form} if $f = (xp_1 + q_1, \ldots, x p_n + q_n)$
for some polynomials $p_i, q_i \in \k[y, z]$.

\begin{proposition}
	\label{prop.standard_form_if_conic_at_infinity}
	Let $f_1, \ldots, f_n \in \k[x, y, z]$ be polynomials and 
	assume that $f = (f_1, \ldots, f_n) \colon \AA^3 \to \AA^n$ is
	a linear system of affine spaces of degree $3$ such that there
	is a homogeneous irreducible polynomial of degree $2$
	that divides the homogeneous parts of degree $3$ of $f_1, \ldots, f_n$. Then $f$
	is equivalent to a linear system of affine spaces in standard form.
\end{proposition}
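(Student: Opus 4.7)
The plan is to use Corollary~\ref{cor.Properties_of_hypersurfaces_at_infinity} to analyse the members of the linear system at infinity, then to transport a normal model via a linear change of coordinates and exploit the ``moreover'' part of case~\ref{cor.PHI_Conic_tangent} to strip the $x$-degree down to~$1$. Decompose each $f_i$ as $f_{i,0}+f_{i,1}+f_{i,2}+f_{i,3}$ with $f_{i,j}\in\k[x,y,z]_j$, and factor $f_{i,3}=QL_i$ with $L_i\in\k[x,y,z]_1$, where $Q$ is the given irreducible homogeneous polynomial of degree~$2$. For every $(\lambda_0,\lambda_1,\ldots,\lambda_n)\in\k^{n+1}$ with $(\lambda_1,\ldots,\lambda_n)\ne 0$, the polynomial $g_\lambda=\lambda_0+\sum_{i=1}^n\lambda_if_i$ cuts out a copy of~$\AA^2$ in $\AA^3$, and its degree-$3$ part is $Q\cdot\sum_i\lambda_iL_i$. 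When $\sum_i\lambda_iL_i\ne 0$, this polynomial defines in $\PP^2$ the irreducible conic $\{Q=0\}$ together with a line, so cases~\ref{cor.PHI_line} and~\ref{cor.PHI_distinct_lines} of Corollary~\ref{cor.Properties_of_hypersurfaces_at_infinity} are excluded and case~\ref{cor.PHI_Conic_tangent} must apply: the line $\sum_i\lambda_iL_i$ is tangent to the smooth conic $Q=0$.

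The next step is to deduce that the $L_i$ are scalar multiples of a single linear form~$L$. The tangent lines to a smooth conic in $\PP^2$ form the dual conic $Q^{\vee}\subset(\PP^2)^{\vee}$. When $\car(\k)\ne 2$, $Q^{\vee}$ is itself a smooth conic, which contains no projective line; since the projectivisation of $\mathrm{span}(L_1,\ldots,L_n)$ lies in $Q^{\vee}$, it must be a single point, so $L_i=c_iL$ for a common nonzero $L\in\k[x,y,z]_1$ tangent to $Q$ and scalars $c_i\in\k$ not all zero, the latter guaranteed by $\deg(f)=3$. In characteristic~$2$ the dual of a smooth conic degenerates to the pencil of lines through its nucleus and the same count gives only $\dim\mathrm{span}(L_i)\le 2$; ruling out the value~$2$ requires an additional argument, for instance using Lemma~\ref{lem.2nd_power} applied to the squares $L_i^2$ together with the appropriate degree-$2$ parts of the $f_i$, or equivalently a direct verification that a two-dimensional $\mathrm{span}(L_i)$ forces some member of the pencil not to be isomorphic to~$\AA^2$.

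Once $L_i=c_iL$ is established, apply a linear automorphism of $\AA^3$ sending the pair $(Q,L)$ to $(xy+z^2,y)$; the intersection $\{Q=L=0\}\subset\PP^2$ is then the single point $[1:0:0]$, which corresponds to $[0:1:0:0]\in\PP^3$. For every $\lambda\in\k^n$ with $\sum_i\lambda_ic_i\ne 0$, the unique singularity of $\overline{\{g_\lambda=0\}}$ is $[0:1:0:0]$ by Corollary~\ref{cor.Properties_of_hypersurfaces_at_infinity}\ref{cor.PHI_Conic_tangent}, so the ``moreover'' assertion of that case yields $g_\lambda=xs_\lambda^2+q_\lambda$ with $s_\lambda\in\k[y,z]_1$ and $q_\lambda\in\k[y,z]$; in particular $g_\lambda$ has $x$-degree at most~$1$. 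The subspace $x\k[y,z]+\k[y,z]\subset\k[x,y,z]$ of polynomials of $x$-degree $\le 1$ is Zariski closed, and the locus $\{\lambda\in\k^n:\sum_i\lambda_ic_i\ne 0\}$ is Zariski dense in $\k^n$, so linearity forces each $f_i$ to lie in $x\k[y,z]+\k[y,z]$, i.e.\ $f_i=xp_i+q_i$ with $p_i,q_i\in\k[y,z]$, which is standard form. The main obstacle is the deduction of proportionality of the $L_i$ when $\car(\k)=2$, where the dual conic is degenerate and requires the extra analysis sketched above.
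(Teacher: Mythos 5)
Your overall strategy is sound, and your treatment of $\car(\k)\neq 2$ is correct and arguably cleaner than the paper's: the paper instead normalises $f_1=xy^2+y(z^2+az+b)+z$ via Proposition~\ref{Proposition:XxpqisA2smalldegreelistpqDeg3} and tests tangency of one explicit member of the pencil to force $\car(\k)=2$, whereas your dual-conic argument disposes of all odd characteristics at once. The final reduction to standard form (common tangency point, hence common singularity of all the closures at infinity, hence $x$-degree $\leq 1$ by Lemma~\ref{lem.hypersxpq} or the ``moreover'' clause of Corollary~\ref{cor.Properties_of_hypersurfaces_at_infinity}\ref{cor.PHI_Conic_tangent}) is also fine.

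The gap is the case $\car(\k)=2$ with $\dim\Span_\k(L_1,\ldots,L_n)=2$, which you only sketch; this is precisely where the paper spends roughly two thirds of its proof, and your proposed repairs do not go through as stated. Lemma~\ref{lem.2nd_power} requires that $\sum\lambda_i g_i$ and $\sum\lambda_i h_i$ share a \emph{common linear factor}, with all polynomials in \emph{two} variables; what the geometry actually gives you (via the normal form of Proposition~\ref{Proposition:XxpqisA2smalldegreelistpqDeg3}\ref{SecondCasexy2Deg3}) is that the degree-$2$ part $\sum\lambda_i f_{i,2}$, a ternary conic, is \emph{singular at the tangency point} $p_\lambda$ of the line $\sum\lambda_i L_i$ with $Q$ --- a different condition, which first has to be converted into a divisibility statement (a singular conic is reducible, then Lemma~\ref{Lemm:PencilLinearFactor} puts the $f_{i,2}$ into a common $\k[s,t]$) before anything resembling Lemma~\ref{lem.2nd_power} becomes usable, and even then the paper concludes by a direct argument on the points $p_\lambda$ rather than by that lemma. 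Moreover, the subcase where the relevant degree-$2$ parts vanish identically (in the paper: $f_{1,2}=f_{2,2}=0$) escapes any such argument entirely and must be excluded by a separate computation using Proposition~\ref{Prop:XfibratReminder}. Until this characteristic-$2$ analysis is actually carried out, the proof is incomplete.
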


\begin{proof}
For $i=1,\ldots,n$, we write $f_i=\sum_{j=0}^3 f_{i,j}$ where $f_{i,j}\in \k[x,y,z]_j$. Applying an automorphism of $\A^n$ we may assume that $f_{i,3}\not=0$ for each $i$. By assumption, there is an irreducible conic $\Gamma \subset \PP^2$ that is contained in the zero locus of $f_{i,3}$, for each $i\in \{1,\ldots,n\}$. Moreover, for each $i$, $f_i$ defines an $\A^2$ inside $\A^3$, so the polynomial $f_{i,3}$ defines in $\PP^2$ the conic $\Gamma$ and a tangent line to that conic in a point $q_i$
and the closure in $\PP^2$ of the hypersurface given by $f_i$ is singular at $q_i$ 
(see Corollary~\ref{cor.Properties_of_hypersurfaces_at_infinity}). If all the points $q_1,\ldots,q_n$ are the same, we can assume that these are $[1:0:0]$, and obtain the result by Lemma~\ref{lem.hypersxpq}. We thus assume that two of the $q_i$'s are distinct and derive a contradiction. 
	We may assume that $q_1\not=q_2$ by applying a permutation of $\A^n$.
	Applying automorphisms of $\A^3$, we may moreover assume that 
	$f_1 = x y^2 + y(z^2 + az + b) + z$ for some $a, b \in \k$ (see
	Proposition~\ref{Proposition:XxpqisA2smalldegreelistpqDeg3}). Hence, $q_1=[1:0:0]$, $\Gamma$ is the conic $xy+z^2=0$ and $q_2\in \Gamma\setminus \{q_1\}$, so $q_2=[-\xi^2:1:\xi]$ for some $\xi\in \k$. Replacing $f_2$ with $f_2\lambda$ for some $\lambda\in \k^*$, we obtain
	\[
		f_{1,3}= y(xy+z^2) \, , \quad f_{2,3}=(x-\xi^2y+2\xi z)(xy+z^2) \, .
	\]
	
	For each $\mu\in \k$, the polynomial $f_2+\mu^2 f_1$ defines a hypersurface $X_\mu\subset \A^3$ and its homogeneous part of degree $3$ is $(x-\xi^2y+\mu^2y+2\xi z)(xy+z^2)$. By Corollary~\ref{cor.Properties_of_hypersurfaces_at_infinity}\ref{cor.PHI_Conic_tangent}, the line $\ell_\mu$ given by $x-\xi^2y+\mu^2y+2\xi z$ is tangent to $\Gamma$. Choosing 
	$\mu=\xi$ when $\xi \neq 0$ and choosing $\mu = 1$ when $\xi = 0$	 gives $\car(\k)=2$.
 	We may then replace $f_2$ with $f_2+\xi^2f_1$ and assume that $\xi=0$. The point of tangency of $\Gamma$ and $\ell_\mu$ is then 
 	$p_\mu= [\mu^2: 1: \mu]$.
	
	Suppose first that $f_{1,2}=f_{2,2}=0$. We obtain
	 \[
	 	f_1 = y(xy+z^2)+ by+z \, , \quad f_2=x(xy+z^2)+\alpha x+\beta y +\gamma^2 z +\delta
	 \]
	 for some $\alpha,\beta,\gamma,\delta\in \k$. The polynomial $f_2+\gamma^2 f_1= (x+\gamma^2 y)(xy+z^2)+\alpha x+(\beta +b\gamma^2)y+\delta$ defines an $\A^2$, so the same holds 
	 when we replace $x$ and $z$ with $x+\gamma^2y,z+\gamma y$ respectively, hence for the polynomial
	 $x(xy+z^2)+\alpha x +(\beta+(b+\alpha)\gamma^2)y+\delta$, impossible by Proposition~\ref{Prop:XfibratReminder} (applied to the polynomial obtained by exchanging $x$ and $y$).	 
	 
	 We now assume that $f_{1,2}$ and $f_{2,2}$ are not both zero.
	There is an affine automorphism of $\A^3$ that sends $f_2+\mu^2 f_1$ onto $h=xy^2+y (z^2+cz+d)+z$ for some $c,d\in \k$ (Proposition~\ref{Proposition:XxpqisA2smalldegreelistpqDeg3}). Thus, $f_2+\mu^2 f_1$ is obtained by applying an element of $\GL_3(\k)$ to $h'=h(x+\varepsilon_1,y+\varepsilon_2,z+\varepsilon_3)$ for some $\varepsilon_1,\varepsilon_2,\varepsilon_3\in \k$. As $h'= h'_0+h'_1+h'_2+h'_3$ where $h'_i \in\k[x,y,z]_i$ and $h'_3=y(xy+z^2)$, 
	$h'_2=\varepsilon_1 y^2+cyz+\varepsilon_2 z^2$ are both singular at $[1:0:0]$, the homogeneous part of degree $2$ of $f_2+\mu^2 f_1$ is singular at $p_\mu$. 
	
	As $f_{1,2}$ and $f_{2,2}$ are not both zero and the set $\{p_\mu \mid \mu \in \k\}$ is not contained in a line, there is no linear factor that divides both $f_{1,2}$ and $f_{2,2}$.
	However, as $f_{2,2}+\mu^2 f_{1,2}$ is divisible by a linear factor for each $\mu\in \k$, there exist $s,t\in \k[x,y, z]_1$ such that $f_{1,2},f_{2,2}\in \k[s,t]$(Lemma~\ref{Lemm:PencilLinearFactor}).
	Remembering that $f_{1,2}=ayz$, we prove first that $a=0$. Indeed, otherwise $\k[s,t]=\k[y,z]$ and $f_{2,2}+\mu^2 f_{1,2} \in \k[y,z]$ is singular at $p_\mu$ so is a multiple of 
	$(\mu y+ z)^2=\mu^2 y^2 + z^2$, impossible as it contains $yz$ for infinitely many $\mu$. 
	Now that $a=0$ is proven, the polynomial $f_{2,2}+\mu^2 f_{1,2}=f_{2,2}$ is singular at each point $p_\mu$, so $f_{2,2}=0$, 
	in contradiction with the above assumption.
\end{proof}

\subsection{Affine linear systems in characteristic $2$ and $3$}
We call a morphism $f \colon Y \to X$  an \emph{$\AA^1$-fibration} if each closed fiber is (schematically) isomorphic to $\AA^1$. We moreover say that the $\AA^1$-fibration $f$ is locally trivial in the Zariski $($respectively \'etale$)$ topology if for each $x \in X$
	there is an open neighbourhood $U \subset X$ of $x$ (respectively 
	 an \'etale morphism $U \to U'$ onto an open neighbourhood $U'$ of $x$ in $X$) 
	such that
	the fiber product $U \times_X Y \to U$ is isomorphic to $U\times \AA^1$ over $U$. 
	
	Recall from the introduction, that an 
	$\AA^1$-fibration $f \colon Y \to X$ is called \emph{trivial} if there
	exists an isomorphism $\varphi \colon X \times \AA^1 \to Y$ such that the
	composition $f \circ \varphi \colon X \times \AA^1 \to X$ is the projection onto the first factor.
	
	An $\A^1$-bundle is then simply an $\A^1$-fibration that is locally trivial in the Zariski topology.
	
We now give two examples of linear systems of affine spaces of degree $3$
that are not equivalent to linear systems in standard form.

\begin{lemma}
	\label{lem.Lin_system_char_2}
	Assume that $\car(\k) = 2$ and let 
	\[
		f = x + z^2 + y^3 \quad \textrm{and} \quad
		g = y + x^2
	\] Then, $\pi=(f, g) \colon \AA^3 \to \AA^2$ is an affine linear system of affine spaces, which is not equivalent
	to an affine linear system in standard form. 
	Moreover, $\pi$ is an $\AA^1$-fibration that is not locally trivial in the \'etale topology.
\end{lemma}
\begin{proof}
	If $\lambda \neq 0$, then 
	$\lambda^2 f + g = \lambda^2 x + y + ( x + \lambda z)^2 + \lambda^2 y^3$ 
	defines an $\AA^2$ in $\AA^3$, since the linear polynomials
	$\lambda^2 x + y$, $ x + \lambda z$ and $y$ are linearly independent
	in $\k[x, y, z]_1$. On the other hand, both $f$ and $g$ define an $\AA^2$
	in $\AA^3$ as well. This implies that $\pi=(f, g) \colon \AA^3 \to \AA^2$ is a linear system of
	affine spaces and thus an affine linear system of affine spaces by Lemma~\ref{lem.properties_lin_systems_of_aff_spaces}\ref{lem.properties_lin_systems_of_aff_spaces1.5}.
	
	Let $X, Y \subset \PP^3$ be the closures of the hypersurfaces in $\AA^3$ which are given by $f$ and $f + g$,
	respectively. By Corollary~\ref{cor.Properties_of_hypersurfaces_at_infinity}\ref{cor.PHI_line} the singular
	locus of $X$ is equal to $[0:1:0:0]$ and the singular locus of $Y$ is equal to $[0:1:0:1]$.
	In particular, $X$, $Y$ have no common singularity and thus, 
	$\pi$ is not equivalent to an affine linear system in standard form by Lemma~\ref{lem.hypersxpq}.
	
	It remains to see that all closed fibres of $\pi$ are isomorphic to $\A^1$ but that $\pi$ is not 
	 locally trivial in the \'etale topology. 
	To simplify the situation, we apply some non-affine automorphisms 
	at the source and the target. We first apply $(x,y+x^2,z)$ (at the source) to get $(x+z^2+(y+x^2)^3,y)$. Applying $(x+y^3,y)$ at the target and $(x,y,z+x^3+xy)$ at the source gives
	\[
		\phi = (x +x^4y+z^2 , y) \colon \AA^3 \to \AA^2 \, .
	\]
	The fibre over a point $(x_0,y_0)$ with $y_0=0$ is isomorphic to $\A^1$, via its projection onto $z$. The fibre over a point $(x_0,y_0)\in \A^2$ with $y_0\not=0$ is isomorphic to $\A^1$, as one can apply $z\mapsto z+\sqrt{y_0}x^2$ to reduce to the previous case.
	
	It remains to see that $\phi$ is not  locally trivial in the \'etale topology. 
	The fibre $F$ of $\phi$ over the (non-closed) generic point of $\{x = 0\}$ is the scheme
	given by $x +x^4y+z^2$ inside $\AA^2_{\k(y)} = \Spec(\k(y)[x, z])$. By~\cite[Corollary~2.3.1 and Lemma~1.2]{Ru1970Forms-of-the-affin}, $F$ is non-isomorphic to the affine line $\AA^1_{\k(y)}$ over $\k(y)$,
	however after extending the scalars to $\k(\sqrt{y})$ we get 
	\[
		F \times_{\Spec(\k(y))} {\Spec(\k(\sqrt{2}))} \simeq \AA^1_{\k(\sqrt{y})} \, .
	\]
	By \cite[Lemma~1.1]{Russell1976} there doesn't exist any separable field extension $k(y) \subseteq K$ such that
	$F \times_{\Spec(\k(y))} \Spec(K) \simeq \AA^1_K$. Hence, $\phi$ and thus $\pi$ are not locally trivial in
	the \'etale topology.
	
\end{proof}

\begin{lemma}
	\label{lem.Lin_system_char_3}
	Assume that $\car(\k) = 3$ and let 
	\[
		f = x + z^2 + y^3 \quad \textrm{and} \quad
		g = z+x^3
	\]Then, $\pi=(f, g) \colon \AA^3 \to \AA^2$ is an affine linear system of affine spaces, which is not equivalent
	to an affine linear system in standard form. 
	Moreover, $\pi$ is an $\AA^1$-fibration that is not locally trivial in the \'etale topology.
\end{lemma}
\begin{proof}
	For each $\lambda \in \k$, the polynomial $f + \lambda^3 g = \lambda^3 z + x + z^2 + (y + \lambda x)^3$ defines an $\AA^2$ in $\AA^3$: replacing $y$ with $y-\lambda x$ 
	and $x$ with $x- \lambda^3 z$ gives $x + z^2 + y^3$.
	On the other hand, $g$ also defines an $\AA^2$
	in $\AA^3$. This implies that $\pi=(f, g) \colon \AA^3 \to \AA^2$ is a linear system of
	affine spaces and thus an affine linear system of affine spaces by Lemma~\ref{lem.properties_lin_systems_of_aff_spaces}\ref{lem.properties_lin_systems_of_aff_spaces1.5}.

	Let $X, Y \subset \PP^3$ be the closures of the hypersurfaces
	of $\AA^3$ which are given by $f$ and $g$, respectively. Then the singular locus of $X$
	is only the point $[0: 1: 0: 0]$ and the singular locus of $Y$ is the line $w = x = 0$,
	by Corollary~\ref{cor.Properties_of_hypersurfaces_at_infinity}\ref{cor.PHI_line}).
	Hence, $(f, g)$ is not equivalent to an affine linear system in standard form (see Lemma~\ref{lem.hypersxpq}).
	
	It remains to see that all closed fibres of $\pi$ are isomorphic to $\A^1$ but that $\pi$ is 
	not a trivial $\AA^1$-fibration. To simplify the situation, we apply some non-affine automorphisms 
	at the source and the target. We first apply $(x,y-x^2,z-x^3)$ (at the source) to get $(x+y^3+z^2+x^3z,z)$, then apply $(x-y^2,y)$ at the target to obtain
	\[
		\phi = (x+y^3+x^3z,z) \colon \AA^3 \to \AA^2 \, .
	\]
	The fibre over a point $(x_0,y_0)$ with $y_0=0$ is isomorphic to $\A^1$, via its projection onto $y$. The fibre over a point $(x_0,y_0)\in \A^2$ with $y_0\not=0$ is isomorphic to  $\AA^1$, 
	as one can apply $y\mapsto y-\sqrt[3]{y_0}x$ to reduce to the previous case. 
	
	Now, the fibre $F$ of $\phi$ over the generic point of $\{z = 0\}$
	is the scheme given by $x+y^3+x^3z$ inside $\AA^2_{\k(z)} = \Spec(\k(z)[x, y])$.
	Using again~\cite{Ru1970Forms-of-the-affin}, we find the
	same way as in the proof of Lemma~\ref{lem.Lin_system_char_2}, that there exists
	no separable field extension $\k(z) \subseteq K$ such that $F \times_{\Spec(\k(z))} \Spec(K) \simeq \AA^1_K$,
	however
	\[
		F \times_{\Spec(\k(z))} {\Spec(\k(\sqrt[3]{z}))} \simeq \AA^1_{\k(\sqrt[3]{z})} \, .
	\]
	This implies again, that neither $\phi$ nor $\pi$ is locally trivial in the \'etale topology.
\end{proof}

We now prove that these two examples of linear systems are unique in some sense (see Lemma~\ref{lem.char2} and \ref{lem.char3} below).

\begin{lemma}\label{A2A3linindep}
Let $\ell_1,\ell_2,\ell_3\in \k[x,y,z]_1$ be three linear polynomials such that $\ell_2$ and $\ell_3$ are linearly independent. Then, $\sum_{i=1}^3 (\ell_i)^i$ defines an $\A^2$ in $\A^3$ if and only if $\ell_1,\ell_2,\ell_3$ are linearly independent.
\end{lemma}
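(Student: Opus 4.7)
The plan is to handle the two implications separately. If $\ell_1, \ell_2, \ell_3$ are linearly independent, then they form a $\k$-basis of $\k[x,y,z]_1$, so after a linear change of coordinates on $\AA^3$ the polynomial $\sum_{i=1}^3 \ell_i^i$ becomes $x + y^2 + z^3$, whose zero locus is the graph of $(y,z) \mapsto -y^2 - z^3$, hence isomorphic to $\AA^2$.

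For the converse, suppose $\ell_1, \ell_2, \ell_3$ are linearly dependent. Since $\ell_2, \ell_3$ are independent, I can write $\ell_1 = \alpha \ell_2 + \beta \ell_3$ for some $\alpha, \beta \in \k$. A linear change of coordinates making $\ell_2 = y$ and $\ell_3 = z$ then reduces the polynomial to $f = \alpha y + \beta z + y^2 + z^3 \in \k[y,z]$, so the hypersurface $V(f) \subset \AA^3$ is the product $\AA^1_x \times C$ with $C = V(f) \subset \AA^2$. If $V(f) \cong \AA^2$, then Zariski cancellation for the affine line (the Abhyankar--Heinzer--Eakin result invoked in the proof of Lemma~\ref{lem.properties_lin_systems_of_aff_spaces}~\ref{lem.properties_lin_systems_of_aff_spaces3}) forces $C \cong \AA^1$, so it suffices to rule this out.

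Assuming $C \cong \AA^1$, I would compare the projective closures in $\PP^2$ with coordinate $w$ at infinity. Homogenizing $f$, the closure $\overline{C}$ has equation $\alpha y w^2 + \beta z w^2 + y^2 w + z^3 = 0$; it meets the line $\{w = 0\}$ only at $[0:1:0]$ and is smooth there of multiplicity $1$, since in the chart $y=1$ the local equation has nonzero linear part~$w$. On the other hand, since $C$ has degree $3$, Corollary~\ref{Coro:A1inA2smalldegree} provides $\psi \in \Aff(\AA^2)$ with $\psi(C) = V(x + c_2 y^2 + c_3 y^3)$ for some $c_2 \in \k$ and $c_3 \in \k^\ast$; the projective closure of this latter curve meets $\{w=0\}$ only at $[1:0:0]$, where the local equation $w^2 + c_2 y^2 w + c_3 y^3$ has multiplicity $2$. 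Since $\psi$ extends to an element of $\PGL_3$ preserving the line at infinity, and multiplicities of plane curves at points are $\PGL_3$-invariant, the equality $1 = 2$ is the desired contradiction.

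The main obstacle is selecting the right projective invariant. The multiplicity at the unique infinite point works cleanly because the $z^3$ term forces $\overline{C}$ to be tangent to the line at infinity with contact of order $3$ (an inflection point), whereas the normal form of Corollary~\ref{Coro:A1inA2smalldegree} shows that every affine $\AA^1$ of degree $3$ in $\AA^2$ necessarily acquires a cuspidal singularity at its unique point at infinity.
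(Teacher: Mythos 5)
Your proof is correct, and its overall structure is the one the paper uses: the forward direction by a linear change of coordinates reducing to the graph $x+y^2+z^3=0$, and the converse by writing the hypersurface as $\AA^1\times C$ for a plane cubic $C$ and invoking cancellation for the affine line to reduce to showing $C\not\simeq\AA^1$. The only divergence is in that last sub-step: the paper argues that an irreducible rational plane cubic must be singular, and since $C$ is smooth the singularity would have to sit at the unique point at infinity, where a direct computation shows the closure is in fact smooth. You instead invoke Corollary~\ref{Coro:A1inA2smalldegree} to put a hypothetical $C\simeq\AA^1$ into the normal form $x+c_2y^2+c_3y^3=0$ (with $c_3\neq 0$ by degree preservation) and compare the multiplicities at the unique point at infinity ($1$ versus $2$), which are $\PGL_3$-invariant. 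Both arguments are sound; the paper's is marginally more self-contained (it only needs the genus formula for plane curves), while yours leans on the classification result already established earlier in the paper and so involves no genus computation. Your closing remark that the singularity forced by the normal form is a cusp is true (it is unibranch of multiplicity $2$) but not needed: only the multiplicity enters the argument.
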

\begin{proof}
If $\ell_1,\ell_2,\ell_3$ are linearly independent, we may apply an element of $\GL_3(\k)$ and assume that $\ell_1=x$, $\ell_2=y$, $\ell_3=z$. 
Thus, $\sum_{i=1}^3 (\ell_i)^i=x+y^2+z^3$ defines an $\A^2$ in $\A^3$. Otherwise, we may assume that $\ell_1=ax+by$, $\ell_2=x$, $\ell_3=y$, so the hypersurface of $\A^3$ given by $\sum_{i=1}^3 (\ell_i)^i=0$ is isomorphic to $\Gamma \times \A^1$, where $\Gamma\subset \A^2$ is the curve given by $ax+by +x^2+y^3=0$. It remains to see that $\Gamma$ is not isomorphic to $\A^1$ (by the positive answer to Zariski's Cancellation Problem,
see \cite[Corollary~2.8]{AbHeEa1972On-the-uniqueness-}). Indeed, the closure of $\Gamma$ in $\PP^2$ 
would otherwise be an irreducible curve singular at infinity, which is here not the case.
\end{proof}
\begin{lemma}
	\label{lem.char2}
	Assume that $\car(\k) = 2$ and let $f = (f_1, \ldots, f_n) \colon \AA^3 \to \AA^n$ be
	an affine linear system of affine spaces.
	Suppose that $f_i = \sum_{j=0}^3 f_{i, j} \in \k[x, y, z]$ for each $i \in \{1, \ldots, n\}$, where 
	$f_{i,j}\in \k[x, y, z]_j$ and that 
	\[
		\Span_\k(f_{1,3},\ldots,f_{n,3})=\k y^3\text{ and }\Span_\k(f_{1,2},\ldots,f_{n,2},y^2)= 
		\k x^2 +\k y^2 +\k z^2.
	\] 
	Then, $n=2$ and $f$ is equivalent to the linear system $(x + z^2 + y^3,y + x^2)$ of Lemma~$\ref{lem.Lin_system_char_2}$.
\end{lemma}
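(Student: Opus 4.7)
The plan is to exploit Proposition~\ref{Proposition:XxpqisA2smalldegreelistpqDeg3} applied to linear combinations of the $f_i$'s, together with the characteristic-$2$ observation that $W := \k x^2 \oplus \k y^2 \oplus \k z^2$ equals the set $\{\ell^2 : \ell \in \k[x,y,z]_1\}$ of squares of linear forms (since $(\alpha x + \beta y + \gamma z)^2 = \alpha^2 x^2 + \beta^2 y^2 + \gamma^2 z^2$). After a linear change on $\AA^n$ I may assume $f_{1,3} = y^3$ and $f_{i,3} = 0$ for $i \geq 2$; each $f_{i,2}$ is then a square in~$W$. When $\lambda_1 \neq 0$, the polynomial $h = \lambda_1 f_1 + \sum_{i \geq 2} \lambda_i f_i$ has cubic part $\lambda_1 y^3$, a triple line; Cases~B and~C of Proposition~\ref{Proposition:XxpqisA2smalldegreelistpqDeg3} produce cubic parts of the forms $y\cdot r_2$ with $r_2 \notin \k[y]$, respectively $y(xy+z^2)$, neither of which is $y^3$, so $h$ falls into Case~A: $h = \alpha^\ast(x + r_2(y,z) + y^3)$. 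Matching cubic parts forces $\alpha_2^{\text{lin}} = y$; the requirement $h_2 \in W$ (no $xy$ or $yz$ cross-term) forces the $yz$-coefficient of $r_2$ to vanish, so $r_2 \in \k y^2 + \k z^2$; the $x^2$ and $z^2$ coefficients of $h_2$ determine the $x$- and $z$-coefficients $a_3, c_3$ of $\alpha_3^{\text{lin}}$; and the invertibility of $\alpha$ becomes a non-vanishing condition on a polynomial expression in the coefficients of the $f_i$.

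The first main step is to rule out $f_{1,2} \in \k y^2$. The span hypothesis then forces $n \geq 3$; after source and target linear changes I arrange $f_{2,2} = x^2$ and $f_{3,2} = z^2$ (any $y^2$-contributions being absorbed by source changes in $x$ and $z$). Applied to $h = f_1 + \lambda_2 f_2 + \lambda_3 f_3$, the invertibility of $\alpha$ becomes
\[
	(A_1 + \lambda_2 A_2 + \lambda_3 A_3)\,c_3 + (C_1 + \lambda_2 C_2 + \lambda_3 C_3)\,a_3 \neq 0
\]
with $a_3/c_3 = \sqrt{\lambda_2/\lambda_3}$, where $A_i, C_i$ denote the $x$- and $z$-coefficients of $f_{i,1}$. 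Specializing to $\lambda_2 = 0$ and to $\lambda_3 = 0$ forces $A_3 = C_2 = 0$ and $A_1 \neq 0$, $C_1 \neq 0$. Substituting $\mu = \sqrt{\lambda_2}$ and $\nu = \sqrt{\lambda_3}$, the remaining condition for $(\lambda_2, \lambda_3) \in (\k^\ast)^2$ becomes
\[
	P(\mu, \nu) := A_1 \nu + C_1 \mu + A_2 \mu^2 \nu + C_3 \mu \nu^2 \neq 0.
\]
Restricting to $\nu = 1$ gives the polynomial $A_2 t^2 + C_1 t + A_1$, which is non-constant (since $C_1 \neq 0$) and hence has a root in the algebraically closed field~$\k$; that root is necessarily non-zero (since $A_1 \neq 0$), yielding the contradiction.

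Hence $f_{1,2} \notin \k y^2$, and after a linear change preserving $y$ I arrange $f_{1,2} = z^2$. The span condition, together with further source and target reductions, gives $f_{2,2} = x^2$ and $f_{i,2} \in \k y^2 + \k z^2$ for $i \geq 3$; the analogous analysis applied to $h = f_1 + \lambda_2 f_2$ forces $A_2 = C_2 = C_1 = 0$ (and $B_2 \neq 0$ for $f_2$ to define $\AA^2$), and similarly $A_i = C_i = 0$ for $i \geq 3$. To rule out $n \geq 3$, consider $h = \lambda_2 f_2 + \lambda_3 f_3$ with $\lambda_2, \lambda_3 \in \k^\ast$: its quadratic part is $\ell^2$ with $\ell$ having a non-zero $x$-coefficient, while its linear part $(\lambda_2 B_2 + \lambda_3 B_3)\,y$ has none, so for $h$ to define $\AA^2$ one needs $\lambda_2 B_2 + \lambda_3 B_3 \neq 0$; this fails at $\lambda_3 = \lambda_2 B_2/B_3$ whenever $B_3 \neq 0$, while $B_3 = 0$ prevents $f_3$ itself from defining $\AA^2$. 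Thus $n = 2$, and routine affine normalizations of the remaining linear and constant terms yield $f \sim (x + z^2 + y^3,\, y + x^2)$.

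The main obstacle is the detailed bookkeeping of the affine transformation~$\alpha$ and the extraction of the polynomial non-vanishing condition $P(\mu,\nu) \neq 0$ on $(\k^\ast)^2$; its refutation by specializing to $\nu = 1$ relies crucially on having previously derived both $C_1 \neq 0$ and $A_1 \neq 0$ from the extreme one-parameter specializations $\lambda_2 = 0$ and $\lambda_3 = 0$.
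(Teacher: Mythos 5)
Your overall strategy is the same as the paper's: normalize $f_{1,3}=y^3$ and $f_{i,3}=0$ for $i\ge 2$, use that in characteristic $2$ every element of $\k x^2+\k y^2+\k z^2$ is the square of a linear form, reduce the quadratic parts to $z^2$ and $x^2$ by linear changes at source and target, and then extract a linear-independence condition on the linear parts of the combinations $\sum\lambda_i f_i$ (you derive it from Case~\ref{FirstCasekyDeg3} of Proposition~\ref{Proposition:XxpqisA2smalldegreelistpqDeg3} and the invertibility of $\alpha$; the paper packages exactly this as Lemma~\ref{A2A3linindep}). The one structural difference is that you treat the case $f_{1,2}\in\k y^2$ as a separate branch, whereas the paper avoids it entirely by first adding a suitable combination of $f_2,\dots,f_n$ to $f_1$ so that already $\Span_\k(f_{1,2},f_{2,2},y^2)=\k x^2+\k y^2+\k z^2$ (this keeps $f_{1,3}=y^3$ and $f_{i,3}=0$), after which one may take $f_{1,2}=z^2$ and $f_{2,2}=x^2$ directly. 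That shortcut would let you delete your entire first main step.

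In that extra branch there is a genuine, though reparable, error. Setting $\nu=1$ in $P(\mu,\nu)=A_1\nu+C_1\mu+A_2\mu^2\nu+C_3\mu\nu^2$ gives $A_2t^2+(C_1+C_3)t+A_1$, not $A_2t^2+C_1t+A_1$: you dropped the contribution of the term $C_3\mu\nu^2$. In the sub-case $A_2=0$ and $C_3=C_1\neq 0$ this specialized polynomial is the non-zero constant $A_1$, so the specialization $\nu=1$ produces no root and no contradiction, and the branch is left open. It can still be closed: fix $\mu_0\in\k^*$ with $A_1+A_2\mu_0^2\neq 0$ (possible since $A_1\neq 0$); then $P(\mu_0,\nu)=(A_1+A_2\mu_0^2)\nu+C_3\mu_0\nu^2+C_1\mu_0$ is non-constant in $\nu$ with non-zero constant term $C_1\mu_0$, hence has a root in $\k^*$, giving the desired contradiction. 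The remainder of your argument --- forcing $A_2=C_1=C_2=0$, $A_1\neq 0$, $B_2\neq 0$, eliminating $n=3$ via the condition $\lambda_2B_2+\lambda_3B_3\neq 0$, and the final rescaling --- is correct and agrees with the paper's proof.
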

\begin{proof}
As $\Span_\k(f_{1,2},\ldots,f_{n,2},y^2)= \k x^2 +\k y^2 +\k z^2$, we have $n\ge 2$. Applying a linear automorphism of $\A^n$, we may assume that $f_{1,3}=y^3$ and that $f_{i,3}=0$ for $i\ge 2$. We may moreover assume that $\Span_\k(f_{1,2},f_{2,2},y^2)= \k x^2 +\k y^2 +\k z^2$ by possibly adding multiples of 
$f_i$, $i \geq 2$ to $f_1$ and then permuting the $f_i$, $i\ge 2$.
Hence, $f_{1,2}=\ell_1^2+\alpha y^2$ and $f_{2,2}=\ell_2^2+\beta y^2$, where $\ell_1,\ell_2\in \k[x,z]_1$ are linearly independent and $\alpha,\beta\in \k$. Applying a linear automorphism at the source that fixes $y$, we may reduce to the case where $f_{1,2}=z^2$ and $f_{2,2}=x^2$. We may moreover assume that $f_{i,0}=0$ for each $i$, by applying a translation at the target.

We then choose $a,b,c,d\in \k$ such that $f_{1,1}=ax+bz \mod \k y$ and $f_{2,1}=cx+dz\mod \k y$. For each $\lambda\in \k$, the polynomial 
\[f_1+\lambda^2 f_2=((a+\lambda^2c)x+(b+\lambda^2 d)z+ \zeta y)+ (z+\lambda x)^2+y^3\]
defines an $\A^2$ in $\A^3$ (where $\zeta\in \k$ depends on $\lambda$). This implies that $((a+\lambda^2c)x+(b+\lambda^2 d)z+ \zeta y)$, $y$ and $z+\lambda x$ are linearly independent (Lemma~\ref{A2A3linindep}), and thus that $(a+\lambda^2c)+(b+\lambda^2 d)\lambda\not=0$. As this is true for all $\lambda$, we obtain $a\not=0$ and $b=c=d=0$, so $f_1=ax+\xi y+z^2+y^3$ and $f_2=\nu y+x^2$ for some $\xi,\nu\in \k$. As $f_2$ defines an $\A^2$ in $\A^3$, we have $\nu\not=0$. 
Applying $x \mapsto \sqrt{\nu} x$ at the source
and replacing $f_2$ by $\nu^{-1} f_2$, we may assume that $\nu=1$. We then replace $f_1$ with $f_1 + \xi f_2$ and $z$ with $z + \sqrt{\xi} x$ to assume $\xi=0$. This gives $(f_1,f_2)=(a x + z^2 + y^3,y + x^2)$. 
After replacing $x, y, z$ with $\mu x, \mu^2 y, \mu^3 z$ 
at the source where $\mu \in \k$
is chosen with $\mu^5 = a$ and after replacing $f_1, f_2$ with 
$f_1 / \mu^6$, $f_2 / \mu^2$, respectively, we may assume further that $a = 1$. This
achieves the proof if $n=2$. 

It remains to see that $n\ge 3$ leads to a contradiction. We add a multiple of $f_2$ to $f_3$ and may assume that $f_{3,2}$ is equal to $\varepsilon^2y^2+\tau^2z^2=(\varepsilon y+\tau z)^2$ for some $\varepsilon,\tau\in\k$. For each $\lambda\in \k$, the polynomial $\lambda^2f_1+f_2+ f_3=
(\lambda^2 x+ y+ f_{3,1})+( x+\varepsilon y+ (\lambda+\tau) z)^2+\lambda^2 y^3$ defines an $\A^2$ in $\A^3$. Hence, for each $\lambda\in \k^*$, the polynomials $\lambda^2 x+ y+ f_{3,1}$, $x+\varepsilon y+ (\lambda+\tau) z$ and $y$ are linearly independent (Lemma~\ref{A2A3linindep}). Writing $f_{3,1}=\alpha x+\beta z +\gamma y$, with $\alpha,\beta,\gamma \in \k$, the polynomials
\[ (\lambda^2+\alpha)x+\beta z\text{ and } x+(\lambda+ \tau) z\] are linearly independent, so $0\not=(\lambda^2+\alpha)(\lambda+ \tau)+\beta=\lambda^3+\lambda^2\tau+\lambda\alpha+(\alpha\tau+\beta)$, for each $\lambda\in \k^*$. Hence, $\alpha=\tau=\beta=0$, which yields $f_3\in \k[y]$. As $f_3$ defines an $\A^2$, we obtain $f_3=\gamma y$ with $\gamma\in \k^*$. But then $f_2+\gamma^{-1}f_3=x^2$ does not define an $\A^2$,
contradiction.
\end{proof}

\begin{lemma}
	\label{lem.char3}
	Assume that $\car(\k) = 3$, let $f_1, \ldots, f_n \in \k[x, y, z]$ of degree $\leq 3$
	such that $f = (f_1, \ldots, f_n) \colon \AA^3 \to \AA^n$ is an affine linear system of affine spaces
	and that the linear span of the homogeneous
	parts of degree $3$ of the $f_1, \ldots, f_n$ is a subspace of dimension $\ge2$ of
	$\k x^3 \oplus \k y^3 \oplus \k z^3$. Then either $f$ is equivalent to a linear system in standard form
	or $n = 2$ and $f$ is equivalent to the linear system
	$(x + z^2 + y^3, z+x^3)$
	in Lemma~$\ref{lem.Lin_system_char_3}$.
\end{lemma}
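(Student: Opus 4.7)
Write each $f_i = \sum_{j=0}^3 f_{i,j}$ with $f_{i,j}\in \k[x,y,z]_j$, and put $V = \Span_\k(f_{1,3}, \ldots, f_{n,3})\subseteq \k x^3\oplus \k y^3\oplus \k z^3$. The plan is to mimic the proof of Lemma~\ref{lem.char2}. The key characteristic~$3$ identity $(ax+by+cz)^3 = a^3x^3+b^3y^3+c^3z^3$ shows that the source $\GL_3(\k)$-action induces, after Frobenius twist, an action on $\k x^3\oplus \k y^3\oplus \k z^3$ that is transitive on subspaces of each dimension (cube roots exist in $\k$). Combining with a target linear change, I may assume either (I) $\dim V = 2$ with $f_{1,3} = y^3$, $f_{2,3} = x^3$, $f_{i,3}=0$ for $i \geq 3$; or (II) $\dim V = 3$ with $f_{1,3}=x^3$, $f_{2,3}=y^3$, $f_{3,3}=z^3$, $f_{i,3}=0$ for $i\geq 4$. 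A target translation then kills the constants $f_{i,0}$.

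The strategy for case~(II) is to derive a contradiction. First I note that standard form is out of reach: among cubes of linear forms, only those in $\k y+\k z$ cubed land inside $x\,\k[y,z]_2+\k[y,z]_3$, and these span the $2$-dimensional subspace $\k y^3+\k z^3$; hence a $3$-dimensional $V$ consisting of cubes can never be brought into the standard-form subspace by any linear coordinate change. Since the second conclusion of the lemma requires $n=2$, case~(II) must not occur. To rule it out directly, I apply Proposition~\ref{Proposition:XxpqisA2smalldegreelistpqDeg3} to each $f_i$: its cubic part being a pure cube, $f_i$ is affine-equivalent to Case~\ref{FirstCasekyDeg3} of the proposition, which forces $f_{i,2}$ to lie in a two-variable subspace $\k[u_i,v_i]_2$ for specific linear forms $u_i,v_i$. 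Combining these constraints with the analogous constraint imposed by each combination $f_i+f_j$ and $f_1+f_2+f_3$ (via Corollary~\ref{cor.Properties_of_hypersurfaces_at_infinity}\ref{cor.PHI_line} and Lemma~\ref{Lemm:PencilLinearFactor}) forces $f_{i,2}=0$ for all~$i$; the linear parts $f_{i,1}$ then cannot be consistently chosen along the singular line of each combination simultaneously, yielding the contradiction.

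Case~(I) follows the proof of Lemma~\ref{lem.char2}. For each $\lambda\in \k^\ast$, $f_1 + \lambda^3 f_2$ has cubic part $(y+\lambda x)^3$, so Corollary~\ref{cor.Properties_of_hypersurfaces_at_infinity}\ref{cor.PHI_line} constrains $f_{1,2}+\lambda^3 f_{2,2}$: its linear factors must pass through a common point on the line $y+\lambda x=0$. By Lemma~\ref{Lemm:PencilLinearFactor}, there exist $s,t\in\k[x,y,z]_1$ with $f_{1,2},f_{2,2}\in\k[s,t]$; normalizing and using Proposition~\ref{Prop:XfibratReminder} for $f_1$ and $f_2$ individually (whose cubic parts are already pure cubes), one pins down $f_{1,2}=z^2$ and $f_{2,2}=0$ up to source affine changes compatible with the cubic normalization. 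Then requiring each combination to define $\AA^2$ forces the linear parts to take the form $f_1 = x + z^2 + y^3$ and $f_2 = z + x^3$ after a final Frobenius-compatible rescaling, yielding the example of Lemma~\ref{lem.Lin_system_char_3}. For $n\geq 3$, each additional $f_i$ has cubic part $0$, and the constraints from $f_i+f_1,\,f_i+f_2$ being $\AA^2$ (via Corollary~\ref{Cor:Hypersurf_degree_2}) force $f_i$ to be an affine combination of $f_1,f_2$, which can be absorbed by a target affine change, reducing to $n=2$.

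The main obstacle is the detailed bookkeeping in case~(I) to deduce $f_{1,2}=z^2$, $f_{2,2}=0$ and the subsequent determination of the linear parts, together with the multi-parameter analysis in case~(II) showing that no consistent choice of $f_{i,2},f_{i,1}$ exists. In both cases the analysis parallels that of Lemma~\ref{lem.char2} but is more intricate because the cubic parts of $f_1$ and $f_2$ now span two independent dimensions rather than one.
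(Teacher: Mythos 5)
Your overall skeleton (normalize the span $V$ of cubic parts, treat $\dim V=2$ and $\dim V=3$ separately, imitate Lemma~\ref{lem.char2}) points in the right direction, but the proof has a genuine gap in Case~(I). After normalizing $f_{1,3}=y^3$, $f_{2,3}=x^3$ you assert that one ``pins down $f_{1,2}=z^2$ and $f_{2,2}=0$'' and arrives at the exceptional example. This is false as stated: the triangular automorphism $(x+y^3,\,y+z^3,\,z)$ satisfies all hypotheses of the lemma and, after the coordinate swap putting its cubic parts into your normal form, has $f_{1,2}=f_{2,2}=0$; it is equivalent to a system in standard form and certainly not to $(x+z^2+y^3,\,z+x^3)$. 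What is missing is exactly the case distinction that the paper runs on $V_2=\Span_\k(f_{1,2},\ldots,f_{n,2})$: if $\gcd(V_2)=1$ one lands in standard form via Lemma~\ref{LinearSystemLinearFactors} and the location of the singular point of the degree-$2$ part on the triple line (Corollary~\ref{cor.Properties_of_hypersurfaces_at_infinity}\ref{cor.PHI_line}); if a linear form divides all of $V_2$ one must still decide whether $V_2$ and $V_3$ have a common zero (standard form again) before the exceptional example can emerge. Your Case~(I) never produces the standard-form branch of the dichotomy, so as written it would ``prove'' a false classification.

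Two further steps do not go through. In Case~(II) ($\dim V=3$) the contradiction is only asserted: the claim that the quadratic parts are forced to vanish and that ``the linear parts cannot be consistently chosen'' is precisely the hard point. The paper's argument requires Lemma~\ref{lem.almost_eigenvectors}, a Frobenius fixed-point statement in $\PGL_3$, to manufacture a specific combination $\sum\lambda_i^3 f_i$ of the shape $\varepsilon\ell+\nu z^2+\ell^3$, which is then excluded by Lemma~\ref{A2A3linindep}; nothing in your sketch substitutes for this. Finally, your reduction of $n\ge 3$ to $n=2$ in Case~(I) invokes Corollary~\ref{Cor:Hypersurf_degree_2}, which concerns degree-$2$ hypersurfaces, whereas $f_i+f_1$ has cubic part $y^3\ne 0$; and the correct endgame there is a contradiction with the linear independence of the linear parts (Lemma~\ref{lem.properties_lin_systems_of_aff_spaces}\ref{lem.properties_lin_systems_of_aff_spaces-1}) obtained by applying Lemma~\ref{A2A3linindep} to $f_1+\lambda^3 f_2+f_3$, not an ``absorption'' of $f_3$ into an affine combination of $f_1,f_2$.
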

\begin{proof}
	Let $f_{i, j} \in \k[x, y, z]$ be the homogeneous part of degree $j$ of $f_i$ for $i=1, \ldots, n$, and let us define $V_j=\Span_\k(f_{1,j},\ldots,f_{n,j})\subseteq \k[x,y,z]_j$ for each $j$. By assumption, $V_3\subseteq \k x^3 \oplus \k y^3 \oplus \k z^3$, so $\sum \lambda_i f_{i, 3}$
	is a third power for all $(\lambda_1, \dots, \lambda_n) \in \k^n$. We may moreover assume that $V_0=0$ by applying a translation at the target. 
	
	It follows from Corollary~\ref{cor.Properties_of_hypersurfaces_at_infinity}\ref{cor.PHI_line}
	that for each $(\lambda_1, \dots, \lambda_n) \in \k^n$ such that $\sum \lambda_i f_{i, 3}\not=0$ (which is true for a general $(\lambda_1, \dots, \lambda_n)$), the polynomial 
	$\sum \lambda_i f_{i, 2}$ is either zero or defines a conic in $\mathbb{P}^2$ that is singular on a point of the triple line defined by $\sum \lambda_i f_{i, 3}$.
	
	 Suppose first that $\gcd(V_2)=1$, and thus that $\dim V_2\ge 2$. Lemma~\ref{LinearSystemLinearFactors} gives two polynomials $s,t\in \k[x,y,z]_1$ such that $V_2\subseteq \k[s,t]$. Changing coordinates on $\A^3$, we may assume that $s=y$ and $t=z$. For general $(\lambda_1,\ldots,\lambda_n)\in \k^n$,  
	 the hypersurface in $\PP^2$ given by the homogeneous polynomial $\sum \lambda_i f_{i, 2}$ is only singular at the point $p=[1:0:0]\in \p^2$ (as $\car(\k) \neq 2$), which is on the triple line defined by $\sum \lambda_i f_{i, 3}$. This implies that $V_3 \subseteq \k y^3 \oplus \k z^3$, so $f$ is a linear system 
	 in standard form.
	 
	 We may now assume that a linear polynomial $h\in \k[x,y,z]_1$ divides each element of $V_2$. Applying an element of $\GL_3$ at the source, we may thus assume that $h=z$. 
	 If a point $p\in \p^2$ is such that all elements of $V_2$ and $V_3$ vanish at $p$, we apply an element of $\GL_3$ at the source to assume $p=[1:0:0]$ and obtain that $f$ is in standard form. Hence, we may assume that the elements of $V_3$ do not share a common zero on the line $z=0$.
	 
	 We now prove that $z^2$ divides $f_{i,2}$ for each $i \in \{1,\ldots,n \}$. We suppose the converse to derive a contradiction. Applying a general element of $\GL_n$ at the target, we obtain that $f_{1,2}$ is not a multiple of $z^2$ and that $f_{1,3}$ and $f_{2,3}$ do not share a common zero on the line $z=0$. Choosing $\ell_1,\ell_2\in \k[x,y,z]_1$ such that $f_{1,3}=\ell_1^3$ and $f_{1,3}=\ell_1^3$, the elements $\ell_1,\ell_2,z$ are linearly independent. We may thus apply an element of $\GL_3$ and assume that $f_{1,3}=x^3$ and $f_{2,3}=y^3$. We write $f_{1,2}=z(ax+by+c z)$ 
	 $f_{2, 2} = z g$ for some $a,b,c\in \k$ 
	 with $a,b$ not both equal to zero and $g \in \k[x, y, z]_1$. 
	 For each $\lambda\in \k$, the polynomial $f_1+\lambda^3f_2$ defines an $\A^2$ in $\A^3$ and as $f_{1,3}+\lambda^3f_{2,3}=(x+\lambda y)^3$, the hypersurface in $\PP^2$ given by the homogeneous polynomial 
	 $f_{1,2}+\lambda^3 f_{2,2}= z (ax + by + cz + \lambda^3 g)$ is singular at a 
	 point $p_\lambda$ of the line in $\PP^2$ given by $x+\lambda y=0$ (Corollary~\ref{cor.Properties_of_hypersurfaces_at_infinity}\ref{cor.PHI_line}). This yields $p_\lambda=[-\lambda:1:0]$, and thus 
	 $-\lambda a + b + \lambda^3 g(-\lambda, 1, 0) = 0.$ 
	 This being true for each $\lambda$, we get $a=b=0$, giving the desired contradiction.
	 
	 We now show that $\dim(V_3)=2$. If $\dim(V_3)=3$, we may assume $(f_{1,3},f_{2,3},f_{3,3})=(x^3,y^3,z^3)$. By Lemma~\ref{lem.almost_eigenvectors}, there exists $(\lambda_1, \lambda_2, \lambda_3) \neq (0, 0, 0)$ and $\varepsilon \neq 0$ such that
	$\sum \lambda_i^3 f_{i, 1} = \varepsilon \ell_1$, where $ \ell_1=\lambda_1 x + \lambda_2 y + \lambda_3 z$. Hence, the polynomial $\sum \lambda_i^3 f_i$ is equal to $\varepsilon \ell_1+\nu z^2+(\ell_1)^3$ for some $\nu\in \k$ and does not define an $\A^2$ in $\A^3$: it is reducible
	if $\nu=0$ or if $z$ and $\ell_1$ are collinear, and otherwise does not define an $\A^2$ by Lemma~\ref{A2A3linindep}.
	
	Now that $\dim(V_3)=2$ and that the elements of $V_3$ do not share a common zero point on $z=0$, we may apply an element of $\GL_3$ that fixes $z$ to get $V_3=\k x^3+\k y^3$. Moreover, $V_2=\k z^2$ (as otherwise $V_2=\{0\}$ would give a linear system in standard form
	after exchanging $x$ and $z$). 
	We apply an element of $\GL_n$ at the target and assume that $f_{1,2}=z^2$ and $f_{1,3}\not=0$. We then add to $f_2$ a linear combination of the other $f_i$ and assume that $f_{2,2}=0$ and that $f_{2,3}$ is not a multiple of $f_{1,3}$. Applying again at the source an element of $\GL_3$ that fixes $z$, 
	we obtain $f_{1,3}=y^3$, $f_{2,3}=x^3$. We get $\alpha,\beta,\gamma,\delta,\varepsilon,\zeta\in \k$ such that
	\[
		f_1= (\alpha x+\beta y+\gamma z)+ z^2+y^3 \, , \quad f_2=(\delta x+\varepsilon y+\zeta z)+x^3 \, .
	\]
	For each $\lambda\in \k$, the polynomial $f_1+\lambda^3 f_3$ defines an $\A^2$ in $\A^3$. This implies that $(\alpha+\lambda^3\delta)x+(\beta+\lambda^3 \varepsilon)y+(\gamma+\lambda^3 \zeta)z$,
	$z$ and $y+\lambda x$ are linearly independent (Lemma~\ref{A2A3linindep}). Hence, $\lambda(\beta+\lambda^3 \varepsilon)-(\alpha+\lambda^3 \delta)\not=0$. This being true for each $\lambda$, we obtain $\beta=\delta=\varepsilon=0$ and $\alpha\not=0$. Hence $f_1= \alpha x+\gamma z+ z^2+y^3, f_2=\zeta z+x^3,$ with $\alpha\zeta\not=0$. 
	Replacing $f_1$ with $f_1-(\gamma/\zeta) \cdot f_2$ and replacing $y$ with $y + \kappa x$ where $\kappa^3=\gamma/\zeta$, we may assume that $\gamma=0$. 
	It remains then to choose $\xi\in \k^*$ with $\alpha^3\zeta=\xi^{15}$, to replace $x,y,z$ with $\xi^6/\alpha x, \xi^2 y, \xi^3z$ at the source and $f_1,f_2$ with $f_1/\xi^6$, $f_2\alpha^3/\xi^{18}$ at the target, to obtain 
	\[
		f_1= x+ z^2+y^3 \, , \quad f_2=z+x^3 \, .
	\]
	Thus, $f$ is the linear system of affine spaces 
	in Lemma~$\ref{lem.Lin_system_char_3}$ if $n=2$. It remains to see that $n\ge 3$ yields a contradiction.
	Adding to $f_3$ a linear combination of $f_1,f_2$ we obtain that $f_{3,3}=0$. This gives $f_3=\alpha x+\beta y +\gamma z+ \theta z^2$ with $\alpha,\beta,\gamma,\theta\in \k$. 
	Replacing $f_3$ by a multiple, we may assume that $\alpha\not=-1$ and $\theta\not=-1$. 
	For each $\lambda\in \k$, the polynomial $f_1+\lambda^3f_2+f_3=(1+\alpha)x+\beta y+
	(\gamma + \lambda^3) z+(1+\theta)z^2+(y+\lambda x)^3$ 
	defines an $\A^2$ in $\A^3$, so $y+\lambda x, z, (1+\alpha)x+\beta y+(\gamma + \lambda^3)z$ are linearly independent (Lemma~\ref{A2A3linindep}). This implies that $\beta\lambda-(1+\alpha)\not=0$. As this is true for each $\lambda$, we get $\beta=0$. But then the linear parts of $f_1,f_2,f_3$ are linearly dependent, contradicting Lemma~\ref{lem.properties_lin_systems_of_aff_spaces}\ref{lem.properties_lin_systems_of_aff_spaces-1}.
	\end{proof}

\subsection{Linear systems of affine spaces of degree $3$ 
with a line in the base locus}

In the following lemma we give 
necessary conditions for a polynomial of degree $\leq 3$ such that it defines an $\AA^2$
in $\AA^3$ and this hypersurface contains in its closure in $\PP^3$ a specific line.

\begin{lemma}\label{Lem:AlongALine}
	Let $F\in \k[w,x,y,z]$ be a homogeneous polynomial of degree $3$ such that $f=F(1,x,y,z)$ satisfies $\Spec(\k[x,y,z]/(f))\simeq \A^2$ and such that $F(0,x,0,z)=0$. Write $F$ as 
	\[
	F=wa_2(x,z)+yb_2(x,z)+w^2c_1(x,z)+wyd_1(x,z)+y^2e_1(x,z)+F_3(w,y)
	\]
	where $a_2,b_2\in \k[x,z]$ are homogeneous of degree $2$, $c_1,d_1,e_1\in \k[x,z]$ are homogeneous of degree $1$ and $F_3\in \k[w,y]$ is homogeneous of degree $3$. Then:
	\begin{enumerate}[leftmargin=*]
		\item\label{Lb2square}
		The polynomial $b_2\in \k[x,z]$ is a square;
		\item\label{La2b2}
		The polynomials $a_2, b_2\in \k[x,z]$ have a common linear factor;
		\item\label{La2e1}
		If $b_2 =0$, then $a_2, e_1\in \k[x,z]$ have a common linear factor;
		\item\label{Ld1a2_common_lin_fact}
		If $b_2 = e_1 = 0$ and $a_2$ is a square, 
		then the polynomials $a_2, d_1 \in \k[x, z]$ have a common linear factor;
		\item\label{La2b2d1e1zero_c1non-zero}
		If $a_2 = b_2 = d_1 = e_1 = 0$ and $\deg(f) \geq 2$, then $c_1 \neq 0$.
	\end{enumerate}
	Under the additional assumption that $\deg(f) = 3$, we have: 
	\begin{enumerate}[leftmargin=*]
		\setcounter{enumi}{5}
		\item\label{Lb_2e_1_zero_a2square}
		If $b_2 = e_1 = 0$, then the polynomial $a_2\in \k[x,z]$ is a square;
		\item\label{Lb_2e_1_zero_a2b1c1no_comm_div}
		If $b_2 = e_1 = 0$ and $(a_2, d_1) \neq (0, 0)$, then $\gcd(a_2, c_1, d_1) = 1$;
		\item\label{La2notsquareb2e1notzero}
		If $a_2$ is not a square, then $b_2\not=0$ or $e_1\not=0$;
	\end{enumerate}
\end{lemma}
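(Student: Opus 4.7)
My plan is to prove the eight items by analysing the geometry of the projective hypersurface $\overline{X}=\{F=0\}\subset\PP^3$ near the line $L=\{w=y=0\}$, using the classification of the cubic at infinity provided by Corollary~\ref{cor.Properties_of_hypersurfaces_at_infinity}. The two key observations, valid in all cases, are: first, the identity
\[
f_3(x,y,z)=F(0,x,y,z)=y\bigl(b_2(x,z)+ye_1(x,z)+Dy^2\bigr),
\]
where $D$ denotes the coefficient of $y^3$ in $F_3$; second, the gradient of $F$ restricted to $L$ equals $(a_2(x,z),0,b_2(x,z),0)$, so that the singular points of $\overline{X}$ on $L$ are precisely the common zeros in $\PP^1_{[x:z]}$ of $a_2$ and $b_2$. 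When $\deg(f)\le 2$, $F$ has $w$ as a factor and a direct expansion gives $b_2=e_1=0$; items \ref{Lb2square}-\ref{La2e1} then hold vacuously, item \ref{Ld1a2_common_lin_fact} follows from the factorisation of the degree-$2$ part of $f$ as a product of two linear forms through a common point on $y=0$ (via Corollary~\ref{Cor:Hypersurf_degree_2}), and item \ref{La2b2d1e1zero_c1non-zero} follows since $f_2=\mu y^2$ with $\mu\neq 0$ combined with $\Spec(\k[x,y,z]/(f))\simeq\AA^2$ forces $c_1\neq 0$.

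For $\deg(f)=3$, the three cases of Corollary~\ref{cor.Properties_of_hypersurfaces_at_infinity} constrain the residual conic $b_2+ye_1+Dy^2$ to be (A) an irreducible conic tangent to $y=0$, (B) proportional to $y^2$, or (C) a product of lines through a common point on $y=0$. Restricting to $y=0$ gives $b_2$, which in each case is either zero or has a double root, proving item \ref{Lb2square}. Item \ref{La2b2} holds because in each case a singular point of $\overline{X}$ lies on $L$ (the tangent point, the triple-line locus, or the common concurrent point), at which both $a_2$ and $b_2$ vanish. When $b_2=0$, we are in case (B) or in the sub-case of (C) where $f_3=y^2\cdot(e_1+Dy)$; the common point of the extra line with $y=0$ is a singularity of $\overline{X}$ on $L$ where $a_2$ and $e_1$ both vanish, giving item \ref{La2e1}. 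If moreover $e_1=0$, then $f_3=Dy^3$ and Corollary~\ref{cor.Properties_of_hypersurfaces_at_infinity}\ref{cor.PHI_line} forces $f_2=a_2+yd_1+Cy^2$ to be zero or a product of two linear forms through a common point on $y=0$; restricting to $y=0$ shows $a_2$ is a square (item \ref{Lb_2e_1_zero_a2square}, with \ref{La2notsquareb2e1notzero} its contrapositive), and the common linear factor of these two restrictions divides the $y$-coefficient $d_1$ (item \ref{Ld1a2_common_lin_fact}). Item \ref{La2b2d1e1zero_c1non-zero} for $\deg(f)=3$ reduces $f$ to $A+c_1(x,z)+By+Cy^2+Dy^3$; if $c_1=0$, then $f\in\k[y]$ and $\Spec(\k[x,y,z]/(f))=\AA^2_{x,z}\times\Spec(\k[y]/(g))$ with $g=A+By+Cy^2+Dy^3$, forcing $g$ to have a single simple root, hence $\deg(g)=1$ and $\deg(f)\le 1$.

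The main obstacle is item \ref{Lb_2e_1_zero_a2b1c1no_comm_div}. Assume for contradiction that a non-zero linear form $\ell\in\k[x,z]$ divides each of $a_2$, $c_1$ and $d_1$. Since $a_2=\lambda\ell^2$ by item \ref{Lb_2e_1_zero_a2square}, a linear change in $(x,z)$ bringing $\ell$ to $z$ makes $f$ independent of $x$, so $\Spec(\k[x,y,z]/(f))=\AA^1_x\times\Spec(\k[y,z]/(f'))$ with $f'\in\k[y,z]$ of degree~$3$, forcing $\Spec(\k[y,z]/(f'))\simeq\AA^1$ and in particular $f'$ irreducible. If $a_2=0$ (so $\lambda=0$ but the coefficient $\nu$ of $z$ in $d_1$ is non-zero), then $f'=(\mu+\nu y)z+(A+By+Cy^2+Dy^3)$: either $\mu+\nu y$ divides the $y$-polynomial, in which case $f'$ factors non-trivially and contradicts irreducibility, or the projection $(y,z)\mapsto y$ on the zero locus of $f'$ misses the point $y=-\mu/\nu$, giving $\Spec(\k[y,z]/(f'))\simeq\AA^1\setminus\{\text{pt}\}$, again a contradiction. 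If $a_2\neq 0$, Corollary~\ref{Coro:A1inA2smalldegree} provides an affine automorphism $\phi\colon(y,z)\mapsto(ay+bz+c,dy+ez+h)$ with $\phi^{\ast}(f')=z+p(y)$; inspecting the coefficients of $y^2z$, $yz^2$ and $z^3$ in $(ay+bz)^3$ (using the Frobenius in characteristics~$2$ and~$3$) forces $b=0$, and comparing the $z^2$-coefficient in the degree-$2$ homogeneous parts then yields $\lambda e^2=0$, hence $e=0$, contradicting $ae-bd\neq 0$.
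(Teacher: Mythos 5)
Your proof is correct, but it takes a genuinely different route from the paper's in the main case $\deg(f)=3$. The paper first shows that the group of affine automorphisms of $\PP^3$ preserving the line $w=y=0$ is generated by two explicit subgroups under which the truth of all eight assertions is invariant, and then reduces to the four normal forms of Corollary~\ref{cor:Degree3onepointoneline}, where the tuple $(a_2,b_2,c_1,d_1,e_1)$ can simply be read off in each case. You instead argue intrinsically: your two observations --- that $f_3=y\bigl(b_2+ye_1+Dy^2\bigr)$ and that the singular points of $\overline{X}$ on the line $w=y=0$ are exactly the common zeros of $a_2$ and $b_2$ --- combined with the trichotomy of Corollary~\ref{cor.Properties_of_hypersurfaces_at_infinity} give items \ref{Lb2square}--\ref{La2e1}, \ref{Lb_2e_1_zero_a2square} and \ref{La2notsquareb2e1notzero} directly and make the geometric role of each coefficient transparent. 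The price is item \ref{Lb_2e_1_zero_a2b1c1no_comm_div}, which the paper obtains for free from the normal forms but which you must establish by a separate reduction to a plane cubic isomorphic to $\AA^1$ and a coefficient comparison against Corollary~\ref{Coro:A1inA2smalldegree}; that argument (vanishing of the $z^3$- and $z^2$-coefficients forcing $b=e=0$ and hence a singular linear part) is correct, as is the $\GG_m$-type contradiction in the sub-case $a_2=0$. In the degenerate case $w\mid F$, your treatment of item \ref{Ld1a2_common_lin_fact} via the factorisation of $f_2$ into two linear forms meeting on $y=0$ is a geometric rephrasing of the paper's Eisenstein argument and is equally valid.
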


\begin{proof}The fact that $F(0,x,0,z)=0$ implies that $F$ can be written in the above form. 
	Note that $F=F_1+F_2+F_3$, where $F_1=wa_2(x,z)+yb_2(x,z)$, $F_2=w^2c_1(x,z)+wy d_1(x,z)+y^2e_1(x,z)$ and $F_3$ are homogeneous in $w,y$ of degree 
	$1$, $2$ and $3$, respectively. 
	It remains to see that the above eight assertions hold. 
	
	First, we assume that $w$ divides $F$. Then $\deg(f)<3$ and $b_2=e_1=0$, 
	so~\ref{Lb2square},~\ref{La2b2} and~\ref{La2e1} hold. If in addition $a_2$ is a square and if
	$a_2$ and $d_1$ would
	have no common non-zero linear factor, then
	the homogeneous part of $f$ of degree $2$ would be 
	$f_2 = a_2 + y(d_1 + \lambda y)$ for some $\lambda \in \k$. 
	As $a_2$ is a square, we may apply a linear coordinate
	change in $x, z$ and assume that $a_2 = x^2$. 
	We then write
	$d_1 = d_{1, 0} x + d_{1, 1} z$ with $d_{1,0},d_{1,1}\in \k$, and obtain
	\[
	f_2 = x^2 + d_{1, 0}yx + y(\lambda y + d_{1, 1}z) \, .
	\]
	Since $d_{1, 1} \neq 0$, the polynomial $f_2 \in \k[x, y, z]$ 
	is irreducible (e.g. by the Eisenstein criterion)
	which contradicts Proposition~\ref{Proposition:XxpqisA2smalldegreelistpqDeg3}
	and therefore~\ref{Ld1a2_common_lin_fact} holds.
	If $a_2 = d_1 = 0$ and 
	$\deg(f) \geq 2$, then $c_1 \neq 0$, since otherwise $f \in \k[y]$ would not be irreducible.
	Hence,~\ref{La2b2d1e1zero_c1non-zero} holds.
	
	We may now assume that $w$ does not divide $F$, which implies that $\deg(f)=3$.
	
	We observe that the group of affine automorphisms 
	$G \subset \Aff(\AA^3) \subset \Aut(\PP^3)$ which preserve the line 
	$L = \set{[w:x:y:z] \in \PP^3}{w=y=0}$ 
	is generated by the following two subgroups:
	\[
	\begin{array}{rcl}
	G_1 &=& 
	\Bigset{\varphi_{\alpha, \beta, \gamma, \delta} \in \Aut(\PP^3)}{ 
		\begin{bmatrix} 
		\alpha & \beta \\ 
		\gamma & \delta
		\end{bmatrix} 
		\in \GL_2(\k) } \\
	G_2 &=& 
	\Bigset{\psi_{\varepsilon, \tau_1,\tau_2,\tau_3, \xi_1, \xi_3} \in \Aut(\PP^3)}{ 
		\varepsilon \in \k^*,\tau_1,\tau_2,\tau_3, \xi_1, \xi_3 \in \k}
	\end{array}
	\]
	where
	\[
	\begin{array}{ccc}
	\PP^3 &\stackrel{\varphi_{\alpha, \beta, \gamma, \delta}}{\xrightarrow{\hspace*{1.5cm}} }& \PP^3 \\
	{[w:x:y:z]} &\xmapsto{\hspace*{1.5cm}}& [w:\alpha x+\beta z: y:\gamma x+\delta z]
	\end{array}
	\]
	and
	\[
	\begin{array}{ccc}
	\PP^3 &
	\stackrel{\psi_{\varepsilon, \tau_1,\tau_2,\tau_3, \xi_1, \xi_3}}{\xrightarrow{\hspace*{2.5cm}}}
	& \PP^3 \\
	{[w:x:y:z]} &\xmapsto{\hspace*{2.5cm}}& 
	[w:x +\xi_1 y+ \tau_1w: \varepsilon y+\tau_2w:z+\xi_3 y+\tau_3w] \, .
	\end{array}
	\]
	Indeed, this follows from the facts that 
	the action of $G$ on $L$ gives a group homomorphism $G\to \Aut(L)\simeq \PGL_2(\k)$ that is surjective on $G_1$, and that the kernel is generated by $G_2$ and the homotheties of $G_1$.
	The fact that all assertions~\ref{Lb2square}-\ref{La2notsquareb2e1notzero} hold
	is preserved under elements of $G_1$ and $G_2$.
	We may thus assume that $f$ is of the form given in Corollary~\ref{cor:Degree3onepointoneline} and we check that the
	assertions~\ref{Lb2square}-\ref{La2notsquareb2e1notzero} are satisfied.
	
	In case~\ref{LFirstCasekyDeg3}, 
	$(a_2,b_2, c_1, d_1, e_1)=(\lambda z^2,\mu z^2, x, \varepsilon z, \nu z)$ for some 
	$\lambda,\mu,\nu, \varepsilon \in \k$.
	
	In case~\ref{LSecondCasexyDeg3y}, 
	$(a_2,b_2, c_1, d_1, e_1)=(0,\mu z^2, z, x, \nu z)$ for some $\mu,\nu \in \k$.
	
	In case~\ref{LSecondCasexyDeg3z}, $f=xz+yz(\lambda y+\mu z)+y+\delta z$ where 
	$\lambda,\mu,\delta\in \k$ and $(\lambda,\mu)\not=(0,0)$, so 
	$(a_2,b_2, c_1, d_1 ,e_1)=(xz,\mu z^2, \delta z, 0, \lambda z)$.
	
	In case~\ref{LSecondCasexy2Deg3}, $f = xy^2 + y(z^2 + az +b) + z$ for some $a, b \in \k$, so
	$(a_2,b_2, c_1, d_1, e_1)=(0, z^2, z, az, x)$.
	
	In each case, $b_2$ is a square, and there is a linear factor that divides $a_2$, $b_2$ 
	and a linear factor that divides $a_2$, $e_1$. 
	Moreover, $a_2$ is not a square only in case~\ref{LSecondCasexyDeg3z} 
	and thus $b_2$ or $e_1$ is non-zero. This shows that~\ref{Lb2square},~\ref{La2b2},~\ref{La2e1}
	and~\ref{La2notsquareb2e1notzero} are satisfied. 
	The equalities $a_2 =b_2=d_1=e_1=0$ are only possible in case~\ref{LFirstCasekyDeg3}, where $c_1=x\neq 0$, 
	thus~\ref{La2b2d1e1zero_c1non-zero} is satisfied. The equalities $b_2 = e_1 = 0$ are only possible in the cases~\ref{LFirstCasekyDeg3} and~\ref{LSecondCasexyDeg3y};
	and then $a_2$, $d_1$ have a common non-zero linear factor, $a_2$ is a square,
	and if $(a_2, d_1) \neq (0, 0)$, then $\gcd(a_2, c_1, d_1) = 1$. 
	Thus~\ref{Ld1a2_common_lin_fact},~\ref{Lb_2e_1_zero_a2square} and~\ref{Lb_2e_1_zero_a2b1c1no_comm_div} are satisfied. 
\end{proof}

\begin{proposition}
	\label{prop.standard_form_if_line_at_infinity}
	Let $f_1, \ldots, f_n \in \k[x, y, z]$ be polynomials and 
	assume that $f = (f_1, \ldots, f_n) \colon \AA^3 \to \AA^n$ is
	an affine linear system of affine spaces of degree $3$ such that $y$ divides the homogeneous parts of degree $3$ of $f_1, \ldots, f_n$. Then, the following hold:
	\begin{enumerate}[$(i)$]
	\item
	Either $f$ is equivalent to a linear system of affine spaces in standard form, or $\car(\k)=2$ and 
	 $f$ is equivalent to $(x+ z^2 +y^3, y + x^2) \colon \AA^3 \to \AA^2$.
	\item\label{rem.All_b_i.2_are collinear}
	Writing the homogeneous
	part of degree $3$ of $f_i$ as $y(\eta_i y^2 + y e_{i, 1}+ b_{i, 2})$ where $\eta_i \in \k$ and
	$e_{i, 1}, b_{i, 2} \in \k[x, z]$ are homogeneous of degree $1$ and $2$, the polynomials $b_{1, 2}, \ldots, b_{n, 2}$ are collinear.
	\end{enumerate}
\end{proposition}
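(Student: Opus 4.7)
The plan is to analyse the subspace $B := \Span_\k(b_{1,2}, \ldots, b_{n,2}) \subseteq \k[x,z]_2$, use it to prove (ii), and then put $f$ into standard form or identify it as the exception. The key observation is that every element of $B$ is a square: for any $(\lambda_1, \ldots, \lambda_n) \in \k^n$, the combination $\sum \lambda_i f_i - \sum \lambda_i f_i(0)$ is either zero or defines an $\A^2$ in $\A^3$, and its projective closure still contains the line $L = \{w = y = 0\}$ because $y$ divides every $f_{i,3}$ and hence every linear combination of them, so Lemma~\ref{Lem:AlongALine}\ref{Lb2square} forces $\sum \lambda_i b_{i,2}$ to be a square. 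In characteristic $\neq 2$, the locus of squares in $\k[x,z]_2 \cong \k^3$ is an irreducible quadric surface (the Veronese image of $\ell \mapsto \ell^2$, cut out by $B^2 = 4AC$) which contains no linear subspace of dimension $\geq 2$, so $\dim B \leq 1$ and (ii) holds. In characteristic $2$ over an algebraically closed field, the squares coincide with a $2$-dimensional linear subspace which, after a linear change of $(x,z)$, equals $\k x^2 \oplus \k z^2$; hence $\dim B \leq 2$ a priori.

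Assuming $\dim B \leq 1$, I would bring $f$ into standard form. After a linear change of $(x,z)$ one may assume $B \subseteq \k z^2$, so $b_{i,2} = \lambda_i z^2$. For indices $i$ with $\lambda_i \neq 0$, Lemma~\ref{Lem:AlongALine}\ref{La2b2} forces $z \mid a_{i,2}$, which, combined with the fact that $c_{i,1}, d_{i,1}, e_{i,1}$ are automatically linear in $x$, puts $f_i$ in the shape $x p_i(y,z) + q_i(y,z)$. For indices $i$ with $b_{i,2} = 0$, I would combine Lemma~\ref{Lem:AlongALine}\ref{La2e1}, \ref{Lb_2e_1_zero_a2square}, \ref{Lb_2e_1_zero_a2b1c1no_comm_div} with the freedom to replace $f_i$ by $f_i + \tau f_{i_0}$ (where $b_{i_0,2} \neq 0$, if such $i_0$ exists; otherwise an additional linear change of $(x,z)$ after selecting a common factor in the $e_{i,1}$) to achieve the same normalisation, yielding the standard form.

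The main obstacle is the characteristic $2$ case with $\dim B = 2$. After basis changes I may take $b_{1,2} = x^2$, $b_{2,2} = z^2$ and $b_{i,2} = 0$ for $i \geq 3$. Applying Lemma~\ref{lem.2nd_power} with $(g_1, g_2) = (x^2, z^2)$ to $(a_{1,2}, a_{2,2})$ gives either $a_{1,2} = a_{2,2} = 0$ or $a_{j,2} = c g_j$ for a common $c \in \k^\ast$; the second case reduces to the first by the source translation $y \mapsto y + c$ (using $(c+y) x^2 = y x^2$ after the translation, in characteristic $2$). With $a_{1,2} = a_{2,2} = 0$, iterated applications of Lemma~\ref{Lem:AlongALine} to the family $\{f_1 + \mu^2 f_2 : \mu \in \k\}$ (whose $b$-coefficient is $(x + \mu z)^2$), combined with the parts \ref{La2e1}--\ref{Lb_2e_1_zero_a2b1c1no_comm_div} applied to each $f_i$ with $i \geq 3$, will force $n = 2$, kill the lower-order contributions $c_{i,1}, d_{i,1}, e_{i,1}, \eta_i$, and produce the hypothesis of Lemma~\ref{lem.char2}; that lemma then identifies $f$ with the exceptional family $(x + z^2 + y^3, y + x^2)$, in which the $b_{i,2}$ all vanish, so (ii) also holds in this case.
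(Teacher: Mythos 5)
Your starting point is sound and matches the paper's: every $\sum\lambda_ib_{i,2}$ is a square by Lemma~\ref{Lem:AlongALine}\ref{Lb2square}, and in characteristic $\neq 2$ the squares form a cone over a smooth conic containing no plane, so the $b_{i,2}$ are collinear. Your treatment of the indices with $b_{i,2}\neq 0$ (forcing $z\mid a_{i,2}$ via \ref{La2b2}, hence multiplicity $\ge 2$ at $[0:1:0:0]$) is also correct, and the trick of adding $\tau f_{i_0}$ handles the remaining indices whenever \emph{some} $b_{i_0,2}$ is non-zero. However, the proposal misplaces the exceptional family and, as a result, omits the two hardest parts of the argument. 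The exceptional system $(x+z^2+y^3,\,y+x^2)$ has $f_{1,3}=y^3$ and $f_{2,3}=0$, so \emph{all} its $b_{i,2}$ vanish: it lives in your ``$\dim B\le 1$'' branch, precisely in the sub-case where every $b_{i,2}=0$, which you claim always yields standard form. That claim is false. When all $b_{i,2}=0$, assertion \ref{La2b2} gives no information, and one must split further: if the $a_{i,2}$ share a linear factor one gets standard form; if every $\sum\lambda_ia_{i,2}$ is a square one lands in characteristic $2$, needs Lemma~\ref{lem.2nd_power} twice (via \ref{La2e1} and \ref{Ld1a2_common_lin_fact}) to kill the $e_{i,1}$ and $d_{i,1}$, and only then does Lemma~\ref{lem.char2} apply; and if a general $\sum\lambda_ia_{i,2}$ is not a square one must derive a contradiction using \ref{La2notsquareb2e1notzero} and an explicit computation with $e_{1,1}=x$, $e_{2,1}=z$. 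None of this is in your sketch, and ``selecting a common factor in the $e_{i,1}$'' is not available from Lemma~\ref{Lem:AlongALine}, which only relates each $e_{i,1}$ to the corresponding $a_{i,2}$.

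The second gap is in your $\dim B=2$ branch. There you cannot ``produce the hypothesis of Lemma~\ref{lem.char2}'': that lemma requires $\Span_\k(f_{1,3},\ldots,f_{n,3})=\k y^3$, whereas $b_{1,2}=x^2$ forces $f_{1,3}$ to contain the monomial $x^2y$. Nor can the conclusion be the exceptional family, whose $b_{i,2}$ all vanish while you assumed $\dim B=2$. In fact this case must be shown to be \emph{impossible} --- that impossibility is exactly statement \ref{rem.All_b_i.2_are collinear} --- and ruling it out is not a matter of iterating Lemma~\ref{Lem:AlongALine}: after normalising $b_{1,2}=z^2$, $b_{2,2}=x^2$ and killing $a_{1,2},a_{2,2}$ as you do, the paper writes $f_1=yz^2+\alpha x+\beta z+\varepsilon$, $f_2=yx^2+\gamma x+\delta z+\nu$ with coefficients in $\k[y]$, substitutes $z\mapsto z+\lambda x$ in $f_1+\lambda^2f_2$, and applies Proposition~\ref{Prop:XfibratReminder} to the resulting polynomials $R_\lambda=xp_\lambda(y)+q_\lambda(y,z)$ to reach a contradiction. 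That degree-one-fibre analysis is the essential content here and is absent from your proposal.
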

\begin{proof}
For each $i$ we denote by $F_i\in \k[w,x,y,z]$ a homogeneous polynomial of degree $3$ such that $f_i=F_i(1,x,y,z)$ and write it as
	\[wa_{i,2}(x,z)+yb_{i,2}(x,z)+w^2c_{i,1}(x,z)+wyd_{i,1}(x,z)+y^2e_{i,1}(x,z)+F_{i,3}(w,y)\]
	where $a_{i,2},b_{i,2}\in \k[x,z]$ are homogeneous of degree $2$, $c_{i,1},d_{i,1},e_{i,1}\in \k[x,z]$ are homogeneous of degree $1$, $F_{i,3}\in \k[w,y]$ is homogeneous of degree $3$, and 
	the following hold for all $(\lambda_1, \ldots, \lambda_n) \in \k^n$ (see Lemma~\ref{Lem:AlongALine}):
	\begin{enumerate}[leftmargin=*]
		\item[\ref{Lb2square}]
		$\sum \lambda_i b_{i,2}(x,z)$ is a square;
		\item[\ref{La2b2}]
		$\sum \lambda_i a_{i,2}(x,z)$ and $\sum \lambda_i b_{i,2}(x,z)$ have a common non-zero linear factor;
		\item[\ref{La2e1}]
		If $\sum \lambda_i b_{i,2}(x,z) = 0$, then $\sum \lambda_i a_{i,2}(x,z)$ and $\sum \lambda_i e_{i,1}(x,z)$ have a common non-zero linear factor;
		\item[\ref{Ld1a2_common_lin_fact}]
		If $\sum \lambda_i b_{i, 2}(x, z) = \sum \lambda_i e_{i, 1}(x, z) = 0$ and $\sum \lambda_i a_{i, 2}(x, z)$ 
		is a square, then $\sum \lambda_i a_{i, 2}(x, z), 
		\sum \lambda_i d_{i, 1}(x, z)$ have a common non-zero linear factor;
	\end{enumerate}
	and if $\deg(\sum \lambda_i f_i)= 3$, then: 	
	\begin{enumerate}[leftmargin=*]
		\item[\ref{La2notsquareb2e1notzero}]
		If $\sum \lambda_i a_{i,2}(x,z)$ is not a square, then $\sum \lambda_i b_{i,2}(x,z)\not=0$ or $\sum \lambda_i e_{i,1}(x,z)\not=0$.
	\end{enumerate}
	We distinguish, whether all $b_{i, 2}$ are collinear (case (A)) or not (case (B)). 
	It turns out that in fact case (B) cannot occur, which proves~\ref{rem.All_b_i.2_are collinear}.

	(A): Any two $b_{i,2}$ are collinear:
	After applying an element of $\GL_2(\k)$ on $x,z$, we may assume that $z^2$ divides all $b_{i,2}$ 
	by assertion~\ref{Lb2square}. If $z$ divides each $a_{i,2}$, the point $[0:1:0:0]$ will be a 
	singular point of the hypersurface in $\PP^3$ given by $F_i$ for each $i$, 
	so $f$ is in standard form. We may thus 
	assume that there is $j$ such that $z$ does not divide $a_{j,2}$. 
	Assertion~\ref{La2b2} then implies that $b_{i,2}=0$ for each $i$.
	
	If a linear factor divides all $a_{i,2}$, we apply an element of $\GL_2$ on $x,z$ and assume that $z$ divides all $a_{i,2}$, giving again that $f$ is in a standard form. We then assume that no linear factor divides all $a_{i,2}$. In particular, $\dim \Span_{\k}(a_{1, 2}, \ldots,a_{n, 2})\geq 2$.
	
	We assume that each $\sum \lambda_i a_{i, 2}$ is a square, which implies that $\car(\k) = 2$ and
	$\Span_{\k}(a_{1, 2}, \ldots,a_{n, 2})= \k x^2 \oplus \k z^2$.
	By assertion~\ref{La2e1}, we can
	apply Lemma~\ref{lem.2nd_power} in order to get $e_{i, 1} = 0$ for each $i =1, \ldots, n$.
	Then, by assertion~\ref{Ld1a2_common_lin_fact} we can apply Lemma~\ref{lem.2nd_power} once again
	and get $d_{i, 1} = 0$ for $i=1, \ldots, n$. Hence, the result follows from Lemma~\ref{lem.char2}.
	
	We now assume that $\sum \lambda_i a_{i, 2}$ is not a square for general $(\lambda_1,\ldots,\lambda_n)\in \k^n$. Assertion~\ref{La2notsquareb2e1notzero} implies that $\sum \lambda_i e_{i, 1}$ is a non-zero linear polynomial 
	 for general $(\lambda_1, \ldots, \lambda_n) \in \k^n$, 
	 which then needs to divide $\sum \lambda_i a_{i, 2}$ by Assertion~\ref{La2e1}. As no linear factor divides all $a_{i,2}$, we may apply a general element of $\GL_n$ at the target and may assume 
	 that $a_{1,2}$ and $a_{2,2}$ have no common factor, and then the same holds for $e_{1,1}$ and $e_{2,1}$
	 (as $e_{i, 1}$ divides $a_{i, 2}$ for $i=1, 2$).
	 We then apply $\GL_2$ on $x, z$ at the source to get $e_{1,1}=x$ and $e_{2,1}=z$. We get $a_{1,2}=x(\alpha x+\beta z)$, $a_{2,2}=z(\gamma x +\delta z)$ for some $\alpha,\beta,\gamma,\delta\in \k$. For each $\lambda\in \k$, the polynomial $e_{1,1}+\lambda e_{2,1}=x+\lambda z$ divides $a_{1,2}+\lambda a_{2,2}=\alpha x^2+(\beta+\lambda \gamma) xz+\delta\lambda z^2 $, so replacing $x=\lambda$ and $z=-1$ gives $0= \lambda^2(\alpha-\gamma)+(\delta-\beta)\lambda$. This being true for all $\lambda$, we obtain $\alpha=\gamma$ and $\beta=\delta$, contradicting the fact that $a_{1,2}$ and $a_{2,2}$ 
	 have no common factor.

	(B): It remains to suppose that not all $b_{i, 2}$, $i=1, \ldots, n$ are collinear and to derive a contradiction. 
	Since by assertion~\ref{Lb2square} each $\sum \lambda_i b_{i, 2}$ is a square, 
	we get $\car(\k) = 2$.
	After applying a linear automorphism at the target, we may assume that
	$b_{1, 2} = z^2$ and $b_{2, 2} = x^2$. According to~\ref{La2b2},
	we can apply Lemma~\ref{lem.2nd_power} and get
	$a \in \k$ with $a_{1, 2} = a z^2$, $a_{2, 2} = a x^2$. Replacing $y$ with $y +a$ at the source, we may assume $a=0$. 
	This gives
	 \[
	 	f_1 = yz^2 + \alpha x +\beta z +\varepsilon \quad \textrm{and} \quad f_2 = yx^2 +\gamma x +\delta z + \nu
	 \]
	 where $\alpha,\beta,\gamma,\delta,\varepsilon,\nu\in \k[y]$ 
	 (the first four of degree $\le 2$ and the last two of degree $\le 3$). For each $\lambda\in \k$, the polynomial $f_1+\lambda^2 f_2=y(z+\lambda x)^2+(\alpha+\lambda^2\gamma) x +(\beta+\lambda^2\delta) z+\varepsilon+\lambda^2\nu$ defines an $\A^2$ in $\A^3$. Replacing $z$ with $z +\lambda x$, the polynomial 
	 \[
	 	R_\lambda=yz^2+(\alpha + \lambda \beta+\lambda^2\gamma + \lambda^3\delta) x +(\beta+\lambda^2\delta) z+\varepsilon+\lambda^2\nu
	 \]
	 defines an $\A^2$ in $\A^3$. 
	 Let us write $p_\lambda=\alpha + \lambda\beta+\lambda^2\gamma + \lambda^3\delta\in \k[y]$.
	 
	 Let us write $\alpha=\sum_{i\ge 0} \alpha_i y^i$, $\beta=\sum_{i\ge 0} \beta_i y^i$, $\gamma=\sum_{i\ge 0} \gamma_i y^i$, $\delta=\sum_{i\ge 0} \delta_i y^i$, where $\alpha_i,\beta_i,\gamma_i,\delta_i\in \k$ for each $i\ge 0$. If there is some $i\ge 0$ such that the coefficient of $y^i$ of $p_\lambda$ is zero for a general (or equivalently for all) $\lambda\in \k$, then $\alpha_i +\lambda \beta_i+\lambda^2\gamma_i+\lambda^3\delta=0$ for each $\lambda\in \k$, so $\alpha_i=\beta_i=\gamma_i=\delta_i=0$.
	 
	 Suppose first that $p_\lambda\in \k[y]\setminus \k$ for a general $\lambda\in \k$. In this case, we may apply Proposition~\ref{Prop:XfibratReminder}: writing $R_\lambda= xp_\lambda(y)+q_\lambda(y,z)$ with $q_\lambda\in \k[y,z]$, the polynomial $q_\lambda(y_0,z)\in \k[z]$ is of degree $1$
	 for each root $y_0\in\k$ of $p_\lambda$. 
	 As the coefficient of $z^2$ in $q_\lambda(y,z)$ is $y$, we find that $0$ is the only possible root of $p_\lambda(y)$, and in fact is a root for a general $\lambda$, as we assumed $p_\lambda\in \k[y]\setminus \k$. Applying the above argument with $i=0$ implies that $\alpha_0=\beta_0=\gamma_0=\delta_0=0$, but then, for each $\lambda\in \k$ the polynomial $\beta+\lambda^2\delta$ is zero at $y=0$, so $q_\lambda(0,z)\in \k[z]$ is not of degree $1$.
	 
	 The last case is when $p_\lambda\in \k$ for each $\lambda\in \k$. This implies (again by the above argument) that $\alpha_i=\beta_i=\gamma_i=\delta_i=0$ for each $i\ge 1$, so $\alpha,\beta,\gamma,\delta\in \k$. We have $\delta \neq 0$, since otherwise
	 $f_2 \in \k[x, y]$ would define in $\AA^2_{x, y}$ 
	 a curve with two points at infinity. There exists thus $\lambda\in \k$ such that $p_\lambda=0$, so $R_\lambda$ does not define an $\A^2$ (it belongs to $\k[y,z]$ and the curve that it defines in $\A^2_{y,z}$ has two points at infinity).
	\end{proof}

\subsection{Reduction to affine linear systems of affine spaces in standard form}
\begin{proposition}
	\label{prop:globally_xp_i+q_i}
	Let $n\ge 1$ and let $f_1,\ldots,f_n\in \k[x,y,z]$ be polynomials of degree $\leq 3$
	such that $f = (f_1, \ldots, f_n) \colon \AA^3 \to \AA^n$ is a linear system
	of affine spaces. Then either $f$ is equivalent to a linear system of affine spaces in standard form,
	or $f$ is equivalent to one of the following linear systems of affine spaces:
	\begin{enumerate}
		\item \label{eq.special_char_2} 
		$(x+ z^2 +y^3, y + x^2) \colon \AA^3 \to \AA^2$ where $\car(\k) = 2$,
		or
		\item \label{eq.special_char_3} 
		$(x + z^2 + y^3, z + x^3) \colon \AA^3 \to \AA^2$ where $\car(\k) = 3$.
	\end{enumerate}
\end{proposition}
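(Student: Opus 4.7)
The plan is to analyse the top-degree parts of $f$. Let $V_j = \Span_\k(f_{1,j},\ldots,f_{n,j}) \subseteq \k[x,y,z]_j$ for $j\in\{0,1,2,3\}$, where $f_{i,j}$ is the homogeneous part of $f_i$ of degree $j$. After applying a translation at the target we may assume $V_0 = 0$. By Lemma~\ref{lem.properties_lin_systems_of_aff_spaces}\ref{lem.properties_lin_systems_of_aff_spaces1.5}, $f$ is an \emph{affine} linear system of affine spaces, so each non-zero $\sum_i \lambda_i f_i$ defines a copy of $\AA^2$ in $\AA^3$. The strategy is to split into cases according to $g = \gcd V_3$: if $g$ is non-trivial the propositions on the behaviour at infinity apply directly, and if $g$ is a constant a discriminant computation will show that $f$ is already in standard form.

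Assume first that $V_3 \neq 0$. If $g$ has an irreducible quadratic factor, this factor divides every $f_{i,3}$ and Proposition~\ref{prop.standard_form_if_conic_at_infinity} yields the standard form. If $g$ has a linear factor, a linear coordinate change brings it to $y$ and Proposition~\ref{prop.standard_form_if_line_at_infinity} delivers either the standard form or the characteristic-$2$ exceptional system $(x+z^2+y^3,\,y+x^2)$. Otherwise $\gcd V_3 = 1$, hence $\dim V_3 \geq 2$. Every non-zero element of $V_3$ is the degree-$3$ part of an $\AA^2$-hypersurface of degree $3$, and is therefore divisible by a linear factor in each of the three cases of Corollary~\ref{cor.Properties_of_hypersurfaces_at_infinity}. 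Applying Lemma~\ref{LinearSystemLinearFactors} to $V_3$, we obtain either $V_3 \subseteq \k[s,t]_3$ for two independent linear forms $s,t$, or $\car\k = 3$ and $V_3 = \k x^3 \oplus \k y^3 \oplus \k z^3$. The second alternative is disposed of by Lemma~\ref{lem.char3}.

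In the first alternative, after a linear change of coordinates we assume $V_3 \subseteq \k[y,z]_3$. If every element of $V_3$ is a cube of a linear form, then the set $\{\ell^3 : \ell \in \k[y,z]_1\}$ is a $\k$-linear subspace only when $\car\k = 3$, and then $V_3 \subseteq \k y^3 \oplus \k z^3 \subseteq \k x^3 \oplus \k y^3 \oplus \k z^3$, so Lemma~\ref{lem.char3} again concludes. Otherwise $V_3$ is not contained in the discriminant of binary cubics in $\k[y,z]_3$: any positive-dimensional linear subspace of this discriminant (the tangent surface of the twisted cubic in $\PP(\k[y,z]_3)$) is a ruling $\{\ell^2 m : m \in \k[y,z]_1\}$, and its containment would force $\ell^2 \mid \gcd V_3 = 1$, a contradiction. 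Consequently, for generic $\lambda$ the cubic $\sum_i \lambda_i f_{i,3}$ factors as three distinct linear forms in $\k[y,z]$, all passing through $[1:0:0] \in \PP^2$. By Corollary~\ref{cor.Properties_of_hypersurfaces_at_infinity}\ref{cor.PHI_distinct_lines}, the projective closure of the $\AA^2$-hypersurface $\sum_i \lambda_i f_i = 0$ is singular at $[0:1:0:0] \in \PP^3$, so Lemma~\ref{lem.hypersxpq} furnishes a decomposition $\sum_i \lambda_i f_i = x p_\lambda + q_\lambda$ with $p_\lambda, q_\lambda \in \k[y,z]$. The vanishing of the coefficient of every monomial $x^a y^b z^c$ with $a \geq 2$ is a linear condition in $\lambda$ which holds for generic $\lambda$ and therefore identically, so each individual $f_i$ is in standard form.

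It remains to treat $V_3 = 0$. If moreover $V_2 = 0$, the claim follows from Lemma~\ref{lem.properties_lin_systems_of_aff_spaces}\ref{lem.properties_lin_systems_of_aff_spaces1}. Otherwise Corollary~\ref{Cor:Hypersurf_degree_2} forces every non-zero element of $V_2$ to be a reducible quadratic form, so applying Lemma~\ref{LinearSystemLinearFactors} to $V_2$ we obtain either a coordinate change placing $V_2 \subseteq \k[y,z]_2$, in which case $f$ is immediately in standard form, or the configuration $\car\k = 2$ and $V_2 = \k x^2 \oplus \k y^2 \oplus \k z^2$. The latter is excluded by the Frobenius mechanism used in the proof of Lemma~\ref{lem.char3}: selecting three $f_i$ with $f_{i,2} \in \{x^2, y^2, z^2\}$ and invoking Lemma~\ref{lem.almost_eigenvectors} produces a non-trivial $\lambda$ for which the degree-$1$ part of $\sum_i \lambda_i f_i$ is proportional to the square root of its degree-$2$ part, and an Artin--Schreier computation then exhibits $\sum_i \lambda_i f_i$ as reducible, contradicting the $\AA^2$-condition. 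The hardest step will be the discriminant analysis in the sub-case $V_3 \subseteq \k[y,z]_3$ with $\gcd V_3 = 1$: verifying that the only obstruction to a generic member having three distinct roots is the characteristic-$3$ inclusion $V_3 \subseteq \k y^3 \oplus \k z^3$, so that in all other situations Corollary~\ref{cor.Properties_of_hypersurfaces_at_infinity}\ref{cor.PHI_distinct_lines} supplies the common singular point $[0:1:0:0]$ that activates Lemma~\ref{lem.hypersxpq}.
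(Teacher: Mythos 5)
Your proof has the same architecture as the paper's: a case split on the top-degree part, an analysis of $\gcd(V_3)$ (resp.\ of $V_2$), and the same key inputs --- Propositions~\ref{prop.standard_form_if_conic_at_infinity} and~\ref{prop.standard_form_if_line_at_infinity}, Lemma~\ref{LinearSystemLinearFactors}, Lemmata~\ref{lem.char2} and~\ref{lem.char3}, and the endgame via Corollary~\ref{cor.Properties_of_hypersurfaces_at_infinity}\ref{cor.PHI_distinct_lines} and Lemma~\ref{lem.hypersxpq}. The one place where you genuinely diverge is the sub-case $\gcd(V_3)=1$, $V_3\subseteq\k[y,z]_3$, and that is also where the argument is shaky. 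You want a generic member of $V_3$ to be \emph{squarefree}, and you justify this by classifying the positive-dimensional linear subspaces of the non-squarefree locus of binary cubics as the rulings $\{\ell^2m : m\in\k[y,z]_1\}$ of the tangent developable of the twisted cubic. That classification is the characteristic-zero picture: in characteristic $2$ the non-squarefree locus is the smooth quadric $a_0a_3=a_1a_2$ (in the coefficients $a_0,\dots,a_3$ of $y^3,y^2z,yz^2,z^3$), whose \emph{second} ruling consists of the lines $\{\ell^2 m : \ell\in\k[y,z]_1\}$ with $m$ fixed; and in characteristic $3$ the locus additionally contains the line of perfect cubes $\k y^3\oplus\k z^3$. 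Your conclusion survives only because each of these extra lines happens to have non-trivial gcd or to consist entirely of cubes (which you excluded beforehand), but the argument as written does not see them. More importantly, the entire step is superfluous: Corollary~\ref{cor.Properties_of_hypersurfaces_at_infinity}\ref{cor.PHI_distinct_lines} only needs $\sum\lambda_i f_{i,3}$ to define at least two \emph{distinct} lines, i.e.\ to not be a perfect cube --- the case $\ell^2m$ with $\ell\not\sim m$ is perfectly acceptable --- and you have already disposed of the all-cubes case; this is exactly how the paper concludes. One further small omission: in the degree-$2$ case you apply Lemma~\ref{LinearSystemLinearFactors} to $V_2$ without first treating $\gcd(V_2)\neq 1$, which that lemma excludes by hypothesis (that sub-case yields standard form immediately after sending the common linear factor to $y$). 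With these repairs the proof is correct.
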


\begin{remark}
	The families of linear systems of affine spaces in~\ref{eq.special_char_2} and~\ref{eq.special_char_3}
	from Proposition~\ref{prop:globally_xp_i+q_i} are the linear systems of affine spaces
	from Lemmata~\ref{lem.Lin_system_char_2} and~\ref{lem.Lin_system_char_3}. In particular, 
	the linear systems of affine spaces in~\ref{eq.special_char_2} and~\ref{eq.special_char_3} are all
	non-equivalent to linear systems of affine spaces in standard form.
\end{remark}

\begin{proof}[Proof of Proposition~$\ref{prop:globally_xp_i+q_i}$]
	If $n=1$, the result follows from Corollary~\ref{cor.cubic_surf_iso_to_A2_singular}, so we will assume that $n\ge 2$. By Lemma~\ref{lem.properties_lin_systems_of_aff_spaces}\ref{lem.properties_lin_systems_of_aff_spaces-1},
	we get $n \leq 3$.
	
	Let $d = \deg(f)$. Since the statement
	holds when $d = 1$, we assume $d \in \{2, 3\}$.
	
	Let $f_{i, j} \in \k[x, y, z]$ be the homogeneous part of degree $j$ of $f_i$ for $i=1, \ldots, n$, and let us define $V_j=\Span_\k(f_{1,j},\ldots,f_{n,j})\subseteq \k[x,y,z]_j$ for each $j\le d$.
	
	First, we consider the case $d = 2$. Due to Corollary~\ref{Cor:Hypersurf_degree_2}, each element
	in $V_2$ is reducible and due to Lemma~\ref{LinearSystemLinearFactors}
	one of the following cases occur:
	\begin{itemize}[leftmargin=*]
		\item There exists $h \in \k[x, y, z]_1$ which divides each element of $V_2$;
		\item $V_2 \subset \k[s, t]$ for linearly independent $s, t \in \k[x, y, z]_1$;
		\item $\car(\k) = 2$ and $V_2 = \k x^2 \oplus \k y^2 \oplus \k z^2$.
	\end{itemize}
	In the first case we may assume that $h = y$ and in the second case we may
	assume that $(s, t) = (y, z)$, so $f$ is in standard form in both cases. 
	If we are in the last case, then 
	$n = 3$ and we may assume that $f_{1, 2} = x^2$, $f_{2, 2} = y^2$, $f_{3, 2} = z^2$.
	Due to Lemma~\ref{lem.almost_eigenvectors} there exists 
	$(\lambda_1, \lambda_2, \lambda_3) \neq (0, 0, 0)$ and $\varepsilon \neq 0$ such that
	$\sum \lambda_i^2 f_{i, 1} = \varepsilon (\lambda_1 x + \lambda_2 y + \lambda_3 z)$ and hence
	we get a contradiction to the irreducibility of $\sum \lambda_i^2 f_i$. 
	
	It remains to do the case where $d = 3$. 
	If a linear factor or an irreducible polynomial of degree $2$ divides all elements of $V_3$, the result follows respectively from Proposition~\ref{prop.standard_form_if_line_at_infinity} (after applying an element of $\GL_3$ at the source) and 
	Proposition~\ref{prop.standard_form_if_conic_at_infinity}. 
	By Corollary~\ref{cor.Properties_of_hypersurfaces_at_infinity}, no element of $V_3$ is irreducible, so we may assume that $\gcd(V_3)=1$. In particular, $\dim V_3 \geq 2$.
	
	If each element of $V_3$ is a third power,
	then $\car(\k) = 3$ and the result follows from Lemma~\ref{lem.char3}.
	Thus we may assume that a general element in $V_3$ is not a third power. Now, Lemma~\ref{LinearSystemLinearFactors}
	implies that there exist linearly independent
	$s, t \in \k[x, y, z]_1$ such that $V_3 \subset \k[s, t]$. We may assume that $(s, t) = (y, z)$.
	As a general element of $V_3$ is not a third power, then by Corollary~\ref{cor.Properties_of_hypersurfaces_at_infinity}\ref{cor.PHI_distinct_lines} 
	the closure of the cubic $\sum \lambda_i f_i=0$ in $\p^3$ has a singularity at $[0:1:0:0]$
	for general $(\lambda_1,\ldots,\lambda_n) \in \k^n$ and thus
	$f$ is in standard form.
\end{proof}

\begin{corollary}
	\label{cor.Linear_systems_in_standard_form}
	Let $1 \leq n \leq 3$ and let 
	$f_1,\ldots,f_n\in \k[x,y,z]$ be polynomials of degree $\leq 3$
	such that $f = (f_1, \ldots, f_n) \colon \AA^3 \to \AA^n$ is a trivial $\AA^{3-n}$-bundle.
	Then $f$ is equivalent to a linear system of affine spaces in standard form.
\end{corollary}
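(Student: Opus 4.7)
The plan is to deduce Corollary~\ref{cor.Linear_systems_in_standard_form} from Proposition~\ref{prop:globally_xp_i+q_i} by ruling out the two exceptional cases \ref{eq.special_char_2} and \ref{eq.special_char_3} that appear there.

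First I would verify that the hypothesis implies $f$ is an affine linear system of affine spaces, which is the input needed for Proposition~\ref{prop:globally_xp_i+q_i}. Indeed, if $f \colon \AA^3 \to \AA^n$ is a trivial $\AA^{3-n}$-bundle, then for every affine hyperplane $H \subset \AA^n$ we have $f^{-1}(H) \simeq H \times \AA^{3-n} \simeq \AA^{n-1}\times \AA^{3-n} \simeq \AA^2$, so $f$ is an affine linear system of affine spaces by definition (Definition~\ref{def.aff_linear_system_of_aff_spaces}).

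Next, I would apply Proposition~\ref{prop:globally_xp_i+q_i}, which states that $f$ is equivalent either to a linear system of affine spaces in standard form (in which case we are done) or to one of the two exceptional families $(x+z^2+y^3,\,y+x^2)$ in $\car(\k)=2$ or $(x+z^2+y^3,\,z+x^3)$ in $\car(\k)=3$. In both exceptional cases one has $n=2$, so $3-n=1$, and the claim that $f$ is a trivial $\AA^{3-n}$-bundle becomes the claim that $f$ is a trivial $\AA^1$-fibration.

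The key step is then to rule out those two exceptional cases. By Lemma~\ref{lem.Lin_system_char_2} (respectively Lemma~\ref{lem.Lin_system_char_3}), the exceptional morphism in characteristic $2$ (respectively $3$) is an $\AA^1$-fibration that is \emph{not} a trivial $\AA^1$-fibration. Since the property of being a trivial $\AA^{3-n}$-bundle is clearly preserved under our equivalence relation (pre- and post-composition with affine automorphisms induce isomorphisms of total spaces compatible with the bundle structure), $f$ cannot be equivalent to either exceptional map. Therefore $f$ must be equivalent to a linear system of affine spaces in standard form, which concludes the proof. No substantial obstacle is expected here: the entire content of the corollary has already been absorbed into the preceding proposition and the two lemmata.
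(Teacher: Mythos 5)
Your proposal is correct and follows essentially the same route as the paper: the paper's proof also derives the corollary directly from Proposition~\ref{prop:globally_xp_i+q_i} together with the fact that the exceptional systems of Lemmata~\ref{lem.Lin_system_char_2} and~\ref{lem.Lin_system_char_3} are not trivial $\AA^1$-bundles. Your additional remarks (that a trivial bundle is an affine linear system of affine spaces, and that triviality is preserved under equivalence) are correct details that the paper leaves implicit.
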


\begin{proof}
	This follows directly from Proposition~\ref{prop:globally_xp_i+q_i}, since
	the linear systems of affine spaces from Lemma~\ref{lem.Lin_system_char_2} and
	Lemma~\ref{lem.Lin_system_char_3}
	are not trivial $\AA^1$-bundles.
\end{proof}

%

\subsection{Study of affine linear systems of affine spaces $\AA^3 \to \AA^2$ in standard form}

Towards the description of the automorphisms of degree $\leq 3$, we study 
in this subsection certain 
affine linear systems of affine spaces $(f_1, f_2) \colon \AA^3 \to \AA^2$ in standard form, i.e.~such that
$f_i = xp_i + q_i$ for $i=1,2$, with $p_i, q_i \in \k[y, z]$.

\begin{lemma}\label{Lem:Noptildenoaut}
	For $i=1, 2$, let $p_i ,q_i \in \k[y, z]$ such that 
	$(xp_1 + q_1, x p_2 + q_2)$ is a linear system of affine spaces. Then,
	$\k[p_1, p_2]\not=\k[y,z]$, i.e.~$(p_1, p_2) \colon \AA^2 \to \AA^2$ is not an automorphism.
\end{lemma}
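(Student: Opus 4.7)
Suppose for contradiction that $\sigma=(p_1,p_2)$ is an automorphism of $\AA^2$, i.e.\ $\k[p_1,p_2]=\k[y,z]$. Composing $f$ on the right by $\Phi=(\mathrm{id},\sigma^{-1})\in\Aut(\AA^3)$ preserves the property of being a linear system of affine spaces, since preimages of linear hyperplanes in $\AA^2$ are simply pulled back by $\Phi$. Hence we may assume from the start that $p_1=y$ and $p_2=z$, and write $f=(xy+Q_1,xz+Q_2)$ with $Q_1,Q_2\in\k[y,z]$.

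Applying Proposition~\ref{Prop:XfibratReminder} to $xy+Q_1=0$ and to $xz+Q_2=0$ separately gives
\[
	Q_1=a_1 y+c_1 z+d_1 \qquad\text{and}\qquad Q_2=a_2 z+c_2 y+d_2
\]
with $a_1,a_2\in\k[y,z]$, $c_1,c_2\in\k^*$ and $d_1,d_2\in\k$. For each $(\lambda_1,\lambda_2)\in(\k^*)^2$ the polynomial $x(\lambda_1 y+\lambda_2 z)+\lambda_1 Q_1+\lambda_2 Q_2$ also defines an $\AA^2$ in $\AA^3$, so after a linear change of coordinates turning $\lambda_1 y+\lambda_2 z$ into a single variable, Proposition~\ref{Prop:XfibratReminder} forces the restriction of $\lambda_1 Q_1+\lambda_2 Q_2$ to the line $\lambda_1 y+\lambda_2 z=0$ (parametrised by $z$) to be a polynomial in $z$ of degree at most $1$ whose coefficient of $z$ is non-zero.

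A direct substitution rewrites this restriction as
\[
	\lambda_2\,z\,(a_2-a_1)\!\left(-\tfrac{\lambda_2}{\lambda_1}z,\,z\right)+\frac{\lambda_1^2 c_1-\lambda_2^2 c_2}{\lambda_1}\,z+(\lambda_1 d_1+\lambda_2 d_2).
\]
Imposing that this has $z$-degree at most $1$ for all $(\lambda_1,\lambda_2)\in(\k^*)^2$ forces $(a_2-a_1)(\mu z,z)\in\k[z]$ to be constant for all $\mu\in\k^*$; the standard coefficient-vanishing argument (expanding $a_2-a_1=\sum c_{ij}y^iz^j$ and reading off coefficients in $\mu$) then gives $a_2-a_1=c\in\k$. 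Non-vanishing of the coefficient of $z$ for every $(\lambda_1,\lambda_2)\in(\k^*)^2$ finally translates to $P(T):=c_1 T^2+c T-c_2\in\k[T]$ having no root in $\k^*$; but $\deg P=2$ (since $c_1\ne 0$) and $P(0)=-c_2\ne 0$, so algebraic closedness of $\k$ yields a non-zero root of $P$ in $\k$ -- the desired contradiction. The technical heart is the extraction of $a_2-a_1\in\k$ from the $z$-degree constraint; the rest is a routine application of Proposition~\ref{Prop:XfibratReminder} combined with the algebraic closedness of $\k$.
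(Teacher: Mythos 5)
Your proof is correct and follows essentially the same route as the paper: after reducing to $p_1=y$, $p_2=z$, both arguments apply Proposition~\ref{Prop:XfibratReminder} to the pencil $x(\lambda_1 y+\lambda_2 z)+\lambda_1 q_1+\lambda_2 q_2$ and extract from the coefficient of $z$ on the contracted line a degree-$2$ polynomial in the pencil parameter (your $P(T)=c_1T^2+cT-c_2$, the paper's $-\gamma\lambda^2+(\alpha-\delta)\lambda+\beta$) that would have to be nowhere zero, contradicting algebraic closedness of $\k$. The only cosmetic difference is that the paper works modulo $(y^2,yz,z^2)$ and thus skips your intermediate (and in fact unnecessary) extraction of $a_2-a_1\in\k$, since only the value $(a_2-a_1)(0,0)$ enters the coefficient of $z$.
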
	
\begin{proof}
	If $\k[p_1,p_2]=\k[y,z]$, then
	we apply a (possibly non-affine) 
	automorphism of $\k[y,z]$ and may assume that $p_1 = y$, $p_2 = z$. We choose $\alpha,\beta,\gamma,\delta,\varepsilon,\tau\in \k$ such that 
	\[
		q_1(y,z)=\alpha y+\beta z+\varepsilon\mod (y^2,yz,z^2) \, ,\quad q_2(y,z)=\gamma y+\delta z+\tau\mod (y^2,yz,z^2)
		\, .
	\]
	Proposition~\ref{Prop:XfibratReminder} implies that 
	$q_1(0,z)\in \k[z]$ and $q_2(y,0)\in \k[y]$ have degree $1$, so $\beta,\gamma\in \k^*$. 
 	For each $\lambda \in \k$, the polynomial in 
 	$(xy+ q_1) - \lambda(xz + q_2) = x ( y - \lambda z) + ( q_1 - \lambda q_2) \in \k[x, y, z]$
	defines an $\AA^2$ in $\AA^3$. Replacing $y$ with $y+\lambda z$, the polynomial 
	\[
		R_\lambda=xy+q_1(y+\lambda z,z)- \lambda q_2(y+\lambda z,z)
	\]
	defines an $\A^2$ in $\A^3$. 
	Proposition~\ref{Prop:XfibratReminder} implies that $R_\lambda(x,0,z)=R_\lambda(0,0,z)\in \k[z]$ is of degree $1$, for each $\lambda\in \k$. However, 
	\[R_\lambda(0,0,z)=q_1(\lambda z,z)-\lambda q_2(\lambda z,z)=\alpha \lambda z+\beta z+\varepsilon-\lambda (\gamma \lambda z+\delta z+\tau)\pmod{z^2}\]
	and as $\gamma\not=0$, there is $\lambda\in \k$ such that the coefficient of $z$ of $R_\lambda(0,0,z)$ is 
	zero, contradiction.
\end{proof}
\begin{lemma}
	\label{lem.Lin_System_p_in_k[y]}
	For $i = 1, 2$, let $p_i \in \k[y]$ and $q_i \in \k[y, z]$ and assume that
	$f = (f_1, f_2) = (xp_1 + q_1, xp_2 + q_2) \colon \AA^3 \to \AA^2$ is an affine 
	linear system of affine spaces. Then the following hold:
	\begin{enumerate}[leftmargin=*]
		\item \label{lem.Lin_System_p_in_k[y]_0}
				If $p_1$ and $p_2$ have a common root, then they are linearly dependent.
		\item \label{lem.Lin_System_p_in_k[y]_1} 
				If $p_1 \not\in \k$ and $p_2 = 0$, then $q_2 \in \k[y]$ and $\deg(q_2) = 1$.
		\item \label{lem.Lin_System_p_in_k[y]_2} 
				If $p_1 = y$ and $q_1 = a y + z r_1 + r_0$ for $a \in \k[y, z]$, $r_1 \in \k^\ast$, $r_0 \in \k$
				and if $p_2 = 1$, 
				then $a-q_2 \in \k[y]$.
		\item \label{lem.Lin_System_p_in_k[y]_3} 
				If $p_1 = y^2$ and $q_1 = y s(z) + z$ for some $s \in \k[z]$ and $\deg(f) \leq 3$, then:
				\begin{enumerate}
					\item \label{lem.Lin_System_p_in_k[y]_3i} If $p_2 = 1$, then $s \in \k$ and $q_2 \in \k[y]$.
					\item \label{lem.Lin_System_p_in_k[y]_3ii} If $p_2 = y+1$, then $s = -z + b$ and $q_2 = -z + r$ for some $b \in \k$
							and $r \in \k[y]$ with $\deg(r) \leq 3$.
				\end{enumerate}
		\item \label{lem.Lin_System_p_in_k[y]_4} 
				If $p_1 = y(y+1)$ and $q_1 = s(y)z + t(y)$ for
				$s, t \in \k[y]$ of degree $\leq 1$ and $p_2 =1$, then $s \in \k^\ast$ and $q_2 \in \k[y]$.
	\end{enumerate}
\end{lemma}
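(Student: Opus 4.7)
For every $(\lambda_1, \lambda_2) \in \k^2 \setminus \{0\}$ and every $\lambda_0 \in \k$, the polynomial
\[
\lambda_0 + \lambda_1 f_1 + \lambda_2 f_2 = x(\lambda_1 p_1 + \lambda_2 p_2) + (\lambda_0 + \lambda_1 q_1 + \lambda_2 q_2)
\]
defines an $\A^2$ in $\A^3$. Whenever the coefficient of $x$ lies in $\k[y]\setminus \k$, Proposition~\ref{Prop:XfibratReminder}\ref{XfibratReminderTechnical} imposes on the ``$q$-part'' the form $a\tilde p + zr_1 + r_0$, with the crucial non-vanishing condition $r_1(y_0) \neq 0$ at each root $y_0$ of the square-free part $\tilde p$. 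Each statement will follow by playing this constraint against carefully chosen $(\lambda_0, \lambda_1, \lambda_2)$.

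For~\ref{lem.Lin_System_p_in_k[y]_0}, suppose for contradiction that $p_1, p_2$ are linearly independent with a common root $y_0$. Then every nonzero combination $\lambda_1 p_1 + \lambda_2 p_2$ is nonzero and vanishes at $y_0$, hence lies in $\k[y] \setminus \k$. Evaluating the Proposition's conclusion at $y = y_0$ shows that $\lambda_1 q_1(y_0, z) + \lambda_2 q_2(y_0, z) \in \k[z]$ has degree exactly $1$. Writing $q_i(y_0, z) = \alpha_i z + \beta_i$, this requires $\lambda_1 \alpha_1 + \lambda_2 \alpha_2 \neq 0$ for all $(\lambda_1, \lambda_2) \in \k^2 \setminus \{0\}$, which is impossible.

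For~\ref{lem.Lin_System_p_in_k[y]_1}, apply the Proposition to $f_1 + \mu f_2 = xp_1 + (q_1 + \mu q_2)$ for each $\mu \in \k$, obtaining $q_1 + \mu q_2 = a_\mu \tilde p_1 + z r_{1, \mu} + r_{0, \mu}$. Subtracting the $\mu = 0$ case and using the $\mu$-independence of $q_2$, we obtain $q_2 = \tilde a \tilde p_1 + z \tilde r_1 + \tilde r_0$ with $\deg \tilde r_0, \deg \tilde r_1 < \deg \tilde p_1$ and $r_{1, \mu} = r_{1, 0} + \mu \tilde r_1$. The non-vanishing condition $r_{1, \mu}(y_0) \neq 0$, required for every $\mu \in \k$ and every root $y_0$ of $\tilde p_1$, forces $\tilde r_1(y_0) = 0$ and hence $\tilde r_1 = 0$. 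Thus $q_2(y_0, z)$ is constant for every such $y_0$, so the line $y = y_0 \subset \A^2_{y, z}$ is contained in the fiber of $q_2$ over $\tilde r_0(y_0)$. Since $f_2 = q_2$ defines $\A^2 \subset \A^3$, cancellation for $\A^1$ gives $V(q_2 - c) \cong \A^1$ for each $c \in \k$; the fiber therefore equals the line, yielding $q_2 \in \k[y]$ of degree $1$.

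For~\ref{lem.Lin_System_p_in_k[y]_2},~\ref{lem.Lin_System_p_in_k[y]_3}, and~\ref{lem.Lin_System_p_in_k[y]_4}, apply the Proposition to $f_1 + \mu f_2$ (or $f_2 + \nu f_1$), where the coefficient of $x$ is $y + \mu$ in~\ref{lem.Lin_System_p_in_k[y]_2}, $1 + \nu y^2$ in~\ref{lem.Lin_System_p_in_k[y]_3i}, $y^2 + \mu y + \mu$ in~\ref{lem.Lin_System_p_in_k[y]_3ii}, and $y^2 + y + \mu$ in~\ref{lem.Lin_System_p_in_k[y]_4}. Expanding the ``$q$-part'' in powers of $z$ and comparing with the required form $a_\mu \tilde p_\mu + z r_{1, \mu} + r_{0, \mu}$, one exploits the degree bound $\deg f \leq 3$ (where assumed) together with divisibility by $\tilde p_\mu$ to eliminate the $z^j$-terms for $j \geq 2$, thereby reducing $s$ and $q_2$ to polynomials of degree $\leq 1$ in $z$. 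The non-vanishing condition $r_{1, \mu}(y_0) \neq 0$ at each root $y_0$ of $\tilde p_\mu$ then translates into an auxiliary polynomial in $\mu$ (or $\nu$) having no roots in $\k$, which by algebraic closedness must be a nonzero constant whose value determines the identities for $s$ and for the $z$-coefficient of $q_2$. The main technical obstacle lies in~\ref{lem.Lin_System_p_in_k[y]_3ii} and~\ref{lem.Lin_System_p_in_k[y]_4}, where $\tilde p_\mu$ is a quadratic in $y$: both roots must be exploited simultaneously, and the crucial observation is that a polynomial in $y$ of degree at most $1$ which is divisible by such a quadratic must vanish identically.
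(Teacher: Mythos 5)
Your overall strategy is exactly the paper's: every assertion is extracted from Proposition~\ref{Prop:XfibratReminder}\ref{XfibratReminderTechnical} applied to members $\lambda f_1+\mu f_2$ of the pencil, by evaluating the ``$q$-part'' at the roots of the $x$-coefficient and noting that the resulting degree-$1$ condition in $z$ forces certain coefficient polynomials to vanish identically or to have no roots in $\k$. Parts \ref{lem.Lin_System_p_in_k[y]_0}, \ref{lem.Lin_System_p_in_k[y]_1}, \ref{lem.Lin_System_p_in_k[y]_2}, \ref{lem.Lin_System_p_in_k[y]_3}\ref{lem.Lin_System_p_in_k[y]_3i} and \ref{lem.Lin_System_p_in_k[y]_4} go through as you sketch them; your route for \ref{lem.Lin_System_p_in_k[y]_1} (uniqueness of the decomposition, then cancellation to identify the fibre of $q_2$ with a line) is longer than the paper's, which simply observes that $q_2(0,z)\in\k$ forces $y\mid q_2-q_2(0,0)$ and invokes irreducibility, but it is valid.

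There is, however, a genuine gap in \ref{lem.Lin_System_p_in_k[y]_3}\ref{lem.Lin_System_p_in_k[y]_3ii} with the parametrization you name. The $x$-coefficient of $f_1+\mu f_2$ is $y^2+\mu y+\mu$, and $y=-1$ is \emph{never} a root of this for any $\mu\in\k$ (its value there is $1$). So the roots at which you may evaluate sweep only $\k\setminus\{-1\}$, and your auxiliary polynomial is only known to be nonzero on $\k\setminus\{-1\}$; it need not be constant. This is not a cosmetic issue: take $s=s_0\in\k$ and $q_2=q_{2,0}(y)\in\k[y]$. Then every member $f_1+\mu f_2=x(y^2+\mu y+\mu)+ys_0+z+\mu q_{2,0}(y)$ satisfies the criterion of Proposition~\ref{Prop:XfibratReminder} (the coefficient of $z$ in the $q$-part is the constant $1$), so \emph{all} the constraints your pencil produces are met, yet the conclusion $s=-z+b$ fails. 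This configuration is excluded only by the member $f_2=x(y+1)+q_{2,0}(y)$ itself --- the ``$\mu=\infty$'' member your pencil omits --- which does not define an $\AA^2$ because its $q$-part has no $z$-term at the root $y=-1$. The repair is to parametrize the pencil so that the root sweeps all of $\k$: either use $f_2+\nu f_1$, whose $x$-coefficient $\nu y^2+y+1$ has roots covering $\k^*$ and whose auxiliary polynomial $y^2q_{2,1}(y)-(y+1)(ys_1+1)$ is automatically nonzero at $y=0$, hence constant equal to $-1$, which does force $s_1=q_{2,1}=-1$; or, as the paper does, use $(1+\eta)f_1-\eta^2f_2$, whose $x$-coefficient $(y-\eta)\bigl((1+\eta)y+\eta\bigr)$ has the root $\eta$ ranging over all of $\k$ and degenerates to $-f_2$ at $\eta=-1$.
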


\begin{proof}
	By assumption for each $(\lambda, \mu) \neq (0,0)$, the equation
	\[
		\lambda f_1 + \mu f_2 = x (\lambda p_1 + \mu p_2) + \lambda q_1 + \mu q_2 = 0
	\]
	defines an $\AA^2$ in $\AA^3$. Hence, by Proposition~\ref{Prop:XfibratReminder}, for each
	$y_0 \in \k$ the following holds:
	\begin{align*}
		\label{eq.assertionStar}
		\tag{$\ast$}
		&\textrm{if $\lambda p_1(y_0) + \mu p_2(y_0) = 0$ and $\lambda p_1 + \mu p_2 \neq 0$,} \\
		&\textrm{then the degree of $\lambda q_1(y_0, z) + \mu q_2(y_0, z) \in \k[z]$ is $1$} \, .
	\end{align*}
	We will use this fact constantly, when we consider the cases~\ref{lem.Lin_System_p_in_k[y]_0}-\ref{lem.Lin_System_p_in_k[y]_4}.
	
	\ref{lem.Lin_System_p_in_k[y]_0}: 
	After an affine coordinate change in $y$, we may assume that $y$ divides $p_1$ and $p_2$.
	By Proposition~\ref{Prop:XfibratReminder} it follows that $q_i(0, z)$ is a polynomial of degree $1$
	in $z$ for $i = 1, 2$. Hence there exists $\mu\in \k$ such that $q_1(0, z)- \mu q_2(0, z)$ is constant. 
	This, together with \eqref{eq.assertionStar}, implies that $ p_1 = \mu p_2$.
	
	\ref{lem.Lin_System_p_in_k[y]_1}: Since $p_1 \not\in \k$, there exists $\gamma \in \k$
	with $p_1(\gamma) = 0$. After applying
	an affine coordinate change in $y$, we may assume that $\gamma = 0$.
	By \eqref{eq.assertionStar}, the degree of $q_1(0,z)+\mu q_2(0,z)\in \k[z]$ is $1$ for each $\mu\in \k$, 
	so $q_2(0, z) \in \k$.
	Hence, $y$ divides $q_2 - q_2(0, 0)$ in $\k[y, z]$.
	Since $q_2 - q_2(0, 0) = 0$ defines an $\AA^2$ in $\AA^3$, the polynomial $q_2 -q_2(0, 0)$ 
	is irreducible and thus $q_2 = \alpha y + q_2(0, 0)$ for some $\alpha \in \k^\ast$.
	
	\ref{lem.Lin_System_p_in_k[y]_2}:
	Choosing $(\lambda,\mu)=(1,-\eta)$ for some $\eta\in \k$, we get
	$\lambda p_1 + \mu p_2 = y - \eta$. Thus by~\eqref{eq.assertionStar}, the degree of the polynomial
	\[
		\eta a(\eta, z) + r_1 z + r_0 - \eta q_2(\eta, z)
		= r_1 z + r_0 + \eta (a(\eta, z)- q_2(\eta, z)) \in \k[z]
	\]
	is $1$ for each $\eta\in \k$. This implies that $a(\eta, z)- q_2(\eta, z) \in \k[\eta]$.
	
	\ref{lem.Lin_System_p_in_k[y]_3}\ref{lem.Lin_System_p_in_k[y]_3i}: 
	Choosing $(\lambda,\mu)=(1,-\eta^2)$, we get $\lambda p_1 + \mu p_2 = (y- \eta)(y + \eta)$.
	By~\eqref{eq.assertionStar} it follows that for all $\eta \in \k$ the degree of
	\[
		\eta s(z) + z - \eta^2 q_2(\eta, z) \in \k[z]
	\]
	is $1$, i.e.~$\eta s(z) + z - \eta^2 q_2(\eta, z) = \alpha z + \beta$ 
	for some $\alpha \in \k^{\ast}$ and $\beta \in \k[\eta]$. 
	In order to use
	\[
		\label{eq.deomp}
		\tag{$\ast\ast$}
		z \k[z] \oplus \k \oplus \eta \k[z] \oplus \eta^2 \k[\eta, z] = \k[\eta, z] \, ,
	\]
	we write $\beta=\beta_0+\eta\beta_1 +\eta^2\beta_2$ where $\beta_0,\beta_1\in \k$, $\beta_2\in \k[\eta]$ and get 
	\[
		(z,0,\eta s(z),-\eta^2q_2(\eta,z))=(\alpha z,\beta_0,\eta\beta_1,\eta^2\beta_2) \, , 
	\] 
	so $s=\beta_1\in \k$ and $q_2(\eta,z)=-\beta_2\in \k[\eta]$.
	
	\ref{lem.Lin_System_p_in_k[y]_3}\ref{lem.Lin_System_p_in_k[y]_3ii}: 
	We now choose $(\lambda,\mu)=(1+\eta,-\eta^2)$ for some $\eta\in \k$ and obtain
	\[
		\lambda p_1 + \mu p_2 =
		(1 + \eta) y^2 - \eta^2 (y+1) = (y- \eta)((1 + \eta)y+\eta) \, .
	\]
	Due to~\eqref{eq.assertionStar}, for all $\eta \in \k$ the degree of the polynomial
	\[
	(1 + \eta)(\eta s(z) + z) - \eta^2 q_2(\eta, z)
	= z + \eta(s(z) + z) + \eta^2 (s(z) - q_2(\eta, z)) \in \k[z]
	\]
	is $1$. Writing this polynomial as above
	as $\alpha z+\beta_0+\eta\beta_1 +\eta^2\beta_2$ with $\alpha\in \k^*$, $\beta_0,\beta_1\in \k$, $\beta_2\in \k[\eta]$, the decomposition~\eqref{eq.deomp} gives
	\[
		(z,0,\eta (s(z) + z),\eta^2(s(z)-q_2(\eta,z)))=(\alpha z,\beta_0,\eta\beta_1,\eta^2\beta_2) \, ,
	\]
	so $s(z) + z =\beta_1\in \k$ and $s(z)- q_2(\eta, z)=\beta_2 \in \k[\eta]$.
	Choosing $b=\beta_1$ and $r\in \k[y]$ such that $\beta_2=b-r(\eta)$, 
	we obtain $s(z)=-z+b$ and $q_2(y, z) =s(z)-b+r(y)= -z + r(y)$. Since $\deg(q_2) \leq 3$
	it follows that $\deg(r)\leq 3$.
	
	\ref{lem.Lin_System_p_in_k[y]_4}: Let $(\lambda, \mu)=(1,-\eta(\eta+1))$. Then 
	\[
		\lambda p_1 + \mu p_2 = y(y+1)- \eta(\eta + 1) = 
		(y-\eta)(y + \eta +1)
	\]
	Due to~\eqref{eq.assertionStar}, for all $\eta \in \k$, the degree of
	\[
		s(\eta)z + t(\eta) - \eta(\eta + 1) q_2(\eta, z) \in \k[z]
	\]
	is $1$. 
	This implies that the polynomial 
	\[
		h=s(\eta)z - \eta(\eta + 1) q_2(\eta, z)\in \k[\eta,z]
	\] 
	is of the form $\alpha z+\beta$ for some $\alpha \in \k^\ast$ and $\beta\in \k[\eta]$.
	
	When we write $q_2=\sum_{i\ge 0} q_{2, i}(y)z^i$ for $q_{2, i} \in \k[y]$, 
	we obtain $q_{2, i}=0$ for each $i\ge 2$ 
	(as $h$ has degree $1$ in $z$) 
	and $s(y)-y(y+1) q_{2, 1}(y)\in \k^\ast$. 
	As $\deg(s)\le 1$, this yields $q_{2, 1}=0$, and then $s(y)\in \k^*$. Moreover, 
	$q_2= q_{2, 0}(y)\in \k[y]$.
\end{proof}

\begin{lemma}
	\label{Lem:x(y+z^2)+z}
	Let $p, q \in \k[y, z]$ such that $\deg(p) \leq 1$ and $\deg(q) \leq 3$.
	Assume that $(x(y + z^2) + z, x p + q) \colon \AA^3 \to \AA^2$ is an affine linear system
	of affine spaces. Then
	\[
	p \in \k \, , \quad \textrm{$q = a \cdot (y + z^2) + b$ for some $a, b \in \k$} \quad \textrm{and} \quad
	(p, a) \neq (0, 0) \, .
	\]
\end{lemma}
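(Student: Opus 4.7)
Write $p = \alpha y + \beta z + \gamma \in \k[y,z]_{\le 1}$. The strategy is to apply the source-automorphism $\sigma = (x, y - z^2, z)$ of $\AA^3$, which replaces the pair $(f_1, f_2)$ by $(xy + z, \; x\tilde{p} + \tilde{q})$ with $\tilde{p}(y,z) = p(y - z^2, z)$ and $\tilde{q}(y,z) = q(y - z^2, z)$. This is again an affine linear system of affine spaces by Lemma~\ref{lem.properties_lin_systems_of_aff_spaces}\ref{lem.properties_lin_systems_of_aff_spaces0}, and the first component $xy + z$ is now in the form required by Proposition~\ref{Prop:XfibratReminder}\ref{XfibratReminderTechnical} (with $p_1 = y$, $q_1 = z$, i.e. $a = 0$, $r_1 = 1$, $r_0 = 0$). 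The argument then splits into three cases according to $p$.

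\emph{Case $p = 0$.} Here $\tilde{f}_2 = \tilde{q} \in \k[y, z]$ is a variable, and for every $t \in \k$ the polynomial $xy + z + t \tilde{q}$ defines an $\AA^2$. Evaluating the decomposition of Proposition~\ref{Prop:XfibratReminder}\ref{XfibratReminderTechnical} at $y = 0$ yields $z + t \tilde{q}(0, z) = r_1(t) z + r_0(t)$ with $r_1(t) \in \k^\ast$ for every $t$, which forces $\tilde{q}(0, z) \in \k$. Writing $\tilde{q} = y B(y,z) + C$ with $B \in \k[y, z]$ and $C \in \k$, the fact that $\tilde{q} - C$ is a variable (Corollary~\ref{coro:TrivialisationVariables}) and hence irreducible forces $B \in \k^\ast$; pulling back by $\sigma^{-1}$ then gives $q = c(y + z^2) + C$ with $c \in \k^\ast$. \emph{Case $p = \gamma \in \k^\ast$.} Scaling the second target coordinate by $1/\gamma$ places the system in the hypotheses of Lemma~\ref{lem.Lin_System_p_in_k[y]}\ref{lem.Lin_System_p_in_k[y]_2} (with $p_2 = 1$); its conclusion gives $\tilde{q}/\gamma \in \k[y]$, and the bound $\deg(q) \le 3$ forces $\tilde{q} = a y + b$, hence $q = a(y + z^2) + b$. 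In both cases $(p, a) \neq (0, 0)$.

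\emph{Case $p \notin \k$.} I would derive a contradiction. When $\alpha = 0$ and $\beta \neq 0$, the map $(y + z^2, \; \beta z + \gamma) \colon \AA^2 \to \AA^2$ is an automorphism (triangular with constant Jacobian $\beta$), contradicting Lemma~\ref{Lem:Noptildenoaut}. When $\alpha \neq 0$: for each $\lambda \neq -\alpha$, the factorisation $\lambda(y + z^2) + p = (\lambda + \alpha)(y - \phi_\lambda(z))$ with $\phi_\lambda(z) = -(\lambda z^2 + \beta z + \gamma)/(\lambda + \alpha)$ and the substitution $y \mapsto y + \phi_\lambda$ applied to $\lambda f_1 + f_2$, together with Proposition~\ref{Prop:XfibratReminder}\ref{XfibratReminderTechnical}, yield the constraint $q(\phi_\lambda(z), z) \in \k[z]_{\le 1}$. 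Expanding $q = \sum q_{ij} y^i z^j$ and matching coefficients of $z^k$ for $k = 2, \ldots, 6$ as polynomial identities in $\lambda$ (using that $\k$ is infinite) successively forces $q_{30} = q_{21} = q_{20} = q_{12} = q_{11} = q_{10} = q_{03} = q_{02} = 0$; in particular $h_1(y) := q_{10} + q_{11} y + q_{12} y^2 \equiv 0$. At the value $\lambda = -\alpha$, the combination $-\alpha f_1 + f_2 = x(-\alpha z^2 + \beta z + \gamma) + (q - \alpha z)$ must still define $\AA^2$; after swapping $y$ and $z$ to put the $x$-coefficient into $\k[y]$, Proposition~\ref{Prop:XfibratReminder}\ref{XfibratReminderTechnical} requires the coefficient of $z$ in $q(z, y) - \alpha y$, which is exactly $h_1(y)$, to be non-zero modulo the squarefree part of $-\alpha y^2 + \beta y + \gamma$, contradicting $h_1 \equiv 0$.

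The main obstacle is the sub-case $\alpha \neq 0$: the coefficient bookkeeping in $\lambda$ across degrees $z^2, \ldots, z^6$ is delicate, and the boundary value $\lambda = -\alpha$ (where $\phi_\lambda$ degenerates) must be handled separately via the swap $y \leftrightarrow z$. The remaining cases reduce quickly to Lemma~\ref{lem.Lin_System_p_in_k[y]}\ref{lem.Lin_System_p_in_k[y]_2}, to Lemma~\ref{Lem:Noptildenoaut}, or to a brief irreducibility argument built on Corollary~\ref{coro:TrivialisationVariables}.
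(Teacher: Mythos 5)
Your proof is correct and follows essentially the same strategy as the paper's: straighten the coefficient of $x$ in each member of the pencil by a shear $y \mapsto y - (\text{quadratic in }z)$ and exploit the fibre condition of Proposition~\ref{Prop:XfibratReminder}\ref{XfibratReminderTechnical}, with the sub-case $\alpha = 0 \neq \beta$ dispatched via Lemma~\ref{Lem:Noptildenoaut} exactly as in the paper. The only differences are organizational: you route $p \in \k^\ast$ through Lemma~\ref{lem.Lin_System_p_in_k[y]}\ref{lem.Lin_System_p_in_k[y]_2} and settle the hardest sub-case by explicit coefficient matching in $(\lambda, z)$, where the paper instead reads off the top-degree coefficient in $\lambda$ of $r(-\lambda z^2+(\lambda-1)(bz+c),z)$ to conclude $r \in \k$ in a single step.
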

\begin{proof}
	Suppose first that $p\in \k$. When we write 
	$r=q(y-z^2,z)\in \k[y,z]$, we obtain $q=r(y+z^2,z)$.
	For each $\lambda\in \k$, the polynomial 
	\[
		x(y+z^2)+z -\lambda(xp+q)=x(y + z^2 - \lambda p) + z - \lambda r(y+z^2,z)
	\] 
	defines an $\A^2$ in $\A^3$, so the same holds for $xy+z-\lambda r(y+\lambda p,z)$. By Proposition~\ref{Prop:XfibratReminder}, the polynomial $z - \lambda r(\lambda p,z)\in \k[z]$ is of degree $1$ for each $\lambda\in \k$. 
	This implies that the polynomial $r(\lambda p,z)\in \k[\lambda,z]$ lies in $\k[\lambda]$. As $p\in \k$, 
	either $p \neq 0$ and $r(y,z)\in \k[y]$ or $p=0$ and $r(y,z)\in \k+y\k[y,z]$. The first case yields $q\in \k[y+z^2]$, so $q=a \cdot (y + z^2) + b$ for some $a, b \in \k$, since $\deg q\le 3$. In the second case, we write $b=q(0,0)$ and obtain that $q-b$ is irreducible, as it defines the preimage of the hyperplane $y=b$. Hence, $r(y,z)-b\in y\k[y,z]$ is irreducible, so equal to $ay$ for some $a\in \k^*$. As before we get $q=a \cdot (y + z^2) + b$. In both cases $(p, a) \neq (0,0)$.
	
	It remains to see that $p\not\in \k$ is impossible. We write $p=ay+bz+c$ for some $(a,b)\in \k^2\setminus \{(0,0)\}$ and $c\in \k$. If $a=0$, then $b\not=0$ which yields $\k[y+z^2,p]=\k[y+z^2,z]=\k[y,z]$, impossible by Lemma~\ref{Lem:Noptildenoaut}. We may thus assume that $a=1$. We write $q=r+z+\mu$, with $\mu\in \k$ and $r\in \k[y,z]$ such that $r(0,0)=0$. For each $\lambda\in \k$, the polynomial 
	\[
	\lambda (x(y+z^2)+z)+(1-\lambda)(xp+q-\mu)=x(y + \lambda z^2+ (1-\lambda)(bz+c)) + z+(1-\lambda)r
	\]
	defines an $\A^2$ in $\A^3$, so the same holds for $xy+ z+ (1-\lambda)\cdot r(y-\lambda z^2+(\lambda-1)(bz+c),z)$. We again apply Proposition~\ref{Prop:XfibratReminder}, and find that $z+ (1-\lambda)\cdot r(-\lambda z^2+(\lambda-1)(bz+c),z)\in \k[z]$ is of degree $1$ for each $\lambda\in \k$, so the polynomial 
	\[
	R=r(-\lambda z^2+(\lambda-1)(bz+c),z)\in \k[\lambda,z]
	\] 
	is an element of $\k[\lambda]$ (independent of $z$). If $r(y,z) \not\in \k$, then $d := \deg_y(r) \geq 1$ and we may write $r = r_0(z) + r_1(z)y + \ldots + r_d(z) y^d$ 
	where $r_d \neq 0$. 
	Thus we get 
	\[
	R= r(\lambda(bz+c-z^2) - (bz + c),z) =\sum_{i=0}^d \lambda^i q_i
	\] 
	where $q_0,\ldots,q_{d}\in \k[z]$ and $q_d=(bz+c-z^2)^d r_d(z)\in \k[z]\setminus \k$. This contradicts $R\in \k[\lambda]$.
	Hence $r(y,z) \in \k$, so $r=r(0,0)=0$. 
	This proves that $q=z+\mu$. But this is impossible, as the zero locus of the polynomial
	$x(y + z^2) + z-( x p + q-\mu)=x(z^2-bz-c)$ is not isomorphic to $\A^2$ 
	(it is reducible).
\end{proof}

\subsection{Linear systems of affine spaces of degree $\leq 3$
in standard form}

We start with a lemma, which lists the possibilities for the polynomials 
$p_1, \ldots, p_n$ in case of a linear system of affine spaces $\AA^3 \to \AA^n$ of degree $\leq 3$
in standard from where the polynomials $p_1, \ldots, p_n$ lie in $\k[y]$.

\begin{lemma}
	\label{lem:factor_of_x}
	Let $n \geq 1$ and let $p_i \in \k[y]$, $q_i \in \k[y, z]$ for $i = 1, \ldots, n$ such that
	$f = (f_1, \ldots, f_n) = (x p_1 + q_1, \ldots, x p_n + q_n) \colon \AA^3 \to \AA^n$ 
	is a linear system of affine spaces of degree $\leq 3$. Let us assume that
	\[
	V := \Span_{\k}\{p_1, \ldots, p_n \} \subseteq \Span_{\k}\{1, y, y^2\} \, .
	\]
	Then, up to affine coordinate changes in $y$ at 
	the source, one of the following cases holds:
	\[
		\begin{array}{r | l | l}
			\textrm{case} & n & V \\
			\hline
			\hypertarget{factor_of_x_1}{(1)} & 2 \textrm{ or } 3 & \k (y+1) \oplus \k y^2 \\
			\hypertarget{factor_of_x_2}{(2)} & 2 \textrm{ or } 3 & \k \oplus \k y^2 \\
			\hypertarget{factor_of_x_3}{(3)} & 2 \textrm{ or } 3 & \k \oplus \k y(y+1) \\
			\hypertarget{factor_of_x_4}{(4)} & 2 \textrm{ or } 3 & \k \oplus \k y \\
			\hypertarget{factor_of_x_5}{(5)} & 1 \, , 2 \textrm{ or } 3 & \k \\
			\hypertarget{factor_of_x_6}{(6)} & 1 \textrm{ or } 2 & \k p \quad \textrm{where $p \in \{0, y, y^2, y(y+1) \}$}
		\end{array}
	\]
\end{lemma}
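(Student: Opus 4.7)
My plan is to extract from the hypothesis the key constraint that any two linearly independent elements of $V$ share no common root in $\k$, and then reduce to the linear-algebra classification of such subspaces of $\k[y]_{\le 2}$. To derive the constraint, given linearly independent $p = \sum_i \lambda_i p_i$ and $q = \sum_i \mu_i p_i$ in $V$, I would compose $f$ with the affine surjective map $\AA^n \to \AA^2$, $u \mapsto (\sum_i \lambda_i u_i, \sum_i \mu_i u_i)$; this remains an affine linear system of affine spaces by Lemma~\ref{lem.properties_lin_systems_of_aff_spaces}\ref{lem.properties_lin_systems_of_aff_spaces2}, and its components have the form $(xp + \tilde q_1,\; xq + \tilde q_2)$, so Lemma~\ref{lem.Lin_System_p_in_k[y]}\ref{lem.Lin_System_p_in_k[y]_0} forces the no-common-root property. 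The bound $\dim V \le 2$ is then immediate, since $y$ and $y^2$ are linearly independent yet share the root $0$ and therefore cannot both belong to $V$.

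The 2-dimensional case splits according to whether $V$ contains a nonzero constant. If $V \cap \k \ne 0$, I write $V = \k \oplus \k p$ for some nonconstant $p$ of degree $1$ or $2$; subtracting a suitable constant (using $1 \in V$) to arrange $p(0)=0$ and then applying an affine coordinate change in $y$ brings $V$ to one of the forms $\k \oplus \k y$, $\k \oplus \k y^2$, or $\k \oplus \k y(y+1)$, namely cases $(4)$, $(2)$ and $(3)$ (the last two distinguished by whether $p$ has a double root or two distinct roots). If $V \cap \k = 0$, then $\dim(V\oplus\k)=3=\dim \k[y]_{\le 2}$, so $V$ contains elements representing $y$ and $y^2$ modulo $\k$; normalising these by an affine change in $y$ and by internal row operations yields $V = \k y \oplus \k(y^2 + \gamma)$, and the no-common-root condition forces $\gamma \ne 0$. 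Since $\k$ is algebraically closed, a further rescaling $y \mapsto \alpha y + \alpha$ with $\alpha^2 = \gamma$ then produces the canonical form $\k(y+1) \oplus \k y^2$, which is case $(1)$.

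For $\dim V \le 1$ I write $V = \k p$, allowing $p = 0$; a short case analysis by the degree of $p$, and in the quadratic case by whether $p$ has a double root or two distinct roots, normalises $p$ (up to scalars and an affine change in $y$) to one of $0$, $1$, $y$, $y^2$, $y(y+1)$, giving cases $(5)$ and $(6)$. The bounds on $n$ recorded in the table then follow from Lemma~\ref{lem.properties_lin_systems_of_aff_spaces}\ref{lem.properties_lin_systems_of_aff_spaces-1}: when every $p_i$ vanishes at $y=0$, which occurs exactly in case $(6)$ with $p \in \{0, y, y^2, y(y+1)\}$, the linear parts of the $f_i = xp_i + q_i$ lie in the two-dimensional space $\k y \oplus \k z$, so their required $\k$-linear independence forces $n \le 2$; in all other cases some $p_i$ has a nonzero constant term and $n$ may reach $3$. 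I expect the main obstacle to lie in the 2-dimensional case with $V \cap \k = 0$: one must simultaneously reach the intermediate form $\k y \oplus \k(y^2 + \gamma)$ with $\gamma\neq 0$ by a single affine coordinate change in $y$, and then verify that all values $\gamma \ne 0$ lie in the same affine orbit, so that they all collapse onto the single representative $\k(y+1) \oplus \k y^2$.
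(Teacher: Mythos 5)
Your proposal is correct and follows essentially the same route as the paper: both rest on Lemma~\ref{lem.Lin_System_p_in_k[y]}\ref{lem.Lin_System_p_in_k[y]_0} to show that linearly independent elements of $V$ share no common root, deduce $\dim V \le 2$, and then normalise by affine changes in $y$ (your passage through the intermediate form $\k y \oplus \k(y^2+\gamma)$ with $\gamma \ne 0$ and the substitution $y \mapsto \alpha(y+1)$, $\alpha^2=\gamma$, is a correct variant of the paper's reduction of $p_2$ to $(ay-c)^2$ followed by $y\mapsto \frac{c}{a}(y+1)$). The only step where you genuinely diverge is the exclusion of $n=3$ in case \hyperlink{factor_of_x_6}{(6)}: you use linear independence of the linear parts (Lemma~\ref{lem.properties_lin_systems_of_aff_spaces}\ref{lem.properties_lin_systems_of_aff_spaces-1}), whereas the paper invokes the Jacobian criterion (Lemma~\ref{lem.properties_lin_systems_of_aff_spaces}\ref{lem.properties_lin_systems_of_aff_spaces4}); both arguments are valid.
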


\begin{proof}
	We first prove that $\k y\oplus \k y^2$ is not contained in $V$. Indeed, we could then assume that $p_1 = y$ and $p_2 = y^2$, but then $(f_1, f_2)$ is not a linear system
	of affine spaces by Lemma~\ref{lem.Lin_System_p_in_k[y]}\ref{lem.Lin_System_p_in_k[y]_0}. This proves in particular that $\dim V\le 2$.
	
	Suppose now that $\dim V\le 1$. 	 If $n\le 2$, we are in case \hyperlink{factor_of_x_5}{(5)} or \hyperlink{factor_of_x_6}{(6)} up
	to an affine coordinate change in $y$. If $n=3$ and $V=\k$, we obtain case \hyperlink{factor_of_x_5}{(5)}. We then prove that $n=3$ and $V\not=\k$ is impossible. Indeed, otherwise, there is $y_0 \in \k$ with $p_i(y_0) = 0$ for $i=1, 2, 3$
	and the Jacobian of $f$ would be non-invertible in all points $(x, y_0, z)$,
	which contradicts Lemma~\ref{lem.properties_lin_systems_of_aff_spaces}\ref{lem.properties_lin_systems_of_aff_spaces4}.
	
	We may now assume that $\dim V=2$, so $n\in \{2,3\}$.
	After a reordering of $f_1, \ldots, f_n$, we get $V = \k p_1 \oplus \k p_2$.
	If $\deg(p_i) \leq 1$ for $i = 1, 2$ we are in case \hyperlink{factor_of_x_4}{(4)}.
	After a possible exchange of $f_1, f_2$ we may assume that $\deg(p_2) = 2$. 
	After adding a certain multiple of $f_2$ to $f_1$ we may assume that $\deg(p_1) \in \{0, 1\}$. If
	$\deg(p_1) = 0$, then after an affine coordinate change in $y$ at the source, we are in case 
	\hyperlink{factor_of_x_2}{(2)} or \hyperlink{factor_of_x_3}{(3)} depending on whether $p_2$ is a square or not. 
	If $\deg(p_1) = 1$,
	then we may assume after an affine coordinate change in $y$ at the source that
	$p_1 = y$ and $p_2 = a^2 y^2 + by + c^2$ for $a, b, c \in \k$ with
	$ac \neq 0$ (indeed, $0$ is not a common root of $p_1, p_2$, as they are linearly independent, see Lemma~\ref{lem.Lin_System_p_in_k[y]}\ref{lem.Lin_System_p_in_k[y]_0}). 
	After adding $-(2ac +b)f_1$ to $f_2$
	we obtain $p_2 = (ay - c)^2$. Thus after the coordinate change 
	$y \mapsto \frac{c}{a}(y+1)$
	we get $p_2 = c^2y^2$, $p_1 = \frac{c}{a}(y+1)$ and thus we are in case \hyperlink{factor_of_x_1}{(1)}.
\end{proof}

\begin{remark}
	If $\car(\k) \neq 2$, then in case \hyperlink{factor_of_x_2}{(2)} of Lemma~\ref{lem:factor_of_x}, one gets $V = \k \oplus \k (y + \frac{1}{2})^2$. Thus after
	the coordinate change $y \mapsto y - \frac{1}{2}$ we are in case \hyperlink{factor_of_x_3}{(3)}.
\end{remark}

In the case of a linear system of affine spaces of degree $3$ of $\AA^3$ in standard form 
such that one component is of the form $x(y+z^2) + z$, 
the remaining components are almost determined, up to affine automorphisms at the target:

\begin{lemma}
	\label{lem:p_1_is_y+z^2}
	Let $n \in \{ 2, 3 \}$ and let $p_i, q_i \in \k[y, z]$ for $i = 1, \ldots, n$ such that 
	$f = (f_1, \ldots, f_n) = (x(y+z^2) + z, x p_2 + q_2, \ldots, x p_n + q_n)$ is a
	linear system of affine spaces of degree $3$. 
	Then, up to an affine coordinate change at the target we have:
	\begin{enumerate}[leftmargin =*]
		\item \label{lem:p_1_is_y+z^2_n2} $n = 2$ and $f = (x(y+z^2) + z, a (y + z^2) + bx)$
				 for $(a, b) \in \k^2 \setminus \{ 0 \}$ or
		\item \label{lem:p_1_is_y+z^2_n3} $n = 3$ and $f = (x(y+z^2) + z, y + z^2, x)$.
	\end{enumerate}
	
\end{lemma}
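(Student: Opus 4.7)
The plan is to apply Lemma~\ref{Lem:x(y+z^2)+z} to each pair $(f_1, f_i)$ for $i \geq 2$, pinning down the form of $f_i$ up to a few constants, and then to finish by an elementary linear-algebra argument at the target.

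For each $i \in \{2, \ldots, n\}$, the projection $\pi_i \colon \AA^n \to \AA^2$ onto the first and $i$-th coordinates is affine linear and surjective, so by Lemma~\ref{lem.properties_lin_systems_of_aff_spaces}\ref{lem.properties_lin_systems_of_aff_spaces2} the pair $(f_1, f_i) = \pi_i \circ f$ is an affine linear system of affine spaces. Writing $f_i = x p_i + q_i$ one only gets $\deg(p_i) \leq 2$ from $\deg(f_i) \leq 3$, whereas Lemma~\ref{Lem:x(y+z^2)+z} requires $\deg(p_i) \leq 1$. This preliminary reduction is the main technical point: I would rule out $\deg(p_i) = 2$ by applying Corollary~\ref{cor.Properties_of_hypersurfaces_at_infinity} to the cubic in $\PP^2$ defined by the degree-$3$ part $x(p_i)_2 + (q_i)_3$ of $f_i$, and more generally to the whole pencil of degree-$3$ parts $\lambda(x(p_i)_2 + (q_i)_3) - \mu x z^2$, using that each non-zero member must be one of the three admissible configurations listed there (conic plus tangent, triple line, or several concurrent lines). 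Once $\deg(p_i) \leq 1$ is established, Lemma~\ref{Lem:x(y+z^2)+z} gives $p_i \in \k$ and $q_i = a_i(y + z^2) + b_i$ for some $a_i, b_i \in \k$ with $(p_i, a_i) \neq (0,0)$.

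A translation at the target subtracting $b_i$ from the $i$-th coordinate fixes $f_1$ and brings us to $f_i = p_i x + a_i(y + z^2)$ for each $i \geq 2$. If $n = 2$ this is case~\ref{lem:p_1_is_y+z^2_n2} with $a = a_2$ and $b = p_2$. If $n = 3$, the linear parts $f_{1,1} = z$, $f_{2,1} = p_2 x + a_2 y$, $f_{3,1} = p_3 x + a_3 y$ must be linearly independent in $\k[x,y,z]_1$ by Lemma~\ref{lem.properties_lin_systems_of_aff_spaces}\ref{lem.properties_lin_systems_of_aff_spaces-1}, forcing the vectors $(p_2, a_2), (p_3, a_3) \in \k^2$ to be linearly independent. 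Applying the unique $\GL_2(\k)$-transformation on the last two target coordinates that sends this ordered basis to $((0,1), (1,0))$ (and fixes $f_1$) then brings $f$ into the form $(x(y+z^2)+z, y + z^2, x)$, i.e.\ case~\ref{lem:p_1_is_y+z^2_n3}.
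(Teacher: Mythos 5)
Your overall strategy coincides with the paper's: reduce to $\deg(p_i)\le 1$, invoke Lemma~\ref{Lem:x(y+z^2)+z} to get $p_i\in\k$ and $q_i=a_i(y+z^2)+b_i$, and finish with a translation and a $\GL_2$-change at the target (the final linear-algebra step, via Lemma~\ref{lem.properties_lin_systems_of_aff_spaces}\ref{lem.properties_lin_systems_of_aff_spaces-1}, is essentially identical to the paper's use of $p_2a_3-p_3a_2\neq 0$). However, the step you flag as "the main technical point" is exactly where your argument has a genuine gap: you only announce that you \emph{would} rule out a nontrivial quadratic part of $p_i$ by applying Corollary~\ref{cor.Properties_of_hypersurfaces_at_infinity} to the pencil of cubic forms at infinity, and this plan, as stated, does not suffice. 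What must be shown is that $p_{i,2}$ is proportional to $z^2$ (so that subtracting a multiple of $f_1$ lowers $\deg(p_i)$); but the admissible configurations of Corollary~\ref{cor.Properties_of_hypersurfaces_at_infinity} do not force this. For instance, in characteristic $2$ the cubic form $x(\lambda y^2+z^2)=x(\sqrt{\lambda}\,y+z)^2$ is a union of lines through a common point for every $\lambda$, hence passes the configuration test even though $p_{i,2}=y^2$ is not a multiple of $z^2$; similarly, when $q_{i,3}\neq 0$, reducibility of $x(\lambda z^2+p_{i,2})+q_{i,3}$ only yields that $z$ divides $p_{i,2}$, not that $z^2$ does.

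The paper closes precisely this gap with two separate arguments: when $q_{i,3}=0$ it applies the collinearity statement of Proposition~\ref{prop.standard_form_if_line_at_infinity}\ref{rem.All_b_i.2_are collinear} to $(f_1(y,x,z),f_i(y,x,z))$ (whose proof rests on the detailed case analysis of Lemma~\ref{Lem:AlongALine}), and when $q_{i,3}\neq 0$ it uses Proposition~\ref{Prop:XxpqisA2thendisjoint} to see that $\lambda z^2+p_{i,2}$ must be a square for all $\lambda$, deduces $\car(\k)=2$, and then derives a contradiction from the irreducibility of the cubic part of $\lambda f_1+f_i$ for general $\lambda$. Neither of these inputs is reachable from Corollary~\ref{cor.Properties_of_hypersurfaces_at_infinity} alone, so you would need to supply an argument of comparable substance to make your reduction to $\deg(p_i)\le 1$ valid. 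The remainder of your proof is correct.
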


\begin{proof}
	For $i=2, \ldots, n$, let
	$p_{i, 2}, q_{i, 3} \in \k[y, z]$ be the homogeneous parts of degree $2$ and $3$ 
	of $p_i$ and $q_i$, respectively. 
	
	We now prove that $p_{i,2}$ is divisible by $z^2$ for each $i\in \{2,\ldots,n\}$. If $q_{i,3}=0$, this follows from Proposition~\ref{prop.standard_form_if_line_at_infinity}\ref{rem.All_b_i.2_are collinear}, applied to the linear system $(f_1(y,x,z), f_i(y,x,z))$. 
	Now, assume $q_{i,3} \neq 0$ and that $p_{i, 2}$ is not a multiple of $z^2$ to derive a contradiction. 
	Since for each $\lambda \in \k$ the polynomial
	$\lambda f_1 + f_i = x(\lambda(y+z^2) + p_i) + (\lambda z + q_i)$ defines an $\AA^2$ in $\AA^3$,
	we get that for general $\lambda \in \k$ the polynomial 
	$\lambda(y+z^2) + p_i \in \k[y, z]$ defines a disjoint union of curves in $\AA^2$ 
	which are isomorphic to $\AA^1$ (see Proposition~\ref{Prop:XxpqisA2thendisjoint}). In particular, 
	for general (and thus for all) $\lambda \in \k$, the polynomial
	$\lambda z^2 + p_{i, 2}$ is a square. Since $p_{i, 2}$ is not a multiple
	of $z^2$ we get that $\car(\k) = 2$ and 
	for general $\lambda \in \k$, the polynomials 
	$\lambda z^2 + p_{i, 2}$ and $q_{i, 3}$ in $\k[y, z]$ have no common non-zero linear factor
	(remember that $q_{i, 3} \neq 0$). This implies that
	the homogeneous part of degree $3$ of $\lambda f_1 +	f_i$, which
	is equal to $x(\lambda z^2 + p_{i, 2}) + q_{i, 3}$, is
	irreducible for general $\lambda \in \k$ and thus $\lambda f_1 + f_i$ does not define
	an $\AA^2$ in $\AA^3$ (see Proposition~\ref{Proposition:XxpqisA2smalldegreelistpqDeg3}),
	contradiction. 
	
	For each $i\in \{2,\ldots,n\}$, we may now add multiples of $f_1$ to $f_i$ and assume that $\deg(p_i) \leq 1$.
	Lemma~\ref{Lem:x(y+z^2)+z} implies that $p_i \in \k$
	and gives the existence of $a_i, b_i \in \k$ such that
	\[
	f_i = x p_i + a_i (y+z^2) + b_i \quad \textrm{and} \quad (p_i, a_i) \neq (0, 0) \, .
	\]
	After applying a translation at the target, we may assume that $b_i = 0$. If $n = 2$, then
	we are in case~\ref{lem:p_1_is_y+z^2_n2}. Hence, we assume $n = 3$.
	Since $f_2$ and $f_3$ are linearly
	independent, it follows that $p_2 a_3 - p_3 a_2 \neq 0$; thus after a linear coordinate change in $y, z$ at the
	target, we may assume that
	\[
		\begin{pmatrix}
			a_2 & p_2\\
			a_3 & p_3
		\end{pmatrix}
		= 
		\begin{pmatrix}
			1 & 0 \\
			0 & 1
		\end{pmatrix} \, .
	\]
	This proves the lemma.
\end{proof}

\begin{lemma}\label{Lemm:xpiqiredky}
	Let $n \geq 1$. For $i \in \{1, \ldots, n\}$, let $f_i = x p_i + q_i$ where 
	$p_i, q_i \in \k[y, z]$ and $\deg(p_i) \leq 2$, $\deg(q_i) \leq 3$.
	If $f = (f_1, \ldots, f_n) \colon \AA^3 \to \AA^n$ is a linear system of affine spaces, then one may apply affine automorphisms at the target and source and reduce to the case where $p_1,\ldots,p_n\in \k[y]$ $($and still have $q_1,\ldots,q_n\in \k[y,z])$.
\end{lemma}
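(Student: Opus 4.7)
The plan is to make a case distinction according to whether some linear combination $\lambda_0+\sum_i\lambda_i f_i$ (for $(\lambda_1,\dots,\lambda_n)\in \k^n\setminus\{0\}$ and $\lambda_0\in \k$) is equivalent, via an affine automorphism of $\AA^3$ fixing $[0:1:0:0]$, to $x(y+z^2)+z$, i.e.~belongs to case~\ref{pqPofdegree2} of Proposition~\ref{Proposition:XxpqisA2smalldegreelistpq}.

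Suppose first that some such combination exists. After a target linear change absorbing it into $f_1$ and an affine automorphism of $\AA^3$ fixing $[0:1:0:0]$ (which preserves the standard form $xp+q$ of every $f_i$), we may assume $f_1=x(y+z^2)+z$. Lemma~\ref{lem:p_1_is_y+z^2} then describes the entire system, giving either $n=2$ with $f=(x(y+z^2)+z,\,a(y+z^2)+bx)$ for some $(a,b)\in \k^2\setminus\{0\}$, or $n=3$ with $f=(x(y+z^2)+z,\,y+z^2,\,x)$. The affine automorphism $(x,y,z)\mapsto(y,x,z)$ of $\AA^3$ at the source puts the system into standard form with every $p_i\in \k[y]$: it yields $(xy+yz^2+z,\,ax+az^2+by)$ in the first subcase and $(xy+yz^2+z,\,x+z^2,\,y)$ in the second. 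The case $n=1$ (where Lemma~\ref{lem:p_1_is_y+z^2} does not apply) is handled in the same way: after bringing $f_1$ to $x(y+z^2)+z$, the same swap yields $f_1=xy+yz^2+z$ with $p_1=y$.

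From now on assume no such combination exists, and let $V_2\subseteq \k[y,z]_2$ be the span of the degree-$2$ parts $p_{i,2}$. By Lemma~\ref{Lemm:XxpqisA2smalldegreelistp} and the exclusion of case~\ref{pqPofdegree2}, every element of $V=\Span\{p_1,\dots,p_n\}$ of degree $2$ is, after an affine change on $(y,z)$, in $\k[y]$, whence its degree-$2$ part is a square of a linear form in $\k[y,z]_1$; so every element of $V_2$ is such a square. When $\car(\k)\neq 2$, squares form a smooth conic in $\PP(\k[y,z]_2)\simeq \PP^2$, forcing $\dim V_2\leq 1$. When $\car(\k)=2$, we rule out $\dim V_2=2$ as follows: after a linear change on $(y,z)$, target combinations and translations one reduces to $p_1=y^2$ and $p_2=z^2$; writing each combination $\lambda f_1+\mu f_2$ in coordinates $u=\sqrt{\lambda}\,y+\sqrt{\mu}\,z$, $v=z$ and applying Proposition~\ref{Prop:XfibratReminder}, the condition that this define $\AA^2$ for all $(\lambda,\mu)\in \k^2\setminus\{0\}$ translates into the non-vanishing on $(\k^*)^2$ of a non-zero homogeneous cubic in $(\sqrt\lambda,\sqrt\mu)$ whose extreme coefficients are the ``$r_1$''-values of Proposition~\ref{Prop:XfibratReminder} for $f_1$ and $f_2$ individually, hence non-zero. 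Over the algebraically closed field $\k$ this cubic must have a root in $\PP^1\setminus\{[0:1],[1:0]\}$, a contradiction. Thus $\dim V_2\leq 1$ in all characteristics.

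It remains to conclude when $\dim V_2\leq 1$. If $V_2=0$, every $p_i$ has degree $\leq 1$; if two combinations $p,p'\in V$ had linearly independent linear parts, then $(p,p')\colon \AA^2\to\AA^2$ would be an affine automorphism of $\AA^2$, contradicting Lemma~\ref{Lem:Noptildenoaut} applied to the corresponding two-coordinate subsystem. Hence after a linear change on $(y,z)$ all linear parts of elements of $V$ lie in $\k y$, so $p_i\in \k[y]$ for each $i$. If $\dim V_2=1$, a linear change on $(y,z)$ together with a target combination give $p_{1,2}=y^2$ and $p_{i,2}=0$ for $i\geq 2$; the $z$-coefficient of $p_1$ must vanish (otherwise $p_1$ is irreducible of degree $2$ in $\k[y,z]$ and hence $f_1$ is in case~\ref{pqPofdegree2}), so $p_1\in \k[y]$. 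For $i\geq 2$, if the $z$-coefficient of $p_i$ were non-zero, then $p_1+p_i$ would be irreducible of degree $2$, i.e.~case~\ref{pqPofdegree2}, contradicting the present assumption; so $p_i\in \k[y]$ for every $i$. The hardest step is the exclusion of the characteristic-$2$ sub-case $\dim V_2=2$, where the incidence argument for the conic does not apply and one must invoke the explicit cubic computation together with algebraic closedness of $\k$.
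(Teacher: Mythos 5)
Your proof is correct in substance and its overall skeleton coincides with the paper's: both arguments hinge on the dichotomy of whether some combination $\sum\lambda_i f_i$ has an irreducible degree-$2$ coefficient of $x$ (equivalently lands in case~\ref{pqPofdegree2} of Proposition~\ref{Proposition:XxpqisA2smalldegreelistpq}), both dispose of that case via Lemma~\ref{lem:p_1_is_y+z^2} followed by the exchange of $x$ and $y$, both use Lemma~\ref{Lem:Noptildenoaut} when all $p_i$ have degree $\le 1$, and both observe that otherwise every element of $\Span(p_{1,2},\dots,p_{n,2})$ is a square, which settles everything except the characteristic-$2$ situation $V_2=\k y^2\oplus\k z^2$. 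It is in that last sub-case that you genuinely diverge: the paper extracts a contradiction from the \emph{cubic} parts $q_{i,3}$, using Lemma~\ref{lem.2nd_power} to write $q_{i,3}=hp_{i,2}$, normalizing to $q_{i,3}=0$ and then contradicting Proposition~\ref{prop.standard_form_if_line_at_infinity}\ref{rem.All_b_i.2_are collinear}; you instead extract it from the \emph{linear} parts of $q_1,q_2$, encoding the condition of Proposition~\ref{Prop:XfibratReminder} as the non-vanishing of a homogeneous cubic $s^3q_{1,01}+s^2t\,q_{1,10}+st^2q_{2,01}+t^3q_{2,10}$ on $(\k^*)^2$ and using algebraic closedness to produce a root with $st\neq 0$. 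Your route is more self-contained (it avoids the heavier Proposition~\ref{prop.standard_form_if_line_at_infinity}) and I checked that the cubic and its extreme coefficients are as you claim; the paper's version has the advantage of treating all $n$ components uniformly rather than restricting to a two-component subsystem, but a contradiction from a subsystem is of course enough.

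One step is glossed over and genuinely needs an argument: the normalization ``$p_1=y^2$ and $p_2=z^2$'' in the characteristic-$2$ sub-case. After arranging $p_{1,2}=y^2$ and $p_{2,2}=z^2$ you get $p_1=(y+a)(y+b)\in\k[y]$ and $p_2=(z+c)(z+d)\in\k[z]$, but in characteristic $2$ a translation does \emph{not} remove the linear term of $y^2+(a+b)y$, so ``translations'' alone do not suffice when $a\neq b$. The fix is available from your own standing hypothesis: since every degree-$2$ element of $V$ is reducible with square leading part, $\lambda p_1+\mu p_2$ must factor as a product of two affine forms whose linear parts are both proportional to $\sqrt{\lambda}\,y+\sqrt{\mu}\,z$, and comparing the coefficients of $y$ and $z$ forces $\sqrt{\lambda}(a+b)=\sqrt{\mu}(c+d)$ for all $(\lambda,\mu)\in(\k^*)^2$, hence $a=b$ and $c=d$; only then do the translations $y\mapsto y+a$, $z\mapsto z+c$ give $p_1=y^2$, $p_2=z^2$. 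With that inserted, your argument goes through.
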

\begin{proof}
	Assume first that $\deg(p_i) \leq 1$ for all $i$. Lemma~\ref{Lem:Noptildenoaut} implies that no two of the linear parts of $p_1,\ldots,p_n$ are linearly independent, 
	so we reduce to the case $p_i \in \k[y]$ for all $i$ by applying an automorphism on $y,z$.
	
	Applying a permutation at the target we may now assume that $\deg(p_1)=2$. 
	
	If $p_1$ is irreducible, we apply an affine coordinate change at the source that fixes $[0:1:0:0]$ and obtain one of the cases of Proposition~\ref{Proposition:XxpqisA2smalldegreelistpq} for $f_1$. The action of this on $p_1$ corresponds to the action of an affine automorphism on $y,z$ and thus does not change the fact that $p_1$ is irreducible; it thus gives Case~\ref{pqPofdegree2} of Proposition~\ref{Proposition:XxpqisA2smalldegreelistpq}, 
	namely $f_1=x(y+z^2)+z$. We apply Lemma~\ref{lem:p_1_is_y+z^2} and obtain two possible cases. Exchanging $x$ and $y$ at the source gives the result.

	We may now assume that for each $(\lambda_1, \ldots, \lambda_n) \in \k^n\setminus \{0\}$, the polynomial
	$\lambda_1 p_1 + \ldots + \lambda_n p_n$ is reducible if it has degree $2$. Indeed, otherwise we reduce to the previous case by applying an affine automorphism at the target. 
	
	We may moreover assume that $\deg(p_i)=2$ for each $i\in\{1,\ldots,n\}$ by adding multiples of $p_1$ to the $p_i$ for $i \geq 2$.
	
	Let $p_{i, j} \in \k[y, z]$ be the homogeneous part of degree $j$ of $p_i$ for 
	$i=1, \ldots, n$, $j = 0, 1, 2$. Let $V=\Span_\k(p_{1,2},\ldots,p_{n,2})$. Applying Proposition~\ref{Proposition:XxpqisA2smalldegreelistpq} to each linear combination $\sum \lambda_i f_i$, we see that each element of $V$ is a square.
	If $\dim(V)=1$, then applying a linear automorphism on $y,z$, we get $p_{i,2}\in \k y^2$ for each $i\in \{1,\ldots,n\}$. For each $i$, the polynomial $p_i\in \k[y,z]$ is reducible, so $p_i\in \k[y]$ as desired.
	
	It remains to see that $\dim(V)\ge 2$ leads to a contradiction. As every element of $V$ is a square, we get $\car(\k)=2$ and $V=\k y^2+\k z^2$.
	For each $(\lambda_1, \ldots, \lambda_n) \in \k^n$, the polynomial
	$x \sum \lambda_i p_{i, 2} + \sum \lambda_i q_{i, 3}$ is reducible
	as it is the homogeneous part of degree $3$ of $\sum \lambda_i f_i$ (Corollary~\ref{cor.Properties_of_hypersurfaces_at_infinity}), so $\sum \lambda_i p_{i, 2}$ and $\sum \lambda_i q_{i, 3}$ have a common
	linear factor.
	Hence, we may apply Lemma~\ref{lem.2nd_power} 
	to $p_{1, 2}, \ldots, p_{n, 2}$ and $q_{1, 3}, \ldots, q_{n, 3}$ and get
	$h \in \k[y, z]_1$ with
	$q_{i, 3} = h p_{i, 2}$ for $i =1, \ldots, n$. 
	After applying the linear automorphism
	$(x - h, y, z)$ at the source, we reduce to the case where $q_{i,3}=0$ for $i=1, \ldots, n$. 
	The vector space generated by the homogeneous parts of degree $3$ of $f_1,\ldots,f_n$ is then equal to $\k xy^2 +\k xz^2$. This is impossible, as Proposition~\ref{prop.standard_form_if_line_at_infinity}\ref{rem.All_b_i.2_are collinear} applied to $(f_1(y,x,z),\ldots,f_n(y,x,z))$ shows.
\end{proof}

\subsection{The proof of Theorem~$\ref{thm:Classification}$}
\label{sec.classific}

In this section, we give a description of all linear systems $\AA^3 \to \AA^n$ of degree $\leq 3$
up to composition of affine automorphisms at the source and target 
and prove in particular Theorem~\ref{thm:Classification}.

\begin{proposition}
	\label{Prop:Deg3_Auto}
	Let $n \geq 2$. For $i \in \{1, \ldots, n\}$, let $f_i = x p_i + q_i$ where 
	$p_i, q_i \in \k[y, z]$ and $\deg(p_i) \leq 2$, $\deg(q_i) \leq 3$.
	If $f = (f_1, \ldots, f_n) \colon \AA^3 \to \AA^n$ is a linear system of affine spaces, then $n\le 3$ and
	$f$ is equivalent to $(g_1, \ldots, g_n) \colon \AA^3 \to \AA^n$ with one of the following 
	possibilities:
	\begin{enumerate}[$(i)$]
		\item \label{Prop:Deg3_Auto_Case1} $(g_1, g_2, g_3) = (x+p(y,z),y+q(z),z)$ 
		where $p \in \k[y, z]$, $q \in \k[z]$;
		\item \label{Prop:Deg3_Auto_Case2}
		$(g_1, g_2, g_3) = (xy + ya(y, z) + z, x + a(y, z) + r(y), y)$ 
		where $a \in \k[y, z]$, $r \in \k[y]$;
		\item \label{Prop:Deg3_Auto_Case3} $(g_1, g_2) = (xy + ya(y, z) + z, y)$
		where $a \in \k[y, z]$;
		\item \label{Prop:Deg3_Auto_Case4} $(g_1, g_2) = (xy^2 + y (z^2 + az + b) + z, y)$ 
		where $a, b \in \k$.
	\end{enumerate}
\end{proposition}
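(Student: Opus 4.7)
The plan is to apply the structural lemmas developed above in a systematic case analysis. First, Lemma~\ref{lem.properties_lin_systems_of_aff_spaces}\ref{lem.properties_lin_systems_of_aff_spaces-1} gives the bound $n\le 3$ immediately, as the degree-one parts of $f_1,\dots,f_n$ are linearly independent in the three-dimensional space $\k[x,y,z]_1$. Next, by Lemma~\ref{Lemm:xpiqiredky}, after composing with suitable affine automorphisms at source and target we may assume that $p_1,\dots,p_n \in \k[y]$, still with $q_1,\dots,q_n\in\k[y,z]$.

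I would then introduce the subspace $V := \Span_\k(p_1,\dots,p_n) \subseteq \k\oplus\k y\oplus\k y^2$ and invoke Lemma~\ref{lem:factor_of_x}, which, after an affine change of coordinate in $y$ at the source, reduces the discussion to six explicit possibilities for $V$. The four shapes listed in the proposition correspond to specific such configurations: case~\ref{Prop:Deg3_Auto_Case1} arises from $V=\k$ with $n=3$, via Proposition~\ref{cor.Autos_of_A2_deg3} applied to the pair $(f_2,f_3)$ after affinely absorbing the linear parts of the $q_i$ into $x$; case~\ref{Prop:Deg3_Auto_Case2} arises from $V=\k\oplus\k y$ with $n=3$, by normalising $p_1=y$, $p_2=1$, $p_3=0$ and applying Lemma~\ref{lem.Lin_System_p_in_k[y]}\ref{lem.Lin_System_p_in_k[y]_1}--\ref{lem.Lin_System_p_in_k[y]_2}; case~\ref{Prop:Deg3_Auto_Case3} arises from $V=\k y$ with $n=2$, via Lemma~\ref{lem.Lin_System_p_in_k[y]}\ref{lem.Lin_System_p_in_k[y]_2}; and case~\ref{Prop:Deg3_Auto_Case4} arises from $V=\k y^2$ with $n=2$, via Lemma~\ref{lem.Lin_System_p_in_k[y]}\ref{lem.Lin_System_p_in_k[y]_3}.

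The hard part will be to show that all the remaining configurations provided by Lemma~\ref{lem:factor_of_x}, in particular $V=\k(y+1)\oplus\k y^2$, $V=\k\oplus\k y^2$, $V=\k\oplus\k y(y+1)$ and $V=\k y(y+1)$, reduce after one further affine move to one of the four shapes above. In each of these I would extract strong rigidity on the companion polynomials from the relevant parts of Lemma~\ref{lem.Lin_System_p_in_k[y]}, most notably parts~\ref{lem.Lin_System_p_in_k[y]_3} and~\ref{lem.Lin_System_p_in_k[y]_4}, combined with the standing degree bounds $\deg(p_i)\le 2$ and $\deg(q_i)\le 3$. The goal is to show that either the system becomes equivalent, after one additional affine change, to one already listed, or the accumulated constraints on $q_i$ force some $\lambda_0+\sum\lambda_i f_i$ to fail to define an $\AA^2$ in $\AA^3$, contradicting the linear system hypothesis. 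The principal book-keeping challenge throughout is that every affine move must simultaneously preserve the standard form $p_i\in\k[y]$ and the degree bounds, so that the preceding lemmas remain applicable at each step of the case analysis.
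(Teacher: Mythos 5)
Your skeleton coincides with the paper's: reduce to $p_i\in\k[y]$ via Lemma~\ref{Lemm:xpiqiredky}, classify $V=\Span_\k(p_1,\ldots,p_n)$ via Lemma~\ref{lem:factor_of_x}, then pin down the $q_i$ using Lemma~\ref{lem.Lin_System_p_in_k[y]}. But what you label ``the hard part'' and defer is in fact the entire body of the proof, and your sketch of how the cases map to the four output shapes is partly off, which suggests the deferred analysis has not actually been tested. Concretely: the configurations $V=\k(y+1)\oplus\k y^2$, $V=\k\oplus\k y^2$ and $V=\k\oplus\k y(y+1)$ are not residual cases to be eliminated or forced into contradictions --- they all genuinely occur and must be reduced by explicit moves. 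In the paper, $V=\k(y+1)\oplus\k y^2$ yields shape~\ref{Prop:Deg3_Auto_Case2} via Lemma~\ref{lem.Lin_System_p_in_k[y]}\ref{lem.Lin_System_p_in_k[y]_3}\ref{lem.Lin_System_p_in_k[y]_3ii} followed by the coordinate change $(x,y,z)\mapsto(z,y+1,-x)$, while $V=\k\oplus\k y^2$ and $V=\k\oplus\k y(y+1)$ yield shape~\ref{Prop:Deg3_Auto_Case1} via parts \ref{lem.Lin_System_p_in_k[y]_3}\ref{lem.Lin_System_p_in_k[y]_3i} and \ref{lem.Lin_System_p_in_k[y]_4}; none of this is a one-move formality, since before Lemma~\ref{lem.Lin_System_p_in_k[y]}\ref{lem.Lin_System_p_in_k[y]_3} can be applied one must first normalise $f_1$ (with $p_1=y^2$) to the form $xy^2+ys(z)+z$ using Proposition~\ref{Proposition:XxpqisA2smalldegreelistpq}, and one must check that the normalising automorphism preserves $\k[y]$ (the ``moreover'' clauses of Propositions~\ref{Proposition:XxpqisA2smalldegreelistpq} and~\ref{Proposition:XxpqisA2smalldegreelistpqDeg3} exist precisely for this).

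Two further inaccuracies in the announced correspondences: shape~\ref{Prop:Deg3_Auto_Case3} does not come from Lemma~\ref{lem.Lin_System_p_in_k[y]}\ref{lem.Lin_System_p_in_k[y]_2}, whose hypothesis is $p_2=1$; in that configuration the second component is $y$, i.e.\ $p_2=0$, so the relevant input is part~\ref{lem.Lin_System_p_in_k[y]_1} (forcing $f_2\in\k[y]$ of degree $1$) together with the normalisation of $f_1$ through Proposition~\ref{Proposition:XxpqisA2smalldegreelistpqDeg3}. And the case $V=\k p$ with $p=y(y+1)$ and $n=2$, which you list among the configurations to be dispatched, also lands in shapes~\ref{Prop:Deg3_Auto_Case1} or~\ref{Prop:Deg3_Auto_Case3} by the same mechanism, not by contradiction. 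Until these reductions are written out --- with the verification at each step that the affine moves preserve both $p_i\in\k[y]$ and the degree bounds, which you correctly identify as the book-keeping burden --- the argument is a plan rather than a proof.
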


\begin{proof}
	Using Lemma~\ref{Lemm:xpiqiredky}, we may assume that $p_i \in \k[y]$ for all $i$.
	
	We then apply Lemma~\ref{lem:factor_of_x}, and may assume that $p_3=0$ if $n=3$ and that $(p_1,p_2)$ is in one of the following cases:
	\[
		\begin{array}{llll}
			\hypertarget{p1p2_x_1}{\hyperlink{factor_of_x_1}{(1)}} &(y^2,y+1) &
			\hypertarget{p1p2_x_4}{\hyperlink{factor_of_x_4}{(4)}} & (y,1) \\
			\hypertarget{p1p2_x_2}{\hyperlink{factor_of_x_2}{(2)}} & (y^2,1) &
			\hypertarget{p1p2_x_5}{\hyperlink{factor_of_x_5}{(5)}} & (1,0) \\
			\hypertarget{p1p2_x_3}{\hyperlink{factor_of_x_3}{(3)}} & (y(y+1),1) &
			\hypertarget{p1p2_x_6}{\hyperlink{factor_of_x_6}{(6)}} & (p,0) 
			\textrm{ with $p \in \{0, y, y^2, y(y+1) \}$ and $n=2$}
		\end{array}
	\]
	
	We now go through the different cases.

	In Cases~\hyperlink{p1p2_x_1}{(1)}-\hyperlink{p1p2_x_4}{(4)}, if $n=3$ then $f_3=q_3$ is an element of $\k[y]$
	of degree~$1$. This follows from 
	Lemma~\ref{lem.Lin_System_p_in_k[y]}\ref{lem.Lin_System_p_in_k[y]_1} applied to $(f_1,f_3)$, as $p_3=0$ and $p_1\in \k[y]\setminus \k$. One can then, if one needs, replace $f_3$ with $\alpha f_3+\beta$ for some $\alpha,\beta\in \k$, $\alpha\not=0$ and obtain $f_3=y$.
	
	In Cases \hyperlink{p1p2_x_1}{(1)}-\hyperlink{p1p2_x_2}{(2)}, $p_1=y^2$. 
	There is $\alpha\in \Aff(\A^3)$ that fixes $[0:1:0:0]$ such that
	$\alpha^\ast(f_1)$ is one of the cases of Proposition~\ref{Proposition:XxpqisA2smalldegreelistpq}. As $\alpha^*(y^2)$ is the coefficient of $x$ in $\alpha^\ast(f_1)$ up to non-zero scalars, we obtain
	that $\alpha^\ast(f_1)$ is the polynomial of Case~\ref{pqSecondCasexy2Deg3}
	in Proposition~\ref{Proposition:XxpqisA2smalldegreelistpq}
	and $\alpha^\ast(y) \in \k[y]$,
	so we reduce to the case where $p_1=y^2$ and $q_1=ys(z)+z$ for some $s\in \k[z]$
	of degree $\leq 2$.
	
	\hyperlink{p1p2_x_1}{(1)}: Here $p_2=y+1$, so Lemma~\ref{lem.Lin_System_p_in_k[y]}\ref{lem.Lin_System_p_in_k[y]_3}
	\ref{lem.Lin_System_p_in_k[y]_3ii} shows that $s(z) = -z + \mu$ and $q_2 = -z + r(y)$ 
	where $\mu \in \k$ and $r \in \k[y]$ has degre $\leq 3$.
	After performing $(x, y, z) \mapsto (x, y, z + \mu)$ at the source and 
	adding constants at the target we may assume $\mu = 0$. Hence,
	\[
		(f_1, f_2) = (xy^2 -zy+z, x(y+1) -z + r(y)) \, .
	\]
	We apply $(x, y, z) \mapsto (z, y+1, -x)$ at the source and get
	\[
		 (f_1, f_2) = (xy + yz(y+2) + z, x + z(y+2) + r(y+1)) \, . 
	\]
	This gives case~\ref{Prop:Deg3_Auto_Case2} if $n=2$. 
	If $n = 3$, then $f_3$ is still an element of $\k[y]$ of degree $1$ and we can then assume $f_3=y$ to obtain Case~\ref{Prop:Deg3_Auto_Case2}.

	\hyperlink{p1p2_x_2}{(2)}: Here $p_2=1$, so $f_2=x+q_2(y,z)$ and if $n = 3$, then 
	$f_3\in \k[y]$ is of degree $1$, so we may assume $f_3= y$. Lemma~\ref{lem.Lin_System_p_in_k[y]}\ref{lem.Lin_System_p_in_k[y]_3}\ref{lem.Lin_System_p_in_k[y]_3i} gives $q_2\in \k[y]$ and $s\in \k$, thus after a permutation of $x, y, z$ at the source we are in case~\ref{Prop:Deg3_Auto_Case1}.
		
	\hyperlink{p1p2_x_3}{(3)}: Here $p_1=y(y+1)$, so $q_1=a(y,z)y(y+1)+s(y)z + t(y)$ for polynomials $a\in \k[y,z], s, t\in \k[z]$ of degree $\leq 1$ with $s(0)s(-1) \neq 0$ (Proposition~\ref{Prop:XfibratReminder}). Replacing $x$ with $x-a(y,z)$, we may assume that $a=0$. 
	Lemma~\ref{lem.Lin_System_p_in_k[y]}\ref{lem.Lin_System_p_in_k[y]_4} then implies that
	$s(y) \in \k^\ast$ and $q_2(y, z) \in \k[y]$. Hence, 
	\[
		(f_1, f_2) = (xy(y+1)+sz + t(y), x + q_2(y))
	\]
	and if $n=3$, we may assume $f_3 = y$. After a permutation of $x, y, z$ at the source and 
	a rescaling of $f_1$, we are in case~\ref{Prop:Deg3_Auto_Case1}.

	\hyperlink{p1p2_x_4}{(4)}: Here $p_1=y$, so $q_1=\tilde{a}(y,z)y+ \alpha z + \beta$ where $\tilde{a}\in \k[y,z]$, $\alpha\in \k^*$ and $\beta\in \k$ (Proposition~\ref{Prop:XfibratReminder}). Replacing $z$ with $\alpha^{-1}(z-\beta)$, we get $f_1=xy + a(y, z)y + z$ for some $a\in \k[y,z]$.
	By Lemma~\ref{lem.Lin_System_p_in_k[y]}\ref{lem.Lin_System_p_in_k[y]_2} 
	there is $r(y) \in \k[y]$
	with $f_2 = x + a(y, z) + r(y)$. Hence, we are in case~\ref{Prop:Deg3_Auto_Case2}.

	\hyperlink{p1p2_x_5}{(5)}: If $n = 2$, then according to Lemma~\ref{Coro:A1inA2smalldegree} 
	we may apply an affine automorphism in $(y, z)$ at the source in order to get 
	$f_2 = q_2 = y + q(z)$ and thus we are in case~\ref{Prop:Deg3_Auto_Case1}.
	If $n = 3$, then $f = (x + q_1, q_2, q_3)$.
	Since $\AA^3 \to \AA^2$, $(x, y, z) \mapsto (q_2(y, z), q_3(y, z))$
	is an affine linear system of affine spaces, by Lemma~\ref{lem.properties_lin_systems_of_aff_spaces}\ref{lem.properties_lin_systems_of_aff_spaces3} 
	the same holds for $(q_2, q_3) \colon \AA^2 \to \AA^2$.
	By Proposition~\ref{cor.Autos_of_A2_deg3}, we get up to affine automorphisms in $y, z$
	at the source and target that $(q_2, q_3) = (y + q(z), z)$ for some $q \in \k[z]$ 
	and thus we are again in case~\ref{Prop:Deg3_Auto_Case1}.
		
	\hyperlink{p1p2_x_6}{(6)}: Assume first that $p = 0$. Then by Proposition~\ref{cor.Autos_of_A2_deg3} we
	may assume that $f = (y + q(z), z)$ for some $q \in \k[z]$. After replacing $y$ with $x$ and
	$z$ with $y$ we are in case~\ref{Prop:Deg3_Auto_Case1}. In all other cases
	$p \in \k[y] \setminus\k$ and by Lemma~\ref{lem.Lin_System_p_in_k[y]}\ref{lem.Lin_System_p_in_k[y]_1}
	we get that $f_2$ is a polynomial of degree $1$ in $\k[y]$.
	By Proposition~\ref{Proposition:XxpqisA2smalldegreelistpq} there is $\alpha \in \Aff(\AA^3)$
	that fixes $[0:1:0:0]$ such that
	$\alpha^\ast(f_1)$ is one of the polynomials in the cases~\ref{pqCaseOnlyyz}-\ref{pqTwoyDeg3} of
	Proposition~\ref{Proposition:XxpqisA2smalldegreelistpq}. Since up to scalars, $\alpha^\ast(p)$
	is the factor of $x$ in $\alpha^\ast(f_1)$ (when we consider it as a polynomial in $x$ over $\k[y, z]$)
	and since $p \in \{y, y^2, y(y+1)\}$, it follows that $\alpha^\ast(f_1)$ 
	belongs to one of the Cases \ref{pqSecondCasexyDeg3}-\ref{pqTwoyDeg3} of
	Proposition~\ref{Proposition:XxpqisA2smalldegreelistpq} and $\alpha^\ast(y) \in \k[y]$.
	In particular, $\alpha^\ast(f_2)$
	is a polynomial of degree $1$ in $y$. 
	Proposition~\ref{Proposition:XxpqisA2smalldegreelistpqDeg3} then gives $\beta\in \Aff(\A^3)$ such that $\beta^*(y)\in \k[y]$ and such that $\beta^*(f_1)$ is one of the polynomials in cases~\ref{FirstCasekyDeg3}, \ref{SecondCasexyDeg3} or~\ref{SecondCasexy2Deg3} of Proposition~\ref{Proposition:XxpqisA2smalldegreelistpqDeg3}. 
	As $\beta^*(f_2)$ is again a polynomial
	of degree $1$ in $\k[y]$, we may replace it with $y$ and get 
	cases~\ref{Prop:Deg3_Auto_Case1},~\ref{Prop:Deg3_Auto_Case3} or~\ref{Prop:Deg3_Auto_Case4}. 
		
\end{proof}

As an immediate consequence we get

\begin{corollary}
	\label{cor:Trivial_bundles}
	Let $n \geq 1$ and let $f \colon \AA^3 \to \AA^n$ be a linear system of affine spaces of degree $\leq 3$.
	Then $f$ is equivalent to a linear system of affine spaces in standard form if and only if 
	$f$ is a trivial $\AA^{3-n}$ bundle. Moreover, the latter condition is satisfied if
	$\car(\k) \not\in \{2, 3\}$.
\end{corollary}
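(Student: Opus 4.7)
The plan is to establish the corollary by assembling several earlier results. The direction $(\Leftarrow)$ is precisely Corollary~\ref{cor.Linear_systems_in_standard_form}. For the direction $(\Rightarrow)$, I would split on $n$: the case $n=1$ is immediate from Corollary~\ref{coro:TrivialisationVariables}, since a standard-form polynomial $xp+q$ of degree $\leq 3$ defining an $\AA^2$ is a variable of $\k[x,y,z]$, so the corresponding morphism $\AA^3\to\AA^1$ is a trivial $\AA^2$-bundle. For $n\in\{2,3\}$, Proposition~\ref{Prop:Deg3_Auto} reduces the problem to four explicit normal forms.

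In cases~\ref{Prop:Deg3_Auto_Case1} and~\ref{Prop:Deg3_Auto_Case2}, the map $(g_1,\ldots,g_n)$ is itself (or extends canonically to) an automorphism of $\AA^3$: case~\ref{Prop:Deg3_Auto_Case1} is triangular, and in case~\ref{Prop:Deg3_Auto_Case2} one exhibits a polynomial inverse by successively solving $y=g_3$, then $z=g_1-g_2g_3+g_3r(g_3)$, and finally $x=g_2-r(g_3)-a(g_3,z)$. Since any projection of an automorphism of $\AA^3$ onto $\AA^n$ is a trivial $\AA^{3-n}$-bundle, these cases are done. Cases~\ref{Prop:Deg3_Auto_Case3} and~\ref{Prop:Deg3_Auto_Case4} both take the form $(g_1,y)$ with $g_1=xp(y)+q(y,z)$ for $p\in\{y,y^2\}\subset\k[y]\setminus\k$; since $g_1$ defines an $\AA^2$ in $\AA^3$ (take the hyperplane $g_2=0$ in the definition of linear system of affine spaces), Proposition~\ref{Prop:XfibratReminder}\ref{XfibratReminderPvariable} supplies an automorphism $\varphi\in\Aut(\AA^3)$ with $\varphi^\ast(x)=g_1$ and $\varphi^\ast(y)=y$, exhibiting $(g_1,g_2)$ as the projection onto the first two coordinates after $\varphi$, hence as a trivial $\AA^1$-bundle.

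Finally, the ``moreover'' claim follows by combining Proposition~\ref{prop:globally_xp_i+q_i} (outside characteristics $2$ and $3$ the two exceptional families produced there cannot occur, so every such $f$ is equivalent to a standard-form linear system) with the $(\Rightarrow)$ direction just established. The proof is essentially one of assembly; there is no substantial obstacle beyond verifying the explicit inverse in case~\ref{Prop:Deg3_Auto_Case2} and invoking Proposition~\ref{Prop:XfibratReminder}\ref{XfibratReminderPvariable} in cases~\ref{Prop:Deg3_Auto_Case3} and~\ref{Prop:Deg3_Auto_Case4}.
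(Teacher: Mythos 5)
Your proposal is correct and follows essentially the same route as the paper: Corollary~\ref{cor.Linear_systems_in_standard_form} for one direction, Corollary~\ref{coro:TrivialisationVariables} for $n=1$, the four cases of Proposition~\ref{Prop:Deg3_Auto} (with Proposition~\ref{Prop:XfibratReminder}\ref{XfibratReminderPvariable} handling cases~\ref{Prop:Deg3_Auto_Case3} and~\ref{Prop:Deg3_Auto_Case4}) for $n\geq 2$, and Proposition~\ref{prop:globally_xp_i+q_i} for the ``moreover'' claim. The only quibble is the parenthetical in which you justify that $g_1$ defines an $\AA^2$: the relevant hyperplane is the coordinate hyperplane in the target whose preimage is $\{g_1=0\}$, not $\{g_2=0\}$ — a harmless slip.
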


\begin{proof}
	If $f \colon \AA^3 \to \AA^n$ is a trivial $\AA^{3-n}$-bundle, then $f$ is equivalent 
	to a linear system of affine spaces in standard form by Corollary~\ref{cor.Linear_systems_in_standard_form}. Conversely, we assume that $f$ is a linear system of affine spaces in standard form and prove that $f$ is a trivial $\AA^{3-n}$-bundle. If $n=1$, then $f$ is a variable of $\k[x,y,z]$ (Corollary~\ref{coro:TrivialisationVariables}), so it defines a trivial $\AA^2$-bundle.
	If $n\ge 2$, we go through the four cases of Proposition~\ref{Prop:Deg3_Auto}.
	In case~\ref{Prop:Deg3_Auto_Case1}
	and~\ref{Prop:Deg3_Auto_Case2}, the morphism $(g_1, g_2, g_3) \colon \AA^3 \to \AA^3$ defines
	an automorphism and in case~\ref{Prop:Deg3_Auto_Case3} and~\ref{Prop:Deg3_Auto_Case4}, 
	Proposition~\ref{Prop:XfibratReminder}\ref{XfibratReminderPvariable} gives the existence of $g_3\in \k[x,y,z]$ such that $(g_1,g_2,g_3)\in \Aut(\A^3)$.
	The second claim follows from Proposition~\ref{prop:globally_xp_i+q_i}.
\end{proof}

We now come to the proof of our description of linear systems of 
affine spaces $\AA^3 \to \AA^n$ of degree $\leq 3$:

\begin{proof}[Proof of Theorem~$\ref{thm:Classification}$]
	Let $f_1, \ldots, f_n \in \k[x, y, z]$ such that $f = (f_1, \ldots, f_n) \colon \AA^3 \to \AA^n$ is
	a linear system of affine spaces of degree $\leq 3$. If $f \colon \AA^3 \to \AA^n$ is not a trivial
	$\AA^{3-n}$-bundle, then by Corollary~\ref{cor:Trivial_bundles} 
	and Proposition~\ref{prop:globally_xp_i+q_i}, we are in cases~\ref{thm:Classification_8} or~\ref{thm:Classification_9}.
	Thus we may assume that $f \colon \AA^3 \to \AA^n$ is a trivial $\AA^{3-n}$-bundle. If $n=1$, this means that $f=f_1$ is a variable, and the description of $f$ follows from Proposition~\ref{Proposition:XxpqisA2smalldegreelistpqDeg3}. We may then assume that $n\ge 2$, that $f$ is  in standard form (applying again Corollary~\ref{cor:Trivial_bundles}) and then go through the different cases of Proposition~\ref{Prop:Deg3_Auto}:
	
	\ref{Prop:Deg3_Auto_Case1}:
	$(f_1, f_2) = (x+p(y,z),y+q(z))$ with $p \in \k[y, z]$
	and $q \in \k[z]$, and $f_3=z$ if $n=3$. Since $\deg(f) \leq 3$, we may write $p = \sum_{i=0}^3 p_i(y, z)$
	and $q(z) = \sum_{i=0}^3 q_i z^i$ where $p_i \in \k[y, z]$ is homogeneous of degree $i$ and
	$q_i \in \k$.
	After applying a translation at the target we may assume that $p_0 = 0$ and $q_0 = 0$.
	After composing $f$ with the automorphism $(x-p_1(y-q_1 z, z), y - q_1 z, z)$ at the source
	we are either in case~\ref{thm:Classification_4} or~\ref{thm:Classification_10}.
	
	\ref{Prop:Deg3_Auto_Case2} and \ref{Prop:Deg3_Auto_Case3}: 
	There exist $a \in \k[y, z]$ of degree $\leq 2$ and $r \in \k[y]$
	of degree $\leq 3$ such that $g=(xy + ya(y, z) + z, x + a(y, z) + r(y), y)$ satisfies:
	$f$ is either equal to $g$,
	or $f$ is equal to $\pi\circ g$ where $\pi\colon \A^3\to \A^2$ is one of the projections $(x,y,z)\mapsto(x,z)$ or $(x,y,z)\mapsto(x,y)$.
	Write $r(y) = r_0 + r_1y + r_2 y^2 + r_3 y^3$
	and $a = a_0 + a_1(y, z) + a_2(y, z)$ where $r_i \in \k$ and $a_i \in \k[y, z]$ is homogeneous of degree $i$. After adding constants at the target, we may assume $r_0 = 0$.
	After applying $(x - a_0 - a_1(y, z), y, z)$ 
	at the source, we may further assume that $a = a_2$ is homogeneous of degree $2$.
	After applying the permutation of the coordinates $(x, y, z) \mapsto (y, z, x)$ at the source, we have replaced $g$ with $g=(yz + za_2(z, x) + x, y + a_2(z, x) + r_1z+r_2z^2+r_3z^3, z)$.
	
	If $a_2(z, x) \in \k[z]$ and $f_2 \neq z$, then
	after applying $(x, y-r_1 z, z)$ at the source we are
	in case~\ref{thm:Classification_4} or case~\ref{thm:Classification_10}. If $a_2(z, x) \in \k[z]$ and
	$f_2 = z$, then after exchanging $y$ and $z$ at the source
	we are again in case~\ref{thm:Classification_4}. Thus we may assume that $a_2(z, x) \not\in \k[z]$.
	If $n = 2$, then we are in case~\ref{thm:Classification_5} or~\ref{thm:Classification_6} and if
	$n = 3$, then we are in case~\ref{thm:Classification_11} 
	after applying $(x, y - r_1z, z)$ at the target.
	
	\ref{Prop:Deg3_Auto_Case4}: This is case~\ref{thm:Classification_7}.
\end{proof}

Next, we will show that the cases in Theorem~\ref{thm:Classification} are all
pairwise non-equivalent. For this we need the following lemma.

\begin{lemma}
\label{Lemm:TwoCasesOfHypersurfacesNotequal}
For each $r_2\in \k[y,z]\setminus \k[y]$, homogeneous of degree $2$, it is not possible to find $p\in \k[y,z]$, $\lambda\in \k$ and $\alpha\in \Aff(\A^3)$ such that 
\[
	\alpha^*(xy+yr_2(y,z)+z)=\lambda x+p(y,z) \, .
\]
\end{lemma}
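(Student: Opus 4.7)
The plan is to expand both sides of the hypothetical identity and compare them degree by degree in $x, y, z$. Write $A = \alpha^*(x)$, $B = \alpha^*(y)$, $C = \alpha^*(z)$ as affine polynomials with constant parts $A_0, B_0, C_0$ and linear parts $A_1, B_1, C_1$, and write $r_2 = aY^2 + bYZ + cZ^2$ with $(b, c) \neq (0, 0)$ since $r_2\notin\k[y]$. The identity reads
\[
AB + B\, r_2(B, C) + C \;=\; \lambda x + p(y, z).
\]
Since $\alpha$ is an affine automorphism, $\deg(\alpha^* f) = \deg f = 3$, so $\deg p = 3$ and the homogeneous degree $3$ part $p_3$ is nonzero.

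The key step is the degree $3$ analysis. The only contribution of total degree $3$ on the left comes from $B_1 r_2(B_1, C_1)$, giving $p_3(y,z) = B_1 \cdot r_2(B_1, C_1) \in \k[y, z]$. Because $\k[y, z]$ is factorially closed in $\k[x, y, z]$ (a product lying in $\k[y,z]$ forces each factor to lie there), both $B_1$ and $r_2(B_1, C_1)$ belong to $\k[y, z]$. Expanding $r_2(B_1, C_1) = a B_1^2 + b B_1 C_1 + c C_1^2$ and using $(b, c) \neq (0, 0)$, the same factorial closure argument applied either to $c C_1^2$ (if $c \neq 0$) or to $bB_1C_1$ with $B_1 \neq 0$ (if $c=0$) forces $C_1 \in \k[y, z]$ as well. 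I expect this to be the main substantive step, as it is here that the hypothesis $r_2 \notin \k[y]$ is used essentially.

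With $B_1, C_1 \in \k[y, z]$, neither involves $x$, so the $x$-column of the Jacobian of $\alpha$ reduces to $(\mu, 0, 0)^T$, where $\mu$ is the $x$-coefficient of $A_1$. Invertibility of $\alpha$ then forces $\mu \neq 0$. I would finish by comparing degree $2$ parts: the term $A_1 B_1$ contributes $\mu\, x\, B_1$ plus terms in $\k[y,z]_2$, whereas every other degree $2$ contribution to the left lies in $\k[y,z]_2$ (since $B, C$ have linear parts in $\k[y,z]$, and $C$ itself is of degree $\le 1$). The right-hand side has no $x$ in degree $2$, because $\lambda x$ is of degree $1$ and $p_2 \in \k[y,z]$. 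Comparing the $x$-coefficients of the degree $2$ part yields $\mu\, B_1 = 0$, hence $B_1 = 0$. But then $p_3 = B_1 \cdot r_2(B_1, C_1) = 0$, contradicting $p_3 \neq 0$. This handles both the cases $\lambda \neq 0$ and $\lambda = 0$ uniformly.
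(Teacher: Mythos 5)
Your proof is correct. It uses the same essential toolbox as the paper's proof---comparison of homogeneous parts combined with the fact that $\k[y,z]$ is factorially closed in $\k[x,y,z]$---but runs the two degree comparisons in the opposite order: the paper first reduces to $\alpha\in\GL_3$, reads off from the degree-$2$ part that $\alpha^*(x),\alpha^*(y)\in\k y+\k z$, normalizes to $\alpha^*(x)=z$, $\alpha^*(y)=y$, and only then derives the contradiction from the degree-$3$ part (which is where $r_2\notin\k[y]$ enters); you instead extract from the degree-$3$ part that $B_1,C_1\in\k[y,z]$ (using $r_2\notin\k[y]$ there) and then contradict in degree $2$. Your route avoids the reduction to the linear case and the coordinate normalization, at the cost of carrying constant terms along; both are of comparable length. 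One step is stated imprecisely: when $c\neq 0$ you cannot apply factorial closure ``to $cC_1^2$'', since only the combination $bB_1C_1+cC_1^2=C_1(bB_1+cC_1)$ is known to lie in $\k[y,z]$ after subtracting $aB_1^2$. The repair is immediate: if $C_1(bB_1+cC_1)\neq 0$, factorial closure gives $C_1\in\k[y,z]$; if it vanishes, then $bB_1+cC_1=0$, so $C_1=-(b/c)B_1\in\k[y,z]$ anyway (and this case in fact contradicts the linear independence of $B_1$ and $C_1$). With that one-line fix the argument is complete.
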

\begin{proof}
Suppose for contradiction that $p,\lambda,\alpha$ exist. We may assume that $\alpha\in \GL_3$, as a translation sends $\lambda x+p(y,z)$ onto $\lambda x+\tilde{p}(y,z)$ for some $\tilde{p}\in \k[y,z]$. Hence, the homogeneous part of degree $2$ of $\alpha^*(xy+yr_2(y,z)+z)$ is $\alpha^*(xy)\in \k[y,z]$. This implies that $\alpha^*(x),\alpha^*(y)$ are linearly independent elements of $\k y+\k z$, as $\k[y, z]$
is factorially closed in $k[x,y,z]$. Replacing $\alpha$ by its composition with an element of $\GL_2$ acting on $y,z$ (which simply replaces $p$ with another polynomial in $\k[y, z]$), we may assume that $\alpha^*(x)=z$ and $\alpha^*(y)=y$. Hence, $\alpha^*(z)=a x+b y+ cz$ for some $a,b,c\in \k$, $a\not=0$. This gives
\[
	\lambda x+p(y,z)=\alpha^*(xy+yr_2(y,z)+z)= yz+yr_2(y,a x+b y+ cz)+a x+b y+ cz \, ,
\]
impossible as $r_2\in \k[y,z]\setminus \k[y]$ and $a\not=0$, 
so the coefficient of $x$ of the right hand side is not constant.
\end{proof}

\begin{proposition}
	\label{prop.Cases_distinct}
	The eleven families in Theorem~$\ref{thm:Classification}$ define disjoint sets of equivalence classes of affine linear systems of affine spaces, 
	i.e.~if $(k), (l) \in \{$\ref{thm:Classification_1}, \ref{thm:Classification_2}, \ldots, \ref{thm:Classification_11}$\}$, and $f,g\colon \AA^3 \to \AA^n$ are equivalent 
	affine linear systems of affine spaces as in family $(k)$ and $(l)$ of
	Theorem~$\ref{thm:Classification}$, respectively, then $(k)=(l)$. 
\end{proposition}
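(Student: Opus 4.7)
My approach hinges on two invariants of an affine linear system of affine spaces $f = (f_1, \ldots, f_n) \colon \AA^3 \to \AA^n$ up to equivalence: (i) the target dimension $n$, and (ii) the set of $\Aff(\AA^3)$-equivalence classes of the irreducible hypersurfaces of $\AA^3$ defined by $\sum_{i=1}^n \lambda_i f_i + c$, as $(\lambda_1, \ldots, \lambda_n, c) \in \k^{n+1}$ varies with $(\lambda_1, \ldots, \lambda_n) \neq 0$. Invariant~(ii) is preserved under equivalence because if $g = \beta \circ f \circ \alpha$, then the pullback by $\alpha$ sends each hypersurface of the linear system of $g$ onto one of the linear system of $f$. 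The invariant $n$ already separates the eleven families into three groups: \ref{thm:Classification_1}--\ref{thm:Classification_3}, \ref{thm:Classification_4}--\ref{thm:Classification_9}, and \ref{thm:Classification_10}--\ref{thm:Classification_11}. Following Proposition~\ref{Proposition:XxpqisA2smalldegreelistpqDeg3}, I call an irreducible polynomial of degree~$\leq 3$ defining an $\AA^2$ in $\AA^3$ of \emph{type (A)}, \emph{(B)}, or \emph{(C)} accordingly.

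The central sublemma is that the three types are pairwise non-equivalent under $\Aff(\AA^3)$. Lemma~\ref{Lemm:TwoCasesOfHypersurfacesNotequal} directly rules out (A)~$\sim$~(B). To separate (C) from (A) and (B), note that the cubic-at-infinity of (C) equals $y(xy+z^2)$, the product of a line and a smooth (hence irreducible) conic---a factorization type preserved under the $\GL_3(\k)$-action induced on cubic forms by $\Aff(\AA^3)$. This property fails for both (A) and (B), whose cubic parts lie in $\k[y,z]$ and hence factor entirely as products of linear forms over the algebraically closed field $\k$. This settles the $n=1$ case, since families~\ref{thm:Classification_1}, \ref{thm:Classification_2}, \ref{thm:Classification_3} consist exactly of polynomials of types (A), (B), and (C).

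For $n=3$, I will check that every non-trivial $\lambda_1 f_1 + \lambda_2 f_2 + \lambda_3 f_3 + c$ in family~\ref{thm:Classification_10} equals $\lambda_1 x + p(y,z)$ when $\lambda_1 \neq 0$ and lies in $\k[y,z]$ otherwise, hence is of type~(A) after a possible swap of $x$ and $y$; whereas in family~\ref{thm:Classification_11}, the polynomial $f_1 = yz + z a_2(x,z) + x$, under the coordinate permutation $(x,y,z) \mapsto (z,x,y)$, becomes $xy + y a_2(z,y) + z$ with $a_2(z,y) \in \k[y,z] \setminus \k[y]$, and so is of type~(B). The sublemma then gives~\ref{thm:Classification_10}~$\not\sim$~\ref{thm:Classification_11}.

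For $n=2$, I first separate \ref{thm:Classification_8} and \ref{thm:Classification_9} (non-trivial $\AA^1$-fibrations) from \ref{thm:Classification_4}--\ref{thm:Classification_7} (trivial ones) via Corollary~\ref{cor:Trivial_bundles} together with Lemmata~\ref{lem.Lin_system_char_2} and~\ref{lem.Lin_system_char_3}; the pair~\ref{thm:Classification_8}, \ref{thm:Classification_9} lives in characteristics~$2$ and~$3$ respectively and so cannot coexist. Among \ref{thm:Classification_4}--\ref{thm:Classification_7}, the same type analysis applies: family~\ref{thm:Classification_4} contains only hypersurfaces of type~(A); the components $f_1$ of~\ref{thm:Classification_5} and~\ref{thm:Classification_6} are of type~(B) by the same coordinate permutation used for~\ref{thm:Classification_11}; and $f_1$ in~\ref{thm:Classification_7} is directly of type~(C). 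This separates every pair except~\ref{thm:Classification_5} vs.~\ref{thm:Classification_6}, which I distinguish by degree: family~\ref{thm:Classification_6} has $f_2 = z$ of degree~$1$, while in~\ref{thm:Classification_5} the identity
\[
\lambda f_1 + \mu f_2 + c = (\lambda z + \mu)(y + a_2) + \lambda x + \mu(r_1 z + r_2 z^2 + r_3 z^3) + c,
\]
combined with $a_2 \neq 0$, shows that every non-zero linear combination has degree~$\geq 2$ (the $\lambda yz$ term when $\lambda \neq 0$, and the $\mu a_2$ term when $\lambda = 0 \neq \mu$, are the irreducible obstructions). The main obstacle I anticipate is the careful verification that every hypersurface in the linear systems of families~\ref{thm:Classification_4} and~\ref{thm:Classification_10} is of type~(A); this reduces to a short case analysis on whether the leading $x$-coefficient vanishes, followed by a swap of $x$ and $y$ when it does.
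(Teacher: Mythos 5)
Your proposal is correct and follows essentially the same route as the paper's proof: both group the families by target dimension and by triviality of the fibration (with the characteristic separating \ref{thm:Classification_8} from \ref{thm:Classification_9}), both use Lemma~\ref{Lemm:TwoCasesOfHypersurfacesNotequal} to separate the ``$x+p(y,z)$'' hypersurfaces from the ``$xy+yr_2+z$'' ones, both use the failure of the cubic form at infinity to split into linear factors to isolate the $xy^2+\cdots$ type, and both distinguish \ref{thm:Classification_5} from \ref{thm:Classification_6} by the degrees occurring in the linear span. The only (harmless) difference is that you handle \ref{thm:Classification_5} vs.\ \ref{thm:Classification_6} with a single degree count, where the paper splits into the cases $r_3\neq 0$ and $r_3=0$.
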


\begin{proof}
	 If $f$ or $g$ is a non-trivial $\A^1$-fibration, then both are. As $\car(\k)=2$ in \ref{thm:Classification_8} and $\car(\k)=3$ in \ref{thm:Classification_9}, we obtain $(k)=(l)=$\ref{thm:Classification_8} or $k=l=$\ref{thm:Classification_9}. We may now assume that $(k)$ and $(l)$ are both contained in one of the sets 
	$\{$\ref{thm:Classification_1}, \ref{thm:Classification_2}, \ref{thm:Classification_3}$\}$, 
	$\{$\ref{thm:Classification_4}, \ref{thm:Classification_5}, \ref{thm:Classification_6}, \ref{thm:Classification_7}$\}$ or $\{$\ref{thm:Classification_9}, \ref{thm:Classification_10}, \ref{thm:Classification_11}$\}$.
	
	We write $f=(f_1,\ldots,f_n)$ and $g=(g_1,\ldots,g_n)$.

	Assume that $f_1 = xy^2 + y(z^2 + az + b) + z$ for some $a, b \in \k$, i.e.~
	$(k)\in \{$\ref{thm:Classification_3}, \ref{thm:Classification_7}$\}$.
	Then for general $(\lambda_1, \ldots, \lambda_n)$, the homogeneous part of 
	degree $3$ of $\sum \lambda_i f_i$
	does not factor into linear polynomials. This has to be the same for the homogeneous part of degree $3$ of $\sum \lambda_i g_i$, 
	so $(k)=(l)\in \{$\ref{thm:Classification_3}, \ref{thm:Classification_7}$\}$
	by inspecting the cases that are different from \ref{thm:Classification_3}, \ref{thm:Classification_7}. The same holds when $(l)\in \{$\ref{thm:Classification_3}, \ref{thm:Classification_7}$\}$, so we may exclude these two cases.
	
	Assume now that $f_1 = x + r_2(y, z) + r_3(y, z)$ for homogeneous polynomials $r_2, r_3 \in \k[y, z]$
	of degree $2$ and $3$, respectively, i.e.~$(k)\in\{$\ref{thm:Classification_1}, \ref{thm:Classification_4},
	\ref{thm:Classification_10}$\}$. 
	For each $(\lambda_1, \ldots, \lambda_n)$, 
	the polynomial $\sum \lambda_i f_i$ is equal to $\lambda x+p(y,z)$ for some $\lambda\in\k$ and $p\in \k[y,z]$.
	Lemma~\ref{Lemm:TwoCasesOfHypersurfacesNotequal} implies that $g_1$ is not equivalent to $xy+y a_2(y,z)+z$ for some $a_2 \in \k[y,z]\setminus \k[y]$, homogeneous of degree $2$, so $(l)\notin \{$\ref{thm:Classification_2}, \ref{thm:Classification_5}, \ref{thm:Classification_6}, \ref{thm:Classification_11}$\}$. 
	This yields $(k)=(l)\in\{$\ref{thm:Classification_1}, \ref{thm:Classification_4}, \ref{thm:Classification_10}$\}$. As before, we may now exclude the cases
	\ref{thm:Classification_1}, \ref{thm:Classification_4} and \ref{thm:Classification_10}.
	
	It remains to see that $(k)=$\ref{thm:Classification_5} and 
	$(l)=$\ref{thm:Classification_6} are not equivalent. 
	We take homogeneous polynomials $a_2, b_2 \in \k[x, z] \setminus \k[z]$ of degree $2$
	and $r_1, r_2, r_3 \in\k$ such that
	\begin{align*}
		f = (f_1, f_2) & = (yz + za_2(x, z) + x, y + a_2(x, z) + r_1 z + r_2 z^2 + r_3 z^3) \\
		g = (g_1, g_2) & = (yz + zb_2(x, z) + x, z) \, .
	\end{align*}
	and prove that $f,g$ are not equivalent.
	For $i =1, 2$, denote by $f_{i, 3}, g_{i, 3} \in \k[x, y, z]$ the homogeneous part of degree $3$ of $f_i$
	and $g_i$, respectively. If $r_3 \neq 0$, then $f_{1, 3}, f_{2, 3}$ are linearly independent
	as $a_2 \not\in \k[z]$, but $g_{1, 3}, g_{2, 3}$ are not, 
	so $f$ and $g$ are not equivalent. If $r_3=0$, as
	$a_2 \not\in \k[z]$, we get that
	$\deg(\lambda_1 f_1 + \lambda_2 f_2) \in \{2, 3\}$ 
	for each $(\lambda_1, \lambda_2) \neq (0, 0)$. As $\deg(g_2) = 1$, $f$ and $g$ are not equivalent.
\end{proof}

\begin{corollary}
	\label{cor.deg3_autos_tame}
	Every automorphism of degree $\leq 3$ of $\AA^3$ is tame.
\end{corollary}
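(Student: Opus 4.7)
The plan is to reduce the statement to Theorem~\ref{thm:Classification} and then to verify tameness only for the two model automorphisms appearing there. Since every automorphism of $\AA^3$ is in particular an affine linear system of affine spaces $\AA^3\to \AA^3$ in the sense of Definition~\ref{def.aff_linear_system_of_aff_spaces}, Theorem~\ref{thm:Classification} tells us that any such automorphism of degree $\leq 3$ is equivalent to one in family~\ref{thm:Classification_10} or~\ref{thm:Classification_11} (the other nine families have target of dimension strictly smaller than $3$). Because $\Tame_\k(\AA^3)$ contains $\Aff_\k(\AA^3)$ by definition and is closed under composition, tameness is preserved under the equivalence relation, so it suffices to handle these two explicit families.

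Family~\ref{thm:Classification_10}, namely $(x+p_2(y,z)+p_3(y,z),\,y+q_2z^2+q_3z^3,\,z)$, is already triangular by inspection. The main case is family~\ref{thm:Classification_11},
\[
f=(yz+za_2(x,z)+x,\ y+a_2(x,z)+r_2z^2+r_3z^3,\ z).
\]
I would introduce the auxiliary map $\alpha=(x,\,y+a_2(x,z),\,z)$, which is an automorphism with inverse $\alpha^{-1}=(x,\,y-a_2(x,z),\,z)$. A direct substitution (the two occurrences of $z\,a_2(x,z)$ cancel) shows
\[
f\circ\alpha^{-1}=(yz+x,\ y+r_2z^2+r_3z^3,\ z),
\]
which is triangular. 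Hence $f$ factors as a triangular automorphism composed with $\alpha$, and it only remains to check that $\alpha$ is tame.

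For this, set $\sigma=(y,x,z)\in \Aff_\k(\AA^3)$ and $\tau=(x+a_2(y,z),\,y,\,z)\in \Triang_\k(\AA^3)$, and observe by a one-line computation that $\sigma\circ\tau\circ\sigma=\alpha$. Therefore $\alpha\in \Tame_\k(\AA^3)$, and consequently $f$ lies in $\Tame_\k(\AA^3)$ as well. There is no genuine obstacle in this corollary beyond invoking Theorem~\ref{thm:Classification}: all the real work has already been done in the classification, and the only remaining content is the explicit tame decomposition of family~\ref{thm:Classification_11} exhibited above.
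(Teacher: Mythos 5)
Your argument is correct and coincides with the paper's: your factorization $f=(f\circ\alpha^{-1})\circ\sigma\circ\tau\circ\sigma$ is exactly the paper's decomposition $h_1\circ\iota\circ h_2\circ\iota$ with $h_1=(x+yz,\,y+r(z),\,z)$, $h_2=(x+a(y,z),\,y,\,z)$ and $\iota=(y,x,z)$, merely derived in two steps instead of being written down at once. The reduction to families~\ref{thm:Classification_10} and~\ref{thm:Classification_11} via Theorem~\ref{thm:Classification} is likewise the intended (implicit) first step of the paper's proof.
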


\begin{proof}
	As for each $a \in \k[x, z]$ and each $r \in \k[z]$ we have the decomposition
	\[
		(x + yz + za(x, z), y + a(x, z) + r(z), z) = h_1 \circ \iota \circ h_2 \circ \iota
	\]
	where $h_1 = (x + yz, y + r(z), z) \in \Triang_{\k}(\AA^3)$, 
	$h_2 = (x + a(y, z), y, z) \in \Triang_{\k}(\AA^3)$ 
	and $\iota = (y, x, z) \in \Aff_{\k}(\AA^3)$,
	it follows from Theorem~\ref{thm:Classification} 
	that all automorphisms of degree $\leq 3$ of $\AA^3$ are tame. 
\end{proof}

\section{Dynamical degrees of automorphisms of $\AA^3$ of degree at most $3$ }
\label{sec.DDeg}

As an application of our description
of automorphisms of $\AA^3$ of degree $\leq 3$
(see Theorem~\ref{thm:Classification}), 
we list in this section all possible dynamical degrees of these automorphisms. 
Recall that the dynamical degree satisfies $\lambda(f) \leq \deg(f)$ 
and that $\lambda(f) = \lambda(g)$ if 
$f, g$ are conjugated automorphisms in $\Aut(\AA^n)$ and more generally if $f, g$ are only conjugated
in the bigger group $\Bir(\AA^n)$ of birational maps of $\AA^n$. 

\subsection{Affine-triangular automorphisms}
We say that an element $f\in \Aut(\AA^n)$ is \emph{affine-triangular} if $f=\alpha\circ \tau$, where $\alpha\in \Aff(\A^n)$ is an affine automorphism and $\tau\in \Triang_{\k}(\AA^n)$ is a triangular automorphism.
Note that an element $g\in \Aut(\A^n)$ is equivalent to a triangular automorphism if and only if it is conjugate to an affine-triangular automorphism by an affine automorphism. The dynamical degrees of affine-triangular automorphisms of $\AA^3$ can be computed, 
using a simple algorithm described in \cite{BlSa2022Dynamical-degrees-}. In particular, one has the following result.

\begin{theoremsimple}\cite[Theorem 1]{BlSa2022Dynamical-degrees-}\label{TheoremDynAffTri}
For each field $\kany$ and each integer $d\ge 2$, the set of dynamical degrees of all affine-triangular automorphisms of $\A^3$ of degree $\le d$ is equal to
\[
	\left.\left\{\frac{a+\sqrt{a^2+4bc}}{2}\right| (a,b,c)\in \NN^3, a+b\le d, c\le d\right\}\setminus \{0\}.
\]
Moreover, for all $a,b,c\in \NN$ such that $\lambda=\frac{a+\sqrt{a^2+4bc}}{2}\not=0$, the dynamical degree of the automorphism
\[
	(z + x^ay^b,y+x^c,x)
\] 
is equal to $\lambda$.
\end{theoremsimple}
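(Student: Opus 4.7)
The plan is to prove the theorem in two steps that together establish both the set equality and the moreover statement. First, I would verify directly that the map $f_{a,b,c} = (z + x^a y^b, y + x^c, x)$ is affine-triangular of degree $\max(a+b,c,1)$ with dynamical degree exactly $\lambda := \frac{a+\sqrt{a^2+4bc}}{2}$. Second, I would show that any affine-triangular automorphism of degree $\le d$ has its dynamical degree realized by some $f_{a,b,c}$ with $a+b\le d$ and $c\le d$, using $\Bir(\A^3)$-conjugation (which preserves $\lambda$) to put the map into this normal form.

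For the moreover step, observe that $f_{a,b,c} = \sigma\circ\tau$ with the cyclic involution $\sigma = (z,y,x) \in \Aff(\A^3)$ and $\tau = (x, y+x^c, z + x^a y^b) \in \Triang(\A^3)$. Writing $f_{a,b,c}^r = (F_1^{(r)}, F_2^{(r)}, F_3^{(r)})$ and composing, one obtains the recurrence
\begin{align*}
F_1^{(r+1)} &= F_3^{(r)} + (F_1^{(r)})^a (F_2^{(r)})^b,\\
F_2^{(r+1)} &= F_2^{(r)} + (F_1^{(r)})^c,\\
F_3^{(r+1)} &= F_1^{(r)}.
\end{align*}
Setting $d_r^{(i)} = \deg F_i^{(r)}$ and, crucially, assuming that no cancellation of leading terms occurs, the degree recursion reads $d_{r+1}^{(1)} = \max(d_r^{(3)}, a d_r^{(1)} + b d_r^{(2)})$, $d_{r+1}^{(2)} = \max(d_r^{(2)}, c d_r^{(1)})$, $d_{r+1}^{(3)} = d_r^{(1)}$. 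For $r$ large the maxima are attained at the nontrivial entries, and the recursion collapses to the linear system on $(d_r^{(1)}, d_r^{(2)})$ governed by $M = \begin{pmatrix} a & b \\ c & 0 \end{pmatrix}$, whose spectral radius is exactly $\lambda$. The hard part will be justifying the no-cancellation claim: I would handle it via a weighted-degree argument, assigning $x$ and $y$ the weights given by a Perron eigenvector of $M^T$ with eigenvalue $\lambda$. Under this weighting the leading monomial of each $F_i^{(r)}$ is preserved under iteration and its weighted degree grows exactly as $\lambda^r$, which both rules out cancellation and pins down $\lambda(f_{a,b,c})=\lambda$.

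For the reverse direction, let $g = \alpha\circ\tau$ be an arbitrary affine-triangular automorphism of degree $\le d$, with $\tau = (x, y+p(x), z+q(x,y))$. I would first exploit that $\Bir(\A^3)$-conjugation by monomial birational maps on coordinate tori reduces $p$ and $q$ to pure monomials $\mu x^c$ and $\nu x^a y^b$ (up to scalars absorbable into $\alpha$). Next, one normalizes the affine part $\alpha$: by a combination of diagonal rescalings and translations it becomes a signed permutation matrix, and a short case analysis over the permutation reduces $g$ to one of a finite list of model forms. In the case where the permutation is the relevant cyclic swap this yields exactly $f_{a,b,c}$, and in the remaining cases an analogous two-dimensional linear recursion shows that $\lambda(g)$ still equals the dominant eigenvalue of a matrix of the same shape $M$ (possibly with some of $a,b,c$ equal to zero), hence lies in the claimed set. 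The degree inequalities $a+b\le d$ and $c\le d$ follow from tracking $\deg(g) = \max(a+b, c, 1) \le d$ through the normalization.
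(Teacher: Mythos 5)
This statement is not proved in the paper at all: it is quoted verbatim from \cite[Theorem 1]{BlSa2019Dynamical-Degree-o} and used as a black box, so there is no internal proof to compare against. Judging your proposal on its own merits: the ``moreover'' half is essentially correct and is the standard weighted-degree argument. Choosing $\mu=(\mu_1,\mu_2,\mu_3)$ with $c\mu_1=\theta\mu_2$, $a\mu_1+b\mu_2=\theta\mu_1$, $\mu_1=\theta\mu_3$ forces $\theta^2=a\theta+bc$, i.e.\ $\theta=\lambda$, and the $\mu$-leading part of $f_{a,b,c}$ is the monomial map $(x^ay^b,x^c,x)$, whose iterates never vanish; Proposition~\ref{prop.Comp_DDeg} then gives $\lambda(f_{a,b,c})=\lambda$ whenever $\lambda>1$ (the cases $\lambda\in\{0,1\}$ need the separate trivial treatment you implicitly defer). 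This is exactly the mechanism the present paper uses in Section~\ref{sec.DDeg}, so that half is fine.

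The completeness direction, however, has a genuine gap. Your claim that conjugation by monomial birational maps reduces the triangular part $(x,\,y+p(x),\,z+q(x,y))$ to one with $p,q$ pure monomials is false as stated: a polynomial such as $q(x,y)=x^2+y^2$ is not carried to a monomial by any torus-equivariant conjugation, and which monomial of $q$ actually governs the asymptotic degree growth depends on the weight $\mu$ that one is trying to determine in the first place --- this circularity is precisely what makes the problem hard. Likewise, normalizing the affine factor $\alpha$ to a signed permutation and then doing ``a short case analysis'' drastically understates the work: the interaction between a general $\alpha\in\Aff(\A^3)$ and the triangular part produces many distinct degree-growth regimes (compare the case distinctions already needed in Proposition~\ref{Prop:DynDeg3NotTriang} for a single family of degree-$3$ maps), and establishing that \emph{every} resulting dynamical degree has the form $\frac{a+\sqrt{a^2+4bc}}{2}$ with $a+b\le d$, $c\le d$ is the main content of the cited paper, not a corollary of a normal form. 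As written, your argument would not let a reader verify that no dynamical degrees outside the stated set occur.
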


\begin{corollary}\label{Cor:DynDegSmallAffTriang}
	For each $d\ge 1$ and each field $\kany$, let us denote by $\Lambda_{d,\kany}\subset\mathbb{R}$ the set of dynamical degrees of all automorphisms of $\A^3_{\kany}$ of degree $d$. We then have
	\[
		\begin{array}{rcl}
	\Lambda_{1,\kany}&=&\{1\}\\
	\Lambda_{2,\kany}&=&\{1,\sqrt{2},(1+\sqrt{5})/2,2\}\\
	\Lambda_{3,\kany}&\supseteq&\{1 , \sqrt{2} , \ \frac{1+\sqrt{5}}{2} , \ \sqrt{3} , 2 , \ \frac{1 + \sqrt{13}}{2} , \
		1 + \sqrt{2} , \ \sqrt{6} , \ \frac{1 + \sqrt{17}}{2} , \ 1+ \sqrt{3} , \ 3 \}
		\, .
	\end{array}
	\]
	Moreover, if $f\in \Aut(\A^3_{\kany})$ is conjugated in $\Aut(\A^3_{\overline{\kany}})$ to an affine triangular automorphism of degree $\le 3$ $($where
	$\overline{\kany}$ is a fixed algebraic closure of $\kany)$, then 
	\[
		\lambda(f)\in \{1 , \sqrt{2} , \ (1+\sqrt{5})/2 , \ \sqrt{3} , 2 , \ (1 + \sqrt{13})/2 , \
		1 + \sqrt{2} , \ \sqrt{6} , \ (1 + \sqrt{17})/2 , \ 1+ \sqrt{3} , \ 3 \} \, .
	\]
\end{corollary}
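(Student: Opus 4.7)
The plan is to use the invariance of the dynamical degree under birational conjugation and under extensions of scalars, together with the observation that if $f=\alpha\circ g\circ\beta\in\Aut(\A^3)$ with $\alpha,\beta\in\Aff(\A^3)$ and $g\in\Triang(\A^3)$, then $f$ is conjugate (by $\beta$) to the affine-triangular automorphism $(\beta\alpha)\circ g$, which has the same degree as $f$. Hence every automorphism equivalent to a triangular one is conjugate in $\Aut(\A^3)$ to an affine-triangular automorphism of the same degree, and Theorem~\ref{TheoremDynAffTri} applies.

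The case $\Lambda_{1,\kany}=\{1\}$ is immediate since degree-$1$ automorphisms are affine. For $\Lambda_{2,\kany}$: Theorem~\ref{Thm:Main} says every degree-$\leq 3$ automorphism is equivalent either to a triangular one or to one of the form~$(\ast)$; as the latter has degree exactly~$3$, each degree-$2$ automorphism is conjugate to an affine-triangular of degree~$2$. By Theorem~\ref{TheoremDynAffTri} with $d=2$, its dynamical degree lies in $\{(a+\sqrt{a^2+4bc})/2 : a+b\leq 2,\ c\leq 2\}\setminus\{0\}$, which a routine enumeration of triples $(a,b,c)$ shows equals $\{1,\sqrt{2},(1+\sqrt{5})/2,2\}$. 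Conversely, each value is realised by an explicit degree-$2$ automorphism, such as $(x+y^2,y,z)$ for $\lambda=1$ and the standard forms $(z+x^ay^b,y+x^c,x)$ with $(a,b,c)=(0,1,2),(1,1,1),(1,1,2)$ for the other three.

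For $\Lambda_{3,\kany}$: the analogous enumeration with $a+b\leq 3,\ c\leq 3$ gives exactly the listed $11$ values. For $9$ of these, one can pick $(a,b,c)$ with $\max(a+b,c)=3$, so the standard form $(z+x^ay^b,y+x^c,x)$ gives a degree-$3$ automorphism realising the value via Theorem~\ref{TheoremDynAffTri}. The main obstacle is $\sqrt{2}$ and $(1+\sqrt{5})/2$, which in the formula arise only from triples $(a,b,c)\in\{(0,1,2),(0,2,1),(1,1,1)\}$, all giving degree-$2$ automorphisms. I would handle these by conjugating $\tau_1=(z+y,y+x^2,x)$ and $\tau_2=(z+xy,y+x,x)$ by carefully chosen degree-$2$ automorphisms such as $\sigma_1=(x,y+xz,z)$ and $\sigma_2=(x,y+z^2,z)$; a short explicit computation shows that $\sigma_i\tau_i\sigma_i^{-1}$ has degree exactly~$3$ and inherits the dynamical degree of $\tau_i$ by conjugation invariance. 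Finally, the ``moreover'' statement follows at once: if $f\in\Aut(\A^3_\kany)$ is conjugate over $\overline{\kany}$ to an affine-triangular automorphism of degree~$\leq 3$, then their dynamical degrees agree, and Theorem~\ref{TheoremDynAffTri} with $d=3$ forces $\lambda(f)$ into the $11$-element list.
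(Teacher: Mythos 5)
Your proposal is correct and takes essentially the same route as the paper: bound $\Lambda_{d,\kany}$ via Theorem~\ref{TheoremDynAffTri} (using Theorem~\ref{Thm:Main} to reduce degree~$\le 2$ automorphisms to affine-triangular ones), enumerate the set $L_d$, realise nine of the eleven values of $L_3$ by degree-$3$ standard forms, and obtain $\sqrt{2}$ and $(1+\sqrt{5})/2$ by conjugating the degree-$2$ automorphisms $(z+y,y+x^2,x)$ and $(z+xy,y+x,x)$ into degree~$3$. The only (immaterial) difference is your choice of conjugator $(x,y+xz,z)$ for $\sqrt{2}$, where the paper uses $(x,y+z^3,z)$.
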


\begin{proof} Let us write 
\[
	L_d=\left.\left\{ \frac{a+\sqrt{a^2+4bc}}{2}\right | (a,b,c)\in \NN^3, a+b\le d, c\le d\right\}\setminus \{0\} \quad
	\textrm{for each $d\ge 1$} \, .
\] 
This gives then 
	\[
		\begin{array}{rcl}
	L_1&=&\{1\}\\
	L_2&=&\{1,\sqrt{2},(1+\sqrt{5})/2,2\}\\
	L_3&=&\{1 , \sqrt{2} , \ \frac{1+\sqrt{5}}{2} , \ \sqrt{3} , 2 , \ \frac{1 + \sqrt{13}}{2} , \
		1 + \sqrt{2} , \ \sqrt{6} , \ \frac{1 + \sqrt{17}}{2} , \ 1+ \sqrt{3} , \ 3 \}
		\, .
	\end{array}
	\]

For each $d \in \{1,2, 3\}$ holds: If $f\in \Aut(\A^3_{\kany})$ is conjugated in $\Aut(\A^3_{\overline{\kany}})$ to an affine triangular automorphism of degree $\le d$, then Theorem~\ref{TheoremDynAffTri} implies that $\lambda(f)\in L_d$. In particular, $\Lambda_{1,\kany}\subseteq L_1$ and $\Lambda_{2,\kany}\subseteq L_2$, as every element of $\Aut(\A^3_{\kany})$
of degree $\leq 2$ is equivalent to a triangular automorphism and is thus conjugate in $\Aut(\A^3_{\overline{\kany}})$ to an affine triangular automorphism (Theorem~\ref{thm:Classification}).

It remains to see that $L_d\subseteq \Lambda_{i,\kany}$ for $d=1,2,3$, by giving explicit examples. For $d=1$, we simply take the identity. For $d\in \{2,3\}$, we use elements of the form
\[
	f_{a,b,c}=(z + x^ay^b,y+x^c,x)\in \Aut(\A^3_\k)
\]
whose dynamical degrees are equal to 
$\lambda(f_{a,b,c})=(a+\sqrt{a^2+4bc})/2$ when this number is not zero (Theorem~\ref{TheoremDynAffTri}).

For $d=2$, we use $f_{1,0,2}$, $f_{0,1,2}$, $f_{1,1,1}$ and $f_{1,1,2}$, which all have degree $2$ and dynamical degrees $1,\sqrt{2},(1+\sqrt{5})/2,2$ respectively.

For $d=3$, we first use  $f_{1,0,3}$, $f_{0,1,3}$, $f_{2,0,3}$, $f_{1,1,3}$, $f_{2,1,1}$, $f_{0,2,3}$, $f_{1, 2, 2}$, 
	$f_{2,1,2}$ and  $f_{0,3,3}$ which all have degree $3$ and dynamical degrees 
$1  $, $ \sqrt{3} $, $ 2 $, $(1 + \sqrt{13})/2$, $ 1 + \sqrt{2} $, $ \sqrt{6} $, $(1 + \sqrt{17})/2$, $ 1+ \sqrt{3}$ and $3$, respectively.
In order to obtain the values
$\sqrt{2}$ and $(1+\sqrt{5})/2$, we conjugate  $f_{0,1,2}=(z + y,y+x^2,x)$ and
$f_{1,1,1}=(z + xy,y+x,x)$  by $(x,y+z^3,z)$ and $(x,y+z^2,z)$, respectively, to get two automorphisms of $\A^3$ of degree $3$ having dynamical degree equal to to $\lambda(f_{0,1,2})=\sqrt{2}$ and $\lambda(f_{1,1,1})=(1+\sqrt{5})/2$, respectively.
\end{proof}
 
\subsection{List of dynamical degrees of all automorphisms of degree $3$}

An automorphism $f \in \Aut(\AA^n)$ is called 
\emph{algebraically stable}, if $\deg(f^r) = \deg(f)^r$ for all $r > 0$. In this case, $\lambda(f) = \deg(f)$.
Now, let $\iota \colon \AA^n \to \PP^n$ be the standard embedding, i.e.~$\iota(x_1, \ldots, x_n) = [1: x_1: \cdots: x_n]$.
Note that $f$ is algebraically stable, if and only if the extension of $f$ to a birational map 
$\bar{f} \colon \PP^n \bir \PP^n$ via $\iota$ satisfies the following: $\bar{f}^r$ maps the hyperplane
at infinity $H_{\infty} = \PP^n \setminus \iota(\AA^n)$ not into the base locus of $\bar{f}$ for each $r > 0$ (follows for instance from \cite[Proposition 1.4.3]{Sibony} or \cite[Lemma 2.14]{BlancConj}).

The computation of the dynamical degrees in Theorem~\ref{thm.Dynamical_degrees} is
 heavily based on the results of~\cite{BlSa2022Dynamical-degrees-}. Let us recall the notations and
 results that we need here. 
 
\begin{definition}
	Let	$\mu = (\mu_1, \ldots, \mu_n) \in (\RR_{\geq 0})^n$ and $r \in \RR_{\geq 0}$.
	For a polynomial
	$p = \sum p_{i_1, \ldots, i_n} x_1^{i_1} \cdots x_n^{i_n} \in \k[x_1, \ldots, x_n]$ 
	(where $p_{i_1, \ldots, i_n} \in \k$) its \emph{$\mu$-homogeneous part of degree $r$}
	is the polynomial
	\[
		\sum_{i_1 \mu_1 + \ldots + i_n \mu_n = r} p_{i_1, \ldots, i_n} x_1^{i_1} \cdots x_n^{i_n}
		\in \k[x_1, \ldots, x_n] \, .
	\]
	For each $p\in \k[x_1,\ldots,x_n]\setminus \{0\}$, we define $\deg_\mu(p)$ to be the maximum of the real numbers $r\in \RR_{\ge 0}$ such that the $\mu$-homogeneous part of degree $r$ of $p$ is non-zero. We then set $\deg_\mu(0)=-\infty$.
\end{definition} 

\begin{definition}
	Let $f = (f_1, \ldots, f_n) \in \Aut(\AA^n)$ and let 
	$\mu = (\mu_1, \ldots, \mu_n) \in (\RR_{\geq 0})^n$.
	We define the \emph{$\mu$-degree} of $f$ by
	\[
		\deg_\mu(f) = \inf\set{\theta \in \RR_{\geq 0}}{ 
		\textrm{$\deg_\mu(f_i) \leq \theta \mu_i$ 
		for each $i \in \{1, \ldots, n\}$}} \, .
	\]
	In particular, $\deg_\mu(f) = \infty$ if the above set is empty.
	If $\theta = \deg_{\mu}(f) < \infty$, then for each
	$i \in \{1, \ldots, n\}$, let $g_i \in \k[x_1, \ldots, x_n]$ be the $\mu$-homogeneous part 
	of degree $\theta \mu_i$ of  $f_i$. Then
	$g = (g_1, \ldots, g_n) \in \End(\AA^n)$ 
	is called the \emph{$\mu$-leading part of $f$}.
\end{definition}

The following result from~\cite{BlSa2022Dynamical-degrees-} 
will serve as the main technique to compute dynamical degrees.

\begin{proposition} \cite[Proposition~A]{BlSa2022Dynamical-degrees-}
	\label{prop.Comp_DDeg}
	Let $f \in \Aut(\AA^n)$ and let
	$\mu = (\mu_1, \ldots, \mu_n) \in (\RR_{> 0})^n$ be such that $\theta=\deg_\mu(f) \in \RR_{> 1}$. 
	If the $\mu$-leading part
	$g \colon \AA^n \to \AA^n$ of $f$ satisfies $g^r \neq 0$ for each $r > 0$, then the dynamical degree $\lambda(f)$
	is equal to  $\theta$.
\end{proposition}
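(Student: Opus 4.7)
The plan is to sandwich $\lambda(f)$ between two bounds both equal to $\theta$, using the $\mu$-degree as an auxiliary invariant that is well-behaved under composition. Write $m=\min_i\mu_i>0$ and $M=\max_i\mu_i$. For any nonzero polynomial $p\in\k[x_1,\ldots,x_n]$, the inequalities $m\cdot\deg(p)\le\deg_\mu(p)\le M\cdot\deg(p)$ hold, and they translate immediately into $\tfrac{m}{M}\deg(h)\le\deg_\mu(h)\le\tfrac{M}{m}\deg(h)$ for any $h\in\End(\AA^n)$. Thus, to control $\deg(f^r)$ it suffices to control $\deg_\mu(f^r)$ up to a multiplicative constant independent of $r$.

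The next ingredient is submultiplicativity: $\deg_\mu(h\circ h')\le\deg_\mu(h)\cdot\deg_\mu(h')$ for any $h,h'\in\End(\AA^n)$. This follows by direct expansion: if $h_i=\sum_\alpha c_\alpha x^\alpha$ with $\sum_j\alpha_j\mu_j\le\deg_\mu(h)\mu_i$, then each monomial $\prod_j (h'_j)^{\alpha_j}$ has $\mu$-degree at most $\sum_j\alpha_j\deg_\mu(h')\mu_j\le\deg_\mu(h)\deg_\mu(h')\mu_i$. Iterating, one obtains $\deg_\mu(f^r)\le\theta^r$; combined with the degree comparison this yields $\deg(f^r)\le\tfrac{M}{m}\theta^r$, and hence $\lambda(f)\le\theta$.

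For the lower bound, the main step is to show by induction on $r\ge 1$ that $g^r$ is the $\mu$-leading part of $f^r$. Write $f_i=g_i+\rho_i$, where $g_i$ is $\mu$-homogeneous of degree $\theta\mu_i$ and $\deg_\mu(\rho_i)<\theta\mu_i$. Expanding $f_i(f_1,\ldots,f_n)$ and grouping terms, the contribution $g_i(g_1,\ldots,g_n)=(g\circ g)_i$ is either zero or $\mu$-homogeneous of degree $\theta^2\mu_i$; every other term either originates from $\rho_i$ (whose $\mu$-degree after substitution is at most $\deg_\mu(\rho_i)\cdot\theta<\theta^2\mu_i$) or comes from a monomial $\prod_j g_j^{\alpha_j}$ of $g_i$ in which at least one $g_j$ has been replaced by $\rho_j$, strictly lowering the $\mu$-degree by $\theta\mu_j-\deg_\mu(\rho_j)>0$. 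Hence, when $g\circ g\neq 0$ the $\mu$-leading part of $f\circ f$ is exactly $g\circ g$, and $\deg_\mu(f\circ f)=\theta^2$. The same argument applied to $f^{r-1}=g^{r-1}+(\text{lower $\mu$-weight terms})$ composed with $f$ propagates the induction, using the standing hypothesis $g^r\neq 0$. Once $\deg_\mu(f^r)=\theta^r$ is established for all $r$, the degree comparison gives $\deg(f^r)\ge\tfrac{m}{M}\theta^r$, and so $\lambda(f)\ge\theta$.

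The main obstacle is the inductive step for the lower bound: one must carefully check that the remainder terms produced at each iteration cannot climb back up to the top $\mu$-weight. This is guaranteed by the $\mu$-homogeneity of each $g_j$, which forces every substitution replacing a $g_j$ by $\rho_j$ to strictly decrease the $\mu$-weight, together with the nonvanishing hypothesis $g^r\neq 0$, which rules out the unique alternative failure mode, namely accidental cancellation of the leading contribution at some iterate. With both inequalities established, one concludes $\lambda(f)=\theta$.
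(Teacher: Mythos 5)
Your argument is correct, but note that this paper does not prove Proposition~\ref{prop.Comp_DDeg} at all: it is imported verbatim from \cite[Proposition~A]{BlSa2019Dynamical-Degree-o}, so there is no in-paper proof to compare against. Your route — bounding $\deg$ by $\deg_\mu$ up to the constants $m/M$ and $M/m$, using submultiplicativity of $\deg_\mu$ for the upper bound $\lambda(f)\le\theta$, and showing inductively that the $\mu$-leading part of $f^r$ is exactly $g^r$ (so $\deg_\mu(f^r)=\theta^r$ as long as $g^r\neq 0$) for the lower bound — is precisely the argument carried out in the cited reference, and all the delicate points (strict drop of $\mu$-weight when a leading factor is replaced by a remainder, non-cancellation of the top $\mu$-homogeneous part, positivity of the $\mu_i$) are handled correctly.
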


\begin{proposition}\label{Prop:DynDeg3NotTriang} Let $f=(f_1,f_2,f_3)=\alpha \circ g \in \Aut(\A^3)$, where $\alpha\in \Aff(\A^3)$,
\[ g=(x+yz + za(x, z)+\xi z, y + a(x, z) + r(z), z),\]
 $\xi\in \k$, $a(x,z)=a_0 x^2+a_1 xz+a_2 z^2+a_3x+a_4z\in \k[x,z]$, $a_0,\ldots,a_4\in \k$, $r\in \k[z]$ has degree $\le 3$ and $(a_0,a_1)\not=(0,0)$.

If $\alpha^*(z)\in \k[z]$, then $\lambda(f)=\deg_x(a)\in \{1,2\}$. Otherwise, either $f$ is algebraically stable $($in which case $\lambda(f)=3)$ or $f$ is conjugate by an element of $\Aut(\AA^3)$ to an affine-triangular automorphism of degree $\le 3$, or we can conjugate $f$ by an affine automorphism and reduce to one of the following cases: 
\begin{enumerate}
\item\label{DynamDegCaser3}
$\deg(r)=3$, $\alpha^*(x)\in \k[z]$ and the coefficient of $z^3$ in $f_3$ is zero;
\item\label{DynamDegCase2a}
$\deg(r) \leq 2$, $\alpha^*(y)\in \k[z]$ and $\alpha^*(z)\in \k[y,z]$;
\item\label{DynamDegCase2b}
$\deg(r)  \leq 2$, $\alpha^*(y)\in \k[z]$, $\alpha^*(x)\in \k[y,z]$ and $a_2 = 0$;
\item\label{DynamDegCase2c}
$\deg(r) \leq 2$, $\alpha^*(x)\in \k[z]$, $\alpha^*(y)\in \k[y,z]$, $a_1\not=0$ and $a_2 = 0$.
\end{enumerate}
\end{proposition}

\begin{proof}
	$(A)$ Suppose first that $\alpha^*(z)\in \k[z]$. Since the dynamical degree of the automorphism 
	$z \mapsto \alpha^*(z)$ of $\AA^1$
	is $1$, by~\cite[Lemma~2.3.1]{BlSa2022Dynamical-degrees-} the dynamical degree of $f$ is given by
	$\lambda(f) = \lim_{r \to \infty} \deg_{x, y}(f^r)^{\frac{1}{r}} $.  If $\deg_x(a)=1$, then $\deg_{x,y}(f^r)=1$ for each $r\ge 1$, so $\lambda(f)=1$. We then suppose that $\deg_x(a)=2$ and prove that $\lambda(f)=2$. Choosing $\mu=(1,1,0)$, we find $\deg_{x,y}(p)=\deg_\mu(p)$ for each $p\in \k[x,y,z]$. As $za(x,z)$ and $a(x,z)$ are $\k$-linearly independent, one finds $\deg_\mu(f_1)=\deg_\mu(f_2)=2$ and $\deg_\mu(f_3)=0$. Hence, $\deg_\mu(f)=2$ and the $\mu$-leading part of $f$ is $g=(g_1,g_2,g_3)$, where $g_3=f_3 \in \k^\ast z + \k$ and 
	$g_1,g_2\in (\k x^2z+\k x^2) \setminus \{0\}$. This implies by induction on $r$
	that no component  of $g^r$ is zero, for each $r\ge 1$, which implies that $\lim_{r \to \infty} \deg_{\mu}(f^r)^{\frac{1}{r}} =2$ \cite[Lemma~2.6.1(5)]{BlSa2022Dynamical-degrees-}. This gives $\lambda(f) =2$.
	
	We may thus assume that $\alpha^*(z)\not\in \k[z]$ in the sequel. We denote by $\overline{f},\overline{g}\in \Bir(\p^3)$ and $\overline{\alpha},\overline{\tau}\in \Aut(\p^3)$ the extensions of $f,g$ and $\alpha,\tau$, via the standard embedding $\A^3\hookrightarrow \p^3$, $(x,y,z)\mapsto [1:x:y:z]$ and denote as usual by $H_\infty$ the hyperplane $\p^3\setminus \A^3$ given by $w=0$
	where $w, x, y, z$ denote the homogeneous coordinates of $\PP^3$.
	 Denoting by $f_{i,j}$ the homogeneous part of $f_i$ of degree $j$, the restriction of $\overline{f}$ to $H_\infty$ is given by $[0:x:y:z]\mapsto [0: f_{1,3}(x,y,z):f_{2,3}(x,y,z): f_{3,3}(x,y,z)]$. 

$(B)$  Suppose now that $\deg(r)=3$. This implies that $\Span_\k(f_{1,3},f_{2,3}, f_{3,3})\subset \k[x,z]_3$ has dimension $2$. Hence, the image by $\overline{f}$ of $H_\infty$ is a line $\ell\subset H_\infty$ (as $(a_0, a_1) \neq (0, 0)$) and the base-locus of $f$ is the 
 line $\ell_z \subset H_\infty$ given by $z = 0$. 
As $\overline{g}(H_\infty)$ is the line $\ell_z$ 
and as $\alpha^*(z) \not\in \k[z]$, the line $\ell=\overline{\alpha}(\ell_z)\subset H_\infty$ satisfies $\ell\not=\ell_z$.  If $\overline{f}$ restricts to a dominant rational map $\ell\dasharrow \ell$, then $f$ is algebraically stable, and the same holds if 
 $\overline{f}(\ell \setminus \ell_z)$  
is a point of  $\ell \setminus \ell_z$.  
We may thus assume that 
$\overline{f}(\ell \setminus \ell_z) = \ell \cap \ell_z \in H_\infty$. 
The fact that $\overline{f}(\ell \setminus \ell_z)$
and thus also $\overline{g}(\ell \setminus \ell_z)$
is a point implies that $\ell=\overline{\alpha}(\ell_z)$ passes through the point $[0:0:1:0]$
and thus $\ell$ is given by $x=\mu z$ for some $\mu\in \k$. We may conjugate $f$ with $\kappa=(x-\mu z,y,z)\in \Aff(\A^3)$ (this replaces $\alpha$ with $\kappa\circ \alpha$  and $g$ with $g\circ \kappa^{-1}$ so does not change the form of $g$)  and assume that $\mu=0$.

 Since  $\overline{f}(\ell \setminus \ell_z) = \ell \cap \ell_z = [0:0:1:0]$, 
 the coefficient of $z^3$ of $f_3$ (and of $f_1$) is equal to zero. As $\overline{\alpha}(\ell_z)$ is the line $x=0$, we get $\alpha^*(x)\in \k[z]$. We are thus in Case~\ref{DynamDegCaser3}.

(C): We may now assume that $\deg(r)<3$ (and still $\alpha^*(z)\not\in \k[z]$).
We write
\[
	\alpha = 
	(\alpha_{11}x+\alpha_{12}y+\alpha_{13}z+\beta_1,
	\alpha_{21}x+\alpha_{22}y+\alpha_{23}z+\beta_2,
	\alpha_{31}x+\alpha_{32}y+\alpha_{33}z+\beta_3)
\]
where $\alpha_{ij}\in \k$ and $\beta_{i}\in \k$ for all $i,j\in \{1,2,3\}$. 
As $\deg(r)<3$ the vector space $\Span_\k(f_{1,3},f_{2,3}, f_{3,3})\subset \k[x,z]_3$ has dimension $1$. The image of $H_\infty$ by $\overline{f}$  
is the point $q=[0:\alpha_{11}:\alpha_{21}:\alpha_{31}]\in H_\infty$ and the base-locus of $f$ is the union of three lines (maybe with multiplicity).  If $q$ is not in the base-locus, then $f$ is algebraic stable. We may thus assume that $f_{i,3}(q)=0$ for each $i$. We distinguish the possible cases, depending on whether $\alpha_{11}$ and $\alpha_{31}$ are zero or not.

(C1): Assume first that $\alpha_{11}=\alpha_{31}=0$. 
As $\alpha^*(z)\not\in \k[z]$, we get $\alpha_{32}\not=0.$ Conjugating by $\kappa=(x-\alpha_{12}/\alpha_{32}z,y,z)$ (this replaces $\alpha$ with $\kappa\circ \alpha$  and $g$ with $g\circ \kappa^{-1}$ so does not change the form of $g$), we may assume that $\alpha_{12}=0$. 

As $g=(x+yz+\xi z, y + r(z), z)\circ (x,y+a(x,z),z)$, we find
\[
\begin{array}{rcl}
	h  &=& (h_1, h_2, h_3) = (x,y+a(x,z) + r(z),z) \circ f\circ (x,y-a(x,z)-r(z),z) \\
	    &=& (x, y + a(x, z) + r(z), z) \circ \alpha \circ (x+(y-r(z))z+\xi z, y, z)
\end{array}
\]
with
\[
\begin{array}{rcl}
	h_1 &=& \alpha_{13} z + \beta_1 \\
	h_3 &=& \alpha_{32}y + \alpha_{33}z + \beta_3 \\
	h_2 &=& \alpha_{21}(x + (y-r(z))z + \xi z) + \alpha_{22}y + \alpha_{23}z + \beta_2 +a(h_1, h_3) + r(h_3)
\end{array}
\]
We see that $h$ is affine-triangular of degree $\leq 3$ 
and thus $f$ is conjugate to an affine triangular automorphism of degree $\leq 3$.

(C2): Assume now that $\alpha_{11}\not=0$ and $\alpha_{31}=0$. The equality $\alpha_{31}=0$ corresponds to $\alpha^*(z)\in \k[y,z]$. As $\alpha^*(z)\not\in \k[z]$, we have $\alpha_{32}\not=0$. Conjugating by $\kappa=(x,y-\alpha_{21}/\alpha_{11}x,z)$ we may assume that $\alpha_{21}=0$ (as before, this replaces $g$ with $g\circ \kappa^{-1}$ and thus does not change the form of $g$). We then conjugate by $(x,y-\alpha_{22}/\alpha_{32} z,z)$ and may assume that $\alpha_{22}=0$, so $\alpha^*(y)\in \k[z]$. We are thus in Case~\ref{DynamDegCase2a}.

(C3): Assume now that $\alpha_{31}\not=0$. Conjugating by $\kappa=(x-\alpha_{11}/\alpha_{31}z,y-\alpha_{21}/\alpha_{31}z,z)$, we may assume that $\alpha_{11}=\alpha_{21}=0$, so $\alpha^*(x),\alpha^*(y)\in \k[y,z]$ and $q = [0:0:0:1]$. 
As $f_{3, 3}(q) = 0$ and as the coefficient of $x$ in $\alpha^\ast(z)$ is non-zero, 
we get $a_2 = 0$.
If $\alpha_{12}\not=0$, we conjugate by $(x,y-\alpha_{22}/\alpha_{12} x,z)$ and may assume that $\alpha_{22}=0$, so  $\alpha^*(y)\in \k[z]$, giving  Case~\ref{DynamDegCase2b}. If $\alpha_{12}=0$ 
and $a_1\not=0$, we get Case~\ref{DynamDegCase2c}. We may thus assume that $\alpha_{11}=\alpha_{12}=\alpha_{21}=0$ and $a_1=a_2=0$.
This gives $\alpha^*(x)\in \k[z]$, $\alpha^*(y)\in \k[y,z]$ and $a(x,z)=a_0 x^2+a_3x+a_4z$, with 
$a_0\not=0$. 
Then, 
\[
 	\begin{array}{rcl}
 		h&=&(h_1,h_2,h_3)=(x,y+a_3x+a_0x^2,z)\circ  f\circ (x,y-a_3x-a_0x^2,z)\\
 		  &=& (x,y+a_3x+a_0x^2,z)\circ\alpha\circ (x+yz + a_4z^2+\xi z, y + a_4z + r(z), z)
 	\end{array}
\] 
is such that $h_1\in \k[z]$, $h_2\in \k[y,z]$ and $h_3\in \k[x,y,z]$ are of degree $\le 2$. Hence, $f$ is conjugate by an element of $\Aut(\AA^3)$ to an affine-triangular automorphism of degree $\le 2$.
 \end{proof}
\smallskip 
 \begin{proposition}\label{Prop:DynDeg3NotTriangDynDeg} 
 The dynamical degree of any $f = \alpha \circ g$ as in the four Cases~\ref{DynamDegCaser3}-\ref{DynamDegCase2a}-\ref{DynamDegCase2b}-\ref{DynamDegCase2c} of Proposition~$\ref{Prop:DynDeg3NotTriang}$, is given as follows:
\begin{enumerate}
\item[\ref{DynamDegCaser3}]
$\lambda(f)=\left\{\begin{array}{ll}
1+\sqrt{2}&\text{ if }a_1\not=0;\\
(1+\sqrt{13})/2&\text{ if }a_1=0.\end{array}\right.$
\item[\ref{DynamDegCase2a}]
$\lambda(f)=\left\{\begin{array}{ll}
1+\sqrt{3}&\text{ if }a_0\not=0;\\
1+\sqrt{2}&\text{ if }a_0=0.\end{array}\right.$
\item[\ref{DynamDegCase2b}]
Writing the coefficient of $z^2$ in $f_1$ as $\varepsilon$, we obtain 
\[
	\lambda(f)=\left\{\begin{array}{llll}
1+\sqrt{3}&\text{ if }a_1\neq 0& \text{ and }&\varepsilon\neq 0;\\
(3 + \sqrt{5})/2&\text{ if }a_1\neq 0& \text{ and }&\varepsilon=0;\\
 (1 + \sqrt{17})/2&\text{ if }a_1= 0& \text{ and }&\varepsilon \neq 0;\\
2&\text{ if }a_1= 0& \text{ and }&\varepsilon = 0.\end{array}\right.
\]
\item[\ref{DynamDegCase2c}] $\lambda(f)=1+\sqrt{2}$.
\end{enumerate}
\end{proposition}
\begin{proof}
\ref{DynamDegCaser3}: We have $\deg(r)=3$, $\alpha^*(x)\in \k[z]$ and the coefficient of $z^3$ in $f_3$ is zero. This gives $f_1=f_{1,0}+f_{1,1}\in \k[z]$ and implies that the coefficient of $z^3$ in $f_2$ is not zero. Let $\theta$ be in the open intervall $(2, 3)$ and choose $\mu= (1,3, \theta)$. 
The $\mu$-degree of $z^3$ is bigger than any other monomial that occurs in $f_1, f_2$ or $f_3$, as $\theta > 2$.
We get $\deg_\mu(f_1)=\theta$, $\deg_\mu(f_2)=3\theta$, with $\mu$-leading parts equal to $\zeta_1 z$ and $\zeta_2 z^3$ for some $\zeta_1,\zeta_2\in \k^*$,
respectively.
As the coefficient of $z^3$ in $f_3$ is zero, the monomial $yz$ occurs in $f_3$. Hence, the $\mu$-leading part 
of $f_3$ belongs to $(\k yz+\k xz^2)\setminus \{0\}$. Indeed, as $\deg_\mu(y)>\deg_\mu(z)>\deg_\mu(x)$,   $\deg_\mu(yz)=3+\theta$ is the biggest $\mu$-degree of the monomials of degree $\le 2$ appearing in $f$; moreover $\deg_\mu(yz)>\deg_\mu(x^2z)=2+\theta$.
 
If  $a_1\not=0$, the coefficient of $xz^2$ in $f_3$ is not zero, 
 so $t\in \k xz^2 $ (since $\theta > 2$). We choose $\theta=1+\sqrt{2}$ and observe that $\theta^2=2\theta +1$. Thus we obtain $\deg_\mu(f)=\theta$, with $\mu$-leading part $g=(\zeta_1 z, \zeta_2 z^3,\zeta_3 xz^2)$, where $\zeta_3\in \k^*$.
 
If $a_1=0$, then 
$t\in \k yz$. We choose $\theta=(1+\sqrt{13})/2$ and observe that
$\theta^2=\theta +3$. Thus we obtain  $\deg_\mu(f)=\theta$, with $\mu$-leading part $g=(\zeta_1 z, \zeta_2 z^3,\zeta_3 yz)$, where $\zeta_3\in \k^*$.

As $g$ is monomial, we have $g^r\not=0$ for each $r\ge 1$, so  $\lambda(f)$ is equal to $\theta$ in both cases (Proposition~\ref{prop.Comp_DDeg}).
 
\ref{DynamDegCase2a}: 
We have $\deg(r) \leq 2$,  $\alpha^*(y)\in \k[z]$ and  $\alpha^*(z)\in \k[y,z]$. This gives
\[
	\begin{array}{rcl}
		f_1&=&f_{1,0}+f_{1,1} + f_{1,2} +\zeta_1 z(a_0 x^2+a_1 xz + a_2 z^2),\\
		f_2&=&f_{2,0}+\zeta_2 z,\\
 		f_{3}&=&f_{3,0}+f_{3,1}+\zeta_3 (a_0 x^2+a_1 xz)+\varepsilon_3 z^2,
 	\end{array} 
 \]
where $\zeta_1,\zeta_2,\zeta_3\in \k^*$, $\varepsilon_3\in \k$.

If $a_0\not=0$, we choose $\theta=1+\sqrt{3}$, $\mu=(\theta +1,1,\theta)$ and observe that
$\theta^2=2\theta+2$. Then, $\deg_\mu(f)=\theta$, with $\mu$-leading part $(\zeta_1 a_0 x^2z, \zeta_2 z, \zeta_3 a_0x^2)$.  This gives $\lambda(f)=\theta$ by Proposition~\ref{prop.Comp_DDeg}.

If $a_0=0$, then $a_1\not=0$. We choose $\theta=1+\sqrt{2}$, $\mu=(\theta +1,1,\theta)$
and observe that $\theta^2=2\theta+1$. Then, $\deg_\mu(f)=\theta$, with $\mu$-leading part $(\zeta_1 a_1 xz^2, \zeta_2 z, \zeta_3 a_1xz)$.  This gives $\lambda(f)=\theta$ by Proposition~\ref{prop.Comp_DDeg}.

\ref{DynamDegCase2b}:
We have $\deg(r) \leq 2$,  $\alpha^*(y)\in \k[z]$, $\alpha^*(x)\in \k[y,z]$ and $a_2 = 0$. 
This gives
\[
	\begin{array}{rcl}
		f_1&=&f_{1,0}+f_{1,1}+\zeta_1 (a_0 x^2+a_1 xz)+\varepsilon_3 z^2, \\
		f_2&=&f_{2,0}+\zeta_2 z,\\
		f_3&=&f_{3,0}+f_{3,1} + f_{3,2} +\zeta_3 z(a_0 x^2+a_1 xz),
	\end{array} 
\]
where $\zeta_1,\zeta_2,\zeta_3\in \k^*$, $\varepsilon_3\in \k$.

If $a_1 \neq 0$ and $\varepsilon_3 \neq 0$, then we choose $\theta = 1 + \sqrt{3}$, $\mu = (2, 1, \theta)$
and observe that $\theta^2 = 2 \theta + 2$. Then,
$\deg_{\mu}(f) = \theta$, with $\mu$-leading part
$(\varepsilon_3 z^2, \zeta_2 z, \zeta_3 a_1 xz^2)$. This gives $\lambda(f) = \theta$ by Proposition~\ref{prop.Comp_DDeg}.

If $a_1 \neq 0$ and $\varepsilon_3 = 0$, then we choose $\theta = (3 + \sqrt{5})/2$, 
$\mu = (1, \theta -2, \theta - 1)$ and observe that $\theta^2 = 3 \theta -1$. 
Then $\deg_{\mu}(f) = \theta$, with $\mu$-leading part
$(\zeta_1 a_1 xz, \zeta_2 z, \zeta_3 a_1 x z^2)$. This gives $\lambda(f) = \theta$ by Proposition~\ref{prop.Comp_DDeg}.

If $a_1 = 0$ and $\varepsilon_3 \neq 0$, then $a_0 \neq 0$ and we choose  $\theta = (1 + \sqrt{17})/2$,
$\mu = (2, 1, \theta)$. Observe that $\theta^2 = \theta + 4$. 
Then $\deg_{\mu}(f) = \theta$, with $\mu$-leading part
$(\varepsilon_3 z^2, \zeta_2 z, \zeta_3 a_0 x^2 z)$. This gives $\lambda(f) = \theta$ by Proposition~\ref{prop.Comp_DDeg}.

If $a_1 = \varepsilon_3 = 0$, then $a_0 \neq 0$ and we choose $\theta = 2$, $\mu = (1, 1, \theta)$.
Then $\deg_{\mu}(f) = \theta$, with $\mu$-leading part
$q = (\zeta_1 a_0 x^2 + \xi_1 z, \zeta_2 z, \zeta_3 a_0 x^2 z +\xi_{3} z^2)$ 
for some $\xi_1$, $\xi_3 \in \k$.
Let $\hat{q} \colon \AA^2 \to \AA^2$,
$(x, z) \mapsto (\zeta_1 a_0 x^2 + \xi_1 z,  \zeta_3 a_0 x^2 z +\xi_{3} z^2)$
and observe that $\hat{q}$ is dominant (as $\zeta_1 a_0$ and $\zeta_3 a_0$ are both non-zero).
As $\pi \circ q = \hat{q} \circ \pi$ for $\pi \colon \AA^3 \to \AA^2$, $(x, y, z) \mapsto (x, z)$, 
it follows that $q^r \neq 0$ for each $r \geq 1$. This gives $\lambda(f) = \theta$ by Proposition~\ref{prop.Comp_DDeg}.

\ref{DynamDegCase2c}: 
We have $\deg(r) \leq 2$, $\alpha^*(x)\in \k[z]$, 
$\alpha^*(y)\in \k[y,z]$, $a_1 \neq 0$ and $a_2 = 0$. This gives
\[
 \begin{array}{rcl}
	f_1&=&f_{1,0}+\zeta_1 z,\\
	f_2&=&f_{2,0}+f_{2,1}+\zeta_2 (a_0 x^2+a_1 xz)+\varepsilon_2 z^2, \\
 	f_3&=&f_{3,0}+f_{3,1} + f_{3, 2} +\zeta_3 z(a_0 x^2+a_1 xz),
 \end{array} 
\]
where $\zeta_1,\zeta_2,\zeta_3 \in \k^*$, $\varepsilon_2 \in \k$.
We choose $\theta = 1 + \sqrt{2}$, $\mu=(1,2,1 + \sqrt{2})$ and observe that $\theta^2 = 2\theta + 1$.
Then $\deg_\mu(f)=\theta$ with $\mu$-leading part $(\zeta_1 z, \varepsilon_2 z^2, \zeta_3 a_1 x z^2)$. 
As $a_1 \neq 0$ and $\zeta_1, \zeta_3 \neq 0$, this  gives $\lambda(f)=\theta$ by Proposition~\ref{prop.Comp_DDeg}.
\end{proof} 

\begin{example}\label{Exa:DiffPropDynDeg3NotTriang}
We illustrate the different cases \ref{DynamDegCaser3}-\ref{DynamDegCase2c} of Proposition~\ref{Prop:DynDeg3NotTriang} 
and Proposition~\ref{Prop:DynDeg3NotTriangDynDeg}, by giving a simple example in each possible case
and we give examples for the two cases where $\alpha^*(z)=z$ and the case where $f$ is algebraically stable.
All of them are of the form $\alpha\circ g$, where $\alpha\in \Aff(\A^3)$,  $g=(x+yz + za(x, z), y + a(x, z) + r(z), z)$, $a=a_0x^2+a_1xz+a_2z^2\in \k[x,z]\setminus \k[z]$ is homogeneous of degree $2$ and $r\in \k[z]$ is of degree $\le 3$. 
\[
	\begin{array}{|c|c|c|c|c|}
	\hline
	\text{Case} & a & r& f\in \Aut(\A^3) & \lambda(f)\\
	\hline
	& xz & 0 & (x+yz+xz^2,y+xz,z) & 1\\
	& x^2 & 0 & (x+yz+x^2z,y+x^2,z) & 2\\
	& xz & z^3& (x+yz+xz^2,z,y+xz+z^3) & 3\\
	\ref{DynamDegCaser3} & xz & z^3 & (z, y+xz+z^3, x+yz+xz^2) & 1+\sqrt{2}\\
	\ref{DynamDegCaser3} & x^2& z^3&  (z,y+x^2+z^3,x+yz+x^2z) & (1+\sqrt{13})/2\\
	\ref{DynamDegCase2a} & x^2 & 0 & (x+yz + zx^2,z, y + x^2) & 1+\sqrt{3}\\
	\ref{DynamDegCase2a} & xz & 0 & (x+yz + xz^2,z,y + xz) & 1+\sqrt{2}\\
	\ref{DynamDegCase2b}& xz&  z^2 & (y + xz + z^2, z,x+yz + xz^2) & 1+\sqrt{3}\\
	\ref{DynamDegCase2b}& xz&  0 & (y + xz, z,x+yz + xz^2) & (3 + \sqrt{5})/2\\
	\ref{DynamDegCase2b}& x^2&  z^2 & (y + x^2 + z^2, z,x+yz + x^2z) &  (1 + \sqrt{17})/2\\
	\ref{DynamDegCase2b}& x^2&  0 & (y + x^2, z,x+yz + x^2z) &2\\
	\ref{DynamDegCase2c}& xz &  0 & (z, y + xz,x+yz + xz^2) & 1+\sqrt{2} \\
	\hline
	\end{array}
\]
\end{example}

\begin{proof}[Proof of Theorem~$\ref{thm.Dynamical_degrees}$]
Corollary~\ref{Cor:DynDegSmallAffTriang} gives the values of $\Lambda_{1,\kany}$ and $\Lambda_{2,\kany}$, proves that $\Lambda_{3,\kany}$ contains $L_3=\{1 , \sqrt{2} , \ \frac{1+\sqrt{5}}{2} , \ \sqrt{3} , 2 , \ \frac{1 + \sqrt{13}}{2} , \
1 + \sqrt{2} , \ \sqrt{6} , \ \frac{1 + \sqrt{17}}{2} , \ 1+ \sqrt{3} , \ 3 \}$ and that for each $f\in \Aut(\A^3_{\kany})$ which is 
conjugated in $\Aut(\A^3_{\overline{\kany}})$ to an affine triangular automorphism of degree $\le 3$ 
$($where $\overline{\kany}$ is a fixed algebraic closure of $\kany)$, we have $\lambda(f)\in L_3$.
	
	Moreover, the element $(y + xz, z,x+z(y + xz))\in \Aut(\A^3_{\kany})$ has dynamical degree $(3 + \sqrt{5})/2$ (follows from Proposition~\ref{Prop:DynDeg3NotTriangDynDeg} as it belongs to Case~\ref{DynamDegCase2b} with $a_1\not=0$ and $\varepsilon=0$, see also Example~\ref{Exa:DiffPropDynDeg3NotTriang}). 
	
It remains then to see that each element  $f\in \Aut(\A^3_{\kany})$ of degree $3$ has a dynamical degree which is either equal to $(3 + \sqrt{5})/2$ or belongs to $L_3$. 
By Theorem~\ref{thm:Classification},  $f$ is conjugate in $\Aut(\A^3_{\overline{\kany}})$ either to an affine-triangular automorphism or to 
$f = \alpha \circ (yz + za(x, z) + x, y + a(x, z) + r(z), z)$ where 
$a\in \k[x, z] \setminus \k[z]$ is homogeneous of degree $2$  
and $r \in \k[z]$ is of degree $\le 3$. In the first case, $\lambda(f)\in L_3$ by Corollary~\ref{Cor:DynDegSmallAffTriang}. In the second case, Propositions~\ref{Prop:DynDeg3NotTriang} and \ref{Prop:DynDeg3NotTriangDynDeg} show that either  $\lambda(f)=(3 + \sqrt{5})/2$ or $\lambda(f)\in L_3$. This achieves the proof.
\end{proof}

\par\bigskip

\renewcommand{\MR}[1]{}

\providecommand{\bysame}{\leavevmode\hbox to3em{\hrulefill}\thinspace}
\providecommand{\MR}{\relax\ifhmode\unskip\space\fi MR }
\providecommand{\MRhref}[2]{%
	\href{http://www.ams.org/mathscinet-getitem?mr=#1}{#2}
}
\providecommand{\href}[2]{#2}

\end{document}